\title{On elementary amenable bounded automata groups}
\author{Kate Juschenko}
\author{Benjamin Steinberg}\thanks{The second author was supported by  NSA MSP \#H98230-16-1-0047.}
\author{Phillip Wesolek}
\date{\today}
\newtheorem{thm}{Theorem}[section]
\newtheorem{obs}[thm]{Observation}
\newtheorem{prop}[thm]{Proposition}
\newtheorem{lem}[thm]{Lemma}
\newtheorem{cor}[thm]{Corollary}
\newtheorem{conj}[thm]{Conjecture}
\theoremstyle{definition}
\newtheorem{defn}[thm]{Definition}
\newtheorem{quest}[thm]{Question}
\newtheorem{rmk}[thm]{Remark}
\newtheorem*{claim*}{Claim}
\newtheorem{example}[thm]{Example}
\newcommand{\Zb}{\mathbb{Z}}
\newcommand{\Nb}{\mathbb{N}}
\newcommand{\mc}[1]{\mathcal{#1}}
\newcommand{\acts}{\curvearrowright}
\newcommand{\Aut}{\mathrm{Aut}}
\newcommand{\sym}{\mathrm{Sym}}
\newcommand{\rist}{\mathrm{rist}}
\newcommand{\normal}{\trianglelefteq}
\newcommand{\rest}{\upharpoonright}
\newcommand{\rk}{\mathrm{rk}}
\newcommand{\Xt}{X^{*}}
\newcommand{\Sym}{\mathrm{Sym}}
\newcommand{\injects}{\hookrightarrow}
 \newcommand{\EA}{\mathrm{EG}}
 \newcommand{\Supp}{\mathrm{Supp}}
 \newcommand{\fix}{\mathrm{fix}}
 \newcommand{\at}{\widetilde{a}}
 \newcommand{\bt}{\widetilde{b}}
\newcommand{\grp}[1]{\langle #1 \rangle}
\begin{document}

\begin{abstract}
There are several natural families of groups acting on rooted trees for which every member is known to be amenable. It is, however, unclear what the elementary amenable members of these families look like. Towards clarifying this situation, we here study elementary amenable bounded automata groups. We are able to isolate the elementary amenable bounded automata groups in three natural subclasses of bounded automata groups. In particular, we show that iterated monodromy groups of post-critically finite polynomials are either virtually abelian or not elementary amenable.
\end{abstract}

\maketitle
\tableofcontents
\addtocontents{toc}{\protect\setcounter{tocdepth}{1}}

\section{Introduction}

A group $G$ is amenable if there exists a finitely additive translation invariant probability measure on all subsets of $G$. This definition was given by J. von Neumann, \cite{von-neumann1}, in response to the Banach-Tarski and Hausdorff paradoxes. He singled out the property of a group which forbids paradoxical actions.

The class of {\it elementary amenable groups}, denoted by  $EG$, was introduced by M. Day in \cite{day:semigroups} and is defined to be the smallest class of groups that contains the finite groups and the abelian groups and is closed under taking subgroups, quotients, extensions, and directed unions. The fact that the class of amenable groups is closed under these operations was already known to von Neumann, \cite{von-neumann1}, who noted at that at that time there was no known amenable group which did not belong to $EG$.

No substantial progress in understanding the class of elementary amenable groups was made until the~80s. C. Chou, \cite{C}, showed that all elementary amenable groups have either polynomial or exponential growth. Shortly thereafter, R. Grigorchuk, \cite{grigorchuk:milnor_en}, gave an example of a group with intermediate growth, and such groups are necessarily amenable but by Chou's work are not elementary amenable. Grigorchuk's group served as a starting point in developing the theory of groups with intermediate growth.

In the same paper, Chou showed that every infinite finitely generated simple group is not elementary amenable. In \cite{JM}, it was shown that the  topological full group of a Cantor minimal system is amenable, and by the results of H. Matui, \cite{Matui}, this group has a simple and finitely generated commutator subgroup, in particular, it is not elementary amenable. This was the first example of an infinite finitely generated simple amenable group. The strategy of Matui has been extended in \cite{cjn}  and \cite{nek1} to minimal actions of $\mathbb{Z}^d$ and groupoids, respectively. This has produced more examples of finitely generated simple amenable groups in combination with results of \cite{JMMS}, where it is demonstrated that there are minimal actions of $\mathbb{Z}^2$ on the Cantor set with amenable topological full groups.

Groups of automorphisms of rooted trees were the only source of groups of intermediate growth until a recent result of V. Nekrashevych, \cite{nek2}, who found finitely generated, torsion  subgroups of the topological full group of intermediate growth. These groups cannot act on rooted tree, since they are simple.

Summarizing the above, the currently known sources of non-elementary amenable groups are  groups acting on rooted trees and topological full groups of Cantor minimal systems.

There has been considerable progress in establishing amenability of groups acting on rooted trees. L. Bartholdi, V. Kaimanovich, and Nekrashevych in \cite{BKN10} proved that groups acting on rooted trees with {\it  bounded activity} are amenable; these are the so-called \textit{bounded automata groups}. This was extended to groups of linear activity by G. Amir, O. Angel, and B. Vir\'ag in \cite{AAV}, and in \cite{JNS}, Nekrashevych, M. de la Salle, and the first named author extended this result further to groups of quadratic activity. These results produce large families of amenable groups, but they do not identify the non-elementary amenable members of the families.  The first named author established what seems to be the first results studying the elementary amenable groups that act on rooted trees in \cite{Ju15}. The present article picks up this thread of research again. We here consider the elementary amenable bounded automata groups.

\begin{quest}\label{qu:main}
What do elementary amenable bounded automata groups look like? How close are they to abelian?
\end{quest}

Question~\ref{qu:main} in full generality seems out of reach at the moment. We answer here the question for three large classes of bounded automata groups, which contain many of the hitherto studied examples. Two of these classes are new, and they appear to be interesting in their own right. We additionally present several examples which place restrictions on how one might specify ``close to abelian."

\subsection{Iterated monodromy groups}
The iterated monodromy groups of post-critically finite polynomials, see \cite[Chapter 5]{N05}, form a compelling and natural class of bounded automata groups. The study of these groups has generated a significant body of research in both dynamics and group theory. See, for example, \cite{G12,nek3}.

For this class of groups, we answer the motivating question completely.
\begin{thm}[See Corollary~\ref{cor:monodromy}]
Every iterated monodromy group of a post-critically finite polynomial is either virtually abelian or not elementary amenable.
\end{thm}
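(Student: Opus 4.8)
The plan is to deduce the dichotomy from a general structural classification of the relevant subclass of bounded automata groups, and then to verify that iterated monodromy groups of post-critically finite polynomials lie in that subclass. First I would record the standard fact that if $f$ is a post-critically finite polynomial, then its post-critical set is finite, so the associated kneading automaton has bounded activity and $\mathrm{IMG}(f)$ is a bounded automata group; moreover these groups are self-replicating (recurrent) and, after restricting to the action on a single boundary orbit, act level-transitively on the corresponding rooted tree. This places $\mathrm{IMG}(f)$ within the subclass for which the paper's main classification applies, so that the dichotomy for the corollary should follow by direct quotation of that theorem once membership is established.

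The mathematical heart is the classification itself, which I would aim to prove in the contrapositive: if $G$ is not virtually abelian, then $G$ is not elementary amenable. The governing intuition is that a non-virtually-abelian $\mathrm{IMG}(f)$ cannot have ``flat'' orbital Schreier graphs --- these are lines (or foldings of lines) exactly in the power-map and Chebyshev cases, which give $\mathbb{Z}$, $D_\infty$, and their virtually abelian relatives --- and in every other case the self-similar branching is genuinely present. Concretely, I would try to show that outside the virtually abelian case the rigid stabilizers $\rist_G(v)$ are nontrivial at every vertex, so that $G$ (or a subquotient of it captured on a single orbit) is weakly branch. The key lemma to isolate is then that a finitely generated weakly branch group is never elementary amenable: the self-replicating inclusion of $\rist_G(v) \times \cdots \times \rist_G(v)$ into a single rigid stabilizer produces an infinitely iterated product structure that is incompatible with membership in any level $EG_\alpha$ of the elementary amenable hierarchy, via a descent on the ordinal rank.

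The step I expect to be the main obstacle is precisely this implication, that failure of virtual abelianness forces the weakly branch (equivalently, the non-elementary-amenable) structure. One cannot route around it through growth alone: by Chou's theorem an elementary amenable group has polynomial or exponential growth, which immediately rules out the intermediate-growth examples, but the Basilica group $\mathrm{IMG}(z^2-1)$ has exponential growth and is nonetheless not elementary amenable, so the argument must be structural rather than metric. For the corollary this obstacle takes the concrete form of translating the combinatorics of the post-critical set --- the interplay of periodic and strictly pre-periodic post-critical points and the local degrees of $f$ --- into the presence or absence of nontrivial rigid stabilizers, thereby pinning down that the power maps and Chebyshev polynomials are the only sources of virtually abelian iterated monodromy groups while all remaining post-critically finite polynomials land in the non-elementary-amenable side of the classification.
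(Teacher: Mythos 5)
Your overall architecture matches the paper's: Corollary~\ref{cor:monodromy} is deduced by combining Nekrashevych's realization theorem (Theorem~\ref{thm:iterated_monodromy}) with a dichotomy theorem for a class of self-similar groups containing the iterated monodromy groups. The divergence is in which class and how the dichotomy is proved, and there your proposal has two genuine gaps, each of which is the mathematical core rather than a verification. The first is your key lemma, that a finitely generated weakly branch group is never elementary amenable. This is not a result you can quote, and your sketch does not establish it. An ``infinitely iterated product structure'' inside rigid stabilizers is not by itself incompatible with elementary amenability: construction rank is insensitive to direct products, $\rk(H\times H)=\rk(H)$ (Proposition~\ref{prop:rank_stability}), and the group of \emph{all} finitary automorphisms of $\Xt$ is weakly branch and locally finite, hence elementary amenable. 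So any proof must make finite generation interact with a recursion mechanism for \emph{normal} subgroups, and that is exactly the paper's technical work: for self-replicating groups, the verbal-subgroup argument (Lemma~\ref{lem:verbal_sgroup}, Corollaries~\ref{cor:ext_EA} and~\ref{cor:ext_EA_3}) upgrades the normal subgroup $L\normal G$ with $\rk(L)+1=\rk(G)$ supplied by Corollary~\ref{cor:EA-rank} to a \emph{weakly self-replicating} normal subgroup of the same rank, which can then be contradicted by section computations (Lemma~\ref{lem:com_contain_sections_2}). Rigid stabilizers are not normal-subgroup data of this kind, and weak branchness alone gives no control over the rank of $L$, so the ``descent on the ordinal rank'' you invoke has no engine. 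Tellingly, when the authors exhibit groups that are weakly branch and not elementary amenable (Proposition~\ref{prop:G_p-properties}), they prove non-elementary-amenability with their own machinery rather than by citing weak branchness.

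The second gap is the reduction itself: the claim that every iterated monodromy group of a post-critically finite polynomial is either virtually abelian or weakly branch, with power maps and Chebyshev polynomials the only virtually abelian sources. You correctly flag this as the main obstacle, but you give no argument for it; it is a substantive assertion about converting post-critical combinatorics into nontrivial rigid stabilizers (statements of this type exist for quadratic polynomials, but you would need them in all degrees). The paper's proof deliberately avoids any such claim: Theorem~\ref{thm:kneading} is proved directly for kneading automata groups by running the rank descent of Corollary~\ref{cor:ext_EA_3} against the tree-like cycle structure of the generators (Lemma~\ref{lem:fixed_pts_tree-like}, Lemma~\ref{lem:inf_dihedral_char}) and fixed-point/section arguments, so weak branchness never enters. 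As written, both pillars of your proposal are unproven claims, so the argument is incomplete.
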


\subsection{Generalized basilica groups}
For a finite set $X$, let $X^*$ be the free monoid on $X$. By identification with its Cayley graph, $X^*$ is naturally a rooted tree. For $g\in \Aut(\Xt)$ and $v\in \Xt$, the section of $g$ at $v$ is denoted by $g_v$; see Section~\ref{sec:prelim} for the relevant definitions.

\begin{defn}
We say that $G\leq\Aut(\Xt)$ is a \textbf{generalized basilica group} if $G$ admits a finite generating set $Y$ such that for every $g\in Y$ either $g_x=1$ for all $x\in X$ or $g_x\in \{1,g\}$  for all $x\in X$ with exactly one $x$ such that $g_x=g$.
\end{defn}

\begin{rmk}
The classical basilica group can be represented as a generalized basilica group. The representation is given by considering the natural action of the basilica group on $[4]^*$, the $4$-regular rooted tree, by restricting the action on the binary tree to even levels; see Example~\ref{ex:basilica}.
\end{rmk}

A key feature of generalized basilica groups is a reduction theorem, Theorem~\ref{thm:reduction_thm_GB},  which reduces the analysis to self-replicating groups.  Using this reduction, we answer the motivating question for a class of generalized basilica groups for which we have some control over the ``cycle structure" of the generators.
\begin{defn}
Let $G=\grp{Y}\leq \Aut(\Xt)$ be a generalized basilica group. We say that $G$ is \textbf{balanced} if for each $g\in Y$ such that there is $x\in X$ for which $g_x=g$, the least $n\geq 1$ for which $g^n$ fixes $x$ is also such that $g^n$ fixes $X$.
\end{defn}
Note that the basilica group acting on $[4]^*$ is balanced.

\begin{thm}[See Corollary~\ref{cor:balanced}]
Every balanced generalized basilica group is either (locally finite)-by-(virtually abelian) or not elementary amenable.
\end{thm}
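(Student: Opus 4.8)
The plan is to establish the dichotomy by combining the reduction theorem (Theorem~\ref{thm:reduction_thm_GB}) with a careful structural analysis of self-replicating balanced generalized basilica groups. The overall strategy mirrors the classical pattern for such results: either the group contains a witness to non-elementary-amenability (typically a suitably embedded non-elementary-amenable subgroup, or a finitely generated infinite subgroup that is simple or otherwise known to escape $EG$), or it is so tame that one can exhibit the explicit (locally finite)-by-(virtually abelian) structure. The \textbf{balanced} hypothesis is precisely what gives control over the generator cycle structure, and I expect it to be the key device that forces a rigid normal form on group elements.

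First, I would apply Theorem~\ref{thm:reduction_thm_GB} to reduce to the self-replicating case, so that from the outset $G = \grp{Y} \leq \Aut(\Xt)$ is a self-replicating balanced generalized basilica group. Next, I would analyze the generators: each $g \in Y$ either has all sections trivial (so $g$ acts only near the root, via a rooted permutation) or has exactly one $x \in X$ with $g_x = g$ and all other sections trivial. The balanced condition says that the least $n \geq 1$ with $g^n$ fixing $x$ also has $g^n$ fixing all of $X$, which synchronizes the ``recursive" direction with the top-level cycle. I would use this to understand the structure of the stabilizer of the first level $\st_G(X)$ and the associated section map; the aim is to show that the first-level action of $G$ factors through a finite group while the kernel is built from commuting copies of smaller pieces, setting up an inductive or self-similar decomposition.

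The decisive step is the dichotomy itself. I would look for an internal obstruction to elementary amenability: by Chou's theorem an infinite finitely generated simple subgroup, or more generally a finitely generated subgroup of intermediate growth or one with no well-ordered descending $EG$-decomposition, cannot be elementary amenable. The natural candidate is a self-similar ``branch-like" subgroup generated by the recursive generators; when the group is genuinely self-replicating and not degenerate, commutators of translates of these generators should generate such a subgroup, witnessing failure of $EG$. Conversely, when no such obstruction arises — when the balanced/self-replicating data collapses so that the recursive generators interact abelianly up to finite cycles — I would show $G$ is (locally finite)-by-(virtually abelian): the locally finite part comes from the layer-wise finitely supported rooted permutations, and the virtually abelian quotient records the commuting translation-like actions of the recursive generators along the tree.

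The hard part will be making the dichotomy clean, that is, proving that if the group fails to be (locally finite)-by-(virtually abelian) then it must contain a concrete non-elementary-amenable subgroup rather than merely failing to exhibit the tame structure. Establishing non-membership in $EG$ requires exhibiting a subgroup that cannot be built up from finite and abelian groups by the closure operations, and the standard route — pinning down a finitely generated infinite simple (or comparable) subgroup inside the branch structure — depends delicately on how the balanced generators overlap in their supports. I expect that controlling these supports, and in particular ruling out accidental commutativity that would otherwise keep the group inside $EG$, will be where the real work lies, likely leaning on the self-replicating hypothesis secured by the reduction theorem together with a generator-by-generator case analysis driven by the balanced condition.
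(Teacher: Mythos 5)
Your opening move --- reducing to the self-replicating case via Theorem~\ref{thm:reduction_thm_GB} with $Q$ the property ``balanced and elementary amenable'' and $P$ the property ``virtually abelian'' --- is exactly how the paper proceeds, and the description of the tame case is broadly consistent with what the reduction theorem delivers. But the decisive step, where you must show that a self-replicating balanced generalized basilica group that is not abelian fails to be elementary amenable, rests on a strategy that cannot work. You propose to exhibit an internal witness to non-elementary-amenability, with the ``natural candidate'' being an infinite finitely generated simple subgroup (invoking Chou), or failing that a subgroup of intermediate growth. A generalized basilica group is by definition a subgroup of $\Aut(\Xt)$, hence residually finite, and every subgroup of it is residually finite; since an infinite simple group is never residually finite, no such simple subgroup can exist in any group acting faithfully on a rooted tree. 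The intermediate-growth alternative is not developed at all, and nothing in the balanced structure suggests a route to growth estimates. So the entire mechanism you lean on for the dichotomy is either impossible or absent, and the remaining phrase (``no well-ordered descending $\EA$-decomposition'') is just a restatement of the conclusion.

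What the paper actually does is a contradiction argument against Osin's construction rank, not the production of a witness subgroup. In the self-replicating case (Theorem~\ref{thm:balanced_self-rep}): assume $G=\grp{Y}$ is elementary amenable with $Y$ consisting of directed elements. If some generator acts transitively on $X$ (an odometer), Corollary~\ref{cor:trans_virt-ab} already forces $G$ abelian. Otherwise, supposing $G$ non-abelian, Corollary~\ref{cor:ext_EA} produces a \emph{weakly self-replicating} normal subgroup $H\normal G$ with $G/H$ finite or abelian and $\rk(H)+1=\rk(G)$. The balanced hypothesis is then used precisely as follows: for each $d\in Y$, the least $n$ with $d^n$ fixing $v^d$ also has $d^n$ fixing $X$ pointwise, and since $\grp{d}$ is not transitive on $X$, the element $d^n$ has a trivial section at some letter; Lemma~\ref{lem:com_contain_sections_2} then forces $d\in H$. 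Hence $H=G$, contradicting $\rk(H)<\rk(G)$. This rank-contradiction machinery (Osin's rank, the verbal-subgroup Lemma~\ref{lem:verbal_sgroup} behind Corollary~\ref{cor:ext_EA}, and the commutator-section Lemma~\ref{lem:com_contain_sections_2}) is the essential content of the proof, and it is entirely missing from your proposal.
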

Examples~\ref{ssec:balanced} show our theorem for balanced groups cannot be sharpened. While a proof eludes us, we expect the following conjecture to hold.
\begin{conj}
Every generalized basilica group is either (locally finite)-by-(virtually abelian) or not elementary amenable
\end{conj}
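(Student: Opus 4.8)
The plan is to deduce the general statement from the balanced case, Corollary~\ref{cor:balanced}, by isolating exactly where the balanced hypothesis is used. First I would apply the reduction theorem, Theorem~\ref{thm:reduction_thm_GB}, to assume $G=\grp{Y}\sleq\Aut(\Xt)$ is self-replicating, recording that both being (locally finite)-by-(virtually abelian) and failing to be elementary amenable pass appropriately through that reduction. I would then split $Y$ into its finitary generators (those $g$ with $g_x=1$ for all $x$), which generate a finite group of rooted automorphisms, and its directed generators, for each of which there is a unique letter $x_g$ with $g_{x_g}=g$. All of the difficulty lives in the directed generators.

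The key computation I would carry out is the first-level decomposition of the relevant power of a directed generator. Writing $\sigma_g\in\Sym(X)$ for the action of $g$ on the first level, $d$ for the order of $\sigma_g$, and $m$ for the length of the cycle of $\sigma_g$ containing $x_g$, the section formula gives $g^d\in\st_G(1)$ with $(g^d)_x=g^{d/m}$ for each $x$ in the cycle of $x_g$ and $(g^d)_x=1$ for every other $x$. Balancedness is precisely the condition $m=d$, i.e. $d/m=1$, under which $g^d$ is the pure ``shift of $g$'' and $\grp{g}$ acts as an odometer; this is the regime in which Corollary~\ref{cor:balanced} already yields the dichotomy. When $g$ is unbalanced we have $1\le m<d$, so on the one hand $d/m\ge 2$ and on the other hand the cycle of $x_g$ is a proper sub-cycle, leaving genuinely trivial coordinates in $g^d$.

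From here my strategy in the unbalanced case is to manufacture branching. The element $g^d$ already lies in $\st_G(1)$ and is supported on the proper coordinate set given by the cycle of $x_g$; using level transitivity and the finitary generators to conjugate copies of $g^d$ onto disjoint coordinates, I would form commutators that localize to a single subtree, producing a non-trivial element of some $\rist_G(x)$. Because $g$ has infinite order and recurses along its ray, self-replication and transitivity should then propagate non-triviality of rigid stabilizers to every vertex, exhibiting a non-trivial weakly branch subgroup; such a group is not elementary amenable by the mechanism underlying Corollary~\ref{cor:balanced}, extracting either a finitely generated subgroup of intermediate growth, excluded from $\EA$ by Chou's growth dichotomy \cite{C}, or an infinite finitely generated simple section, likewise excluded by \cite{C}. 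The statement is then the disjunction of the balanced and unbalanced alternatives.

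The main obstacle is the localization step, and it is exactly what the balanced hypothesis circumvents. Taking $d$-th powers and sections of unbalanced directed generators whose cycles may overlap produces elements whose supports and sections tangle, so there is no uniform contraction estimate guaranteeing that the commutators used to isolate a single coordinate are non-trivial, nor that the resulting rigid stabilizers remain non-trivial at every level rather than collapsing deeper in the tree. Controlling this cancellation---either by proving that every unbalanced self-replicating generalized basilica group is genuinely weakly branch, or by producing an alternative non-elementary-amenability witness insensitive to the cycle combinatorics---is the crux, and the absence of such uniform control is precisely why the statement remains a conjecture rather than a theorem.
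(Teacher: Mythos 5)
The statement you are proving is stated in the paper as a \emph{conjecture}: the authors write explicitly that a proof eludes them, so there is no proof of record to compare against, and your proposal must stand entirely on its own. It does not, and you concede as much in your final paragraph: the unbalanced case is never completed, and that case is the whole content of the conjecture. Your setup is sound --- reducing via Theorem~\ref{thm:reduction_thm_GB} (with $Q$ being elementary amenability and $P$ being virtual abelianness) to showing that every self-replicating elementary amenable generalized basilica group is virtually abelian, and your first-level computation $(g^d)_x=g^{d/m}$ on the cycle of $x_g$ and $(g^d)_x=1$ elsewhere is correct, with balancedness equivalent to $m=d$ (though note that $\grp{g}$ is an odometer only when that cycle is all of $X$, not merely when $m=d$). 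But in the unbalanced case you only gesture at ``manufacturing branching'': no commutator is actually exhibited, no non-triviality of any rigid stabilizer is verified, and nothing guarantees the rigid stabilizers stay non-trivial at every level rather than collapsing deeper in the tree. This is a missing proof, not a missing detail.

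There is a second, independent gap even granting weak branchness: the ``mechanism underlying Corollary~\ref{cor:balanced}'' is not what you describe. The paper's proof of Theorem~\ref{thm:balanced_self-rep} (whence Corollary~\ref{cor:balanced}) is a construction-rank contradiction: Corollary~\ref{cor:ext_EA} supplies a weakly self-replicating $H\normal G$ with $\rk(H)+1=\rk(G)$, and Lemma~\ref{lem:com_contain_sections_2} forces every directed generator into $H$, contradicting the rank. Nowhere does the paper extract subgroups of intermediate growth or finitely generated simple sections, and no result in the paper (or invoked by it) asserts that weakly branch groups fail to be elementary amenable. So the fallback you appeal to does not exist in the form you cite; you would have to prove such a statement, or import it from the literature with a genuine reference, on top of carrying out the branching construction. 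Your closing diagnosis is accurate: these two items --- the localization of supports for unbalanced generators and a non-elementary-amenability criterion insensitive to cycle combinatorics --- are exactly what is missing, which is why your text is a strategy outline rather than a proof, and why the statement remains a conjecture in the paper.
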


The class of generalized basilica groups encompasses the torsion-free bounded automata groups, suggesting it is a natural class.

\begin{prop}[See Corollary~\ref{cor:torsion-free_GBG}]
If $G\leq \Aut(\Xt)$ is a torsion-free bounded automata group, then there is a torsion-free generalized basilica group $H$ and $k\geq 1$ such that $H$ is isomorphic to a subgroup of $G$ and
\[
G\injects  \Sym(X^k)\ltimes H^{X^k}.
\]
\end{prop}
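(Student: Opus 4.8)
The plan is to realize the desired map as the level-$k$ wreath recursion, for a carefully chosen $k$, and to take $H$ to be the group generated by the resulting level-$k$ sections of a suitable generating set of $G$. First I would invoke the structure theory of bounded automata to fix a finite generating set $S$ of $G$ in which every element is either \emph{finitary} or \emph{directed}; since $G$ is a bounded automata group it is self-replicating, so every section $g_v$ of every $g\in G$ again lies in $G$. The torsion hypothesis now does two things. A nontrivial finitary automorphism has finite order, so torsion-freeness forces every finitary generator to be trivial, whence $G$ is generated by its directed elements. Likewise, for a directed generator $s$, directed along a ray $r=r_1r_2\cdots$, every section of $s$ off $r$ is finitary, hence a finite-order element of $G$, hence trivial; so $s$ has trivial off-ray sections and its only nontrivial first-level section is $s_{r_1}$.

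Next I would exploit finiteness of the state set. For each directed $s$, the sequence of spine sections $s^{(n)}:=s_{r_1\cdots r_n}$ consists of directed (in particular nontrivial) elements and takes only finitely many values, so it is eventually periodic; let $a_s$ be a preperiod and $p_s$ a period. Choose $k\geq 1$ with $k\geq a_s$ and $p_s\mid k$ for all $s\in S$; then $s^{(2k)}=s^{(k)}$ for every $s$. Regroup the tree into blocks of length $k$, identifying $\Aut(\Xt)$ with a subgroup of $\Aut((X^k)^*)$ via $g\mapsto \wt g$ (this map is injective because an automorphism moving some vertex moves level-$jk$ vertices for all large $j$). Now set
\[
H := \grp{\wt{s^{(k)}} : s\in S}\leq \Aut((X^k)^*).
\]
I claim each generator $\wt{s^{(k)}}$ is a generalized basilica generator over the alphabet $X^k$: its off-ray first-level ($X^k$-)sections are trivial, while its unique nontrivial first-level section, at the vertex $r_{k+1}\cdots r_{2k}$, equals $\wt{s^{(2k)}}=\wt{s^{(k)}}$ by the choice of $k$. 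Thus $H$ is a generalized basilica group, and since it is isomorphic (via $g\mapsto\wt g$) to the subgroup $\grp{s^{(k)}:s\in S}$ of $G$, it is torsion-free.

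Finally I would check that the level-$k$ wreath recursion $\Phi\colon G\to \Sym(X^k)\ltimes H^{X^k}$, sending $g$ to its action on $X^k$ together with its tuple of level-$k$ sections $(g_v)_{v\in X^k}$, is a well-defined injective homomorphism. It is injective because an element acting trivially on level $k$ with all level-$k$ sections trivial is itself trivial. For well-definedness one needs each $g_v\in H$: using the cocycle identity $(gh)_v=g_{h(v)}h_v$, every level-$k$ section of an arbitrary element of $G$ is a product of level-$k$ sections of generators and their inverses, and for a directed generator these are exactly $1$ and $s^{(k)}$; hence every $g_v$ lies in $\grp{s^{(k)}:s\in S}$, which corresponds to $H$ under the regrouping. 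The main obstacle is the first paragraph: pinning down, from the bounded-automata structure theory, that a torsion-free $G$ may be generated by directed elements with trivial off-ray sections and eventually periodic spine, and then arranging a single $k$ that simultaneously clears every preperiod and is divisible by every period. Once this normalization is in place, the construction of $H$ and the verification that $\Phi$ lands in $\Sym(X^k)\ltimes H^{X^k}$ are routine bookkeeping about the block regrouping.
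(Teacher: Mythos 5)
The decisive gap is your opening move: the assumption that a torsion-free bounded automata group $G$ admits a finite \emph{generating} set consisting of finitary and directed elements. No structure theorem provides this, and it is false in general. What the theory gives is Proposition~\ref{prop:directed_depth} (all sufficiently deep sections of each element are finitary or directed) and Theorem~\ref{thm:gen_by_nuc} (a finite self-similar set $Z$ of finitary and directed elements that absorbs all deep sections); but $\grp{Z}$ can be a proper subgroup of $G$, and $Z$ generates $G$ only under the extra hypothesis of self-replication (\cite[Proposition 2.11.3]{N05}). Your parenthetical ``since $G$ is a bounded automata group it is self-replicating'' conflates self-similar (true by definition, and all you actually use when you put sections back in $G$) with self-replicating (strictly stronger); that conflation is exactly where the unjustified step hides. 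Concretely: on $[2]^*$ let $a:=(01)(1,a)$ be the odometer, let $g:=(a,a^{-1})$, and set $G:=\grp{a,g}$. The sections of the generators are $1,a,a^{-1}$, so $G$ is self-similar, and every element is bounded and finite state; one checks $aga^{-1}=g^{-1}$ and $\grp{g,a^2}\cong\Zb^2$, so $G$ is the Klein bottle group, in particular torsion-free. Every section of every element of $G$ is a power of $a$, so a directed element of $G$ (which must equal one of its own sections) is a power of $a$, and since the sections of $a^j$ are $a^{\lfloor j/2\rfloor}$ and $a^{\lceil j/2\rceil}$, only $a^{\pm 1}$ are directed. Hence the directed elements generate only the proper subgroup $\grp{a}$, your set $S$ does not exist, and your map $\Phi$ cannot be made well defined with $H$ generated by spine sections of generators. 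Note also that if your first paragraph were available, your argument would show that every torsion-free bounded automata group is itself, after regrouping, a generalized basilica group; that is the conclusion of Corollary~\ref{cor:torsion_free}, which holds only in the self-replicating case, and it is precisely because $G$ need not be generated by directed elements that the present proposition is stated with an auxiliary subgroup $H$ and an embedding $G\injects\Sym(X^k)\ltimes H^{X^k}$.

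The repair is the paper's Theorem~\ref{thm:reduction-reduced-form}: build $H$ not from a generating set of $G$ but from the attracting set $Z$ of Theorem~\ref{thm:gen_by_nuc}, equivalently from the deep sections of a fixed finite generating set $W$ of $G$. Choose $k$ to be a common multiple of the periods of the directed elements of $Z$, at least the depths of the finitary elements of $Z$, and large enough that $g_v\in Z$ for all $g\in W$ and $v\in X^k$. Torsion-freeness makes every finitary element of $Z$ trivial, hence every directed element of $Z$ strongly directed, so $H:=r_k(\grp{Z})$ is a torsion-free generalized basilica group isomorphic to $\grp{Z}\leq G$; the cocycle identity $(gh)_v=g_{h(v)}h_v$ then shows that every level-$k$ section of every element of $G$ lies in $\grp{Z}$, which yields the embedding $G\injects\Sym(X^k)\ltimes H^{X^k}$. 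From that point on, your work --- the regrouping $r_k$, the verification that the regrouped directed elements satisfy the basilica condition, and the injectivity of the wreath recursion --- is correct and essentially identical to the paper's; only the source of the generators of $H$ has to change.
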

\begin{prop}[See Corollary~\ref{cor:torsion_free}]
Every torsion-free self-replicating bounded automata group admits a faithful representation as a generalized basilica group.
\end{prop}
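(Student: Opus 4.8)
The plan is to produce, after replacing the tree $\Xt$ by a power $(X^k)^*$, a finite generating set all of whose members have the directed basilica shape. I would lean on three structural inputs: torsion-freeness kills the finitary part, boundedness forces a single active ray, and self-replication lets the recurrent states generate. First I would recall the structure theory of bounded automorphisms: up to a finitary factor, a bounded automorphism is a product of \emph{directed} automorphisms, each having a single active ray $\xi=x_1x_2\cdots$ along which its iterated sections stay nontrivial, while every section branching off $\xi$ is finitary. Since $G$ is self-similar, all sections of elements of $G$ again lie in $G$; since $G$ is torsion-free and every finitary automorphism has finite order, \emph{every finitary element of $G$ is trivial}. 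Hence the off-ray sections of a directed $g\in G$ vanish, so at each level $n$ the automorphism $g$ has a unique nontrivial section $g_{x_1\cdots x_n}=\delta^n(g)$, where $\delta(g):=g_{x_1}$ is the directed successor; all other level-$n$ sections are trivial. Thus each nontrivial generator already looks like a basilica generator, except that its unique nontrivial child-section is $\delta(g)$ rather than $g$.

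The next step is to analyze $\delta$ on the finite set of directed states. I would invoke the standard facts that a bounded automata group is contracting with a finite nucleus $\mathcal N$ closed under sections and inverses, and that a nucleus state is characterized by recurring as a section at arbitrarily large depth. Two observations then pin down $\mathcal N$. A state with two or more active rays cannot recur deep, because distinct active rays eventually separate, so every sufficiently deep section meets at most one of them; hence the directed states of $\mathcal N$ are single-ray. And a single-ray directed state $s=\delta^{|v|}(t)$ can recur at arbitrarily large depth only if $s$ lies on an eventual cycle of $\delta$, so every directed state of $\mathcal N$ is \emph{cyclic}: $\delta^{p}(s)=s$ for some $p\ge 1$. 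Combined with torsion-freeness (which makes the finitary nucleus states trivial), this gives $\mathcal N=\{1\}\cup\{\text{cyclic directed elements}\}$. Here is where I would use self-replication: it is precisely the recurrence/self-replicating hypothesis that guarantees the nucleus generates $G$ (for a non-self-replicating $G$ one must instead absorb a $\Sym(X^k)$-factor, which is the content of the weaker embedding statement). So $G$ is generated by finitely many cyclic directed elements $s_1,\dots,s_m$ with periods $p_1,\dots,p_m$.

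Finally I would pass to the power tree. Set $k=\operatorname{lcm}(p_1,\dots,p_m)$ and restrict the action to the levels divisible by $k$, giving an action on $(X^k)^*$. This is faithful: if $g\in\Aut(\Xt)$ fixes every vertex of $X^{kn}$ for all $n$, then for any vertex $v$ and any descendant $w$ of $v$ at a level divisible by $k$ one has $g(w)=w$, forcing $g(v)=v$, so $g=1$; hence the restriction $\Aut(\Xt)\injects\Aut((X^k)^*)$ is injective. For each generator $s_i$, a vertex $w\in X^k$ other than the length-$k$ prefix $w_0$ of its ray branches off the ray before level $k$, so $s_i$'s section there is an off-ray (hence trivial) section; and at $w_0$ the section is $\delta^k(s_i)=\delta^{p_i\cdot(k/p_i)}(s_i)=s_i$. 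Thus in $(X^k)^*$ each $s_i$ has exactly one nontrivial section $(s_i)_{w_0}=s_i$ and all others trivial, which is exactly the generalized basilica condition (the root permutation on $X^k$ is unconstrained). Therefore $G$, acting faithfully on $(X^k)^*$ with generating set $\{s_1,\dots,s_m\}$, is a generalized basilica group.

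The main obstacle I anticipate is the second paragraph: proving that the directed nucleus states are single-ray and cyclic, and that the nucleus generates $G$. The cyclicity rests on the ``distinct active rays separate'' argument and on recurrence forcing a $\delta$-cycle, while generation is exactly the point at which self-replication is indispensable; everything else (the finitary$\Rightarrow$trivial reduction, the per-level uniqueness of the nontrivial section, and the faithfulness of the power-tree passage) is routine once the structure theory of bounded automorphisms is in hand.
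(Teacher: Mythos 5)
Your proposal is correct and takes essentially the same route as the paper: torsion-freeness makes every finitary section trivial (so directed elements are strongly directed), self-replication guarantees that the finite self-similar set of finitary/directed states generates $G$ (the paper cites \cite[Proposition 2.11.3]{N05}, exactly the fact you invoke), and passing to $(X^k)^*$ for $k$ a common multiple of the periods yields the generalized basilica presentation, which is precisely the content of Theorem~\ref{thm:reduction-reduced-form} together with Corollary~\ref{cor:reduction-reduced-form}. The only cosmetic difference is that you re-derive the cyclic structure of the directed nucleus states from the recurrence characterization, whereas the paper cites this wholesale as \cite[Theorem 3.9.12]{N05}.
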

We also exhibit countably many non-isomorphic balanced generalized basilica groups; see Examples~\ref{ssec:balanced}. The class of balanced generalized basilica groups, and hence the class of generalized basilica groups, is thus as large as it can be, since there are only countably many bounded automata groups.

\subsection{Groups of abelian wreath type}
Let $\pi_1:\Aut(\Xt)\rightarrow \Sym(X)$ be the homomorphism induced by the action of $\Aut(\Xt)$ on $X$. Under the natural identification with finitary elements, we regard $\Sym(X)\leq \Aut(\Xt)$, when convenient.

\begin{defn}
 We say that $G\leq \Aut(\Xt)$ is of \textbf{abelian wreath type} if $\pi_1(G)\leq \Sym(X)$ is abelian and $G$ admits a finite self-similar generating set $Y$ such that for every $g\in Y$ either $g_x=1$ for all $x\in X$ or $g_x\in \{g\}\cup \pi_1(G)$ for all $x\in X$ with exactly one $x$ such that $g_x=g$.
\end{defn}
\begin{rmk}
The Gupta-Sidki groups and the more general GGS groups are of abelian wreath type.
\end{rmk}

A group $G\leq \Aut(\Xt)$ of abelian wreath type embeds into the inverse limit $(\dots \wr_{X} A\wr_{X} A)$ where $A$ is the abelian group $\pi_1(G)$. This class of bounded automata groups seems to be among the simplest such groups, because the structure of these groups is minimally complicated by permutation-group-theoretic phenomena.

We answer the motivating question completely for self-replicating groups of abelian wreath type.

\begin{thm}[See Theorem~\ref{thm:aws_groups}]\label{thm:aws_groups_intro}
Every self-replicating group of abelian wreath type is either virtually abelian or not elementary amenable.
\end{thm}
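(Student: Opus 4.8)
The plan is to prove the contrapositive in a sharpened form: if $G$ is a self-replicating group of abelian wreath type that is \emph{not} virtually abelian, then $G$ is weakly branch, and hence not elementary amenable. I would take as the central external input the incompatibility of the weakly branch condition with elementary amenability for finitely generated groups---an input consistent with the fact that every previously known non-elementary-amenable bounded automata group, such as the Gupta--Sidki and more general GGS groups, is branch. The real work then reduces to two things: producing a genuine weakly branch structure from the hypotheses (self-replicating and abelian wreath type), and isolating exactly the degenerate configurations that instead force $G$ to be virtually abelian.

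First I would set $A=\pi_1(G)\leq\Sym(X)$, which is abelian by hypothesis, and analyse the first-level decomposition $\st_G(X)\injects G^X$, $g\mapsto(g_x)_{x\in X}$, together with the section maps $\phi_x\colon\st_G(X)\to G$; self-replication makes each $\phi_x$ onto. The generating set $Y$ splits into the \emph{inactive} generators, which have $g_x=1$ for all $x$ and so are rooted finitary elements lying in $A$, and the \emph{directed} generators, each having a single $x$ with $g_x=g$ and all remaining sections in $A$. Because the finitary part of $G$ is governed entirely by the abelian group $A$, the abelianization and the lower metabelian quotients of $G$ are controlled by $A$; this is precisely the mechanism that prevents a locally finite part from appearing and so upgrades the expected conclusion from ``(locally finite)-by-(virtually abelian)'' to ``virtually abelian.'' I would next produce a nontrivial normal subgroup $K\normal G$---for instance a suitable term built from commutators of the directed generators---and use self-replication to show that for each $x\in X$ the subgroup $K$ contains a nontrivial element supported on the subtree below $x$ whose section at $x$ again lies in $K$. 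This self-similarity of $K$ lets me push supports arbitrarily far down the tree, yielding nontrivial rigid stabilizers $\rist_G(v)$ at every vertex $v$, which is the weakly branch condition.

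The main obstacle I expect is the dichotomy itself: showing that the rigid stabilizers are genuinely nontrivial whenever $G$ fails to be virtually abelian, and dually pinning down the collapse case precisely. Concretely, the branching argument requires that the seed subgroup $K$ \emph{survives} the passage to sections rather than dying after finitely many levels; the abelian wreath hypothesis is what makes this trackable, since the obstruction to survival lives in the abelian group $A$ and should reduce to a finite linear-algebraic condition on the defining data of the directed generators. When that condition degenerates---roughly, when the directed generators satisfy enough relations that their sections generate only an abelian subgroup---the branching collapses, $\st_G(X)$ becomes virtually abelian, and since $\st_G(X)$ has finite index in $G$ (as $G/\st_G(X)\injects\Sym(X)$) we conclude that $G$ is virtually abelian. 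Separating these two regimes cleanly, and verifying that the non-degenerate regime really produces nontrivial $\rist_G(v)$ for \emph{all} $v$ rather than merely nontrivial sections at the first level, is the delicate point; once it is established, the weakly-branch-implies-not-elementary-amenable input closes the argument.
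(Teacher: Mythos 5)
There is a genuine gap, and it sits exactly where you placed your ``central external input.'' The statement that finitely generated \emph{weakly branch} groups cannot be elementary amenable is not a theorem you can cite: what is actually known (Juschenko, \cite{Ju15}) is that finitely generated \emph{branch} groups are not elementary amenable, which was Grigorchuk's conjecture. The distinction is not pedantic in this setting. GGS groups with constant defining vector are self-replicating, of abelian wreath type, not virtually abelian, and they are weakly branch but \emph{not} branch; so even if you succeeded in proving your dichotomy, the branch-group theorem would not close the argument for precisely some of the groups Theorem~\ref{thm:aws_groups} is meant to cover, and the weakly branch version you need is not available. Your justification for the input---that it is ``consistent with'' known examples like the Gupta--Sidki groups being branch---is not evidence, and the paper itself signals the issue: in Proposition~\ref{prop:G_p-properties} the authors prove that the groups $G_p$ are weakly branch and, \emph{separately}, via their own machinery, that they are not elementary amenable; they never derive one from the other.

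The second gap is that the dichotomy itself (not virtually abelian $\Rightarrow$ weakly branch) is only gestured at, and you yourself flag the survival of the seed subgroup $K$ under sections as ``the delicate point'' without resolving it; producing nontrivial $\rist_G(v)$ at \emph{every} vertex from the abelian wreath hypotheses is a theorem-sized claim, not a routine verification. For comparison, the paper's proof never establishes any form of branchness. It argues by contradiction against Osin's construction rank: Corollary~\ref{cor:ext_EA_3} supplies weakly self-replicating normal subgroups $L\leq M\normal G$ with $G/M$ finite, $M/L$ abelian, and prescribed rank drops; then a generator-by-generator analysis (Lemmas~\ref{lem:factorization1}--\ref{lem:factorization3}, assembled in Lemma~\ref{lem:factorization}, all requiring $|X|>2$, with $|X|=2$ handled separately by Theorem~\ref{thm:2-reg_BAG}) shows $(\pi_1(G)\cap G)M=G$, and a further argument forces $G/L$ to be abelian, contradicting $\rk(L)+2=\rk(G)$. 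Nothing in your sketch substitutes for this analysis, and the route you propose cannot be completed with currently available results.
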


The infinite dihedral group shows that Theorem~\ref{thm:aws_groups} is sharp; see Example \ref{ssec:dihedral}.  The GSS groups are self-replicating as soon as they are infinite, so Theorem~\ref{thm:aws_groups_intro} applies to these groups. Additionally, as there are countably many self-replicating GGS groups, there are countably many isomorphism types of self-replicating groups of abelian wreath type.

\subsection{Groups containing odometers}

An element $d\in \Aut(\Xt)$ is said to be an \textbf{odometer} if $\grp{d}$ acts transitively on $X$ and $d_x\in \{1,d\}$ for all $x\in X$ with exactly one $x\in X$ such that $d_x=d$. Odometers arise somewhat naturally in bounded automata groups. For instance some iterated monodromy groups will contain these; see Example~\ref{ex:basilica_2}. On the other hand, our theorem for groups with odometers plays an important technical role in other arguments, so one is welcome to regard this theorem as a useful technical tool.

\begin{thm}[See Theorem~\ref{thm:odometers}]\label{thm:trans_intro}
Every bounded automata group that contains an odometer is either virtually abelian or not elementary amenable.
\end{thm}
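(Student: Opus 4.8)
The plan is to prove the dichotomy in its contrapositive form: assuming $G$ is elementary amenable, I will show it is virtually abelian. Since a bounded automata group is finitely generated, and an odometer $d$ acts transitively on every level $X^n$, I may start from a finitely generated, level-transitive $G$ containing $d$. The decisive structural feature of the odometer is its self-similar equation $d^{|X|} = (d,\dots,d)$ on $\st_G(1)$: the section of $d^{|X|}$ at each letter is again the odometer $d$. Consequently each section group $G_x = \{h_x : h \in \st_G(1)\}$ (a homomorphic image of a finite-index subgroup of $G$, hence again elementary amenable and again a bounded automata group) \emph{again contains an odometer}. Thus the hypothesis of the theorem is stable under the fundamental operation of passing to first-level sections, which is exactly what should power a recursive argument. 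First I would record these reductions and set up the section homomorphism $\psi : \st_G(1) \to \prod_{x\in X} G_x$, noting it is injective because an element fixing the first level with all sections trivial is itself trivial.

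The engine of the proof is an \emph{integrate-up} principle: if every section group $G_x$ is virtually abelian, then $\prod_{x\in X} G_x$ is virtually abelian (a finite product), so the injection $\psi$ forces $\st_G(1)$, and hence its finite-index overgroup $G$, to be virtually abelian. I would therefore run a transfinite induction on the elementary-amenable rank of $G$ (the Chou hierarchy $\mathrm{EG}_\alpha$), using that each $G_x$ is a bounded automata group containing an odometer whose rank does not exceed that of $G$; the inductive hypothesis would render each $G_x$ virtually abelian and the integrate-up step would then close the argument.

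The main obstacle is that passing from $G$ to a section group $G_x$ need \emph{not} strictly lower the elementary-amenable rank: when $G$ is self-replicating some coordinate projection of $\psi$ is onto, so a given $G_x$ is a quotient of a finite-index subgroup of $G$ and has exactly the same rank. Plain induction on rank therefore stalls precisely in the self-replicating (indeed weakly branch) case. My plan there is to argue directly that such a $G$ cannot be elementary amenable unless it is virtually abelian: I would invoke Chou's theorem that elementary amenable groups have no intermediate growth, together with the structural fact that a finitely generated elementary amenable group has, at the top of the hierarchy, a normal subgroup $N$ of strictly smaller rank with $G/N$ finite or abelian, and play this top layer against the self-replicating surjection $\st_G(1) \onto G$ and the minimality supplied by the odometer. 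The expectation is that a non-virtually-abelian self-replicating bounded automata group with an odometer is weakly branch, so that the rigid stabilizers $\rist_G(v)$ generate, level by level, a subquotient incompatible with membership in any $\mathrm{EG}_\alpha$; this contradiction would eliminate the rank-rigid case and leave only the situations where the rank strictly drops, where the induction and the integrate-up step apply.

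A secondary technical point I would pin down carefully is the preservation of elementary amenability under these reductions. Each individual section group $G_x$ is a quotient of a subgroup of $G$ and so is elementary amenable, but if $G$ is not already self-similar one must check that the finitely many sections that occur (the nucleus of a bounded automaton is finite) assemble into an elementary amenable self-similar closure without inflating the rank. I expect this to be routine given level-transitivity, since the relevant section groups are pairwise conjugate under $G$; the genuinely hard content remains the reconciliation of rank-rigidity with virtual abelianness described above, where the essential use of the odometer — guaranteeing minimality and propagating itself through every section — does the real work.
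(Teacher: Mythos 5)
Your overall architecture---reduce to a self-replicating situation and treat that case separately---matches the paper's, and your preliminary observations are sound: the relation $d^{|X|}=(d,\dots,d)$ does propagate the odometer into every section group, and the embedding $\st_G(1)\injects\prod_{x\in X}\sec_G(x)$ does give your ``integrate-up'' step. The genuine gap is exactly where you yourself locate the difficulty: the self-replicating, rank-rigid case. That case is not a residual subcase to be disposed of---after the reduction it \emph{is} the theorem---and for it you offer an expectation rather than an argument. Neither of the tools you name can close it. Chou's growth dichotomy is useless here: elementary amenable groups may have exponential growth and non-elementary-amenable bounded automata groups do too (the basilica group), while the lamplighter group $C_2\wr\Zb$ is an elementary amenable bounded automata group of exponential growth (Example~\ref{ex:lamp}); growth simply does not see this dichotomy. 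The weakly branch route requires two unproved statements: that a non-virtually-abelian self-replicating bounded automata group with an odometer is weakly branch, and that finitely generated weakly branch groups cannot be elementary amenable. Neither is established in your sketch, and the second is not available as a black box---the closest results, in \cite{Ju15}, require branch-type level-transitivity conditions on rigid stabilizers, not mere weak branchness; indeed, were such a theorem available, much of the present paper would be unnecessary.

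What replaces your ``subquotient incompatible with membership in any $\EA_\alpha$'' in the paper is the actual key idea, absent from the proposal. For an elementary amenable self-replicating group, verbal subgroups (the commutator subgroup, or a locally finite word coming from a finite quotient) can be enlarged, by iterating the section homomorphism and a max-n stabilization argument (Lemma~\ref{lem:verbal_sgroup_v2}), to \emph{weakly self-replicating} normal subgroups of the \emph{same construction rank}; this produces $N\normal G$ and $M\normal G$ weakly self-replicating with $M/N$ abelian, $G/M$ finite, and $\rk(M)+1=\rk(N)+2=\rk(G)$ (Corollaries~\ref{cor:ext_EA}, \ref{cor:ext_EA_2}, and \ref{cor:ext_EA_3}, the last of which again uses the odometer through Lemma~\ref{lem:BAG-rank}). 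The odometer $h$ is then used a final time: setting $L:=M\grp{h}$, Lemma~\ref{lem:trans_no_section} lets one move vertices with trivial sections, and together with Lemma~\ref{lem:fix} one shows that $\sec_L(v)$ contains all generators, i.e.\ $\sec_L(v)=G$, while $L_{(v)}/M_{(v)}$ is abelian; since $M=\sec_M(v)\normal\sec_L(v)$, the quotient $G/M$ is abelian, so $\rk(G)\leq\rk(M)+1$, contradicting the rank deficit of two. Without this construction of weakly self-replicating normal subgroups and the resulting rank-counting contradiction (or something equally substantive), your induction has nothing to fall back on precisely where it stalls. A smaller point: your section groups $\sec_G(x)$ need not be self-similar, hence are not literally bounded automata groups in the paper's sense; the paper's reduction instead passes to the subgroup generated by the sections of the directed generators (Lemma~\ref{lem:reduce_to_directed}), which the odometer makes self-replicating (Proposition~\ref{prop:self-rep_char_gen}).
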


Let $G$ be a bounded automata group containing a level transitive element $d$.  The element $d$ is well-known to be conjugate in the full automorphism group of the rooted tree to an odometer, but the conjugator need not preserve the property of $G$ being a bounded automata group.   In Theorem~\ref{thm:trans_intro}, we need the element to be an odometer in the bounded representation of $G$.

\section{Preliminaries}\label{sec:prelim}
For a group $G$ acting on a set $X$ and $Y\subseteq X$, we write $G_{(Y)}$ for the pointwise stabilizer of $Y$ in $G$. For $g,h\in G$, the commutator $[g,h]$ is $ghg^{-1}h^{-1}$.

\subsection{Bounded automata groups}\label{ssec:BAG}
For a finite set $X$ of size at least $2$, let $X^*$ be the free monoid on $X$; the identity is the empty word. By identification with its Cayley graph, $X^*$ is naturally a rooted tree, but in practice and in the work at hand, it is often more useful to think of $\Xt$ as words. We write $X^k$ to denote the words of length $k$. An automorphism $g$ of $X^*$ is a bijection such preserves the prefix relation. That is, $a$ is a prefix of $b$ if and only if $g(a)$ is a prefix of $g(b)$. Automorphisms of $X^*$ act on $X^*$ from the left, and we denote the group of automorphisms by $\Aut(\Xt)$.

For each $k\geq 1$, the group $\Aut(\Xt)$ acts on the set of words of length $k$. We let $\pi_k:\Aut(\Xt)\rightarrow \Sym(X^k)$ denote the induced homomorphism. It is sometimes convenient to see $\pi_k(\Aut(\Xt))\leq \Aut(\Xt)$, and this is achieved as follows: Say $\sigma\in \pi_k(\Aut(\Xt))$ and $v\in X^l$ with $l\geq k$. Writing $v=xw$ with $x\in X^k$ and $w\in \Xt$, we define $\sigma(v):=\sigma(x)w$. One verifies that this identification gives that $\pi_k(\Aut(\Xt))\leq \Aut(\Xt)$.

For a word $w\in \Xt$, the words with prefix $w$, $w\Xt$, are isomorphic to $\Xt$ by the map $\psi_w:\Xt\rightarrow w\Xt$ defined by $\psi_w:x\mapsto wx$. For $g\in \Aut(\Xt)$ and $w\in \Xt$, the element $g$ induces an automorphism $g_w\in \Aut(\Xt)$ defined by
\[
g_w:=\psi_{g(w)}^{-1}\circ g\circ \psi_w.
\]
The automorphism $g_w$ is called the \textbf{section} of $g$ at $w$. A straightforward computation shows the operation of taking sections enjoys two fundamental properties.

\begin{lem} Let $X$ be a non-empty finite set, $g$, $h$, and $k$ be elements of $\Aut(\Xt)$, and $v_1$ and $v_2$ be elements of $\Xt$. Then,
	\begin{enumerate}[(a)]
		\item $g_{v_1v_2}=(g_{v_1})_{v_2}$
		\item $(hk)_v={h}_{k(v)}{k}_v$
	\end{enumerate}
\end{lem}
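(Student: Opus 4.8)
The plan is to replace the compositional definition $g_w=\psi_{g(w)}^{-1}\circ g\circ \psi_w$ with a single concrete factorization identity and then read off both statements from it. Concretely, I would first establish that for every $g\in \Aut(\Xt)$ and all $w,u\in \Xt$,
\[
g(wu)=g(w)\,g_w(u).
\]
This is immediate from the definition: unwinding $\psi_w$ gives $g_w(u)=\psi_{g(w)}^{-1}(g(wu))$, and since $g$ preserves the prefix relation and $w$ is a prefix of $wu$, the word $g(w)$ is a prefix of $g(wu)$; hence $g(wu)=g(w)t$ for a unique suffix $t$, and $\psi_{g(w)}^{-1}$ simply strips the prefix $g(w)$ to return $t=g_w(u)$. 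The only point needing care here is uniqueness of the suffix, which holds because $\Xt$ is a free monoid and concatenation is left cancellative.

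For part (a), I would apply the factorization identity repeatedly. Using it with base word $v_1$ applied to $v_2u$ gives $g(v_1v_2u)=g(v_1)\,g_{v_1}(v_2u)$, and applying it to the automorphism $g_{v_1}$ with base word $v_2$ gives $g_{v_1}(v_2u)=g_{v_1}(v_2)\,(g_{v_1})_{v_2}(u)$. Combining these, and using $g(v_1)g_{v_1}(v_2)=g(v_1v_2)$, yields $g(v_1v_2u)=g(v_1v_2)\,(g_{v_1})_{v_2}(u)$. On the other hand the identity applied directly at base word $v_1v_2$ gives $g(v_1v_2u)=g(v_1v_2)\,g_{v_1v_2}(u)$. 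Left cancellation of $g(v_1v_2)$ then forces $g_{v_1v_2}(u)=(g_{v_1})_{v_2}(u)$ for all $u$, i.e.\ the two automorphisms agree.

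For part (b), I would expand $(hk)(vu)=h(k(vu))$, apply the factorization to $k$ to get $k(vu)=k(v)\,k_v(u)$, and then apply it to $h$ at base word $k(v)$ to obtain $h(k(v)\,k_v(u))=h(k(v))\,h_{k(v)}(k_v(u))$. Since $h(k(v))=(hk)(v)$, comparing with $(hk)(vu)=(hk)(v)\,(hk)_v(u)$ and cancelling on the left gives $(hk)_v(u)=h_{k(v)}(k_v(u))$ for all $u$; as automorphisms act on the left, this is exactly $(hk)_v=h_{k(v)}k_v$. I expect no genuine obstacle: the entire argument is bookkeeping once the factorization identity is in hand, and the only places to be attentive are the composition convention (that the product $h_{k(v)}k_v$ means first apply $k_v$, then $h_{k(v)}$) and consistently tracking which base word each section is taken at.
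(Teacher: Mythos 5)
Your proof is correct. The paper offers no proof of this lemma at all---it is introduced with ``a straightforward computation shows''---and your argument, which unwinds the definition $g_w=\psi_{g(w)}^{-1}\circ g\circ\psi_w$ into the factorization identity $g(wu)=g(w)\,g_w(u)$ (justified by prefix preservation and left cancellation in the free monoid) and then reads off both (a) and (b) by comparing two factorizations of the same word, is exactly the standard computation the authors intend, carried out completely and with the composition conventions handled correctly.
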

\begin{rmk} The previous lemma ensures that map $\Aut(\Xt)\times \Xt\rightarrow \Aut(\Xt)$ by $(g,v)\mapsto g_v$ is a cocycle.
\end{rmk}

An automorphism $g\in \Aut(\Xt)$ is called \textbf{finitary} if there is some $k\geq 1$ such that $g_v=1$ for all $v\in X^k$. The least $k$ such that $g_v=1$ for all $v\in X^k$ is called the \textbf{depth} of the finitary element $g$. An automorphism $g\in \Aut(\Xt)$ is called \textbf{directed} if there is some $k\geq 1$ and $w\in X^k$ such that $g_w=g$ and $g_v$ is finitary for all $v\in X^k\setminus \{w\}$. The least such $k$ is called the \textbf{period} of $g$, and the vertex $w\in X^k$ is called the \textbf{active vertex} of $g$ on $X^k$ and is denoted by $v^g$. We say that $g$ is \textbf{strongly active} if $g(v^g)\neq v^g$. We say $g$ is \textbf{strongly directed} if there exists $k\geq 1$ and $w\in X^k$ such that $g_w=g$ and $g_u=1$ for all $u\in X^k\setminus \{w\}$. An element $d\in \Aut(\Xt)$ is said to be an \textbf{odometer} if $\grp{d}$ acts transitively on $X$ and $d_x\in \{1,d\}$ for all $x\in X$ with exactly one $x\in X$ such that $d_x=d$.

For the work at hand, the following features of automorphisms of rooted trees play an integral role.
\begin{defn}
An automorphism $g\in \Aut(\Xt)$ is \textbf{finite state} if $S(g):=\{g_v\mid v\in \Xt\}$ is finite.  The element $g$ is \textbf{bounded} if there is $N\geq 0$ such that for all $n\geq 1$,
\[
|\{v\in X^v\mid g_v\neq 1\}|\leq N.
\]
\end{defn}

Finitary automorphisms and directed automorphisms are clearly bounded and finite state. A partial converse also holds.
\begin{prop}[{\cite[Proposition 2.7]{BKN10}}]\label{prop:directed_depth}
An automorphism $g\in \Aut(\Xt)$ is bounded and finite state if and only if there exists $m\geq 1$ such that each section $g_v$ for $v\in X^m$ is either finitary or directed.
\end{prop}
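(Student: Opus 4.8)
The statement is a biconditional in which one direction is immediate and the other carries all the content. For the direction assuming the existence of $m$, I would simply propagate the already-noted fact that finitary and directed automorphisms are bounded and finite state through the cocycle identity $g_{v_1v_2}=(g_{v_1})_{v_2}$. Writing any $u\in\Xt$ of length at least $m$ as $u=vu'$ with $v\in X^m$ gives $g_u=(g_v)_{u'}$, so $S(g)\subseteq\{g_u : |u|<m\}\cup\bigcup_{v\in X^m}S(g_v)$ is finite; and for $n\geq m$ the number of $u\in X^n$ with $g_u\neq 1$ is at most $\sum_{v\in X^m}N_v$, where $N_v$ bounds the activity of $g_v$. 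Thus $g$ is finite state and bounded with a uniform constant.

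For the converse I would pass to the finite automaton structure on the state set $S(g)$, viewed as a directed graph with an edge $h\to h_x$ for each $h\in S(g)$ and $x\in X$; a word $v$ then traces a path from $g$ to $g_v$, and $g_v$ is finitary exactly when the only cycle reachable from $g_v$ is the loop at the identity state. The heart of the argument is to convert boundedness into two constraints on the cycles of this graph: (i) no state lies on two distinct simple cycles, and (ii) there is no directed path from one nontrivial cycle (a cycle other than the identity loop) to a different nontrivial cycle. Each is proved by contradiction through an activity estimate. If a state $a=g_{u_0}$ lay on two distinct cycles labelled $w_1,w_2$, then every concatenation of $w_1$'s and $w_2$'s appended to $u_0$ would again carry the section $a$, producing exponentially many nontrivial sections and contradicting boundedness; if a path of length $\ell$ joined a cycle $C_1$ of length $p$ through $a$ to a different nontrivial cycle $C_2$, then branching off at each of the levels $u_0w_1^{j}$ would seed a fresh surviving copy of $C_2$, producing at least linearly many nontrivial sections. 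I expect establishing (i) and (ii)---that is, extracting the precise graph-theoretic shape of a bounded automaton from the activity bound---to be the main obstacle, since this is where boundedness is actually used.

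With (i) and (ii) in hand I would read off the local structure. Constraint (i) forces each state on a nontrivial cycle to have a unique outgoing edge that stays on its cycle, so that cycle is labelled by a single word $w\in X^k$, and constraint (ii) forces any edge that leaves the cycle to land on a state from which no nontrivial cycle is reachable, hence a finitary state. Consequently every state lying on a nontrivial cycle is directed: its section along $w$ is itself, and every other section at level $k$ is finitary. It then remains to choose $m$ so that every $g_v$ with $v\in X^m$ already lies on a nontrivial cycle or is finitary.

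Finally I would take $m=|S(g)|$ and analyse the path $g=s_0\to s_1\to\cdots\to s_m=g_v$. Since the path has $|S(g)|+1$ vertices, a state repeats, so the path meets a cycle; if that cycle is the identity loop then $s_m=1$ is finitary, and otherwise the path meets a nontrivial cycle $C$. By (ii) the path can meet no second nontrivial cycle, so from its first visit to $C$ it either remains on $C$---whence $s_m\in C$ is directed by the previous paragraph---or it leaves $C$ exactly once and, by the structure above, becomes finitary and stays finitary, since finitariness is inherited by sections. In every case $g_v$ is finitary or directed, which completes the proof.
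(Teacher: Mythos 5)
The paper does not actually prove this proposition: it is quoted directly from \cite[Proposition 2.7]{BKN10}, so there is no internal argument to compare against. Your proposal supplies a self-contained proof, and its route is essentially the standard one underlying the cited result (Sidki's cycle-structure characterization of bounded automata). The easy direction via the cocycle identity is fine, and your constraints (i) and (ii) --- that nontrivial simple cycles in the Moore diagram on $S(g)$ are pairwise disjoint and that no directed path joins two distinct ones --- are exactly the right intermediate statements; the readout that every state on a nontrivial cycle is directed, and the pigeonhole argument with $m=|S(g)|$, correctly finish the proof from there.

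Two points need shoring up. First, constraint (i) as literally stated is false for the identity state, which lies on $|X|$ distinct loops; it must be restricted to nontrivial states (equivalently, to cycles avoiding the identity state), which is clearly what you intend since your contradiction needs the repeated section $a$ to be nontrivial. Second, and more substantively, both of your counting arguments silently assume that the exhibited words are pairwise distinct elements of a single level $X^n$, and this is not automatic. In (i) it is where simplicity earns its keep: distinct simple cycles through $a$ have labels with $w_1w_2\neq w_2w_1$ (otherwise $w_1,w_2$ are powers of a common word and the longer cycle revisits $a$, so is not simple), hence $w_1,w_2$ generate a free subsemigroup and distinct concatenations really are distinct words. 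In (ii), the words $u_0w_1^jz$ extended along $C_2$ to a common level could a priori coincide for different $j$; ruling this out needs (i) (so $C_1$ and $C_2$ are disjoint) together with choosing $z$ of minimal length among paths from $C_1$ to a different nontrivial cycle --- a coincidence would place the state $b\in C_2$ either on $C_1$ or at a proper prefix of $z$, contradicting disjointness or minimality. With these details supplied your argument is complete; without them, ``exponentially many'' and ``linearly many'' are assertions rather than estimates, and this is precisely the point where a careless version of the argument (for instance with non-simple cycles) genuinely fails.
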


We are now prepared to define the bounded automata groups.
\begin{defn}
	A (set) group $G\leq \Aut(\Xt)$ is \textbf{self-similar} if $g_w\in G$ for all $w\in \Xt$ and $g\in G$.
\end{defn}

\begin{defn}
A group $G\leq \Aut(\Xt)$ is said to be a \textbf{bounded automata group} if $G$ is finitely generated and self-similar and every element $g\in G$ is bounded and finite state.
\end{defn}
\begin{rmk}
Bounded automata groups are named as such, because they are exactly the groups such that the generators are given by a bounded automaton; see \cite[Chapter 1.5]{N05}.
\end{rmk}

\subsection{Self-replicating groups}
Letting $\Aut(\Xt)_{(w)}$ be the stabilizer of $w$ in $\Aut(\Xt)$, the map
\[
\phi_w:\Aut(\Xt)_{(w)}\rightarrow \Aut(\Xt)
\]
by $g\mapsto g_w$ is a homomorphism. We call $\phi_w$ the \textbf{section homomorphism} at $w$. For a group $G\leq \Aut(\Xt)$, we denote the image $\phi_w(G_{(w)})$ by $\sec_G(w)$, and $\sec_G(w)$ is called the \textbf{group of sections} of $G$ at $w$.

\begin{defn}
A group $G\leq \Aut(\Xt)$ is called \textbf{self-replicating} if $G$ is self-similar, acts transitively on $X$ and $\sec_G(x)=G$ for all $x\in X$.
\end{defn}
\begin{rmk}
The terms ``fractal" and ``recurrent" have previously appeared in the literature in place of ``self-replicating." We understand that ``self-replicating" is the currently accepted term.
\end{rmk}
For a self-replicating group $G\leq \Aut(\Xt)$, it follows by induction on the length of $v\in \Xt$ that $\sec_G(v)=G$ for all $v\in \Xt$. Self-replicating groups additionally act transitively on $X^k$ for every $k$, not just on $X$. Groups that act transitively on $X^k$ for all $k\geq 1$ are called \textbf{level transitive}. For $g\in \Aut(\Xt)$ an odometer, $\grp{g}$ is level transitive.

In the present work, weak forms of self-similarity and self-replication arise frequently.
\begin{defn}
We say $G\leq \Aut(X^*)$ is \textbf{weakly self-similar} if $\sec_G(v)\leq G$ for every $v\in \Xt$. We say that $G$ \textbf{weakly self-replicating} if $\sec_G(v)=G$ for every $v\in X^*$.
\end{defn}
A weakly self-similar (weakly self-replicating) group need not be self-similar (self-replicating). This holds even in the case that $G$ acts transitively on $X$. See Example~\ref{ssec:weak_self-rep}

\begin{rmk} In the work at hand, we will need to analyze subgroups of a bounded automata group which are weakly self-similar, but not self-similar. The expert will note that one can conjugate a weakly self-similar group to obtain a self-similar group. However, if one conjugates the supergroup to make the subgroup in question self-similar, the conjugate often fails to be a bounded automata group.
\end{rmk}

\subsection{Elementary amenable groups}

\begin{defn} The class of \textbf{elementary amenable groups}, denoted by $\EA$, is the smallest class of groups such that the following hold.
\begin{enumerate}
\item $\EA$ contains all abelian groups and  finite groups.
\item $\EA$ is closed under taking subgroups, group extensions, quotients, and direct limits.
\end{enumerate}
\end{defn}

By work of D. Osin, \cite{Os02}, the class $\EA$ admits a simpler description. Let us define recursively classes $\EA_{\alpha}$ for $\alpha$ an ordinal as follows.
\begin{enumerate}[$\bullet$]
\item $\EA_0$ is the class of abelian groups and finite groups.
\item Given $\EA_{\alpha}$, let $\EA^e_{\alpha}$ be the class of $\EA_{\alpha}$-by-$\EA_0$ groups and let $\EA^l_{\alpha}$ be the groups given by a direct limit of groups in $\EA_{\alpha}$. We define $\EA_{\alpha+1}:=\EA_{\alpha}^e\cup \EA_{\alpha}^l$.
\item For $\lambda$ a limit ordinal, we define $\EA_{\lambda}:=\bigcup_{\beta<\lambda} \EA_{\beta}$.
\end{enumerate}

\begin{thm}[{Osin, \cite[Theorem 2.1]{Os02}}] $\EA=\bigcup_{\alpha\in ORD} \EA_{\alpha}$.
\end{thm}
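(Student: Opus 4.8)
The plan is to prove the two inclusions of $\EA=\bigcup_{\alpha\in ORD}\EA_{\alpha}$ separately; write $\mathcal H:=\bigcup_{\alpha}\EA_{\alpha}$. The inclusion $\mathcal H\subseteq \EA$ is the routine direction: by transfinite induction on $\alpha$ one checks $\EA_{\alpha}\subseteq \EA$. The base case $\EA_0\subseteq \EA$ holds by definition; at successor stages $\EA^e_{\alpha}\subseteq \EA$ because $\EA$ is closed under extensions and $\EA^l_{\alpha}\subseteq \EA$ because $\EA$ is closed under direct limits; limit stages are immediate. For the reverse inclusion $\EA\subseteq \mathcal H$ I would invoke the minimality of $\EA$: it suffices to verify that $\mathcal H$ contains all abelian and finite groups and is closed under subgroups, quotients, extensions, and direct limits.

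Before that I would record two preliminary facts, each by transfinite induction on $\alpha$. First, monotonicity: $\beta\leq\alpha$ implies $\EA_{\beta}\subseteq \EA_{\alpha}$. The engine is that every $G\in\EA_{\alpha}$ is trivially an $\EA_{\alpha}$-by-$\EA_0$ group (extend by the trivial group), so $\EA_{\alpha}\subseteq \EA^e_{\alpha}\subseteq\EA_{\alpha+1}$, while the limit case is built into the definition of $\EA_{\lambda}$. Second, each $\EA_{\alpha}$ is closed under subgroups and quotients, proved by a simultaneous induction: for $G\in\EA^e_{\alpha}$ sitting in $1\to N\to G\to Q\to 1$ with $N\in\EA_{\alpha}$ and $Q\in\EA_0$, a subgroup $H\leq G$ has $H\cap N\in\EA_{\alpha}$ with $H/(H\cap N)$ a subgroup of $Q$, and a quotient $G/K$ has $NK/K\in\EA_{\alpha}$ with $(G/K)/(NK/K)$ a quotient of $Q$; the $\EA^l_{\alpha}$ case follows since subgroups and quotients commute with the relevant directed unions.

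With these in hand, $\mathcal H$ is closed under direct limits: a directed system with terms $G_i\in\EA_{\alpha_i}$ lands, after monotonicity with $\alpha=\sup_i\alpha_i$, in $\EA^l_{\alpha}\subseteq\EA_{\alpha+1}$, where one first replaces the $G_i$ by their images in the colimit (quotients of the $G_i$, hence still in $\EA_{\alpha}$) so as to express the limit as a directed union. Closure of $\mathcal H$ under subgroups and quotients is then immediate from the preliminary facts.

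The main obstacle is closure of $\mathcal H$ under arbitrary extensions, since the hierarchy is built using only extensions by $\EA_0$-quotients at each successor stage. I would prove the sharper statement that an extension $1\to N\to G\to Q\to 1$ with $N\in\EA_{\alpha}$ and $Q\in\EA_{\beta}$ lies in some $\EA_{\delta}$, by transfinite induction on $\beta$. The base $\beta=0$ is the definition of $\EA^e_{\alpha}$. For $\beta=\gamma+1$ with $Q\in\EA^e_{\gamma}$, write $1\to M\to Q\to P\to 1$ with $M\in\EA_{\gamma}$ and $P\in\EA_0$, and let $G_1\leq G$ be the preimage of $M$; then $G_1$ is an $\EA_{\alpha}$-by-$\EA_{\gamma}$ extension, so $G_1\in\EA_{\delta'}$ by induction, and $G$ is an $\EA_{\delta'}$-by-$\EA_0$ extension, hence in $\EA_{\delta'+1}$. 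If instead $Q=\varinjlim Q_i\in\EA^l_{\gamma}$, pulling back the images of the $Q_i$ gives $G=\varinjlim G_i$ with each $G_i$ an $\EA_{\alpha}$-by-(quotient of $Q_i$) extension, so each $G_i\in\EA_{\delta'}$ by induction and quotient-closure, whence $G\in\EA^l_{\delta'}$. A limit ordinal $\beta$ reduces at once to some $\gamma<\beta$ with $Q\in\EA_{\gamma}$. The care needed lies in correctly forming preimages and directed unions, and in threading the two transfinite inductions, on the complexity of the quotient for extensions and on $\alpha$ for the structural lemmas, so that they feed one another without circularity.
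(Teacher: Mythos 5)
Your proof is correct. Note that the paper does not prove this statement at all: it is quoted from Osin, and the ingredients you develop along the way are precisely what the paper later imports from Osin's paper as Proposition~\ref{prop:rank_stability} (his Lemmas 3.1 and 3.2). Your route --- monotonicity of the hierarchy, closure of each $\EA_{\alpha}$ under subgroups and quotients, and then closure under arbitrary extensions by transfinite induction on the complexity of the quotient --- is essentially Osin's own argument; his version of your key extension lemma is the quantitative bound $\rk(G)\leq \rk(K)+\rk(Q)$, which your induction reproves in qualitative form, and the rest (minimality of $\EA$ plus the easy inclusion $\bigcup_{\alpha}\EA_{\alpha}\subseteq \EA$) is the standard bookkeeping. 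One small point to tighten: in the $\EA^l_{\gamma}$ case of your extension lemma you assert that all the pulled-back subgroups $G_i$ lie in a single $\EA_{\delta'}$, but the inductive hypothesis only yields $G_i\in\EA_{\delta_i}$ with $\delta_i$ depending on $i$; you must set $\delta':=\sup_i\delta_i$ (a set-indexed supremum, so it exists) and invoke your monotonicity lemma before concluding $G\in\EA^l_{\delta'}$ --- exactly the maneuver you already perform when proving closure of $\mathcal{H}$ under direct limits. With that one-line fix the argument is complete and there is no circularity between your two inductions, since the structural lemmas are established for all ordinals before the extension lemma ever appeals to them.
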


In view of Osin's theorem, the class of elementary amenable groups admits a well-behaved ordinal-valued rank.

\begin{defn} For $G\in \EA$, we define
\[
\rk(G):=\min\{\alpha\mid G\in \EA_{\alpha}\}
\]
and call $\rk(G)$ the \textbf{construction rank} of $G$.
\end{defn}

\begin{cor}\label{cor:EA-rank}
If $G\in \EA$ is finitely generated and neither finite nor abelian, then there is $L\normal G$ such that $G/L$ is finite or abelian and $\rk(G)=\rk(L)+1$.
\end{cor}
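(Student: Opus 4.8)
The plan is to exploit Osin's stratification $\EA=\bigcup_\alpha \EA_\alpha$ together with the two structural obstructions afforded by finite generation and by $G$ lying outside $\EA_0$. First I would record the elementary but essential fact that the classes are nested: $\EA_\beta\subseteq \EA_\alpha$ whenever $\beta\leq\alpha$. This follows by transfinite induction, the successor step using that any $H\in \EA_\alpha$ is $H$-by-$1$ and hence lies in $\EA^e_\alpha\subseteq \EA_{\alpha+1}$, and the limit step being immediate from $\EA_\lambda=\bigcup_{\beta<\lambda}\EA_\beta$. I will also note, for use below, that each $\EA_\alpha$ is closed under quotients, which is a routine parallel induction.

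Set $\alpha:=\rk(G)$. Since $G$ is neither finite nor abelian, $G\notin \EA_0$, so $\alpha\geq 1$. I claim $\alpha$ is a successor ordinal. Indeed, were $\alpha$ a limit ordinal, then $G\in \EA_\alpha=\bigcup_{\beta<\alpha}\EA_\beta$ would place $G$ in some $\EA_\beta$ with $\beta<\alpha$, contradicting the minimality built into the definition of $\rk(G)$. Write $\alpha=\gamma+1$, so that $G\in \EA_{\gamma+1}=\EA^e_\gamma\cup \EA^l_\gamma$ while $G\notin \EA_\gamma$.

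The next step is to eliminate the direct-limit alternative using finite generation. If $G\in \EA^l_\gamma$, then $G$ is a directed union of subgroups lying in $\EA_\gamma$; since $G$ is finitely generated, a finite generating set lies in a single member of this directed system, forcing $G$ to equal that member, so $G\in \EA_\gamma$, a contradiction. (In the colimit reading, the same finiteness argument identifies $G$ with a single quotient of an $\EA_\gamma$-group, and closure under quotients again gives $G\in\EA_\gamma$.) Therefore $G\in \EA^e_\gamma$, so there is $L\normal G$ with $L\in \EA_\gamma$ and $G/L\in \EA_0$, i.e.\ $G/L$ is finite or abelian.

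It remains to pin down $\rk(L)$. Clearly $\rk(L)\leq\gamma$. If instead $\rk(L)=\beta<\gamma$, then $G$ is $\EA_\beta$-by-$\EA_0$, whence $G\in \EA^e_\beta\subseteq \EA_{\beta+1}\subseteq \EA_\gamma$ by nestedness (using $\beta+1\leq\gamma$), again contradicting $G\notin \EA_\gamma$. Hence $\rk(L)=\gamma$ and $\rk(G)=\gamma+1=\rk(L)+1$, as desired. The only genuinely delicate point is the finite-generation argument ruling out the $\EA^l_\gamma$ case; everything else is bookkeeping with the nested filtration $(\EA_\alpha)$ and the minimality of the construction rank.
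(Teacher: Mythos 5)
Your proof is correct and is essentially the argument the paper intends: the paper states this corollary without proof as an immediate consequence of Osin's stratification, and your fleshed-out version (the rank is a successor ordinal by minimality, finite generation rules out the direct-limit alternative, and minimality again forces $\rk(L)=\rk(G)-1$) is exactly the standard argument. The quotient-closure fact you invoke for the colimit case is also recorded in the paper as Proposition~\ref{prop:rank_stability}(2), cited from Osin, so it may simply be quoted rather than reproved.
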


\begin{prop}\label{prop:rank_stability}
Let $G\in \EA$.
\begin{enumerate}
\item If $H\leq G$, then $\rk(H)\leq \rk(G)$. \textnormal{(\cite[Lemma 3.1]{Os02})}
\item If $L\normal G$, then $\rk(G/L)\leq \rk(G)$. \textnormal{(\cite[Lemma 3.1]{Os02})}
\item If $\{1\}\rightarrow K\rightarrow G\rightarrow Q\rightarrow \{1\}$ is a short exact sequence, then $\rk(G)\leq \rk(K)+\rk(Q)$.\textnormal{(\cite[cf. Lemma 3.2]{Os02})}
\item  $\rk(G\times G)=\rk(G)$. \textnormal{(folklore)}
\end{enumerate}
\end{prop}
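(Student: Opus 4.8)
The four assertions all revolve around the hierarchy $(\EA_\alpha)_\alpha$, and my plan is to prove each by transfinite induction on construction rank. It is convenient to record first a structural observation: the construction rank of a group is never a limit ordinal. Indeed, for a limit $\lambda$ one has $\EA_\lambda=\bigcup_{\beta<\lambda}\EA_\beta$, so any group in $\EA_\lambda$ already lies in some $\EA_\beta$ with $\beta<\lambda$; hence $\rk$ takes only the value $0$ and successor values. Every induction below thus splits into a base case $\alpha=0$ and a successor case $\alpha=\beta+1$, where in the latter the group is either $\EA_\beta$-by-$\EA_0$ or a directed union of $\EA_\beta$-groups.

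Parts (1) and (2) are Osin's, and I would recover them by verifying that each $\EA_\alpha$ is closed under subgroups and under quotients. For (1), if $G$ is $N$-by-$F$ with $N\in\EA_\beta$ and $F\in\EA_0$, and $H\leq G$, then $H\cap N\in\EA_\beta$ by induction while $H/(H\cap N)\injects G/N\in\EA_0$, so $H$ is again $\EA_\beta$-by-$\EA_0$; the directed-union case follows from $H=\varinjlim(H\cap G_i)$. Part (2) is the dual statement, with homomorphic images in place of intersections. For (3) I would induct on $\rk(Q)$: write $Q$ as an $\EA_\beta$-by-$\EA_0$ extension (or a directed union), pull this structure back along $G\onto Q$ to the preimage of the $\EA_\beta$-part, apply the inductive bound to that preimage, and reassemble. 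The one delicate point is the base case $\rk(Q)=0$, where an $\EA_0$ quotient already contributes one level --- the infinite dihedral group shows $\rk$ can jump from $0$ to $1$ across such an extension --- which is the small discrepancy flagged by the ``cf.'' in the citation.

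The real content is (4). One inequality is immediate: $G$ embeds into $G\times G$ as $G\times\{1\}$, so $\rk(G)\leq\rk(G\times G)$ by (1). For the reverse, the plan is to prove by transfinite induction on $\alpha$ that $G\in\EA_\alpha$ implies $G\times G\in\EA_\alpha$. In the base case $G$ is abelian or finite, whence $G\times G$ is respectively abelian or finite and so lies in $\EA_0$. In the successor case $\alpha=\beta+1$, if $G$ is $N$-by-$F$ with $N\in\EA_\beta$ and $F\in\EA_0$, then $N\times N\normal G\times G$ with quotient $F\times F$; here $N\times N\in\EA_\beta$ by induction and $F\times F\in\EA_0$ as in the base case, so $G\times G$ is $\EA_\beta$-by-$\EA_0$; and if $G=\varinjlim G_i$ is a directed union, then $G\times G=\varinjlim(G_i\times G_i)$ is a directed union of $\EA_\beta$-groups. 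This gives $\rk(G\times G)\leq\rk(G)$, and equality follows.

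The main obstacle, and the reason (4) is not a formal consequence of (3), is that the corresponding statement for products of two \emph{distinct} members of $\EA_\alpha$ fails at the bottom level: $\mathbb{Z}\times S_3$ is neither abelian nor finite, hence lies outside $\EA_0$, even though both factors belong to $\EA_0$. One therefore cannot prove ``$G,H\in\EA_\alpha\Rightarrow G\times H\in\EA_\alpha$'' and specialize. It is essential that the two factors are the \emph{same} group $G$, so that $G\times G$ inherits being abelian (resp. finite) in the base case and so that the top quotient $F\times F$ stays in $\EA_0$ in the successor case. Keeping the two factors tied together is the single point requiring attention; the rest of the induction is routine.
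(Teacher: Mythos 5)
The paper itself contains no proof of this proposition---parts (1) and (2) are cited to Osin, part (3) carries only a ``cf.'' citation, and part (4) is labelled folklore---so your self-contained transfinite inductions are the only argument on the table, and for parts (1), (2) and (4) they are correct and standard. Your preliminary observation that $\rk$ never takes limit values is right; the subgroup/quotient closure inductions for (1) and (2) are the usual ones; and for (4) the diagonal induction ($G\in\EA_\alpha\Rightarrow G\times G\in\EA_\alpha$), together with your remark that products of two \emph{distinct} $\EA_0$-groups such as $\Zb\times S_3$ leave $\EA_0$, is precisely the folklore argument and its one genuinely subtle point. One small technical caveat: if the ``direct limits'' defining $\EA_\alpha^l$ are allowed to have non-injective bonding maps, then the formula $H=\bigcup_i(H\cap G_i)$ does not literally apply; one instead writes $G$ as the directed union of the images of the $G_i$ (each a quotient of some $G_i$) and runs (1) and (2) as a simultaneous induction. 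For (4) this issue is invisible, since filtered colimits commute with finite products.

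For (3) you have not left a gap; you have identified an error in the statement. With the paper's definitions the printed inequality $\rk(G)\leq\rk(K)+\rk(Q)$ is false: in $\{1\}\rightarrow\Zb\rightarrow D_\infty\rightarrow\Zb/2\Zb\rightarrow\{1\}$ both $\Zb$ and $\Zb/2\Zb$ have rank $0$, while the infinite dihedral group is neither finite nor abelian and so has rank $1$---exactly your dihedral example. What your induction on $\rk(Q)$ proves is the corrected bound $\rk(G)\leq\rk(K)+\rk(Q)+1$, and this corrected form, not the printed one, is what the paper actually uses downstream: in Corollaries~\ref{cor:ext_EA} and~\ref{cor:ext_EA_2} the steps $\rk(G)\leq\rk(\gamma(G))+1$ (finite quotient) and $\rk(\gamma(G))\leq\rk([\gamma(G),\gamma(G)])+1$ (abelian quotient) are instances of the $+1$ inequality with $\rk(Q)=0$, whereas the printed (3) would give $\rk(G)\leq\rk(\gamma(G))$, which is not what is invoked and is false in general. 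So state (3) with the $+1$; with that emendation your proof of all four parts is complete and correct.
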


\section{Reduced form}
A given representation of a bounded automata group may not always be optimal. The reduction developed herein produces a canonical representation of a bounded automata group which often makes arguments simpler. This technique appears to be folklore.

Given a tree $\Xt$ and $k\geq 1$, the group $\Aut(\Xt)$ acts faithfully on $(X^k)^*$, giving an embedding $r_k:\Aut(\Xt)\injects \Aut((X^k)^*)$. The map $r_k$ is just the restriction of the action of $\Aut(\Xt)$ to $(X^k)^*\subset \Xt$ (which is a free monoid on $X^k$ and hence an $|X|^k$-regular tree).

\begin{lem}\label{lem:directed_preserve} Suppose that $g\in \Aut(\Xt)$ and $k\geq 1$.
\begin{enumerate}
\item For any $w\in (X^k)^*$ and $g\in G$, $r_k(g_w)=(r_k(g))_w$.
\item If $g$ is finitary, then $r_k(g)$ is finitary.
\item If $g$ is directed, then $r_k(g)$ is directed.
\item If $g$ is strongly directed, then $r_k(g)$ is strongly directed.
\end{enumerate}
\end{lem}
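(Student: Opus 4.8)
The plan is to unwind the definition of $r_k$ in terms of sections and prove the four parts essentially in sequence, with part (1) doing most of the work. The key observation is that the embedding $r_k$ is literally ``look at $\Xt$ through the coarser tree $(X^k)^*$'': a vertex $w\in (X^k)^*$ is a word $w = x_1\cdots x_m$ with each $x_i\in X^k$, which is the same as a particular word in $\Xt$ of length $km$, and $r_k(g)$ acts on $(X^k)^*$ by the restriction of $g$'s action on $\Xt$. So the section $(r_k(g))_w$ computed in $\Aut((X^k)^*)$ should agree, under the identification of $(X^k)^*$ with the sub-monoid of $\Xt$ of words whose length is divisible by $k$, with the section $g_w$ computed in $\Aut(\Xt)$ and then viewed again through $r_k$.

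\textbf{Part (1).} First I would prove this directly from the definitions. Recall $g_w = \psi_{g(w)}^{-1}\circ g\circ \psi_w$, where $\psi_w\colon \Xt\to w\Xt$ is $u\mapsto wu$. The analogous maps for the tree $(X^k)^*$ are $\psi^{(k)}_w\colon (X^k)^*\to w(X^k)^*$; but under the identification of $(X^k)^*$ with the length-divisible-by-$k$ words in $\Xt$, the map $\psi^{(k)}_w$ is just the restriction of $\psi_w$. Hence for $u\in (X^k)^*$,
\[
(r_k(g))_w(u) = (\psi^{(k)}_{g(w)})^{-1}\bigl(g(\psi^{(k)}_w(u))\bigr) = (\psi_{g(w)})^{-1}\bigl(g(wu)\bigr) = g_w(u).
\]
The point is that $g$ maps $w(X^k)^*$ into $g(w)(X^k)^*$ because $g(w)\in (X^k)^*$ (since $w\in (X^k)^*$ and $g$ preserves $(X^k)^*$, as $r_k(g)$ is well-defined), and $g$ is prefix-preserving, so this is all consistent. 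Thus $(r_k(g))_w = g_w\rest (X^k)^*= r_k(g_w)$, which is (1). The only thing requiring care is bookkeeping the identification of indices, and confirming $g$ restricts to the subtree $(X^k)^*$.

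\textbf{Parts (2)--(4)} then follow quickly from (1) together with the definitions of the three activity types. For (2), if $g$ is finitary of depth $d$, pick $m$ with $mk\geq d$; for every $w\in (X^k)^m$ we have $\lvert w\rvert = mk\geq d$ in $\Xt$, so $g_w = 1$, whence by (1) $(r_k(g))_w = r_k(g_w)=r_k(1)=1$, so $r_k(g)$ is finitary. For (3), if $g$ is directed with period $p$ and active vertex $v^g\in X^p$, I would choose $m$ so that $mk$ is a multiple of $p$ large enough that the single active vertex of $g$ at level $mk$ lies in $(X^k)^*$; using (1), among the level-$m$ vertices $w\in (X^k)^m$ the section $(r_k(g))_w=r_k(g_w)$ equals $r_k(g)$ exactly when $g_w=g$ (there is one such $w$, the length-$mk$ word that is iterated active) and is finitary (by (2) applied to the finitary $g_w$) otherwise — so $r_k(g)$ is directed. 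For (4), strong directedness is the same statement with ``finitary'' replaced by ``trivial,'' and the identical argument applies since $g_w=1$ for the non-active $w$ forces $r_k(g_w)=1$.

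\textbf{The main obstacle} is purely notational: making the identification between $(X^k)^*$ and the divisible-by-$k$-length words of $\Xt$ precise enough that the section-homomorphism computation in part (1) is unambiguous, and correctly choosing the level $m$ in parts (3)--(4) so that the active vertex at $\Xt$-level $mk$ genuinely corresponds to a single $(X^k)^*$-vertex at level $m$. Once (1) is nailed down, parts (2)--(4) are routine consequences, so I would invest the care there.
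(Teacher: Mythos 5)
Your proposal is correct and follows essentially the same route as the paper: prove (1) by unwinding the definition of sections and observing that $r_k$ is restriction to $(X^k)^*$, then deduce (2)--(4) by applying (1) at a suitable level. The only difference is cosmetic: you make explicit the choice of level (a multiple of the period of $g$) in parts (3)--(4), which the paper's proof leaves implicit when it asserts the existence of such an $n$.
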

\begin{proof}
	Set $O:=X^k$ and take $w\in O^*$. By definition, the section of $g$ at $w$ in $\Xt$ is $g_w=\psi_{g(w)}^{-1}\circ g\circ \psi_w$ where $\psi_v:X^*\rightarrow vX^*$ by $x\mapsto vx$. The map $r_k$ is restriction to $O^*$, so we see that
	\[
	r_k(g_w)=(g_w)\rest_{O^*}=\psi_{g(w)}^{-1}\rest_{O^*}\circ g\rest_{O^*}\circ \psi_w\rest_{O^*}=(r_k(g))_w.
	\]
Hence, (1) holds.

If $g$ is finitary, then we can find some $n\geq 1$ such that $g_w=1$ for all $w\in O^n$. Claim (1) shows that $(r_k(g))_w=1$ for all $w\in O^n$, so $r_k(g)$ is finitary, verifying (2).

If $g$ is (strongly) directed, we can find some $n\geq 1$ such that for all $w\in O^n$ either $g_w=g$ or $g_w$ is finitary (trivial). If $g_w$ is finitary (trivial), then claims (1) and (2) ensure that $(r_k(g))_w$ is finitary (trivial). If $g_w=g$, then (1) implies that $r_k(g)=r_k(g_w)=(r_k(g))_w$. We conclude that for every $w\in O^n$ either $(r_k(g))_w=r_k(g)$ or $r_k(g)_w$ is finitary (trivial). Hence, $r_k(g)$ is a (strongly) directed element of $\Aut(O^*)$, establishing (3) and (4).
\end{proof}

\begin{cor}\label{cor:preserve_BAG}
If $G\leq\Aut(\Xt)$ is a bounded automata group and $k\geq 1$, then $r_k(G)\leq\Aut((X^k)^*)$ is a bounded automata group.
\end{cor}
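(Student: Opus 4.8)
The plan is to verify that $r_k(G)$ satisfies the three defining conditions of a bounded automata group: finite generation, self-similarity, and the requirement that every element be bounded and finite state. Since $r_k$ is an injective homomorphism, $r_k(G)$ is an isomorphic copy of $G$, so finite generation is immediate, as the image of a finite generating set of $G$ generates $r_k(G)$. The engine for the remaining two conditions is Lemma~\ref{lem:directed_preserve}(1), which identifies the sections of $r_k(g)$ in $\Aut((X^k)^*)$ with the $r_k$-images of the sections of $g$ taken at vertices $w\in (X^k)^*\subseteq \Xt$; that is, $(r_k(g))_w=r_k(g_w)$.

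First I would establish self-similarity. Given $r_k(g)\in r_k(G)$ and a vertex $w$ of $(X^k)^*$, Lemma~\ref{lem:directed_preserve}(1) gives $(r_k(g))_w=r_k(g_w)$. Self-similarity of $G$ yields $g_w\in G$, whence $(r_k(g))_w=r_k(g_w)\in r_k(G)$, as required. Next I would handle the finite-state and bounded conditions for an arbitrary element $r_k(g)$. For the finite-state condition, Lemma~\ref{lem:directed_preserve}(1) shows $S(r_k(g))=\{r_k(g_w)\mid w\in (X^k)^*\}\subseteq r_k(S(g))$, which is finite because $S(g)$ is finite. For boundedness, note that since $r_k$ is injective, $(r_k(g))_w\neq 1$ precisely when $g_w\neq 1$; identifying the level-$m$ vertices of $(X^k)^*$ with $X^{km}$, the count $|\{w\in (X^k)^m\mid (r_k(g))_w\neq 1\}|$ equals $|\{v\in X^{km}\mid g_v\neq 1\}|$, which is bounded by the same constant $N$ witnessing the boundedness of $g$.

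The argument is essentially bookkeeping, and there is no serious obstacle; the only point demanding care is that the bounded condition quantifies over all levels $n$ of $\Xt$, whereas $r_k$ exposes only the levels that are multiples of $k$. Since the witnessing constant $N$ for $g$ holds uniformly across all levels, it in particular holds across the sublattice of levels seen by $r_k$, so no loss occurs. Alternatively, one could bypass the direct counting by combining Lemma~\ref{lem:directed_preserve}(2),(3) with Proposition~\ref{prop:directed_depth}: choosing $m$ to be a multiple of $k$ past which every section of $g$ is finitary or directed, one transports this property through $r_k$ to conclude that $r_k(g)$ is again bounded and finite state.
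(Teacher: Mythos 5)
Your proof is correct, but your primary argument takes a genuinely different route from the paper's. The paper's proof is a one-line appeal to Proposition~\ref{prop:directed_depth} together with Lemma~\ref{lem:directed_preserve}: every element of a bounded automata group has, at some level, only finitary or directed sections, and $r_k$ preserves finitary and directed elements, so $r_k(g)$ satisfies the same characterization of being bounded and finite state --- this is precisely the alternative you sketch in your final paragraph. Your main argument instead verifies the definitions directly from Lemma~\ref{lem:directed_preserve}(1) and the injectivity of $r_k$: the states of $r_k(g)$ form a subset of $r_k(S(g))$, and the number of non-trivial sections of $r_k(g)$ at level $m$ of $(X^k)^*$ equals the number of non-trivial sections of $g$ at level $km$ of $\Xt$, hence is bounded by the constant $N$ witnessing boundedness of $g$. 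The direct route is more self-contained: it does not invoke Proposition~\ref{prop:directed_depth} (a result cited from the literature), and it sidesteps a step that the characterization route leaves implicit, namely that the level $m$ supplied by Proposition~\ref{prop:directed_depth} can be replaced by a larger multiple of $k$ (which requires knowing that sections of finitary and directed elements are again finitary or directed). What the paper's route buys is brevity: Lemma~\ref{lem:directed_preserve}(2)--(4) was set up exactly so that this corollary follows formally, and those preservation statements are needed again later (e.g., in the proof of Theorem~\ref{thm:reduction-reduced-form}). Your handling of the one delicate point --- that boundedness of $r_k(g)$ only requires the bound at levels of $\Xt$ that are multiples of $k$, which the uniform bound for $g$ supplies --- is exactly right, as is your explicit treatment of finite generation and self-similarity, which the paper's one-liner takes for granted.
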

\begin{proof}
This is immediate from Proposition~\ref{prop:directed_depth} and Lemma~\ref{lem:directed_preserve}.
\end{proof}

Recall that for each $k\geq 1$, there is a homomorphism from $\Aut(\Xt)$ into $\Sym(X^k)$ denoted by $\pi_k:\Aut(\Xt)\rightarrow \Sym(X^k)$.  The functions  $\pi_k$ and $r_k$ enjoy the following relationship:
\[
\pi_1\circ r_k=\pi_k.
\]
Recall also that we may see $\pi_k(\Aut(\Xt))\leq \Aut(\Xt)$ whenever necessary.

\begin{defn}
For $G\leq\Aut(\Xt)$ a bounded automata group, we say that $G$ is in \textbf{reduced form} if $G$ admits a finite generating $Y$ such that for every $g\in Y$ either $g_x=1$ for all $x\in X$ or $g_x\in \{g\}\cup \pi_1(G)$ for all $x\in X$ with exactly one $x$ such that $g_x=g$. We say that $Y$ is a \textbf{distinguished generating set} for $G$. We write $G=\grp{Y}\leq\Aut(\Xt)$ to indicate that $Y$ is a distinguished generating set for $G$.
\end{defn}
Note that one may always assume that $Y$ is a self-similar set by adding each section from $\pi_1(G)$ of a directed element of $Y$ to $Y$.  Neither the reduced form nor the distinguished generating set for a given reduced form are unique in general. However, all bounded automata groups can essentially be put in reduced form. Establishing this requires a result from the literature.

\begin{thm}[{\cite[Theorem 3.9.12]{N05}}]\label{thm:gen_by_nuc}
For $G\leq \Aut(\Xt)$ a bounded automata group, there is a finite self-similar set $Z\subseteq G$ consisting of finitary and directed elements such that for all $g\in G$ there is $N$ for which $g_v\in Z$ for all $v\in X^k$ and $k\geq N$.
\end{thm}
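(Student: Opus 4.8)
The plan is to produce $Z$ as, essentially, the \emph{nucleus} of $G$: the set of sections that recur at arbitrarily deep levels. I would first establish the statement for the generators, then bootstrap to all of $G$ via the cocycle formula, with the boundedness hypothesis doing the real work at the very end.

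First I would fix a finite generating set and close it under taking sections. Since $G$ is a bounded automata group, every generator is finite state, so this closure is a finite set $S$; it is self-similar by construction, and we may assume $1\in S=S^{-1}$. Applying Proposition~\ref{prop:directed_depth} to each of the finitely many elements of $S$ and taking the maximum of the resulting thresholds, I obtain a single $m\geq 1$ such that $s_v$ is finitary or directed for every $s\in S$ and every $v\in X^m$. Because $S$ is self-similar, $s_v\in S$, and since a section of a finitary element is finitary while a section of a directed element is again finitary or directed (an easy consequence of the definitions), the set
\[
Z_0:=\{\, s_v \mid s\in S,\ v\in X^k,\ k\geq m \,\}
\]
is a finite, self-similar subset of $S$ consisting of finitary and directed elements that contains every sufficiently deep section of every generator.

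The next step passes from generators to an arbitrary $g\in G$. Writing $g=s_1\cdots s_\ell$ as a word in $S$ and iterating the cocycle identity $(hk)_v=h_{k(v)}k_v$, every section $g_v$ factors as $(s_1)_{u_1}\cdots(s_\ell)_{u_\ell}$, a product of sections of the generators taken at level $|v|$. For $|v|\geq m$ each factor lies in $Z_0$, so every deep section of $g$ is a product of at most $\ell$ elements of the finite set $Z_0$. The difficulty, and the main obstacle, is that such a product of finitary and directed elements is in general neither finitary nor directed, so this observation alone does not place $g_v$ in a fixed finite set.

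To overcome this I would use that each element of $G$ is bounded: at every level at most $N$ sections are nontrivial, and a directed element contributes a nontrivial (directed) section along only a single active path, with finitary sections of controlled depth branching off it. Hence for deep $v$ the active paths of the various directed factors of $g$ have, once and for all, either merged or separated, and at all but boundedly many coordinates the factors $(s_i)_{u_i}$ are trivial or finitary of small depth. I would make this precise through a stabilization argument: the sets $S_N(g):=\{\,g_v\mid |v|\geq N\,\}$ are finite (as $g$ is finite state) and decreasing in $N$, so each stabilizes, and by Proposition~\ref{prop:directed_depth} every element of the stable part is finitary or directed. Boundedness then caps, uniformly in $g$, the depths of the finitary elements and the periods and finitary parts of the directed elements that can occur as deep sections. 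Consequently only finitely many finitary and directed elements arise as deep sections across all of $G$; collecting them and closing under sections (which preserves finitary/directed and, over a finite collection of finite-state elements, keeps the set finite) yields the required finite, self-similar $Z\supseteq Z_0$. The crux is exactly this uniform bound on the complexity of deep sections — equivalently, the fact that a bounded automata group is contracting, with its nucleus serving as $Z$ — and it is precisely here, rather than in the finite-state reduction, that the bounded-activity hypothesis is indispensable.
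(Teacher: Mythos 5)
Your argument is correct, and essentially routine, up through the per-element statement: closing a generating set under sections gives the finite self-similar set $S$, Proposition~\ref{prop:directed_depth} (together with the fact that sections of finitary or directed elements are again finitary or directed) gives for each fixed $g\in G$ a level beyond which all sections of $g$ are finitary or directed, and finite-stateness makes the set $D(g)$ of such deep sections finite. But this is not the theorem. The theorem demands a \emph{single} finite set $Z$ with $D(g)\subseteq Z$ for every $g\in G$, i.e.\ that $\bigcup_{g\in G}D(g)$ is finite, and at exactly this point your proof substitutes an assertion for an argument: ``boundedness then caps, uniformly in $g$, the depths of the finitary elements and the periods and finitary parts of the directed elements that can occur as deep sections.'' No such uniformity is available from the hypotheses: boundedness is a per-element condition, and the bounds add under multiplication (since $|\{v\in X^n : (gh)_v\neq 1\}|\leq N(g)+N(h)$), so they a priori grow linearly in word length. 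To see what your argument fails to exclude, suppose $d\in G$ is directed with period $k$ and fixes its active vertex $w=v^d\in X^k$. Then for every $n$ one computes $(d^n)_w=(d_w)^n=d^n$, while $(d^n)_v$ is finitary for $v\in X^k\setminus\{w\}$; so each nontrivial $d^n$ is directed and is a deep section of the group element $d^n$, whence $Z$ must contain all powers of $d$, and the theorem silently forces any such $d$ to have finite order. That is a true fact, but it requires proof (such a $d$ acts by applying one fixed finitary automorphism simultaneously at all vertices $w^j$ of its spine, and these commuting finitary pieces have a common finite order), and it is only the simplest of the configurations one must control: when the spines of several directed factors of a product merge, the deep sections along the common spine are products of several directed elements, and one must show that only finitely many such products ever arise over the whole group. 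You concede the point yourself when you identify the crux with ``the fact that a bounded automata group is contracting, with its nucleus serving as $Z$'' --- but that is verbatim the statement being proven, so the proposal is circular at its decisive step.

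For comparison: the paper does not prove this statement at all; it quotes it as \cite[Theorem 3.9.12]{N05}, where the uniform finiteness is established by a genuine combinatorial analysis of bounded automata (finiteness of the set of post-critical sequences and control of how the spines of directed states merge or separate). Your reduction to generators and your stabilization argument reproduce the easy, per-element half of that analysis; the uniform half is the actual content, and it is missing.
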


The directed elements of the set $Z$ in Theorem~\ref{thm:gen_by_nuc} are all strongly directed if and only if in the automaton with state set $Z$ (and the obvious transitions), each non-trivial state belonging to a cycle cannot reach any state off that cycle except for the trivial state.

\begin{thm}\label{thm:reduction-reduced-form}
For $G\leq \Aut(\Xt)$ a bounded automata group, there are a finite set $Z\subseteq G$ and $k\geq 1$ such that the following hold:
\begin{enumerate}
\item $r_k(G)\leq\Aut((X^k)^*)$ is a bounded automata group.
\item Setting $H:=r_k(\grp{Z})$, $H\leq \Aut((X^k)^*)$ is a bounded automata group in reduced form with distinguished generating set $r_k(Z)$.  Moreover, if the directed elements of the set from Theorem~\ref{thm:gen_by_nuc} are strongly directed, then $H$ is generalized basilica.
\item There is an injective homomorphism $G\rightarrow \Sym(X^k)\ltimes H^{X^k}$.
\end{enumerate}
\end{thm}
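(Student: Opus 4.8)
The plan is to combine Theorem~\ref{thm:gen_by_nuc} with the stabilization behavior of sections to pass to a level where the nucleus-like set $Z$ governs all deep sections, and then use the embedding machinery already available via $r_k$ and $\pi_k$. The key observation is that Theorem~\ref{thm:gen_by_nuc} gives a finite self-similar set $Z \subseteq G$ of finitary and directed elements such that every element of $G$ eventually has all its deep sections in $Z$. The depths and periods of elements of $Z$ are bounded, so I would first choose $k \geq 1$ large enough that, after applying $r_k$, every directed element of $Z$ has period $1$ with respect to the new alphabet $O := X^k$ (i.e.\ its active vertex lies in $O = X^k$ rather than some $X^{mk}$), and every finitary element of $Z$ has its sections at all letters of $O$ either trivial or again in the relevant generating data. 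Lemma~\ref{lem:directed_preserve} guarantees that $r_k$ sends finitary elements to finitary elements and directed elements to directed elements, and Corollary~\ref{cor:preserve_BAG} guarantees that $r_k(G)$ remains a bounded automata group; this gives (1) immediately.

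For (2), with $k$ chosen as above, I would set $H := r_k(\grp{Z})$ and check that $r_k(Z)$ is a distinguished generating set, i.e.\ that each $g \in r_k(Z)$ satisfies either $g_x = 1$ for all $x \in O$, or $g_x \in \{g\} \cup \pi_1(H)$ for all $x \in O$ with exactly one active letter. The finitary elements of $Z$ of depth at most $k$ become elements acting only on the first level of $O^*$, hence lie in $\pi_1(H) = \pi_k(\grp{Z})$ and have all sections trivial; the directed elements, having been arranged to have period $1$ over $O$, contribute a single active letter $x$ with $g_x = g$ and the remaining sections finitary of depth collapsing into $\pi_1(H)$. For the ``moreover'' clause, if the directed elements of $Z$ are in fact \emph{strongly} directed, then Lemma~\ref{lem:directed_preserve}(4) ensures $r_k$ preserves strong directedness, so the off-active sections are trivial rather than merely finitary, which is exactly the generalized basilica condition $g_x \in \{1, g\}$.

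For (3), the embedding $G \injects \Sym(X^k) \ltimes H^{X^k}$ is the standard wreath-recursion decomposition at level $k$. The plan is to send $g \in G$ to the pair consisting of its action $\pi_k(g) \in \Sym(X^k)$ on the $k$-th level together with the tuple of its sections $(g_v)_{v \in X^k}$. One must verify that each section $g_v$ lands in $H$: this is where the choice of $Z$ is essential, since for large $k$ the sections $g_v$ all lie in $Z \subseteq \grp{Z}$, and $r_k$ identifies $\grp{Z}$ with $H$ appropriately. The map $g \mapsto (\pi_k(g), (g_v)_{v \in X^k})$ is a homomorphism by the cocycle identity (part (b) of the sections lemma) and is injective because the action on $X^k$ together with all level-$k$ sections determines $g$ completely.

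\emph{The main obstacle} I anticipate is the bookkeeping in (3): after applying $r_k$ to pass to the alphabet $O = X^k$, the sections $g_v$ for $v \in X^k$ are elements of $\Aut(\Xt)$ sitting over \emph{sub}words, and one must carefully reconcile the two viewpoints — sections computed in the original tree $\Xt$ versus the group $H = r_k(\grp{Z})$ living in $\Aut((X^k)^*)$. The subtlety is that $G$ itself need not equal $\grp{Z}$, so the deep sections of a \emph{general} $g \in G$ only lie in $Z$ after descending possibly more than $k$ levels; the correct $k$ must simultaneously handle the period/depth reduction of $Z$ \emph{and} the stabilization bound from Theorem~\ref{thm:gen_by_nuc} for all generators of $G$. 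Choosing $k$ to dominate both quantities (taking the larger, or a common multiple, of the finitely many relevant depths, periods, and stabilization levels) resolves this, but stating the estimate cleanly and confirming that the single $k$ works for all three conclusions simultaneously is the crux of the argument.
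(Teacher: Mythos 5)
Your proposal is correct and follows essentially the same route as the paper: take the set $Z$ from Theorem~\ref{thm:gen_by_nuc}, choose $k$ to be a common multiple of the periods of the directed elements of $Z$ that also dominates the depths of the finitary elements and the stabilization levels of a finite generating set of $G$, then deduce (1)--(3) from Lemma~\ref{lem:directed_preserve}, Corollary~\ref{cor:preserve_BAG}, and the level-$k$ wreath recursion. The point you flag as the crux---that sections of arbitrary elements of $G$ at level $k$ need only lie in $\grp{Z}$ rather than $Z$---is resolved exactly as you suggest (stabilization for the generators plus the cocycle identity), which is also how the paper handles it.
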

\begin{proof}
Let $W$ be a finite generating set for $G$ and let $Z\subset G$  be as given by Theorem~\ref{thm:gen_by_nuc}; in particular, $Z$ is closed under taking sections. By taking a common multiple of the periods of directed $d\in Z$, we may find $m$ such that for each directed $d\in Z$ and $x\in X^m$ the section $d_x$ is either finitary or equal to $d$. Taking $n$ large enough, we may assume that $f_x=1$ for all $x\in X^n$ and finitary $f\in Z$.

Let $k$ be a sufficiently large multiple of $m$ so that $k\geq n$ and $g_v\in Z$ for all $g\in W$ and $v\in X^k$. For every directed $d\in Z$ and $x\in X^k$, either $d_x$ is finitary and in $\pi_k(G)$ or $d_x=d$. For every finitary $f\in Z$ and $x\in X^k$, $f_x=1$.  Set $O:=X^k$ and let $r_k:\Aut(\Xt)\rightarrow \Aut(O^*)$ be the canonical inclusion. Corollary~\ref{cor:preserve_BAG} ensures that $r_k(G)$ is again a bounded automata group, verifying (1). It is easy to verify that $\grp{Z}\leq \Aut(\Xt)$ is a bounded automata group using that $Z$ is closed under taking section, so $H=r_k(\grp{Z})$ is a bounded automata group by a second application of Corollary~\ref{cor:preserve_BAG}.

Let us now argue that $H\leq \Aut(O^*)$ is in reduced form. Set $Y:=r_k(Z)$; note that $Y$ is closed under takings sections by Lemma~\ref{lem:directed_preserve} as $Z$ is closed under taking sections.  Via Lemma~\ref{lem:directed_preserve}, $Y$ consists of finitary elements and directed elements, since $Z$ consists of such elements.  Say that $r_k(f)\in Y$ is finitary. Lemma~\ref{lem:directed_preserve} ensures that $f\in Z$ must also be finitary. For each $x\in O$, $f_x=1$, so in view of Lemma~\ref{lem:directed_preserve}, $(r_k(f))_x=1$ for all $x\in O$.  Say that $r_k(d)\in Y$ is directed. It follows that $d\in Z$ must be directed. For $x\in O$ such that $d_x$ is finitary, we have that $d_x\in Z$. Applying again Lemma~\ref{lem:directed_preserve}, we deduce that $(r_k(d))_x$ is finitary and an element of $\pi_1(H)$. For $x\in O$ such that $d_x=d$, we likewise deduce that $(r_k(d))_x=r_k(d)$. For all $x\in O$, it is thus the case that $(r_k(d))_x\in \{r_k(d)\}\cup \pi_1(H)$. Moreover, if $d$ is strongly directed, then so is $r_k(d)$ by Lemma~\ref{lem:directed_preserve}. We conclude that $H$ is in reduced form and that if each directed element of $Z$ is strongly directed, then $H$ is generalized basilica. Claim (2) thus holds.

For claim (3), each $g\in W$ is such that $g_x\in Z$ for any $x\in X^k$ by choice of $k$. From Lemma~\ref{lem:directed_preserve}, we deduce that for each $r_k(g)\in r_k(G)$ and $o\in O$, $(r_k(g))_o\in r_k(\grp{Z})=H$. We define $G\rightarrow \Sym(O)\ltimes H^O$ by $g\mapsto \pi_1(r_k(g))(r_k(g)_o)_{o\in O}$. One easily verifies that this is a well-defined monomorphism.
\end{proof}

Using a second fact from the literature, we obtain a useful corollary.
\begin{prop}[{\cite[Proposition 2.11.3]{N05}}] If $G$ is a self-replicating bounded automata group, then $Z$ as in Theorem~\ref{thm:gen_by_nuc} is a generating set for $G$.
\end{prop}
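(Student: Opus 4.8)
The plan is to set $H:=\grp{Z}$ and prove the nontrivial inclusion $G\subseteq H$. Since $Z$ is closed under taking sections and, by the cocycle identity $(hk)_v=h_{k(v)}k_v$, a section of a product of elements of $Z$ is again a product of elements of $Z$, the group $H$ is self-similar and every section of an element of $H$ lies in $H$. For each $g\in G$, Theorem~\ref{thm:gen_by_nuc} furnishes a least integer $d(g)\geq 0$ with $g_v\in Z$ for all $v\in X^k$ and all $k\geq d(g)$, and I would induct on $d(g)$. The base case $d(g)=0$ is immediate: then the section of $g$ at the root is $g$ itself, so $g\in Z\subseteq H$.

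For the inductive step, suppose $d(g)=N\geq 1$ and the claim holds below $N$. Each first-level section satisfies $(g_x)_w=g_{xw}$, so $d(g_x)\leq N-1$ and hence $g_x\in H$ by induction. Writing $\sigma:=\pi_1(g)$ and $k:=\sigma^{-1}g\in G_{(X)}$, the cocycle identity gives $k_x=(\sigma^{-1})_{g(x)}\,g_x$, so every first-level section of the level-fixing element $k$ also lies in $H$. It therefore suffices to realize the permutation $\sigma$ by an element of $H$ and to show that a level-fixing element all of whose first-level sections lie in $H$ itself lies in $H$.

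The reassembly of such a level-fixing element is the crux, and it is here that the self-replicating hypothesis is indispensable: a bounded automata group need not be branch, so an element of $G_{(X)}$ is not in general recoverable from independently prescribed sections (already for the adding machine, the element with the generator as one section and the identity as the other is not in the group). What I would use instead is that $H$ inherits recurrence from $G$. Concretely, I would first prove that $H$ is level transitive and that $\pi_1(H)=\pi_1(G)$, which realizes $\sigma$ inside $H$, and then prove that $\sec_H(x)=H$ for every $x\in X$; I expect both to follow from the corresponding properties $\sec_G(x)=G$ of the self-replicating group $G$ together with the defining feature of $Z$, namely that it captures all sufficiently deep sections. I anticipate that establishing $\sec_H(x)=H$ is the main obstacle, since it is precisely the statement that the recurrence of $G$ survives passage to the nucleus-generated subgroup. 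Granting these facts, the induction can be closed by matching $g$ with an element of $H$ on the finite portrait at level $N$: level transitivity together with $\pi_N(H)=\pi_N(G)$ realizes the action of $g$ on $X^N$, while $\sec_H(x)=H$ and self-similarity allow the $Z$-valued sections $g_v$ $(v\in X^N)$ to be inserted within $H$, producing $h\in H$ that agrees with $g$ on $X^N$ and has the same level-$N$ sections, whence $h=g$ by the contracting structure.
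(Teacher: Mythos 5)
Your reductions are fine up to the crux: the base case, the bound $d(g_x)\leq N-1$, and the observation that it suffices to realize $\pi_1(g)$ inside $H:=\grp{Z}$ and to handle level-fixing elements. (Minor slip: $\sigma^{-1}g$ need not lie in $G$, since the rooted permutation $\sigma=\pi_1(g)$ need not belong to $G$; this is repairable.) The genuine gap is the closing step, and it sits exactly at the point you yourself flagged. Even granting all three of your unproved lemmas ($H$ level transitive, $\pi_N(H)=\pi_N(G)$, and $\sec_H(x)=H$), you cannot produce $h\in H$ agreeing with $g$ on $X^N$ \emph{and} having the same level-$N$ sections. Weak self-replication of $H$ inserts one prescribed section at one prescribed vertex, with no control over the sections at the other vertices of $X^N$; if you try to correct a second vertex $v_2$ by multiplying by $h_2\in H_{(v_2)}$, the identity $(h_2h_1)_{v_1}=(h_2)_{h_1(v_1)}(h_1)_{v_1}$ shows you destroy what was arranged at $v_1$. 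Prescribing the whole tuple $(g_v)_{v\in X^N}$ simultaneously is precisely the branch-type property (large rigid stabilizers) whose failure you illustrated with the adding machine, so self-replication of $H$ does not repair the obstruction you identified--the proof never escapes it. Moreover, the three auxiliary lemmas are only ``expected''; as far as I can see, $\sec_H(x)=H$ and $\pi_1(H)=\pi_1(G)$ are no easier than the proposition itself (both are immediate from $H=G$, and I see no independent route to them).

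For comparison, the paper gives no proof (it cites \cite{N05}), but the standard argument runs in the opposite direction and avoids reassembly entirely: instead of rebuilding $g$ from its sections, use self-replication to realize $g$ \emph{as} a deep section. Let $W$ be a finite generating set of $G$ and put $S:=\{w_u\mid w\in W\cup W^{-1},\ u\in \Xt\}$; this set is finite (elements of $G$ are finite state), closed under taking sections, and every element of $G$ is a product of elements of $S$. By Theorem~\ref{thm:gen_by_nuc} there is $N$ such that $s_u\in Z$ for all $s\in S$ and all $u\in X^k$ with $k\geq N$. Fix $v\in X^N$. Since $G$ is self-replicating, $\sec_G(v)=G$, so for $g\in G$ there is $f\in G_{(v)}$ with $f_v=g$. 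Writing $f=s_1\cdots s_\ell$ with $s_i\in S$, the cocycle identity gives
\[
g=f_v=(s_1)_{u_1}(s_2)_{u_2}\cdots(s_\ell)_{u_\ell},\qquad u_\ell=v,\quad u_i=(s_{i+1}\cdots s_\ell)(v),
\]
and every $u_i$ has length $N$, so every factor lies in $Z$. Hence $g\in\grp{Z}$. Your induction on contraction depth, by contrast, forces you to solve the portrait-realization problem, which is false in general; that is the step to abandon.
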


\begin{cor}\label{cor:reduction-reduced-form}
 If $G\leq \Aut(\Xt)$ is a self-replicating bounded automata group, then there is $k\geq 1$ such that $r_k(G)\leq\Aut((X^k)^*)$ is a self-replicating bounded automata group in reduced form. In particular, $G$ has a faithful representation as a self-replicating bounded automata group in reduced form.
\end{cor}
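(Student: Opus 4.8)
The plan is to leverage Theorem~\ref{thm:reduction-reduced-form} together with the self-replicating hypothesis, which forces the nucleus-type set $Z$ to already generate $G$. First I would apply the immediately preceding proposition (\cite[Proposition 2.11.3]{N05}): the set $Z$ appearing in Theorem~\ref{thm:reduction-reduced-form} is precisely the finite self-similar set produced by Theorem~\ref{thm:gen_by_nuc}, and since $G$ is self-replicating this set generates $G$, so $\grp{Z}=G$. Feeding this into Theorem~\ref{thm:reduction-reduced-form}, the group $H:=r_k(\grp{Z})$ is nothing but $r_k(G)$, and parts (1) and (2) of that theorem then hand us, for the corresponding $k$, that $r_k(G)\leq\Aut((X^k)^*)$ is a bounded automata group in reduced form with distinguished generating set $r_k(Z)$. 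This settles the ``bounded automata group'' and ``reduced form'' clauses simultaneously.

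It remains to check that $r_k(G)$ is self-replicating. Since self-replication is a property of the action on $O^*:=(X^k)^*$ and not merely an abstract invariant, this is where the actual verification lies. Self-similarity of $r_k(G)$ is free, as it is already a bounded automata group. For transitivity on the first level $O:=X^k$ of $O^*$, I would invoke the recorded identity $\pi_1\circ r_k=\pi_k$: the action of $r_k(G)$ on $O$ is exactly the action of $G$ on $X^k$, which is transitive because every self-replicating group is level transitive.

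For the section condition I would fix $o\in O$, regarded at once as a single letter of $O^*$ and as a length-$k$ vertex of $\Xt$. As $r_k$ is the restriction of the action to $O^*\subseteq\Xt$, an element $r_k(g)$ fixes $o$ in $O^*$ precisely when $g$ fixes the vertex $o$ in $\Xt$, so the stabilizer of $o$ in $r_k(G)$ is $r_k(G_{(o)})$. Lemma~\ref{lem:directed_preserve}(1) gives $(r_k(g))_o=r_k(g_o)$, whence
\[
\sec_{r_k(G)}(o)=\{(r_k(g))_o\mid g\in G_{(o)}\}=r_k(\{g_o\mid g\in G_{(o)}\})=r_k(\sec_G(o)).
\]
Self-replication of $G$ yields $\sec_G(v)=G$ for every $v\in\Xt$, in particular $\sec_G(o)=G$, so $\sec_{r_k(G)}(o)=r_k(G)$; as $o$ was arbitrary, $r_k(G)$ is self-replicating. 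Finally, $r_k$ is injective since the action of $\Aut(\Xt)$ on $(X^k)^*$ is faithful, so $r_k$ realizes $G$ as a self-replicating bounded automata group in reduced form, giving the ``in particular'' clause.

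I do not anticipate a serious obstacle; the one point requiring care is to keep straight the two roles of a word $o\in X^k$ --- as an alphabet letter of $O^*$ versus a level-$k$ vertex of $\Xt$ --- when transporting the stabilizer and section computations across $r_k$, and to remember that self-replication must be verified as an action-theoretic property rather than inferred from the isomorphism $G\cong r_k(G)$.
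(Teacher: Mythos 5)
Your proposal is correct and follows exactly the route the paper intends: \cite[Proposition 2.11.3]{N05} gives $\grp{Z}=G$, so Theorem~\ref{thm:reduction-reduced-form} applies with $H=r_k(\grp{Z})=r_k(G)$, yielding a bounded automata group in reduced form. The paper leaves the verification that $r_k(G)$ is still self-replicating implicit, and your check --- transitivity on $X^k$ from level transitivity via $\pi_1\circ r_k=\pi_k$, and the section condition $\sec_{r_k(G)}(o)=r_k(\sec_G(o))=r_k(G)$ via Lemma~\ref{lem:directed_preserve}(1) --- is precisely the right way to fill in that omitted step.
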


\begin{rmk}
Theorem~\ref{thm:reduction-reduced-form} motivates the form of our definitions of generalized basilica groups and abelian wreath type groups. Our definitions assume the group is in reduced form to avoid cumbersome reduction steps.
\end{rmk}

We deduce two further corollaries of Theorem~\ref{thm:reduction-reduced-form}, which elucidate the connection between torsion-free bounded automata groups and generalized basilica groups.  In a torsion-free bounded automata group, every finitary element is trivial and hence every directed element is strongly directed.  Theorem~\ref{thm:reduction-reduced-form} thus implies the following consequence.

\begin{cor}\label{cor:torsion-free_GBG}
If $G\leq \Aut(\Xt)$ is a torsion-free bounded automata group, then there are a torsion-free generalized basilica group $H$ such that $H$ is isomorphic to a subgroup of $G$ and $k\geq 1$ such that
\[
G\injects  \Sym(X^k)\ltimes H^{X^k}.
\]
\end{cor}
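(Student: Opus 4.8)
The plan is to read this corollary directly off Theorem~\ref{thm:reduction-reduced-form}, once the torsion-freeness hypothesis is translated into the structural statement that powers part~(2) of that theorem, namely that every directed element in sight is \emph{strongly} directed. Accordingly I would proceed in two stages: first extract the consequences of torsion-freeness for finitary and directed elements, and then feed these into Theorem~\ref{thm:reduction-reduced-form}.

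For the first stage, I would record two observations. The finitary automorphisms of $\Xt$ of depth at most $m$ form a finite group, isomorphic to the $m$-fold iterated permutational wreath product of $\Sym(X)$; hence every finitary automorphism has finite order, and since $G$ is torsion-free its only finitary element is the identity. Next, because $G$ is self-similar, every section $g_v$ of an element $g\in G$ again lies in $G$. Consequently, if $d\in G$ is directed with active vertex $w\in X^n$, then each section $d_v$ with $v\in X^n\setminus\{w\}$ is a finitary element of $G$ and is therefore trivial; that is, $d$ is strongly directed. Thus every directed element of $G$ is strongly directed.

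In the second stage I would apply Theorem~\ref{thm:reduction-reduced-form} to $G$, obtaining the finite self-similar set $Z\subseteq G$ from Theorem~\ref{thm:gen_by_nuc} together with some $k\geq 1$. By the previous paragraph the directed elements of $Z$ are strongly directed, so part~(2) of Theorem~\ref{thm:reduction-reduced-form} yields that $H:=r_k(\grp{Z})\leq \Aut((X^k)^*)$ is a generalized basilica group, while part~(3) supplies the embedding $G\injects \Sym(X^k)\ltimes H^{X^k}$. Finally, since $r_k$ is injective we have $H\cong \grp{Z}\leq G$, so $H$ is a subgroup of a torsion-free group and is therefore itself torsion-free. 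Combining these gives a torsion-free generalized basilica group $H$ isomorphic to a subgroup of $G$ along with the required embedding.

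I do not expect a genuine obstacle here, as the substantive work is carried out in Theorem~\ref{thm:reduction-reduced-form}. The one point that demands care is the claim that the directed elements are strongly directed: this rests precisely on the self-similarity of $G$ (to keep the sections inside $G$) combined with torsion-freeness (to annihilate the finitary sections), and I would make sure the active vertex and its period are handled consistently with the definition of strongly directed so that the finitary-section condition is being checked at the correct level.
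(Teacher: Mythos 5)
Your proposal is correct and follows essentially the same route as the paper: the paper likewise observes that torsion-freeness forces every finitary element to be trivial, hence every directed element is strongly directed, and then invokes parts (2) and (3) of Theorem~\ref{thm:reduction-reduced-form}. Your additional details (finitary elements have finite order since those of bounded depth form a finite iterated wreath product, and $H\cong\grp{Z}\leq G$ via injectivity of $r_k$, giving torsion-freeness of $H$) are exactly the points the paper leaves implicit.
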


\begin{cor}\label{cor:torsion_free}
If $G\leq \Aut(\Xt)$ is a self-replicating torsion-free bounded automata group, then $G$ admits a faithful representation as a generalized basilica group.
\end{cor}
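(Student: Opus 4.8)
The plan is to combine Corollary~\ref{cor:reduction-reduced-form} with the torsion-free observation already recorded above, namely that in a torsion-free bounded automata group every finitary element is trivial. First I would invoke Corollary~\ref{cor:reduction-reduced-form}: since $G$ is a self-replicating bounded automata group, there is $k\geq 1$ such that $H:=r_k(G)\leq \Aut((X^k)^*)$ is a self-replicating bounded automata group in reduced form; fix a distinguished generating set $Y$ for $H$. Because $r_k$ is injective, $H\cong G$ is again torsion-free, and because $H$ is self-replicating it is in particular self-similar, so every section of an element of $H$ again lies in $H$. It suffices to show that $Y$ witnesses $H$ as a generalized basilica group, since the faithful representation $G\cong H$ then gives the claim.

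Next I would examine the generators. Writing $O:=X^k$, the reduced-form condition says that each $g\in Y$ satisfies either $g_o=1$ for all $o\in O$, or $g_o\in \{g\}\cup \pi_1(H)$ for all $o\in O$ with exactly one $o$ for which $g_o=g$. The crux is the directed case: for each non-active $o$ the section $g_o$ lies in $\pi_1(H)$ and is therefore a finitary automorphism, while self-similarity forces $g_o\in H$. Since $H$ is torsion-free, this finitary element $g_o$ must be trivial. Hence every non-active section vanishes and $g_o\in \{1,g\}$ for all $o\in O$ with a unique active vertex, which is precisely the generalized basilica condition. A generator of the first type is itself a finitary element of $H$, hence trivial, and so may be discarded without changing $\grp{Y}=H$; equivalently it already satisfies the generalized basilica condition $g_o=1$ for all $o$. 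Thus $Y$ (after removing any trivial members) exhibits $H$ as a generalized basilica group.

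I expect this to be essentially routine once the reduction to reduced form is in hand; all the real work is carried by Corollary~\ref{cor:reduction-reduced-form}. The only point requiring care is the identification of the non-active sections as genuine finitary elements of the \emph{torsion-free} group $H$, which is exactly where the torsion-free hypothesis is used to upgrade ``reduced form'' to ``generalized basilica.'' There is no substantial obstacle beyond this bookkeeping, and in particular the passage from the statement ``every directed element is strongly directed'' in a torsion-free bounded automata group to the generalized basilica condition on the distinguished generators is immediate.
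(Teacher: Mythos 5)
Your proof is correct and follows essentially the same route as the paper: both rest on the reduced-form reduction (Corollary~\ref{cor:reduction-reduced-form}, itself built from Theorem~\ref{thm:reduction-reduced-form}) combined with the observation that a finitary element of a torsion-free self-similar group must be trivial. The only cosmetic difference is that the paper applies torsion-freeness \emph{before} passing to $r_k$ (every directed element of the nucleus is strongly directed, so the ``moreover'' clause of Theorem~\ref{thm:reduction-reduced-form}(2) applies), whereas you apply it \emph{after}, to the non-active sections of the distinguished generators of $r_k(G)$.
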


Let us conclude this section by working out the reduced form of the classical basilica group.
\begin{example}\label{ex:basilica}
Let $a,b\in \Aut([2]^*)$ be defined recursively by
\[
a(iv):=
\begin{cases}
iv & i=0\\
ib(v) & i=1
\end{cases}
\quad \text{and}\quad
b(iv):=
\begin{cases}
1v & i=0\\
0a(v) & i=1.
\end{cases}
\]
The group $G:=\grp{a,b}\leq \Aut([2]^*)$ is called the \textbf{basilica group}; see \cite{GZ02}. There is a convenient notation for the generators, representing them as elements of $\Sym([2])\ltimes G^2$, called wreath recursion: the wreath recursion of $a$ is $a=(1,b)$, and that of $b$ is $b=(01)(1,a)$.

We see that $a_1=b$ and $b_1=a$, so $G$ is not in reduced form. Consider $r_2(G)$. Set $\tilde{a}:=r_2(a)$ and $\tilde{b}:=r_2(b)$. In view of Lemma~\ref{lem:directed_preserve}, $\tilde{a}_{11}=r_2(a_{11})=r_2(a)=\tilde{a}$, and $\tilde{b}_{11}=\tilde{b}$. One further checks that all other sections of $\tilde{a}$ and $\tilde{b}$ on $[2]^2$ are trivial. In wreath recursion, $\at=(23)(1,1,1,\at)$ and $\bt=(02)(13)(1,1,1,\bt)$. Hence, $r_2(G)$ is in reduced form, and furthermore, the group is a generalized basilica group. In fact, $r_2(G)$ is a \textit{balanced} generalized basilica group.
\end{example}

\section{Technical results}
This section establishes the key technical result of this work, Lemma~\ref{lem:verbal_sgroup}, as well as several general technical observations for later use.

\subsection{Elementary amenable self-replicating groups}
Let $\gamma(x_1,\dots,x_n)$ be a word in the free group on $n$ generators. For a group $G$, the \textbf{verbal subgroup} of $G$ given by $\gamma(x_1,\dots,x_n)$ is
\[
\gamma(G):=\grp{\gamma(g_1,\dots,g_n)\mid g_i\in G}.
\]
Note that verbal subgroups are always characteristic subgroups of $G$.

A group is called \textbf{max-n} if every increasing chain of normal subgroups eventually stabilizes.  We will only require that all finitely generated virtually abelian groups are max-n, but the class of max-n groups is in fact large and complicated.

Recall that $\rk(G)$ denotes the construction rank of an elementary amenable group $G$.

\begin{lem}\label{lem:verbal_sgroup_v2}
	Suppose that $G\leq \Aut(\Xt)$ is an elementary amenable self-replicating group. Let $\gamma$ and $\beta$ be words such that $G/\beta(\gamma(G))$ is a max-n group.  Then there are weakly self-replicating normal subgroups $M$ and $ N$ of $G$ with
\begin{enumerate}[(1)]
\item $N\leq M$, $\gamma(G)\leq M$, and $\beta(\gamma(G))\leq N$;
\item $\rk(M)=\rk(\gamma(G))$ and $\rk (N)=\rk(\beta(\gamma(G)))$; and
\item $\beta(M/N)=1$.
\end{enumerate}
\end{lem}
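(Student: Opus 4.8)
The plan is to build $M$ and $N$ as limits of an ascending ``section-closure'' process applied to $\gamma(G)$ and $\beta(\gamma(G))$, using the max-n hypothesis to force that process to terminate after finitely many steps.

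First I would record four facts about how the groups of sections interact with a normal subgroup $K\normal G$ of the self-replicating group $G$. (i) \emph{Normality:} $\sec_K(x)\normal G$ for each $x\in X$; given $g\in G$, self-replication lets us choose $\tilde g\in G_{(x)}$ with $\tilde g_x=g$, and then $g\,\sec_K(x)\,g^{-1}=\sec_K(x)$ by conjugating a stabilizing witness by $\tilde g$ and taking sections. (ii) \emph{Monotonicity:} $K\le L$ implies $\sec_K(x)\le\sec_L(x)$. (iii) \emph{Rank bound:} $\sec_K(x)$ is a quotient of the subgroup $K_{(x)}\le K$, so $\rk(\sec_K(x))\le\rk(K)$. (iv) \emph{Inflation for verbal $K$:} if $K=\delta(G)$ is verbal then $K\le\sec_K(x)$, since $\phi_x\colon G_{(x)}\to G$ is onto and restricts to a homomorphism, carrying $\delta$-values to $\delta$-values; hence $\rk(\sec_K(x))=\rk(K)$.

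The decisive observation is that, because $G$ acts transitively on $X$, all the subgroups $\sec_K(x)$ for $x\in X$ are $G$-conjugate, while by (i) each is normal in $G$; therefore they are all \emph{equal}. Write $\rho(K)$ for this common value $\sec_K(x)$; it is a single normal subgroup with $\rk(\rho(K))\le\rk(K)$, and $\rho$ is monotone. Now I would iterate: set $K_0=\gamma(G)$ and $K_{n+1}=\rho(K_n)$. Inflation gives $K_0\le K_1$, and monotonicity propagates this to an ascending chain $K_0\le K_1\le\cdots$ of normal subgroups of $G$, each containing $\gamma(G)$ and each of rank exactly $\rk(\gamma(G))$ by (iii)--(iv). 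Since every $K_n$ contains $\beta(\gamma(G))$, the chain may be read inside the max-n group $G/\beta(\gamma(G))$, where it must stabilize; thus $K_n=K_{n_0}=:M$ for $n\ge n_0$. Then $M=\rho(M)=\sec_M(x)$ for every $x$, and the composition rule $\sec_M(xv)=\sec_{\sec_M(x)}(v)$ upgrades this to $\sec_M(v)=M$ for all $v$, so $M$ is weakly self-replicating with $\gamma(G)\le M$ and $\rk(M)=\rk(\gamma(G))$. For $N$, I would run the identical construction on the verbal subgroup $\beta(\gamma(G))$, obtaining a weakly self-replicating normal $N:=\rho^{n_*}(\beta(\gamma(G)))$ with $\rk(N)=\rk(\beta(\gamma(G)))$, where $n_*\ge n_0$ is large enough that both chains have stabilized. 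Monotonicity together with $\beta(\gamma(G))\le\gamma(G)$ gives $N\le M$, and the remaining point $\beta(M)\le N$ (i.e.\ $\beta(M/N)=1$) I would get from the operator inequality $\beta(\sec_K(x))\le\sec_{\beta(K)}(x)$, that is $\beta\circ\rho\le\rho\circ\beta$, proved exactly as inflation by pushing $\beta$-values through $\phi_x$; iterating and using monotonicity yields $\beta(M)=\beta(\rho^{n_*}(\gamma(G)))\le\rho^{n_*}(\beta(\gamma(G)))=N$.

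The main obstacle is the rank bookkeeping: a priori the section-closure could both enlarge $\gamma(G)$ uncontrollably and, being an ascending union, add one to the construction rank. Both dangers are defused by the ``conjugacy collapse'' observation, which turns the a priori \emph{product} $\grp{\sec_K(x):x\in X}$ into a single normal subgroup of rank $\le\rk(K)$, and by the max-n hypothesis, which replaces the infinite union by a finite stage so that no direct-limit rank increment occurs. Verifying that $\beta(\gamma(G))$ is genuinely a verbal subgroup of $G$ (so inflation applies to it, bootstrapping off the inflation of $\gamma(G)$) and that the two chains stabilize simultaneously are the routine points to be careful about.
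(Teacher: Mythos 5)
Your proposal is correct and is essentially the paper's own proof: the paper fixes $x\in X$, iterates $K_{n+1}=\phi_x((K_n)_{(x)})$ and $H_{n+1}=\phi_x((H_n)_{(x)})$ starting from $\gamma(G)$ and $\beta(\gamma(G))$, uses the max-n hypothesis to stabilize these ascending chains of normal subgroups, and does the same rank bookkeeping; your operator $\rho$ is exactly this section-closure, with the conjugacy/normality observation that the paper invokes after stabilization built in at the outset. The only organizational difference is that the paper obtains $\beta(M/N)=1$ by carrying the inductive invariant $\beta(K_n/H_n)=1$ along the two chains (using that $K_{n+1}/H_{n+1}$ is a quotient of $(K_n)_{(x)}/(H_n)_{(x)}$), whereas you prove the equivalent commutation inequality $\beta(\rho(K))\leq\rho(\beta(K))$ and apply it once at the end.
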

\begin{proof}
	Fix $x\in X$ and let $\phi_x:G_{(x)}\rightarrow G$ be the section homomorphism at $x$. Clearly $\gamma(G_{(x)})\leq \gamma(G)_{(x)}\leq G_{(x)}$, and as $\phi_x$ is onto, we have
	\[
	\gamma(G)=\phi_x(\gamma(G_{(x)}))\leq \phi_x(\gamma(G)_{(x)})\leq \phi_x(G_{(x)})=G.
	\]
Similarly, we have
\[
	\beta(\gamma(G))=\phi_x(\beta(\gamma(G_{(x)})))\leq \phi_x(\beta(\gamma(G))_{(x)})\leq \phi_x(G_{(x)})=G.
	\]

	We now define two sequences of normal subgroups of $G$: Set $K_0:=\gamma(G)$, $H_0:=\beta(\gamma(G))$ and $K_{n+1}:=\phi_x((K_{n})_{(x)})$, $H_{n+1}:=\phi_x((H_n)_{(x)})$.  Trivially, $H_0\leq K_0$, so $H_n\leq K_n$ for all $n\geq 0$ by induction.   The previous paragraph ensures that $K_0\leq K_1$ and $H_0\leq H_1$, so $K_n\leq K_{n+1}$, $H_n\leq H_{n+1}$ for all $n\geq 0$, again by induction. Normality follows by induction because $\phi_x$ is onto. We claim that $\beta(K_n/H_n)=1$ for all $n\geq 0$. This is trivial for $n=0$. Assume that $\beta(K_n/H_n)=1$.  Note that $(K_n)_{(x)}\cap H_n = (H_n)_{(x)}$ and so $\beta((K_n)_{(x)}/(H_n)_{(x)})\leq \beta(K_n/H_n)=1$.  As $K_{n+1}/H_{n+1}$ is a quotient of $(K_n)_{(x)}/(H_n)_{(x)}$, we deduce that $\beta(K_{n+1}/H_{n+1})=1$.

Since $G/\beta(\gamma(G))$ is a max-n group, there is $L$ such that $K_L=K_{L+1}$ and $H_L=H_{L+1}$. We claim $M:=K_L$ and $N:=H_L$ for such an $L$ satisfy the lemma. The previous paragraph ensures that $M$ and $N$ are normal in $G$. We see that $\sec_M(x)=K_{L+1}=K_L=M$, and since the section subgroups are conjugate to each other, $\sec_M(y)=M$ for all $y\in X$. The obvious induction argument now gives that $\sec_M(v)=M$ for all $v\in \Xt$, so $M$ is weakly self-replicating. The same argument shows that $N$ is weakly self-replicating. Claims (1) and (3) follow from the previous paragraph. In view of Proposition~\ref{prop:rank_stability}, induction shows $\rk(K_j)= \rk(K_0)=\rk(\gamma(G))$ for all $j$. The group $M$ is thus such that $\rk(M)=\rk(\gamma(G))$. A similar argument shows $\rk(N)=\rk(\beta(\gamma(G)))$, completing the proof of (2).
\end{proof}

By considering the special case that $\beta$ is a letter, we obtain the following.

\begin{lem}\label{lem:verbal_sgroup}
	Suppose that $G\leq \Aut(\Xt)$ is an elementary amenable self-replicating group. Let $\gamma$ be a word such that $G/\gamma(G)$ is a max-n group.  Then there is a weakly self-replicating $M\normal G$ such that $\gamma(G)\leq M$ and $\rk(M)=\rk(\gamma(G))$.
\end{lem}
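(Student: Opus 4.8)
The plan is to obtain Lemma~\ref{lem:verbal_sgroup} as an immediate specialization of Lemma~\ref{lem:verbal_sgroup_v2}, using the word $\beta$ equal to the single letter $x_1$ in the free group on one generator. Since all of the substantive work has already been carried out in Lemma~\ref{lem:verbal_sgroup_v2}, the only thing I would need to verify is that this choice of $\beta$ degenerates the two-word hypothesis and conclusion into the one-word statement at hand.

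First I would record the elementary computation that controls this degeneration: for the one-letter word $\beta = x_1$ and any group $H$, the verbal subgroup is $\beta(H) = \grp{\beta(h) \mid h \in H} = \grp{h \mid h \in H} = H$. In particular $\beta(\gamma(G)) = \gamma(G)$. Consequently the hypothesis of Lemma~\ref{lem:verbal_sgroup_v2} that $G/\beta(\gamma(G))$ be max-n is literally the hypothesis of the present lemma that $G/\gamma(G)$ be max-n, so the stronger lemma applies to the pair $(\gamma,\beta)$ with no extra assumptions. Applying it produces weakly self-replicating normal subgroups $N \leq M$ of $G$ satisfying conditions (1)--(3). From condition (1) I extract $\gamma(G) \leq M$, and from condition (2) I extract $\rk(M) = \rk(\gamma(G))$; since $M$ is weakly self-replicating and normal, this $M$ is exactly the subgroup demanded by the conclusion. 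The subgroup $N$ and condition (3) play no role here (indeed $\beta(M/N) = M/N$, so condition (3) merely forces $M = N$), and I would simply discard them.

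I expect essentially no obstacle, as this is a pure specialization rather than a genuine argument. The single point worth flagging is the easy-to-overlook observation that the verbal subgroup attached to a one-letter word recovers the ambient group, which is precisely the mechanism collapsing the iterated construction $\beta(\gamma(\cdot))$ of Lemma~\ref{lem:verbal_sgroup_v2} down to the single verbal subgroup $\gamma(\cdot)$. Everything else — weak self-replication of $M$, normality, the rank equality, and containment of $\gamma(G)$ — is inherited verbatim from the conclusion of the general lemma.
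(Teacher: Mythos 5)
Your proposal is correct and is exactly the paper's own argument: the paper derives Lemma~\ref{lem:verbal_sgroup} from Lemma~\ref{lem:verbal_sgroup_v2} precisely ``by considering the special case that $\beta$ is a letter,'' which is the specialization you carry out, including the key observation that a one-letter word $\beta$ satisfies $\beta(H)=H$ so that the hypothesis and conclusion collapse as you describe.
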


There are a couple of important corollaries of Lemma~\ref{lem:verbal_sgroup}. We say a word $w(x_1,\dots,x_n)$ is  \textbf{$d$-locally finite} if every $d$-generated group $G$ such that $w(G)=\{1\}$ is finite.

\begin{lem}[cf. {\cite[p. 517]{FJ08}}]\label{lem:lf_word}
	For every $d$-generated finite group $A$, there is a $d$-locally finite word $w$ such that $w(A)=1$.
\end{lem}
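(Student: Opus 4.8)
The plan is to build the word $w$ directly from the relatively free group of rank $d$ in the variety generated by $A$, exploiting that this group is finite. First I would fix a free group $F$ of rank $d$ on generators $x_1,\dots,x_d$ and set $V:=\bigcap_{\phi\in\mathrm{Hom}(F,A)}\ker\phi$, the set of all laws of $A$ in the variables $x_1,\dots,x_d$; this $V$ is a fully invariant normal subgroup of $F$. Because a homomorphism $F\to A$ is freely determined by, and may take, arbitrary values on the generators, the set $\mathrm{Hom}(F,A)$ is in bijection with $A^d$ and hence is finite; the product of all these homomorphisms then realizes $B:=F/V$ as a subgroup of the finite group $A^{A^d}$. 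Thus $B$ is finite, and (only the finiteness of $A$, not its $d$-generation, is used here). Write $N:=|B|$ and let $b_i:=x_iV$ be the generating images of the $x_i$ in $B$.

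Next I would read a system of laws off the multiplication of $B$. For each $s\in B$ choose a word $u_s$ in $x_1,\dots,x_d$ with $u_s(b_1,\dots,b_d)=s$, taking $u_1$ to be the empty word and $u_{b_i}=x_i$ for each generator. For every pair $s,t\in B$ the word $c_{s,t}:=u_s\,u_t\,u_{st}^{-1}$ then lies in $V$, since it maps to $s\,t\,(st)^{-1}=1$ in $B$; that is, each $c_{s,t}$ is a law of $A$. There are only finitely many such words, so I would package them into a single word $w$ by taking the product of the $c_{s,t}$ over all pairs $(s,t)$, with each factor written in its own disjoint copy of the variables $x_1,\dots,x_d$. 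Evaluating all but one block of variables at the identity shows that, in any group, $w\equiv 1$ holds if and only if every $c_{s,t}\equiv 1$ holds; consequently $w(A)=1$, as each $c_{s,t}$ is a law of $A$.

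Finally I would verify that $w$ is $d$-locally finite, in fact with the explicit bound $N$. Suppose $G=\langle g_1,\dots,g_d\rangle$ satisfies $w(G)=\{1\}$, and write $\bar g=(g_1,\dots,g_d)$, so that $c_{s,t}(\bar g)=1$ for all $s,t\in B$. Then the set $H:=\{u_s(\bar g)\mid s\in B\}$ is closed under multiplication, since $u_s(\bar g)\,u_t(\bar g)=u_{st}(\bar g)$; it contains the identity $u_1(\bar g)=1$; and it is closed under inverses, since $u_s(\bar g)\,u_{s^{-1}}(\bar g)=u_1(\bar g)=1$. Hence $H$ is a subgroup of $G$ of order at most $N$. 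As $g_i=u_{b_i}(\bar g)\in H$, we get $G=\langle g_1,\dots,g_d\rangle\leq H$, so $|G|\leq N$ and $G$ is finite. The one genuinely delicate point, and the reason for passing to $B=F/V$ rather than working with $A$ itself, is the middle step: the relations $c_{s,t}$ must be \emph{laws} of $A$, valid under every substitution, and not merely relations that happen to hold at one generating tuple; phrasing them through the relatively free quotient $B$ is precisely what guarantees $c_{s,t}\in V$. The remaining verifications—that the disjoint-variable product turns $w\equiv 1$ into the conjunction of the $c_{s,t}\equiv 1$, and that $H$ is a subgroup—are routine.
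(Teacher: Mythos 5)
Your proof is correct (up to one degenerate case, noted below), but there is nothing in the paper to compare it with line by line: the paper does not prove Lemma~\ref{lem:lf_word}, it simply cites \cite[p.~517]{FJ08}. The argument cited there runs along the same first half as yours --- the group $B=F/V$ embeds in $A^{A^d}$, so $V$ has finite index in the finitely generated free group $F$ --- but then invokes Schreier's theorem to conclude that $V$ is generated by finitely many words $v_1,\dots,v_m$, takes $w$ to be the product of the $v_i$ in disjoint variables, and observes that for a $d$-generated $G$ with $w(G)=1$ the evaluation homomorphism $F\to G$ at a generating tuple kills each $v_i$, hence kills the subgroup $V$ they generate, so that it factors through the finite group $B$. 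Your proof follows the same skeleton but replaces the appeal to Schreier's subgroup theorem by the explicit multiplication-table words $c_{s,t}=u_su_tu_{st}^{-1}$ together with the direct verification that $H=\{u_s(\bar g)\mid s\in B\}$ is a subgroup containing the generators. This is genuinely more elementary (nothing beyond the definition of $V$ and the disjoint-variable trick is used), and it makes the bound $|G|\le |B|\le |A|^{|A|^d}$ explicit. The two arguments are close cousins: with your normalization $u_{b_i}=x_i$, the words $c_{s,b_i}$ are exactly the Schreier generators of $V$ relative to the transversal $\{u_s\}$, so you are re-proving the fragment of Reidemeister--Schreier that is needed rather than quoting it.

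One small point to patch: the simultaneous normalization ``$u_1$ is the empty word and $u_{b_i}=x_i$'' presupposes that $1,b_1,\dots,b_d$ are pairwise distinct in $B$. This holds whenever $A\neq 1$ (if $a\in A$ is nontrivial, the homomorphism sending $x_i\mapsto a$ and all other generators to $1$ separates $b_i$ from $b_j$ and from $1$), but for $A=1$ your construction degenerates to the empty word, which is not $d$-locally finite. That case is trivial ($w=x_1$ works), or you can handle all cases uniformly by dropping the normalization of the $u_s$ and instead appending to your product the $d$ additional words $x_iu_{b_i}^{-1}$, which lie in $V$ and force $g_i=u_{b_i}(\bar g)\in H$ in the final step.
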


\begin{cor}\label{cor:ext_EA}
Suppose that $G\leq \Aut(\Xt)$ is a finitely generated elementary amenable self-replicating group. If $G$ is non-abelian, then there is a weakly self-replicating $M\normal G$ such that $G/M$ is either finite or abelian and $\rk(M)+1=\rk(G)$. Furthermore, if $\rk(G)$ is witnessed (in the sense of Corollary~\ref{cor:EA-rank}) by a normal subgroup $N$, then $G/M$ is finite if $G/N$ is finite and $G/M$ is abelian if $G/N$ is abelian.
\end{cor}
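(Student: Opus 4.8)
The plan is to apply Corollary~\ref{cor:EA-rank} to get a witnessing normal subgroup $N$, and then to replace $N$ by a weakly self-replicating normal subgroup $M$ via Lemma~\ref{lem:verbal_sgroup}, choosing the defining word so that $N$ bounds the rank of $M$ from above while the quotient $G/M$ bounds it from below. First I would observe that a non-abelian self-replicating group is necessarily infinite: transitivity on $X$ forces the vertex stabilizer $G_{(x)}$ to have index $|X|\geq 2$ in $G$, yet the section homomorphism $\phi_x\colon G_{(x)}\to G$ is surjective, which is impossible for a nontrivial finite group. Thus $G$ is finitely generated, elementary amenable, and neither finite nor abelian, so Corollary~\ref{cor:EA-rank} provides $N\normal G$ with $G/N$ finite or abelian and $\rk(G)=\rk(N)+1$.

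Next I would, according to the type of $G/N$, exhibit a verbal subgroup $\gamma(G)\leq N$ with $G/\gamma(G)$ max-n of that same type. If $G/N$ is abelian, take $\gamma=[x,y]$, so that $\gamma(G)=[G,G]\leq N$ and $G/\gamma(G)=G^{\mathrm{ab}}$ is a finitely generated abelian group, hence max-n. If $G/N$ is finite, fix $d$ with $G$ being $d$-generated; then $G/N$ is a $d$-generated finite group, so Lemma~\ref{lem:lf_word} yields a $d$-locally finite word $w$ with $w(G/N)=1$, whence $w(G)\leq N$. Taking $\gamma=w$, the quotient $G/\gamma(G)$ is a $d$-generated group killed by $w$, so it is finite by $d$-local finiteness and therefore max-n. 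In either case Lemma~\ref{lem:verbal_sgroup} applies and produces a weakly self-replicating $M\normal G$ with $\gamma(G)\leq M$ and $\rk(M)=\rk(\gamma(G))$; since $G/M$ is a quotient of $G/\gamma(G)$, it is abelian in the first case and finite in the second, matching the type of $G/N$ as required by the ``furthermore'' clause.

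It remains to check $\rk(M)+1=\rk(G)$, which is the heart of the matter. For $\rk(G)\leq\rk(M)+1$: writing $\alpha=\rk(M)$, the subgroup $M$ lies in $\EA_\alpha$ and $G/M\in\EA_0$ (being abelian or finite), so $G$ is $\EA_\alpha$-by-$\EA_0$ and hence lies in $\EA_{\alpha+1}$. For the reverse inequality $\rk(M)\leq\rk(G)-1$: from $\gamma(G)\leq N$ together with Proposition~\ref{prop:rank_stability}(1) we get $\rk(M)=\rk(\gamma(G))\leq\rk(N)=\rk(G)-1$. Combining the two yields $\rk(M)+1=\rk(G)$.

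I expect the finite case of the second step to be the main obstacle. In the abelian case the commutator word is essentially forced and everything is transparent, but in the finite case one needs a single verbal subgroup that simultaneously sits inside $N$ (to cap the rank by $\rk(N)$) and has a finite quotient to which Lemma~\ref{lem:verbal_sgroup} applies; it is precisely the $d$-locally finite word of Lemma~\ref{lem:lf_word} that delivers this, and confirming both $w(G)\leq N$ and the finiteness of $G/w(G)$ is where the argument carries real content. The surrounding rank manipulations are then routine.
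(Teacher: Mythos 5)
Your proof is correct and follows essentially the same route as the paper: Corollary~\ref{cor:EA-rank} produces the witness $N$, then Lemma~\ref{lem:verbal_sgroup} is applied to the verbal subgroup $[G,G]$ in the abelian case and to $\gamma(G)$ for a $d$-locally finite word $\gamma$ supplied by Lemma~\ref{lem:lf_word} in the finite case, with the same two rank inequalities ($\gamma(G)\leq N$ giving the upper bound, $G/M\in\EA_0$ giving the lower bound). The only difference is that you spell out details the paper leaves implicit, namely the justification that a nontrivial self-replicating group is infinite and the explicit rank bookkeeping.
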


\begin{proof}
That $G$ is self-replicating ensures that $G$ is not finite. Since $G$ is non-abelian, Corollary~\ref{cor:EA-rank} supplies $N\normal G$ such that $G/N$ is either finite or abelian and $\rk(N)+1=\rk(G)$.

If $G/N$ is abelian, Lemma~\ref{lem:verbal_sgroup} applied to the verbal subgroup $[G,G]$ supplies the desired subgroup $M$. Let us suppose that $N$ is of finite index in $G$. Say that $G$ is $d$-generated and let $\gamma$ be a $d$-locally finite word such that $\gamma(A)=1$ where $A:=G/N$, whose existence is given by Lemma~\ref{lem:lf_word}. We thus have that $\gamma(G)\leq N$. It is also that case that $G/\gamma(G)$ is finite, since $G/\gamma(G)$ is a $d$-generated group and $\gamma(G/\gamma(G))=1$, so $\rk(\gamma(G))+1=\rk(G)$. Applying Lemma~\ref{lem:verbal_sgroup} to $\gamma(G)$ supplies the desired subgroup $M$.
\end{proof}

In the case of a self-replicating elementary amenable group for which the rank is given by a finite index subgroup, we can say a bit more.

\begin{cor}\label{cor:ext_EA_2}
Suppose that $G\leq \Aut(\Xt)$ is a finitely generated elementary amenable self-replicating group. If $G$ is not virtually abelian and admits $M\normal G$ with finite index such that $\rk(M)+1=\rk(G)$, then there are weakly self-replicating subgroups $A\normal G$ and $F\normal G$ such that
\begin{enumerate}
\item $A\leq F$, $F/A$ is abelian, and $G/F$ is finite; and
\item $\rk(F)+1=\rk(A)+2=\rk(G)$.
\end{enumerate}
\end{cor}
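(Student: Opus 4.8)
The plan is to follow the strategy of Corollary~\ref{cor:ext_EA}, but to extract \emph{two} successive rank drops and realize them simultaneously through the two-word version of the main technical lemma, Lemma~\ref{lem:verbal_sgroup_v2}, taking the second word to be a commutator. Throughout write $\alpha:=\rk(G)$. I would first observe that $\alpha\geq 2$: if $\alpha\leq 1$, then $\rk(M)=\alpha-1=0$, so the finite-index subgroup $M$ would be finite or abelian, forcing $G$ to be virtually abelian, contrary to hypothesis. I will use repeatedly two facts about construction rank: subgroup monotonicity (Proposition~\ref{prop:rank_stability}), and the extension bound that a $K$-by-$\EA_0$ group has rank at most $\rk(K)+1$, which is immediate from the definition of $\EA^e_\bullet$.

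For the first drop, suppose $G$ is $d$-generated and set $A_0:=G/M$, a $d$-generated finite group. Lemma~\ref{lem:lf_word} supplies a $d$-locally finite word $\gamma$ with $\gamma(A_0)=1$, whence $\gamma(G)\leq M$ and $G/\gamma(G)$ is finite (it is $d$-generated and killed by $\gamma$). Since $\gamma(G)\leq M$ gives $\rk(\gamma(G))\leq\rk(M)=\alpha-1$, while the extension bound applied to the $\gamma(G)$-by-finite group $G$ gives $\rk(\gamma(G))\geq\alpha-1$, I conclude $\rk(\gamma(G))=\alpha-1$. As $\gamma(G)$ is of finite index in $G$ and $G$ is not virtually abelian, $\gamma(G)$ is an infinite, non-abelian, finitely generated elementary amenable group.

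The second (abelian) drop is where the main work lies, and it is the step I expect to be the obstacle. I claim $\rk([\gamma(G),\gamma(G)])=\alpha-2$. The lower bound is immediate from the extension bound, since $\gamma(G)$ is $[\gamma(G),\gamma(G)]$-by-abelian, so $\rk([\gamma(G),\gamma(G)])\geq\rk(\gamma(G))-1=\alpha-2$. For the upper bound I would apply Corollary~\ref{cor:EA-rank} to $\gamma(G)$ to obtain $L\normal\gamma(G)$ with $\gamma(G)/L$ finite or abelian and $\rk(L)=\alpha-2$. The crucial point is that the case $\gamma(G)/L$ finite is impossible: then $L$ would be of finite index in $G$, and the extension bound would force $\rk(L)\geq\alpha-1>\alpha-2$. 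Hence $\gamma(G)/L$ is abelian, so $[\gamma(G),\gamma(G)]\leq L$ and $\rk([\gamma(G),\gamma(G)])\leq\rk(L)=\alpha-2$, proving the claim. This ``the finite case cannot occur'' observation, forced by the rigidity of construction rank under passage to finite-index subgroups, is the heart of the argument and is exactly what guarantees the second drop is genuinely abelian rather than finite.

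Finally, set $\beta:=[x_1,x_2]$, so that $\beta(\gamma(G))=[\gamma(G),\gamma(G)]$, and verify the hypothesis of Lemma~\ref{lem:verbal_sgroup_v2}, namely that $G/\beta(\gamma(G))$ is max-n: the normal subgroup $\gamma(G)/[\gamma(G),\gamma(G)]$ is finitely generated abelian (as $\gamma(G)$ is finitely generated, being of finite index in $G$) and of finite index (as $G/\gamma(G)$ is finite), so $G/[\gamma(G),\gamma(G)]$ is finitely generated virtually abelian, hence max-n. Lemma~\ref{lem:verbal_sgroup_v2} then produces weakly self-replicating normal subgroups, which I label $A\leq F$, with $\gamma(G)\leq F$, $[\gamma(G),\gamma(G)]\leq A$, $\rk(F)=\rk(\gamma(G))=\alpha-1$, $\rk(A)=\rk([\gamma(G),\gamma(G)])=\alpha-2$, and $\beta(F/A)=1$, i.e. $F/A$ abelian. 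Since $\gamma(G)\leq F$ and $G/\gamma(G)$ is finite, $G/F$ is finite, and the rank identities give $\rk(F)+1=\rk(A)+2=\alpha=\rk(G)$, which is exactly conclusions (1) and (2).
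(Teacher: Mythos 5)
Your proposal is correct and takes essentially the same route as the paper: both use Lemma~\ref{lem:lf_word} to obtain $\gamma$ with $\gamma(G)\leq M$ and $G/\gamma(G)$ finite, establish $\rk(\gamma(G))=\rk(G)-1$ and $\rk([\gamma(G),\gamma(G)])=\rk(G)-2$ by ruling out a finite second drop, and conclude by applying Lemma~\ref{lem:verbal_sgroup_v2} with $\beta=[x,y]$ after the same max-n verification. The only cosmetic difference is that you apply Corollary~\ref{cor:EA-rank} to $\gamma(G)$ itself, whereas the paper applies it to $M$ and then uses $[\gamma(G),\gamma(G)]\leq[M,M]\leq N$; the two bookkeeping schemes are interchangeable.
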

\begin{proof}
Since $M$ is finitely generated and neither finite nor abelian, as $G$ is not virtually abelian, Corollary~\ref{cor:EA-rank} supplies a normal subgroup $N\normal M$ such that $M/N$ is either finite or abelian and $\rk(N)+1=\rk(M)$.   It cannot be the case that $M/N$ is finite, since this contradicts the rank of $G$, so $M/N$ is abelian.  Say that $G$ is $d$-generated and let $\gamma$ be a $d$-locally finite word such that $\gamma(A)=1$ where $A:=G/M$, whose existence is given by Lemma~\ref{lem:lf_word}. It follows that $\gamma(G)\leq M$ and $G/\gamma(G)$ is finite.  Since $M/N$ is abelian, it follows that $[\gamma(G),\gamma(G)]\leq N$.  Thus
\[
\rk(G)\leq \rk(\gamma(G))+1\leq \rk(M)+1=\rk(G),
\]
and
\[
\rk(\gamma(G))\leq \rk([\gamma(G),\gamma(G)])+1\leq \rk(N)+1=\rk(M)=\rk(\gamma(G)).
\]
We conclude that  $\rk(\gamma(G))+1=\rk(G)$ and $\rk([\gamma(G),\gamma(G)])+2=\rk(\gamma(G))+1=\rk(G)$.

The group $G/[\gamma(G),\gamma(G)]$ is a finitely generated virtually abelian (as $G/\gamma(G)$ is finite) and hence a max-n group. Applying Lemma~\ref{lem:verbal_sgroup_v2} with $\beta=[x,y]$, we can find weakly self-replicating $F$ and $ A$ normal in $G$ such that $A\leq F$, $F/A$ is abelian, $\gamma(G)\leq F$ (and so $G/F$ is finite), $\rk(F)=\rk(\gamma(G))$ and $\rk(A)=\rk([\gamma(G),\gamma(G)])$.  This completes the proof.
\end{proof}

\subsection{Generalities on self-similar groups}
The results here are primarily for the reader's convenience, as these results are easy consequences of the definitions. We will appeal to these results throughout this work and often without explicit reference. The reader already comfortable with groups acting on rooted trees can safely skip this subsection.

\begin{lem}\label{lem:fix}
	Suppose that $G\leq \Aut(X^*)$, $G$ acts level transitively on $X^*$, and $H\normal G$ is weakly self-similar. If $H$ fixes $x\in X^k$ for some $k\geq 1$, then $H$ is trivial.
\end{lem}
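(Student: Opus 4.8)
The plan is to first upgrade the hypothesis that $H$ fixes the single vertex $x\in X^k$ to the much stronger statement that $H$ fixes all of $X^k$ pointwise, and then to propagate this fixing downward through the tree one block of length $k$ at a time using weak self-similarity, eventually forcing every element of $H$ to fix every vertex of $\Xt$.

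For the first step I would use normality together with level transitivity. Given $g\in G$ and $h\in H$, normality yields $g^{-1}hg\in H$, and since $H$ fixes $x$ we get $h(g(x))=g\bigl((g^{-1}hg)(x)\bigr)=g(x)$; thus $H$ fixes $g(x)$ as well. As $G$ acts transitively on $X^k$, the orbit $\{g(x)\mid g\in G\}$ is all of $X^k$, so $H$ fixes $X^k$ pointwise. In particular $H\leq G_{(X^k)}$, so for each $w\in X^k$ the section homomorphism $\phi_w$ is defined on all of $H$ and $H_{(w)}=H$. Weak self-similarity then gives the crucial containment $\sec_H(w)=\phi_w(H)\leq H$ for every $w\in X^k$.

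Next I would prove by induction on $n$ that $H$ fixes $X^{nk}$ pointwise, the base case $n=1$ being the previous step. For the inductive step, take $h\in H$ and $v\in X^{(n+1)k}$ and write $v=wu$ with $w\in X^k$ and $u\in X^{nk}$. Because $h(w)=w$, the definition of the section gives $h(v)=h(wu)=w\,h_w(u)$. Now $h_w\in\sec_H(w)\leq H$, so $h_w\in H$, and the inductive hypothesis says $h_w$ fixes $X^{nk}$ pointwise, whence $h_w(u)=u$ and $h(v)=v$. This completes the induction. Finally, an automorphism that fixes $X^{nk}$ pointwise also fixes every word of length at most $nk$, since prefixes are preserved: extending a shorter word $u$ to some $u'\in X^{nk}$, the fact that $h(u')=u'$ forces the length-$|u|$ prefix $h(u)$ of $h(u')$ to equal $u$. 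As every vertex of $\Xt$ is a prefix of a word whose length is a multiple of $k$, each $h\in H$ fixes all of $\Xt$, i.e. $h=1$, so $H$ is trivial.

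I do not expect a serious obstacle here, as every step is a direct consequence of the definitions; the one point requiring care is the inductive step, where weak self-similarity is precisely what guarantees that each section $h_w$ lands back inside $H$ and therefore continues to fix the lower levels. This is the engine of the descent and explains why the hypothesis must be weak self-similarity of $H$ rather than mere fixation of the single vertex $x$.
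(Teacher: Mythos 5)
Your proof is correct and follows exactly the same route as the paper's (much terser) argument: normality plus level transitivity forces $H$ to fix $X^k$ pointwise, and weak self-similarity then propagates the fixing down to $(X^k)^n$ for all $n$, so $H$ acts trivially on $\Xt$. You have simply filled in the details the paper leaves implicit, including the prefix argument for vertices whose length is not a multiple of $k$.
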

\begin{proof}
That $H$ is normal ensures that $H$ fixes $X^k$. That $H$ contains $\sec_H(v)$ for all $v\in \Xt$ implies that $H$ in fact fixes $(X^k)^n$ for all $n\geq 1$. Hence, $H$ acts trivially on $\Xt$.
\end{proof}

\begin{lem}\label{lem:com_contain_sections_2}
Suppose that $G\leq \Aut(\Xt)$ acts transitively on $X$ and suppose that there is a weakly self-similar $M\normal G$ such that $G/M$ is abelian or $M$ acts transitively on $X$. If $g\in G$ fixes $y$ and $x$ in $X$ and $g_x=1$, then $g_y\in M$.
\end{lem}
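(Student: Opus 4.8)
The plan is to produce, from $g$, a single element of $M$ that fixes $y$ and whose section at $y$ is exactly $(g_y)^{-1}$; weak self-similarity of $M$ then forces $(g_y)^{-1}\in M$, and hence $g_y\in M$. The element will be a commutator $[h,g]=hgh^{-1}g^{-1}$ for a suitably chosen $h$, and the whole point is to evaluate its section at the base point $y$ (rather than at $x$), so that the hypothesis $g_x=1$ collapses the computation with no leftover conjugation.

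First I would dispose of the trivial case $x=y$, where $g_y=g_x=1\in M$, and assume $x\neq y$. Using the appropriate transitivity hypothesis I choose $h$ with $h(x)=y$: in the case where $G/M$ is abelian I take $h\in G$, and in the case where $M$ acts transitively on $X$ I take $h\in M$. In either case I claim $c:=[h,g]\in M$. When $G/M$ is abelian this is immediate, since $c\in[G,G]\leq M$. When $h\in M$, I write $c=h\cdot(gh^{-1}g^{-1})$ and use $M\normal G$ to see $gh^{-1}g^{-1}\in M$, whence $c\in M$.

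Next I would check that $c$ fixes $y$ and compute its section there. Tracking the action from the right using $g(y)=y$, $h^{-1}(y)=x$, and $g(x)=x$ shows $c(y)=y$. For the section, writing $c=h\cdot(gh^{-1}g^{-1})$ and applying the cocycle identity $(ab)_v=a_{b(v)}b_v$ together with $(a^{-1})_v=(a_{a^{-1}(v)})^{-1}$, the inner factor $gh^{-1}g^{-1}$ contributes $g_{h^{-1}(y)}=g_x=1$, while the two occurrences of $h_x$ cancel, leaving $c_y=h_x\cdot 1\cdot(h_x)^{-1}(g_y)^{-1}=(g_y)^{-1}$. This cancellation is the crux of the argument.

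Finally, since $c\in M$ fixes $y$, weak self-similarity of $M$ gives $c_y\in\sec_M(y)\leq M$, so $(g_y)^{-1}\in M$ and therefore $g_y\in M$. The only real content is the section computation; the delicacy is that a naive approach transporting the section at $y$ to one at $x$ would only place a \emph{conjugate} of $g_y$ inside $M$, which is useless without self-similarity of $G$. Evaluating the commutator at $y$ is precisely what makes the conjugating factor $h_x$ cancel, and $g_x=1$ is what kills the remaining section term. I expect this cancellation to be the single step worth verifying carefully; everything else is routine bookkeeping with the section cocycle.
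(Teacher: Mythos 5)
Your proof is correct and follows essentially the same route as the paper: both form a commutator of $g^{\pm 1}$ with a transporter $h$ satisfying $h(x)=y$ (taken in $M$ when $M$ is transitive, in $G$ when $G/M$ is abelian), observe it lies in $M$ and fixes $y$, and compute its section at $y$ so that $g_x=1$ and the cancellation of $h_x$ leave $(g_y)^{\pm 1}$, which weak self-similarity places in $M$. The only cosmetic difference is that the paper uses $[h,g^{-1}]$, whose section at $y$ is $g_y$ rather than $(g_y)^{-1}$.
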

\begin{proof}
Take $f\in G$ such that $f(x)= y$. If $M$ acts transitively on $X$, we take $f\in M$. The commutator $[f,g^{-1}]$ is an element of $M$, and $[f,g^{-1}]_y\in M$ since $[f,g^{-1}]$ fixes $y$ and $M$ is weakly self-replicating. Noting that $(f^{-1})_y=(f_x)^{-1}$, we see
\[
[f,g^{-1}]_y=(fg^{-1}f^{-1}g)_y=f_x\cdot 1 \cdot (f_x)^{-1}\cdot g_y=g_y.
\]
Hence, $g_y\in M$.
\end{proof}

\begin{lem}\label{lem:weakly_self-similar_trans}
Suppose that $G\leq \Aut(\Xt)$ acts transitively on $X$ and is weakly self-replicating. If $H\normal G$ is weakly self-similar and of finite index in $G$, then $H$ acts transitively on $X$.
\end{lem}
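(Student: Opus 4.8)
The plan is to count the $H$-orbits on $X$ via double cosets and to show there is exactly one by squeezing two index inequalities, each of which uses one of the two hypotheses on $H$. Fix $x\in X$ and write $d:=[G:H]$. Since $G$ acts transitively on $X$, the map $gG_{(x)}\mapsto g(x)$ identifies $X$ with the coset space $G/G_{(x)}$ as $G$-sets, so the $H$-orbits on $X$ correspond to the double cosets in $H\backslash G/G_{(x)}$. Thus it suffices to prove that there is a single double coset, i.e. that $HG_{(x)}=G$.

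First I would bound the index of $H_{(x)}:=H\cap G_{(x)}$ in $G_{(x)}$. Since $G_{(x)}/(H\cap G_{(x)})$ embeds into $G/H$, we have $[G_{(x)}:H_{(x)}]=[HG_{(x)}:H]\leq [G:H]=d$. Next I would push this through the section homomorphism $\phi_x:G_{(x)}\rightarrow \Aut(\Xt)$. Because $G$ is weakly self-replicating, $\phi_x(G_{(x)})=\sec_G(x)=G$, so $\phi_x$ restricts to a surjection $G_{(x)}\onto G$, and by definition $\phi_x(H_{(x)})=\sec_H(x)$. For a surjective homomorphism the image index can only drop, so $[G:\sec_H(x)]\leq [G_{(x)}:H_{(x)}]\leq d$.

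The crucial step is then to run this inequality backwards using weak self-similarity of $H$. Since $H$ is weakly self-similar, $\sec_H(x)\leq H$, whence $d=[G:H]\leq [G:\sec_H(x)]$. Combining with the previous bound forces equality throughout, in particular $[G_{(x)}:H_{(x)}]=d$. Therefore $[HG_{(x)}:H]=d=[G:H]$, giving $HG_{(x)}=G$, and hence a single $H$-orbit on $X$, as required.

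The hard part will be only the bookkeeping that the two index inequalities genuinely pinch: the forward inequality $[G:\sec_H(x)]\leq[G_{(x)}:H_{(x)}]$ relies on surjectivity of $\phi_x$ (weak self-replication of $G$), while the reverse inequality $[G:H]\leq[G:\sec_H(x)]$ relies on the containment $\sec_H(x)\leq H$ (weak self-similarity of $H$), so neither hypothesis alone suffices. As a harmless byproduct the argument also yields $\sec_H(x)=H$, so in fact $H$ is weakly self-replicating, though this is not needed for the statement.
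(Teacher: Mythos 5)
Your proof is correct and is essentially the paper's argument in different packaging: the paper fixes $x$, uses weak self-replication of $G$ to get surjectivity of $\phi_x$ and weak self-similarity of $H$ to get a well-defined surjection $G_{(x)}H/H\onto G/H$, then invokes finiteness of $G/H$ to conclude $G_{(x)}H=G$, exactly the pinch you perform with indices. Your chain $[G:H]\leq[G:\sec_H(x)]\leq[G_{(x)}:H_{(x)}]=[HG_{(x)}:H]\leq[G:H]$ is just that induced surjection counted coset by coset, so the two proofs use the same ingredients in the same places.
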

\begin{proof}
Fix $x\in X$ and let $\phi_x:G_{(x)}\rightarrow G$ be the section homomorphism, which is surjective because $G$ is weakly self-replicating. Since $H$ is weakly self-similar, the surjective homomorphism $\tilde{\phi}_x:G_{(x)}H/H\rightarrow G/H$ by $gH\mapsto \phi_x(g)H$ is well-defined. The group $G_{(x)}H/H$ is thus a subgroup of $G/H$ that surjects onto $G/H$. As $G/H$ is finite, we deduce that $G/H=G_{(x)}H/H$. Hence, $H$ acts transitively on $X$.
\end{proof}

\subsection{Example: the basilica group}\label{ex:basilica_1}
We close our technical discussion by exhibiting how these results are applied to prove the basilica group is non-elementary amenable. The basic strategy applied here is used throughout this work.

Let $G:=\grp{\at,\bt}\leq\Aut([4]^*)$ be the basilica group in the reduced form as given in Example~\ref{ex:basilica}. The group $G$ is self-replicating; one can verify this directly or see \cite{GZ02}. The elements $\at$ and $\bt$ in wreath recursion are $\at=(23)(1,1,1,\at)$ and $\bt=(02)(13)(1,1,1,\bt)$.

Let us suppose toward a contradiction that $G$ is elementary amenable. One easily verifies that $G$ is not abelian. Corollary~\ref{cor:ext_EA} thus produces a weakly self-replicating subgroup $M\normal G$ such that $\rk(M)+1=\rk(G)$ and $G/M$ is either finite or abelian. We see that $\at^2=(1,1,\at,\at)$ and $\bt^2=(1,\bt,1,\bt)$. Lemmas~\ref{lem:com_contain_sections_2} and \ref{lem:weakly_self-similar_trans} imply that $\at\in M$ and $\bt\in M$, so $M=G$. This contradicts the rank of $G$. We conclude that $G$ is not elementary amenable.

\begin{rmk}
The general strategy to show a given self-replicating bounded automata group is not elementary amenable is to contradict the rank of the group via the weakly self-replicating subgroups provided by either Corollary~\ref{cor:ext_EA} or Corollary~\ref{cor:ext_EA_2}. In the case of the basilica group, the squares $\at^2$ and $\bt^2$ act trivially on $[4]$ and have some trivial sections, so we may apply Lemmas~\ref{lem:com_contain_sections_2} and \ref{lem:weakly_self-similar_trans} to deduce that $M=G$, which contradicts the rank. In general, we must work much harder.
\end{rmk}

\section{Groups containing odometers}
It is convenient to begin with our theorems on bounded automata groups containing odometers.
\begin{defn}
An element $d\in \Aut(\Xt)$ is said to be an \textbf{odometer} if $\grp{d}$ acts transitively on $X$ and $d_x\in \{1,d\}$ for all $x\in X$ with exactly one $x\in X$ such that $d_x=d$.
\end{defn}
In the course of proving these results, we upgrade Corollary~\ref{cor:ext_EA_2}, and the resulting more powerful statement, Corollary~\ref{cor:ext_EA_3}, will be used frequently in later sections.

\subsection{Self-replicating groups}
This preliminary section shows that bounded automata groups containing an odometer are close to being self-replicating. We begin by characterizing self-replication for generalized basilica groups.

\begin{lem}\label{lem:trans_no_section}
Suppose that $h\in \Aut(\Xt)$ is such that $h_x\in \{1,h\}$ for all $x\in X$ with at most one $x$ such that $h_x=h$. For any $w,v\in X$ and $j\in \Zb$ for which $h^j(v)=w$, there is $i$ such that $h^i(v)=w$ and $(h^i)_v=1$ and $0\leq |i|<|X|$.
\end{lem}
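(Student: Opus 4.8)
The plan is to track the action of $h$ on the first level and the resulting sections of the powers $h^i$ at $v$. Write $\sigma:=\pi_1(h)\in\Sym(X)$ for the permutation that $h$ induces on $X$, so that $h^i(v)=\sigma^i(v)$ for every $i\in\Zb$. The hypothesis says $h_x\in\{1,h\}$ for every $x\in X$, with at most one $x$ --- call it $x_0$ when it exists --- satisfying $h_{x_0}=h$. The condition $h^j(v)=w$ forces $v$ and $w$ to lie in a common $\sigma$-cycle; let $c$ be the length of this cycle (so $c\leq|X|$) and let $j_0\in\{0,1,\dots,c-1\}$ be the unique exponent with $\sigma^{j_0}(v)=w$. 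Every $i$ with $h^i(v)=w$ is then $\equiv j_0\pmod c$.

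Next I would compute $(h^i)_v$ using the cocycle identity $(hk)_v=h_{k(v)}k_v$ from the preliminary lemma. For $i>0$ this unwinds to $(h^i)_v=h_{\sigma^{i-1}(v)}\cdots h_{\sigma(v)}h_v$, and each factor $h_{\sigma^m(v)}$ equals $h$ exactly when $\sigma^m(v)=x_0$ and equals $1$ otherwise; a symmetric computation handles $i<0$, where the relevant factors are $h_{\sigma^{-r}(v)}^{-1}$ for $1\leq r\leq|i|$. Since all factors are powers of the single element $h$, they commute, and the product is trivial precisely when none of the visited vertices is the active vertex $x_0$. Thus $(h^i)_v=1$ as soon as the corresponding forward (resp. backward) $\sigma$-path from $v$ avoids $x_0$.

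Finally I would play the two minimal representatives against each other: take $i_+:=j_0\geq 0$ and $i_-:=j_0-c\leq 0$, both of which send $v$ to $w$ and satisfy $|i_\pm|\leq c-1<|X|$ (when $j_0=0$ one simply uses $i_+=0$). The forward path for $i_+$ visits the vertices at cyclic positions $\{0,1,\dots,j_0-1\}$, while the backward path for $i_-$ visits positions $\{j_0,j_0+1,\dots,c-1\}$; these two sets partition the cycle $\Zb/c\Zb$. As $x_0$ occupies at most one position, at least one of the two paths avoids it, so at least one of $(h^{i_+})_v$ and $(h^{i_-})_v$ is trivial, furnishing the desired $i$. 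The only delicate points, and where I expect the bookkeeping to demand the most care, are getting the direction and the inverses right in the section formula for negative exponents and verifying cleanly that the forward and backward position sets partition the cycle --- this partition is exactly what simultaneously forces one of the two choices to succeed and keeps $|i|$ below $|X|$.
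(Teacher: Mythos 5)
Your proposal is correct and takes essentially the same route as the paper: both arguments compute $(h^i)_v$ via the cocycle formula and then pass between two exponents in the same residue class modulo the cycle length so that the resulting trajectory avoids the unique active vertex --- the paper phrases this as choosing $j$ with $|j|$ least and then replacing $j$ by $j\mp m$ (where $m$ is the length of the active vertex's orbit, producing the cancellation $h^{\mp1}h^{\pm1}=1$), while you phrase it as the observation that the forward path for $j_0$ and the backward path for $j_0-c$ partition the cycle, so at least one misses the active vertex. One cosmetic caveat: your claim that the product is trivial \emph{precisely} when the path avoids the active vertex is an overstatement (the ``only if'' direction can fail when $h$ is torsion, e.g.\ $h=(12)(h,1,1)$ on a three-letter alphabet is an involution), but this is harmless since your argument only uses the ``if'' direction.
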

\begin{proof}
If no $x\in X$ is such that $h_x=h$, then the lemma is immediate.

Suppose there is a unique $x\in X$ with $h_{x}=h$. Choose $j$ with $|j|\geq 0$ least such that $h^j(v)=w$.  If $(h^j)_v=1$, we are done. Otherwise, it is the case that $(h^j)_v=h^{\pm 1}$, since we chose $|j|$ least. As that cases are similar, let us suppose that $(h^j)_v=h$ (i.e., $j>0$).

Let $O$ be the orbit of $x$ under $\grp{h}$. The word $x$ is the unique element of $X$ for which $h$ has a non-trivial section, so $(h^k)_z=1$ for all $z\in X\setminus O$ and $k\in \Zb$. We deduce that $v\in O$. Take $m\geq 1$ least such that $h^m$ fixes $x$ and observe that $(h^m)_y=h$ for all $y\in O$. The element $h^{-m}h^j=h^{j-m}$ is thus such that $h^{j-m}(v)=w$ and $(h^{j-m})_v= h^{-1}h=1$ and $|j-m|<|X|$.
\end{proof}

\begin{lem}\label{lem:section_sgrp}
	Suppose that $G=\grp{Y}\leq \Aut(\Xt)$ is a generalized basilica group and let $O$ be an orbit of $G$ on $X$. For all $x\in O$, it is then the case that $\sec_G(x)=\grp{Y'}$ where $Y'$ is the collection of directed $g\in Y$ such that $v^g\in O$.
\end{lem}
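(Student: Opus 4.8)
The plan is to prove the two inclusions $\sec_G(x)\subseteq \grp{Y'}$ and $\grp{Y'}\subseteq \sec_G(x)$ separately, writing throughout $O$ for the $G$-orbit of $x$ and recalling that $\sec_G(x)=\phi_x(G_{(x)})$. For the first inclusion I would take an arbitrary $w\in G_{(x)}$, write it as a word $w=a_1\cdots a_n$ with each $a_i\in Y\cup Y^{-1}$, and apply the cocycle rule $(hk)_v=h_{k(v)}k_v$ repeatedly to obtain
\[
w_x=(a_1)_{u_1}(a_2)_{u_2}\cdots (a_n)_{u_n},\qquad u_i=(a_{i+1}\cdots a_n)(x).
\]
Each $u_i$ lies in $O$, being the image of $x$ under an element of $G$. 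Examining the factors, a finitary generator always contributes $1$, while a directed generator $y$ contributes a nontrivial section only at its active vertex, that is, when $u_i=v^y$; since $u_i\in O$, any such $y$ has $v^y\in O$ and hence lies in $Y'$ (the case $a_i=y^{-1}$ is symmetric). Thus every nontrivial factor of $w_x$ lies in $Y'\cup (Y')^{-1}$, so $w_x\in \grp{Y'}$.

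The reverse inclusion is where the work lies, and the crux is the following \emph{navigation claim}: for every $y\in O$ there is $q_y\in G$ with $q_y(x)=y$ and $(q_y)_x=1$. To prove it I would use transitivity of $G$ on $O$ to write $y=\pi_1(a_m)\cdots\pi_1(a_1)(x)$ for generators $a_i\in Y\cup Y^{-1}$, and set $z_0:=x$ and $z_i:=a_i(z_{i-1})$, so that $z_m=y$. Each $a_i$ is $b^{\pm 1}$ for a generator $b$ satisfying the hypotheses of Lemma~\ref{lem:trans_no_section}, so that lemma supplies a power $b_i$ of $b$ with $b_i(z_{i-1})=z_i$ and $(b_i)_{z_{i-1}}=1$. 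Setting $q_y:=b_m\cdots b_1$, one checks $q_y(x)=z_m=y$ and, by the cocycle rule, $(q_y)_x=(b_m)_{z_{m-1}}\cdots (b_1)_{z_0}=1$.

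Granting the navigation claim, I would finish $\grp{Y'}\subseteq \sec_G(x)$ as follows. Fix a directed $g\in Y'$, so $v^g\in O$, and set $v':=g(v^g)\in O$. Using the navigation elements $q_{v^g}$ and $q_{v'}$, put $w:=q_{v'}^{-1}\,g\,q_{v^g}$. Then $w(x)=q_{v'}^{-1}(g(v^g))=q_{v'}^{-1}(v')=x$, so $w\in G_{(x)}$; and since $(q_{v^g})_x=1$ and $(q_{v'}^{-1})_{v'}=((q_{v'})_x)^{-1}=1$, the cocycle rule gives $w_x=(q_{v'}^{-1})_{v'}\,g_{v^g}\,(q_{v^g})_x=g$. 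Hence $g\in \sec_G(x)$, and as $g$ ranges over $Y'$ we obtain $\grp{Y'}\subseteq \sec_G(x)$, completing the proof.

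The main obstacle is the navigation claim: naively moving $x$ to $y$ by a group element drags along nontrivial sections at every active vertex it crosses, and a strongly active generator $g$ cannot merely be conjugated into place since $g(v^g)\neq v^g$. Lemma~\ref{lem:trans_no_section} is exactly the device that lets one traverse each single-generator step with a trivial section by replacing the generator with a suitable power, and passing to \emph{two} navigation elements $q_{v^g}$ and $q_{v'}$ (rather than one) is what absorbs the possible strong activity of $g$. The surrounding computations are routine cocycle bookkeeping.
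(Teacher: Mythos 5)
Your proof is correct and runs on the same engine as the paper's: Lemma~\ref{lem:trans_no_section} supplies the trivial-section moves within $O$, and your cocycle expansion giving $\sec_G(x)\leq \grp{Y'}$ is exactly the paper's induction on word length. The only organizational difference is in the reverse inclusion, where the paper first proves $\sec_G(v)=\sec_G(w)$ for all $v,w\in O$ (by conjugating stabilizers by trivial-section powers of single generators) and then realizes each $d\in Y'$ as $(d^n)_{v^d}=d$ for $n$ least with $d^n$ fixing $v^d$, whereas you compose the trivial-section moves into explicit transporters $q_{v^g},q_{v'}$ and conjugate $g$ itself into $G_{(x)}$ -- the same transport mechanism, merely packaged as navigation elements rather than as constancy of the section groups along the orbit.
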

\begin{proof} Take $d\in Y$ and let $W_1,\dots, W_l$ list the orbits of $\grp{d}$ on $O$, for some $1\leq k \leq n$. For $w,v\in W_j$, there is some power $i$ such that $d^i(v)=w$ and $(d^i)_v=1$ by Lemma~\ref{lem:trans_no_section}. Plainly, $d^{i}G_{(v)}d^{-i}=G_{(w)}$, and we deduce that
	\[
	\sec_G(w)=(d^{i})_v\sec_G(v)((d^i)_v)^{-1}.
	\]
 Hence, $\sec_G(w)=\sec_G(v)$, 	since $(d^{i})_v=1$. We conclude that $\sec_G(w)=\sec_G(v)$ for all $w,v\in W_j$ and $1\leq j\leq l$. Since $G=\grp{Y}$ and acts transitively on $O$, it follows that $\sec_G(v)=\sec_G(w)$ for all $v,w\in O$.
	
	For $x\in O$ and $h\in Y$, the section $h_x$ is an element of $\grp{Y'}$. Inducting on the word length, one sees that $g_x\in \grp{Y'}$ for all $g\in G$, so a fortiori $\sec_G(x)\leq \grp{Y'}$. On the other hand, for $d\in Y'$, take $n\geq 1$ least such that $d^n$ fixes $v^d$. The section of $d^n$ at $v^d$ is $d$, so $d\in \sec_G(v^d)=\sec_G(x)$. We conclude that $\grp{Y'}=\sec_G(x)$ for all $x\in O$, verifying the lemma.
\end{proof}

\begin{prop}\label{prop:self-rep_char}
For $G=\grp{Y}\leq \Aut(\Xt)$ a generalized basilica group, $G$ is self-replicating if and only if $G$ is generated by the directed elements of $Y$ and $G$ acts transitively on $X$.
\end{prop}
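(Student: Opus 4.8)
The plan is to read off the proposition as a near-immediate consequence of Lemma~\ref{lem:section_sgrp}, once one records that generalized basilica groups are self-similar. First I would verify self-similarity: every $g\in Y$ satisfies $g_x\in\{1,g\}$ for all $x\in X$, so using the cocycle identities $g_{v_1v_2}=(g_{v_1})_{v_2}$ and $(hk)_v=h_{k(v)}k_v$, an induction on word length shows $g_w\in G$ for all $g\in G$ and $w\in\Xt$. Hence $G$ is self-similar, and therefore $G$ is self-replicating if and only if $G$ acts transitively on $X$ and $\sec_G(x)=G$ for every $x\in X$.

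Next I would specialize Lemma~\ref{lem:section_sgrp} to the transitive case. Each directed $g\in Y$ has period $1$, so its active vertex $v^g$ lies in $X$. When $G$ acts transitively on $X$, the unique orbit is $O=X$, and every directed generator then satisfies $v^g\in O$. Thus the set $Y'$ furnished by Lemma~\ref{lem:section_sgrp} is exactly the set of all directed elements of $Y$, and the lemma yields $\sec_G(x)=\grp{Y'}$ for every $x\in X$, with $\grp{Y'}$ the subgroup generated by the directed elements of $Y$.

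The two implications now follow. If $G$ is self-replicating, then by definition it acts transitively on $X$, and the preceding paragraph gives $G=\sec_G(x)=\grp{Y'}$, so $G$ is generated by the directed elements of $Y$. Conversely, if $G$ acts transitively on $X$ and is generated by the directed elements of $Y$, then $\sec_G(x)=\grp{Y'}=G$ for every $x\in X$; together with self-similarity and transitivity this is exactly self-replication.

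The argument carries essentially no obstacle beyond correct bookkeeping: the only points requiring care are the verification of self-similarity and the identification of $Y'$ with the full set of directed generators under transitivity (which uses that period-$1$ directed generators have their active vertex in $X$). With these in hand the equivalence is forced by Lemma~\ref{lem:section_sgrp}.
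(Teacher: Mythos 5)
Your proof is correct and follows essentially the same route as the paper: both directions rest on Lemma~\ref{lem:section_sgrp}, specialized under transitivity to the single orbit $O=X$, so that $Y'$ is exactly the set of directed generators. The only cosmetic differences are that the paper re-runs the word-length induction directly in the forward direction rather than citing the lemma, and leaves self-similarity implicit (generalized basilica groups are bounded automata groups, hence self-similar), whereas you verify it explicitly.
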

\begin{proof}
Suppose first that $G$ is self-replicating. By definition, $G$ acts transitively on $X$. Fix $x\in X$ and let $Z\subseteq Y$ be the directed elements. An easy induction argument shows that $g_x\in \grp{Z}$ for all $g\in G$. We deduce that $G=\sec_G(x)\leq \grp{Z}\leq G$, and thus, $G$ is generated by $Z$.

For the converse, let $Z$ be the directed elements of $Y$. Since $G$ acts transitively on $X$, $\sec_G(x)=\grp{Z}$ for all $x\in X$ by Lemma~\ref{lem:section_sgrp}. As $\grp{Z}=G$, we deduce that $\sec_G(x)=G$.
\end{proof}

A version of Proposition~\ref{prop:self-rep_char} holds for bounded automata groups with odometers.
\begin{prop}\label{prop:self-rep_char_gen}
Suppose that $G=\grp{Y}\leq \Aut(\Xt)$ is a bounded automata group in reduced form. If $G$ is self-replicating, then $G$ is generated by
\[
Z:=\{d_x\mid d\in Y\text{ is directed and }x\in X\}.
\]
The converse holds if $G$ contains an odometer.
\end{prop}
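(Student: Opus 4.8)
The statement to prove is Proposition~\ref{prop:self-rep_char_gen}, which has two parts: first, that for a bounded automata group $G=\grp{Y}$ in reduced form, self-replication implies $G$ is generated by the set $Z$ of sections $d_x$ of directed generators; and second, the converse holds under the additional hypothesis that $G$ contains an odometer. My plan is to treat the two directions separately, modeling the argument on the proof of Proposition~\ref{prop:self-rep_char} for generalized basilica groups.

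\medskip

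For the forward direction, I would fix $x\in X$ and argue that every section $g_x$ for $g\in G$ lies in $\grp{Z}$. By the reduced form hypothesis, each generator $g\in Y$ is either finitary with all sections at $X$ trivial, or directed with every section $g_y$ lying in $\{g\}\cup\pi_1(G)$. Crucially, since $G$ is in reduced form, the finitary sections $g_y\in\pi_1(G)$ of a directed generator are themselves products of the $\pi_1$-images, and one must confirm these lie in $\grp{Z}$; this is exactly why $Z$ is defined as \emph{all} sections $d_x$ of directed generators rather than just the copies of $d$ itself. Using the cocycle identity $(hk)_v=h_{k(v)}k_v$ from the section lemma, an induction on word length then shows $g_x\in\grp{Z}$ for every $g\in G$. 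Self-replication gives $G=\sec_G(x)\leq\grp{Z}\leq G$, forcing equality.

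\medskip

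For the converse, I would assume $G=\grp{Z}$ and that $G$ contains an odometer $d$; I want to show $\sec_G(x)=G$ for all $x\in X$ and that $G$ acts transitively on $X$. The odometer immediately supplies transitivity on $X$, since $\grp{d}$ already acts transitively. For the section equality, the key is to show each generator $d_x\in Z$ lies in $\sec_G(x)$, i.e., is realized as a genuine section of some element of $G_{(x)}$, not merely as a section of an element that moves $x$. The odometer is the tool that makes this work: because $d$ is an odometer with a unique active vertex and acts transitively on $X$, I can use powers of $d$ (and the analogue of Lemma~\ref{lem:trans_no_section}, which controls when a power of a near-directed element has trivial section) to move between vertices while tracking sections, thereby transporting the directed generators' sections into the stabilizer $G_{(x)}$ and realizing $\grp{Z}$ inside $\sec_G(x)$. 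Combined with the forward-direction observation that $\sec_G(x)\leq\grp{Z}=G$, this yields $\sec_G(x)=G$.

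\medskip

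The main obstacle I anticipate is the converse direction, specifically realizing the \emph{finitary} sections in $Z$ as honest sections of stabilizer elements. In the generalized basilica case (Proposition~\ref{prop:self-rep_char}), the directed generators have sections only in $\{1,g\}$, so Lemma~\ref{lem:section_sgrp} handles everything cleanly. Here, in the general reduced-form setting, a directed generator may have finitary sections lying anywhere in $\pi_1(G)$, and it is not a priori clear that these non-trivial finitary sections can be produced from within $G_{(x)}$ without the odometer. I expect the odometer hypothesis to be precisely what bridges this gap: its transitivity and controlled section structure let one conjugate and take powers to land inside the point stabilizer while recovering all of $Z$. Verifying this carefully—showing that every element of $Z$, directed or finitary, appears as $\phi_x(g)$ for some $g\in G_{(x)}$—is where the real work lies, and it is where the hypothesis that $G$ contains an odometer (rather than merely a level-transitive element) will be used.
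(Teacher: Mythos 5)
Your proposal is correct and takes essentially the same route as the paper: the forward direction is the same cocycle-identity induction yielding $G=\sec_G(x)\leq\grp{Z}\leq G$, and your converse is precisely the paper's odometer-sandwich argument, in which Lemma~\ref{lem:trans_no_section} (which applies to odometers directly, no analogue needed) supplies powers $h^i,h^j$ with trivial sections at the relevant vertices so that $h^jgh^i\in G_{(x)}$ and $(h^jgh^i)_x=g_y$, realizing all of $Z$ inside $\sec_G(x)$. The only difference is that the paper writes out this sandwich computation explicitly, whereas you describe it at the level of strategy.
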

\begin{proof}
Suppose first that $G$ is self-replicating. Fix $x\in X$. An easy induction argument shows that $g_x\in \grp{Z}$ for all $g\in G$. We deduce that $G=\sec_G(x)\leq \grp{Z}\leq G$, and thus, $G$ is generated by $Z$.

For the converse, let $h\in G$ be an odometer. Immediately, we see that $G$ acts transitively on $X$. For each $x$ and $y\in X$ distinct, Lemma~\ref{lem:trans_no_section} supplies $i$ such that $h^i(x)=y$ and $(h^i)_x=1$. Fixing $x\in X$, for any $g\in G$  and $y\in X$, there are $i$ and $j$ such that $h^jgh^i$ fixes $x$ and $(h^jah^i)_x=g_y$. The image of the section homomorphism $\sec_G(x)=\phi_x(G_{(x)})$ therefore contains all sections $g_y$ for $g\in G$ and $y\in X$. In particular, $\sec_G(x)$ contains $Z$, so $\sec_G(x)=G$. We conclude that $G$ is self-replicating.
\end{proof}

Our final preliminary lemma shows that one can easily reduce to the group generated by the set $Z$.

\begin{lem}\label{lem:reduce_to_directed}
	Suppose that $G=\grp{Y}\leq \Aut(\Xt)$ is a bounded automata group in reduced form. Letting $Z:=\{d_x\mid d\in Y\text{ is directed and }x\in X\}$, the group $\grp{Z}$ is a bounded automata group in reduced form, and
	\[
	G\hookrightarrow \Sym(X)\ltimes \grp{Z}^X .
	\]
\end{lem}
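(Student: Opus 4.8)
The plan is to use the standard wreath recursion $\Phi\colon G\to \Sym(X)\ltimes \Aut(\Xt)^X$ given by $g\mapsto (\pi_1(g),(g_x)_{x\in X})$ and to check that its image lands in $\Sym(X)\ltimes \grp{Z}^X$. That $\Phi$ is an injective homomorphism is standard: the cocycle identity $(gh)_x=g_{h(x)}h_x$ makes it a homomorphism into the permutational wreath product, and an element of $\Aut(\Xt)$ is determined by its first-level permutation together with its first-level sections, so $\Phi$ is injective. To see the image lies in $\Sym(X)\ltimes \grp{Z}^X$, it suffices to show $g_x\in\grp{Z}$ for every $g\in G$ and $x\in X$. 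For a generator $g\in Y$ this is immediate: if $g_x=1$ for all $x$ there is nothing to prove, while if $g$ is directed then $g_x\in Z\subseteq \grp{Z}$ for every $x$ by the very definition of $Z$. For arbitrary $g\in G$, writing $g$ as a word in $Y^{\pm 1}$ and iterating the cocycle identities (together with $(g^{-1})_x=(g_{g^{-1}(x)})^{-1}$) expresses $g_x$ as a product of first-level sections of generators and their inverses, each lying in $\grp{Z}$; hence $g_x\in\grp{Z}$.

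Next I would verify that $\grp{Z}$ is a bounded automata group. It is finitely generated since $Y$ and $X$ are finite, and since $Z\subseteq G$ we have $\grp{Z}\leq G$, so every element of $\grp{Z}$ is bounded and finite state because $G$ is. It remains to check self-similarity, i.e. that $\grp{Z}$ is closed under taking sections. For this I would observe that $Z\cup\{1\}$ is closed under first-level sections: a finitary $z\in Z$ coming from $\pi_1(G)$ has $z_x=1$, while a directed $z=d\in Y$ has $d_x\in Z$ for all $x$. Iterating through the cocycle identities as above shows $w_v\in\grp{Z}$ for all $w\in\grp{Z}$ and $v\in\Xt$, so $\grp{Z}$ is self-similar and hence a bounded automata group.

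Finally, I would show $\grp{Z}$ is in reduced form with distinguished generating set $Z$, which is where the main care is required. Each $z\in Z$ is either finitary of the form $z\in\pi_1(G)$ — in which case $z_x=1$ for all $x$, meeting the first alternative of the reduced-form definition — or a directed $d\in Y$. In the directed case $d$ retains exactly one active $x$ with $d_x=d$, since $d$ is not finitary whereas its non-active sections lie in $\pi_1(G)$ and hence are finitary; for the remaining $x$ we have $d_x\in\pi_1(G)$. The delicate point is that the reduced-form condition demands $d_x\in\{d\}\cup\pi_1(\grp{Z})$, not merely $\pi_1(G)$; this is exactly why $Z$ was defined to contain these sections, for then $d_x\in Z\subseteq\grp{Z}$, and since $d_x$ acts only on the first letter it coincides with its image in $\pi_1(\grp{Z})$ under the identification $\Sym(X)\leq\Aut(\Xt)$. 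Thus every $d_x$ lies in $\{d\}\cup\pi_1(\grp{Z})$, so $Z$ is a distinguished generating set. I expect the main obstacle to be precisely this last verification: keeping track of the difference between $\pi_1(G)$ and $\pi_1(\grp{Z})$, and using the definition of $Z$ to guarantee the non-active sections descend into $\pi_1(\grp{Z})$.
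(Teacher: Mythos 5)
Your proof is correct and follows essentially the same route as the paper: the core step in both is showing $g_x\in\grp{Z}$ for all $g\in G$ and $x\in X$ by induction on word length via the cocycle identity, and then using the wreath-recursion map $g\mapsto \pi_1(g)\,((g_x)_{x\in X})$ as the monomorphism. The paper dismisses the claim that $\grp{Z}$ is a bounded automata group in reduced form as ``immediate''; your careful verification of that claim — in particular tracking the distinction between $\pi_1(G)$ and $\pi_1(\grp{Z})$ and noting that the non-active sections of a directed generator lie in $Z\subseteq\grp{Z}$ and hence in $\pi_1(\grp{Z})$ — is exactly the right way to fill in that gap.
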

\begin{proof} That $\grp{Z}$ is a bounded automata group in reduced form is immediate.

For each $x\in X$ and $h\in Y$, it follows the section $h_x$ is an element of $Z$. Inducting on the word length, one sees that $g_x\in \grp{Z}$ for all $g\in G$ and $x\in X$. The map
\[
	G\rightarrow\Sym(X)\ltimes \grp{Z}^X
\]
given	by $g\mapsto \pi_1(g)((g_x)_{x\in X})$ is thus well-defined. One verifies that this map is also a monomorphism.
\end{proof}

\subsection{Generalized basilica groups}

\begin{lem}\label{lem:fin_abelianization}
Suppose that $G=\grp{Y}\leq \Aut(\Xt)$ is a self-replicating generalized basilica group that is non-abelian. If $H\normal G$ is weakly self-replicating and $G/H$ is abelian, then $G/H$ is finite.
\end{lem}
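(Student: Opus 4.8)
The plan is to reduce to a statement about the individual generators. By Proposition~\ref{prop:self-rep_char}, $G$ acts transitively on $X$ and is generated by its directed generators, so the abelian group $A:=G/H$ is generated by the images of the directed generators. Since $A$ is finitely generated abelian, it will be finite as soon as each directed generator has finite order modulo $H$; thus it suffices to show, for every directed generator $d$, that some power of $d$ lies in $H$. Throughout I would use that $G$ acts transitively on $X$ together with Lemma~\ref{lem:com_contain_sections_2}, which applies here precisely because $H$ is weakly self-similar and $G/H$ is abelian.

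Fix a directed generator $d$ with active vertex $x_0$, let $O\subseteq X$ be the $\grp{d}$-orbit of $x_0$, set $m:=|O|$, and let $N$ be the order of the permutation $\pi_1(d)$. A direct cocycle computation, using that the only nontrivial section of $d$ on $X$ is $d_{x_0}=d$, shows that $d^N\in G_{(X)}$ with $(d^N)_y=d^{N/m}$ for $y\in O$ and $(d^N)_y=1$ for $y\notin O$; this is exactly the calculation underlying the treatment of the basilica group in Section~\ref{ex:basilica_1}. If $O\neq X$, I am done quickly: choosing $y\in O$ and $x\in X\setminus O$, the element $d^N$ fixes both $x$ and $y$ and satisfies $(d^N)_x=1$, so Lemma~\ref{lem:com_contain_sections_2} yields $(d^N)_y=d^{N/m}\in H$, whence the image of $d$ in $A$ has finite order.

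The remaining, genuinely harder, case is $O=X$, i.e. when $d$ is an \emph{odometer}: then $d^s=(d,\dots,d)$ (with $s:=|X|$) has \emph{no} trivial section on $X$, so Lemma~\ref{lem:com_contain_sections_2} cannot be fed by powers of $d$ alone, and this is where the main work lies. My proposed route is first to observe that, because $G/H$ is abelian and $H$ is weakly self-replicating, the first-level section maps coincide: the assignments $\rho_x\colon G_{(X)}H/H\to A$, $\overline g\mapsto\overline{g_x}$, are well defined (here $\sec_H(x)=H$ is exactly what makes $\overline{g_x}$ independent of the chosen representative), and conjugating by an element of $G$ carrying $x$ to $y$ and using that conjugation is trivial in the abelian group $A$ gives $\rho_x=\rho_y=:\rho$ for all $x,y$. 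This already re-proves the case $O\neq X$, since well-definedness of $\rho$ forces $\overline{d^{N/m}}=0$. For the odometer one has $\rho(\overline{d^s})=\bar d$ while $\overline{d^s}=s\bar d$, so after tensoring with $\mathbb{Q}$ the operator $\rho$ sends the class of $\bar d$ in $A\otimes\mathbb{Q}$ to $\tfrac1s\bar d$; as the odometer generators span $A\otimes\mathbb{Q}$ (the non-odometer ones being torsion by the previous paragraph), $\rho$ acts as $\tfrac1s\cdot\mathrm{id}$ on the free part, forcing $G_{(X)}H/H$ into $s\cdot(A/\mathrm{tors})$ and, by the same analysis at the levels $X^k$, into $s^k\cdot(A/\mathrm{tors})$ for all $k$.

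The main obstacle is converting this $1/s$-scaling into a contradiction when $A$ is infinite, and this is exactly the step that must use the hypothesis that $G$ is non-abelian: the statement is false for $G=\mathbb{Z}$ acting as an odometer, where the scaling is perfectly consistent with $A=\mathbb{Z}$. The intended mechanism is that non-abelianness gives $[G,G]\neq 1$, hence $H\neq 1$, so by Lemma~\ref{lem:fix} the nontrivial weakly self-similar normal subgroup $H$ fixes no vertex at any level $X^k$; this should make $\pi_k(H)$ large enough to bound the index $[\pi_k(G):\pi_k(H)]=[A:G_{(X^k)}H/H]$ below the value $s^{kn}$ forced by the containment $G_{(X^k)}H/H\le s^k(A/\mathrm{tors})$, where $n$ is the free rank of $A$, thereby forcing $n=0$ and $A$ finite. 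Pinning down this final counting — in particular controlling the growth of $\pi_k(H)$ from weak self-replication without circularly assuming $[G:H]<\infty$ — is the delicate point I expect to be the crux of the argument.
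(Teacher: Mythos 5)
Your reduction and your treatment of the non-transitive generators are correct and agree with the paper: the paper also first passes to the directed generators via Proposition~\ref{prop:self-rep_char}, and for a generator $k$ whose cycle containing the active vertex is not all of $X$ it applies Lemma~\ref{lem:com_contain_sections_2} to a power of $k$ that fixes $X$ pointwise, has a trivial section somewhere, and has a section $k^i$ on the active orbit, concluding $k$ is torsion modulo $H$. Your section-map $\rho$ is a clean repackaging of this (its well-definedness does follow from $\sec_H(x)=H$, and $\rho_x=\rho_y$ follows from abelianness of $G/H$ exactly as you say), and the scaling relation $\rho(s\overline{d})=\overline{d}$ for an odometer $d$ is also correct.

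The genuine gap is the odometer case, which is precisely the hard case and the bulk of the paper's proof. What your scaling argument actually yields is a \emph{lower} bound: writing $B_k:=G_{(X^k)}H/H$ and $n$ for the free rank of $A=G/H$, you get $[\pi_k(G):\pi_k(H)]=[A:B_k]\geq s^{kn}$. To reach a contradiction you need a complementary \emph{upper} bound $[\pi_k(G):\pi_k(H)]<s^{kn}$ for some $k$, and nothing in your proposal produces one. The facts you invoke about $H$ (nontrivial, normal, weakly self-replicating, fixing no vertex by Lemma~\ref{lem:fix}) all push in the wrong direction: counting $H$-orbits on $X^k$, transitivity of $G$, etc., give further lower bounds on this index, not upper bounds. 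The one visible source of an upper bound is $[G,G]\leq H$, which gives $[\pi_k(G):\pi_k(H)]\leq |\pi_k(G)^{\mathrm{ab}}|$, but then you must show the abelianized congruence quotients of an arbitrary non-abelian generalized basilica group grow strictly slower than $s^{kn}$ -- a statement about $G$ alone that is nowhere established and looks comparable in difficulty to the lemma itself. Notably, your approach never uses the basilica structure of the \emph{other} generators in this case, whereas the paper's proof leans on it entirely: assuming no power of the odometer $h$ lies in $H$, the paper takes a second generator $k\in Y\setminus\{h^{-1}\}$, uses Lemma~\ref{lem:trans_no_section} to arrange $h^ik$ to fix a point with trivial section, analyzes the cycle structure and sections of $h^ik$ on $X$ via Lemma~\ref{lem:com_contain_sections_2} to force $h^ik\in H$ and then $i$ to be independent of the chosen fixed point, and finally concludes $hk=1$, contradicting $k\neq h^{-1}$. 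Some argument of this combinatorial kind (or a genuinely new idea for the index bound) is needed; as written, your proof is incomplete at exactly the step you flag.
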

\begin{proof}
In view of Proposition~\ref{prop:self-rep_char}, we may assume that $Y$ consists of directed elements.

Suppose first that $k\in Y$ does not act transitively on $X$ and let $n\geq 1$ be least such that $k^n$ fixes $X$. There is some $x\in X$ such that $(k^n)_x=1$, and there is $y\in X$ such that $(k^n)_y=k^i$ for some $i\geq 1$. Applying Lemma~\ref{lem:com_contain_sections_2}, we conclude that $(k^n)_y=k^i\in H$. Hence, $k$ is torsion in $G/H$.

We next consider $h\in Y$ that acts transitively on $X$. Suppose toward a contradiction that $h^i\notin H$ for all $i\geq 1$. Fix $k\in Y\setminus\{h^{-1}\}$ and fix $y\in X$ such that $k(y)\neq y$ and $k_y=1$. By Lemma~\ref{lem:trans_no_section}, there is $|X|>|i|\geq 1$ such that $h^ik(y)=y$ and $(h^i)_{k(y)}=1$. The element $h^ik$ has non-trivial sections exactly at some $x_1,\dots,x_l$ in $X$ where $l\in\{|i|,|i|+1\}$, since $h^i$ has $|i|$ many non-trivial sections (all equal to $h$ or $h^{-1}$) and $k$ has one non-trivial section. Say that $x_1$ is the element of $X$ at which the section of $h^ik$ is of the form $ak$ for $a\in \{1,h^{\pm 1}\}$.

Suppose there is some $x_j$ such that $x_j$ and $x_1$ lie in different orbits of the cyclic group $\grp{h^ik}$ acting on $X$. Let $m\geq 1$ be least such that $(h^ik)^m$ fixes $x_j$. The section $((h^ik)^m)_{x_j}$ is of the form $h^r$ for some $1\leq |r|\leq i$. The element $(h^ik)^m$ also fixes $y$ and has a trivial section at $y$. Applying Lemma~\ref{lem:com_contain_sections_2}, we deduce that $h^r\in H$, which is absurd. It is thus the case that all $x_j$ lie in the same orbit of  $\grp{h^ik}$ acting on $X$. Take $m\geq 1$ least such that $(h^ik)^m$ fixes $x_1$ and observe that $((h^ik)^m)_{x_1}=h^ik$. Applying again Lemma~\ref{lem:com_contain_sections_2}, we conclude that $h^ik\in H$.

For any other $y'\in X$ such that $k(y')\neq y'$ and $k_{y'}=1$, the same argument gives $|X|> |j|\geq 1$ such that $h^{j}k(y')=y'$ and $h^jk\in H$.  Thus, $h^{j-i}=h^{j}kk^{-1}h^{-i}\in H$. In view of our reductio assumption, we conclude that $i=j$. Therefore, $h^ik$ fixes all $y\in X$ such that $k(y)\neq y$ and $k_y=1$.

Say that $k$ fixes $x$ and $k_x=1$. Letting $l\geq 1$ be least such that $k^l$ fixes $v^k$,  Lemma~\ref{lem:com_contain_sections_2} implies that $k\in H$. Since $h^ik$ is also in $H$, $h^i\in H$, which is absurd. We conclude that $k$ fixes no $x$ such that $k_x=1$. The element $h^ik$ must then fix all $y\in X\setminus\{v^k\}$, so $h^ik$ fixes $X$. The subgroup $H$ contains all sections of $h^ik$, since it is weakly self-similar, so if $i>1$ or $i=1$ and $v_h\neq k(v_k)$, then $h\in H$, which we assume to be false. The element $hk$ therefore fixes $X$, and $(hk)_{v^k}=hk$. It follows that $hk=1$. This concludes the reductio argument, since $k\neq h^{-1}$.

 We have now established that every element $k\in Y$ is such that $k^i\in H$ for some $i$, hence $G/H$ is finite.
\end{proof}

\begin{thm}\label{thm:trans}
Suppose that $G=\grp{Y}\leq \Aut(\Xt)$ is a generalized basilica group. If $G$ contains an odometer, then either $G$ is virtually abelian or $G$ is not elementary amenable.
\end{thm}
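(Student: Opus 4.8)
The plan is to reduce to a self-replicating group and then run a rank-descent argument driven by Lemma~\ref{lem:fin_abelianization}. First I would pass to the subgroup generated by the directed generators. Let $Z$ be the set of directed elements of the distinguished generating set $Y$ and put $H:=\grp{Z}$. By Lemma~\ref{lem:reduce_to_directed}, $H$ is a generalized basilica group in reduced form, every level-one section of an element of $G$ lies in $H$, and $G\injects \Sym(X)\ltimes H^X$. Writing $x_0\in X$ for the vertex with $d_{x_0}=d$, the odometer satisfies $d=d_{x_0}\in H$, so $H$ is transitive on $X$; as $H$ is generated by directed elements, Proposition~\ref{prop:self-rep_char} shows $H$ is self-replicating and it still contains the odometer $d$. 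The conclusion then transfers back to $G$: if $H$ is not elementary amenable then neither is its overgroup $G$, while if $H$ is virtually abelian then so is $\Sym(X)\ltimes H^X$ and hence its subgroup $G$. It thus suffices to prove the theorem for the self-replicating generalized basilica group $H$.

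Next I would assume $H$ is elementary amenable and argue that it is virtually abelian. If $H$ is abelian we are done, so assume $H$ is non-abelian and, toward a contradiction, not virtually abelian. Corollary~\ref{cor:ext_EA} supplies a weakly self-replicating $M\normal H$ with $H/M$ finite or abelian and $\rk(M)+1=\rk(H)$; Lemma~\ref{lem:fin_abelianization} rules out an infinite abelian quotient, so $H/M$ is finite and $M$ has finite index. Corollary~\ref{cor:ext_EA_2} then produces weakly self-replicating $A\leq F$, both normal in $H$, with $F/A$ abelian, $H/F$ finite, and $\rk(F)+1=\rk(A)+2=\rk(H)$.

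The contradiction will follow once I show the abelian layer $F/A$ is finite: then $H/A$ is finite, so $H$ is $A$-by-finite and $\rk(H)\leq\rk(A)+1$ by Proposition~\ref{prop:rank_stability}, contradicting $\rk(H)=\rk(A)+2$. Proving $F/A$ finite is the crux, and the step I expect to be the main obstacle. Since $F$ is weakly self-replicating of finite index, it is finitely generated and, by Lemma~\ref{lem:weakly_self-similar_trans}, transitive on $X$; hence $F/A$ is a finitely generated abelian group, and it suffices to show it is torsion. For $g\in F$, after replacing $g$ by the power that fixes $X$, weak self-replication gives $g_x\in F$ for every $x$, and one wants to apply Lemma~\ref{lem:com_contain_sections_2} inside $F$ with the normal subgroup $A$ (legitimate because $F/A$ is abelian) to deposit a power of $g$ into $A$.

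This succeeds immediately whenever the relevant power fixes $X$ and has both a trivial and a nontrivial section, but it fails in the ``full cycle'' case, where $g$ acts as a transitive cycle on $X$ and no trivial section is forced. This is exactly the obstruction that Lemma~\ref{lem:fin_abelianization} overcomes for the directed generators of a generalized basilica group, and I expect the real work to be transporting that multi-element cycle argument to the subgroup $F$, which carries no distinguished directed generating set. The natural substitute for those generators is the diagonal element $d^{|X|n}=(d^n,\dots,d^n)$, which lies in $F$ for suitable $n$ since $H/F$ is finite; combined with the transitive action of $F$ it should manufacture the elements with a trivial section needed to feed Lemma~\ref{lem:com_contain_sections_2}. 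Establishing this finiteness of $F/A$---equivalently, that finite-index subgroups of $H$ have finite abelianization---is the heart of the argument.
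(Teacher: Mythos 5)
Your reduction to the self-replicating group $H=\grp{Z}$ (including the observation that the odometer $d=d_{x_0}$ lies in $H$) and your rank-descent setup---Corollary~\ref{cor:ext_EA} plus Lemma~\ref{lem:fin_abelianization} to get a finite-index weakly self-replicating $M$, then Corollary~\ref{cor:ext_EA_2} to get $A\leq F$ with $F/A$ abelian and $\rk(A)+2=\rk(H)$---coincide with the paper's proof. But the proposal stops exactly where the proof has to happen: you say yourself that the finiteness of $F/A$ is ``the heart of the argument'' and you offer only a plan for it. That plan has two concrete defects. First, Lemma~\ref{lem:com_contain_sections_2} deposits a \emph{section} of an element into the normal subgroup, not a power of the element: for a directed generator of a generalized basilica group the two coincide (sections of its powers are again powers of it, which is what drives Lemma~\ref{lem:fin_abelianization}), but for an arbitrary $g\in F$ the section $(g^n)_y$ bears no relation to powers of $g$, so even your ``easy case'' does not show $F/A$ is torsion. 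Second, your proposed workhorse, the diagonal element $d^{|X|n}=(d^n,\dots,d^n)$, has \emph{every} first-level section nontrivial, so it can never satisfy the hypothesis of Lemma~\ref{lem:com_contain_sections_2}, which requires a trivial section; it is the worst possible input for that lemma.

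The paper closes the argument by a different mechanism, in which the odometer drives the contradiction directly rather than manufacturing inputs for Lemma~\ref{lem:com_contain_sections_2}. With $A$ as above (nontrivial, since $H$ is not virtually abelian) and $d$ the odometer, form $L:=A\grp{d}$ and fix $x\in X$. By Lemma~\ref{lem:trans_no_section}, powers of $d$ carry any vertex of $X$ to any other with trivial section, so for every $a\in A$ and $y\in X$ there are $i,j$ with $d^jad^i\in L_{(x)}$ and $(d^jad^i)_x=a_y$; hence $\sec_L(x)$ contains all first-level sections of elements of $A$. By Lemma~\ref{lem:fix}, $A$ fixes no vertex, so for each $k\in Z$ there is $a\in A$ with $a(k(v^k))\neq k(v^k)$; then $k^{-1}ak\in A$ and $(k^{-1}ak)_{v^k}=a_{k(v^k)}k$, so both $a_{k(v^k)}k$ and $a_{k(v^k)}$ lie in $\sec_L(x)$, giving $k\in\sec_L(x)$. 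Thus $\sec_L(x)=H$. Since $L/A$ is cyclic, $L_{(x)}/A_{(x)}$ is abelian, and applying $\phi_x$ (which maps $A_{(x)}$ onto $A$ and $L_{(x)}$ onto $H$ by weak self-replication) shows $H/A$ is abelian, contradicting $\rk(H)=\rk(A)+2$. Note that this route proves $H/A$ is abelian outright and never needs $F/A$, or the abelianization of any finite-index subgroup, to be finite; to salvage your route you would need a genuinely new argument for that torsion claim, which the tools you cite do not supply.
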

\begin{proof}
Without loss of generality, we may assume that $Y$ contains $\pi_1(G)\cap G$.  Let $K$ be the subgroup generated by the directed members of $Y$. The subgroup $K$ equals $\grp{Z}$ where $Z$ is as in Lemma~\ref{lem:reduce_to_directed}, so in view of Lemma~\ref{lem:reduce_to_directed}, $K$ is virtually abelian if and only if $G$ is virtually abelian. By replacing $G$ with $K$ if needed, we assume that each element of $Y$ is directed. Proposition~\ref{prop:self-rep_char} now ensures that $G$ is self-replicating.

Let us suppose toward a contradiction that $G$ is elementary amenable but not virtually abelian. Applying Corollary~\ref{cor:ext_EA}, there is $H\normal G$ such that $H$ is weakly self-replicating, $\rk(H)+1=\rk(G)$, and $G/H$ is either finite or abelian. In the case that $G/H$ is abelian, Lemma~\ref{lem:fin_abelianization} ensures that $G/H$ is in fact finite. The group $G$ thus admits a weakly self-replicating $H\normal G$ such that $\rk(H)+1=\rk(G)$ and $G/H$ is finite. Since $H\normal G$ is of finite index, Corollary~\ref{cor:ext_EA_2} supplies a weakly self-replicating $A\normal G$ such that $G/A$ is virtually abelian and $\rk(A)+2=\rk(G)$.

Fix $h\in G$ such that $h$ is an odometer and form $L:=A\grp{h}$. For each $x$ and $y\in X$ distinct, Lemma~\ref{lem:trans_no_section} supplies $i$ such that $h^i(x)=y$ and $(h^i)_x=1$. Fixing $x\in X$, for any $a\in A$  and $y\in X$, there are $i$ and $j$ such that $h^jah^i$ fixes $x$ and $(h^jah^i)_x=a_y$. The image of the section homomorphism $\sec_L(x)=\phi_x(L_{(x)})$ therefore contains all sections of $A$.

For any $k\in Y$, there is some $a\in A$ such that $a(k(v^k))\neq k(v^k)$, by Lemma~\ref{lem:fix}. The element $k^{-1}ak$ is a member of $A$, and $(k^{-1}ak)_{v^k}=a_{k(v^k)}k$. The group $\sec_L(x)$ thus contains $a_{k(v^k)}k$ and $a_{k(v^k)}$. Hence, $k\in \sec_L(x)$, and we deduce that $\sec_L(x)=G$. On the other hand, $A_{(x)}\normal L_{(x)}$, and $L_{(x)}/A_{(x)}$ is abelian. Thus, $A=\sec_A(x)\normal \sec_L(x)=G$, so $G/A$ is abelian. This implies that $\rk(G)=\rk(A)+1$ which is absurd.
\end{proof}

\begin{example}\label{ex:basilica_2}
Let $G:=\grp{\at,\bt}\leq\Aut([4]^*)$ be the basilica group in the reduced form as given in Example~\ref{ex:basilica}. The elements $\at$ and $\bt$ in wreath recursion have the following form: $\at=(23)(1,1,1,\at)$ and $\bt=(02)(13)(1,1,1,\bt)$. We now see that
\[
\at\bt^{-1}=(23)(1,1,1,\at)(02)(13)(1,\bt^{-1},1,1)=(0312)(1,\at\bt^{-1},1,1).
\]
Hence, $\at\bt^{-1}$ is an odometer, so $G$ contains an odometer. Theorem~\ref{thm:trans} now gives a second proof that $G$ is not elementary amenable.
\end{example}

\subsection{Technical results revisited}

\begin{lem}\label{lem:BAG-rank}
Suppose that $G=\grp{Y}\leq \Aut(\Xt)$ is a self-replicating bounded automata group in reduced form. If $G$ is elementary amenable but not virtually abelian, then there is a weakly self-replicating $M\normal G$ such that $G/M$ is finite and $\rk(M)+1=\rk(G)$
\end{lem}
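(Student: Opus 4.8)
The plan is to begin exactly as in the proof of Theorem~\ref{thm:trans}. As $G$ is self-replicating it is infinite, and since it is not virtually abelian it is in particular non-abelian, so Corollary~\ref{cor:ext_EA} applies and furnishes a weakly self-replicating $M\normal G$ with $\rk(M)+1=\rk(G)$ and with $G/M$ either finite or abelian. If $G/M$ is finite there is nothing left to prove, so the whole content of the lemma lies in the case that $G/M$ is abelian, where I would show that $G/M$ is in fact finite. This is the general-reduced-form counterpart of Lemma~\ref{lem:fin_abelianization}, which proved precisely this upgrade for self-replicating generalized basilica groups, and carrying it out is the crux.

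To see that an abelian $G/M$ must be finite, I would fix a distinguished generating set $Y$ and show that every $g\in Y$ is torsion modulo $M$; since $G/M$ is finitely generated and abelian, this forces it to be finite. A finitary generator is torsion already in $\Aut(\Xt)$, so the work is entirely with a directed generator $d$, for which $d_{v^d}=d$ while $d_y\in\pi_1(G)$ is finitary for every other $y\in X$. Here I would pass to a power $d^n$ with $n$ a common multiple of the cycle lengths of $\pi_1(d)$ on $X$, so that $d^n$ fixes $X$, and then read off its level-one sections orbit-by-orbit for $\grp{\pi_1(d)}$ acting on $X$: along the orbit through the active vertex $v^d$ the section $(d^n)_z$ is a product of finitary factors together with a positive power of $d$, while along every other orbit $(d^n)_z$ is a product of finitary elements and hence finitary. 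The engine for transporting sections into $M$ is Lemma~\ref{lem:com_contain_sections_2}: if an element fixes two letters $x,y\in X$ and has trivial section at $x$, then---because $G$ acts transitively on $X$, $M$ is weakly self-similar and normal, and $G/M$ is abelian---its section at $y$ lies in $M$. Feeding the purely finitary off-active-orbit sections through this lemma, I would strip away the finitary contributions and conclude that a positive power of $d$ lands in $M$, so that $d$ is torsion modulo $M$.

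The hard part is precisely the bookkeeping of the nontrivial finitary sections $d_y\in\pi_1(G)$. In the generalized basilica setting of Lemma~\ref{lem:fin_abelianization} these side sections are trivial, which supplies the trivial companion sections that Lemma~\ref{lem:com_contain_sections_2} requires and is what makes that argument run smoothly; in general reduced form they are genuine torsion elements of $\pi_1(G)$, which need not even be abelian, and the difficulty is to manufacture enough vertices carrying trivial (or already $M$-valued) sections to serve as anchors while showing the remaining finitary data is absorbed by $M$ in the abelian quotient. I expect the boundedness of $G$ to be essential here, as it ensures that only finitely many vertices of each level carry nontrivial sections, so that the stripping procedure is a finite manipulation and terminates. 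Once every generator of $Y$ is shown to be torsion modulo $M$, the finitely generated abelian group $G/M$ is finite, and the weakly self-replicating subgroup $M$ with $\rk(M)+1=\rk(G)$ delivered by Corollary~\ref{cor:ext_EA} is the subgroup required by the lemma.
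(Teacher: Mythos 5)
Your setup matches the paper's: apply Corollary~\ref{cor:ext_EA} to get a weakly self-replicating $M\normal G$ with $\rk(M)+1=\rk(G)$ and $G/M$ finite or abelian, then upgrade the abelian case to finite by showing each generator is torsion modulo $M$ (equivalently, that $FM/M$ has finite index, where $F$ is the locally finite subgroup of finitary elements). But the crux---handling a directed generator $d$---is exactly where your sketch breaks down, and you have not supplied the missing idea. The fatal case is when $\grp{d}$ acts \emph{transitively} on $X$: then there are no ``other orbits'' at all, so every level-one section of every power of $d$ contains a power of $d$, and your proposed anchoring on ``purely finitary off-active-orbit sections'' has nothing to anchor on. This case cannot be dispatched by bookkeeping. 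The paper handles it by invoking Theorem~\ref{thm:trans}: if $Y$ contained no non-trivial finitary element, then $G$ would be a generalized basilica group containing an odometer, and Theorem~\ref{thm:trans} would force $G$ to be virtually abelian or non-elementary-amenable, contradicting the hypotheses; with the resulting non-trivial finitary $f\in Y$ in hand, the paper then runs a delicate argument with the elements $d^jf$ (analyzing their orbit structure and sections, and using commutators with elements supplied by self-replication), which also requires the preliminary reduction to $|X|>2$ via $r_2$. None of this appears in your proposal, and ``boundedness makes the stripping terminate'' is not a substitute for it.

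There is also a secondary inaccuracy in the intransitive case: Lemma~\ref{lem:com_contain_sections_2} requires a \emph{trivial} section at a fixed vertex, whereas the off-orbit sections of $d^n$ are merely finitary and possibly non-trivial, so the lemma does not apply as you state. The paper's actual mechanism is to use self-replication to produce $h\in G$ with $h(y)=x$ and $h_y=1$, and then compute the commutator $[h,d^{-m}]\in M$ directly; its section at the active vertex is $r^{-1}(fd)^i$ with $r,f$ finitary, and weak self-replication of $M$ plus abelianness of $G/M$ yields only $d^i\in FM$---not $d^i\in M$. One then needs the extra observation that $FM/M$ is locally finite (so $d$ is indeed torsion modulo $M$, and a finitely generated abelian group with a locally finite finite-index subgroup is finite). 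Your overall scheme is thus recoverable in the intransitive case with this correction, but the transitive case is a genuine gap whose resolution requires the odometer theorem, a substantial prior result rather than a refinement of the stripping procedure you describe.
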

\begin{proof}
In view of Lemma~\ref{lem:directed_preserve}, if $G$ is in reduced form, then $r_k(G)\leq\Aut((X^k)^*)$ is again in reduced form for any $k\geq 1$. By passing to $r_2(G)$, we may thus assume that $|X|>2$.

By Corollary~\ref{cor:ext_EA}, there is a weakly self-replicating $N\normal G$ such that $G/N$ is either finite or abelian and $\rk(N)+1=\rk(G)$. Suppose that $G/N$ is abelian. We will argue that $G/N$ is finite. Letting $F$ be the collection of finitary elements of $G$, we will in fact show that $FN$ is of finite index in $G$. This suffices to prove the theorem as $F$ is locally finite, and hence $G/N$ will have a locally finite subgroup of finite index, and hence is finite.

Fix $d\in Y$ directed and suppose first that $\grp{d}$ does not act transitively on $X$. Let $x\in X$ be such that $d_x=d$ and $y\in X$ be such that $y$ is not in the orbit of $x$ under $\grp{d}$. Taking $m\geq 1$ to be least such that $d^m$ fixes $X$, it follows that $(d^m)_x=(fd)^i$ for some $i\geq 1$ and $f$ finitary and $(d^m)_y=r$ for $r$ some finitary. Since $G$ is self-replicating, there is $h\in G$ such that $h(y)=x$ and $h_y=1$. The commutator $[h,d^{-m}]$ is an element of $N$, and
\[
[h,d^{-m}]_x=r^{-1}(fd)^{i},
\]
which is an element of $N$ as $N$ is weakly self-replicating. Since $G/N$ is abelian, it follows that $d^i\in FN$.

Now suppose that $d\in Y$ acts transitively on $X$. In view of Theorem~\ref{thm:trans}, there must be a non-trivial finitary element $f\in Y$. We may find a power $n:=|X|>i\geq 1$ such that $d^if$ fixes some $x\in X$. This presents two cases: $(d^if)_x$ is finitary or $(d^if)_x=rdr'$ for $r$ and $r'$ finitary.  The cases are similar, so we only consider the latter. Recall that $n>2$. If $i=n-1$, then $d^{-1}f$ fixes $x$, so we replace $d$ with $d^{-1}$. We may thus assume that $n-1>i$.

Replace $d^i$ with $d^{j}$ where $j:=i-n$ and note that $1<|j|$. The sections of $d^jf$ have three possible forms: finitary, $rdr'd^{-1}r''$ with $r,r'$, and $r''$ finitary, or $rd^{- 1}r'$ with $r$ and $r'$ finitary. The section $(d^jf)_x$ is of the form $rdr'd^{-1}r''$, and since $i<n-1$, there is some $y\in X$ such that $(d^jf)_y=sd^{-1}s'$ with $s$ and $s'$ finitary elements.

Let $k$ be least such that $(d^jf)^k$ fixes $y$. Since $G$ is self-replicating, there is $h\in G$ such that $h(x)=y$ and $h_x=1$. The commutator $[(d^jf)^{k},h]$ is an element of $N$, and
\[
[(d^jf)^k,h]_y=a_k\dots a_1((d^jf)_x)^{-k},
\]
where each $a_i$ has one of aforementioned forms, $(d^jf)_x$ has the form $rdr'd^{-1}r''$ and $a_1= ((d^jf)^k)_y$ is of the form $sd^{-1}s'$.

Since $G/N$ is abelian, the $a_i$ of the form $rdr'd^{-1}r''$ are equal to a finitary modulo $N$. We may thus write $[(d^jf)^k,h]_y \mod N=b_1\dots b_m \mod N$ where each $b_i$ is either finitary or of the form $sd^{-1}s'$. Commuting the finitaries to the left modulo $N$, we see that $[(d^jf)^k,h]_y \mod N= f'd^{-l}\mod N$ for $f$ a finitary and $l$ non-zero. On the other hand, $[(d^jf)^k,h]_y \in N$ since $N$ is weakly self-replicating. It now follows that $d^l\in FN$.

We have now demonstrated that every $z\in Y$ admits $l$ such that $z^l\in FN$. Hence, $FN$ is of finite index in $G$ as $G/N$ is finitely generated abelian.
\end{proof}

In view of Corollary~\ref{cor:ext_EA_2}, Lemma~\ref{lem:BAG-rank} yields the following consequence.

\begin{cor}\label{cor:ext_EA_3}
Suppose that $G\leq \Aut(\Xt)$ is a self-replicating bounded automata group. If $G$ is elementary amenable but not virtually abelian, then there are weakly self-replicating subgroups $N\normal G$ and $M\normal G$ such that
\begin{enumerate}
\item $N\leq M$, $M/N$ is abelian, and $G/M$ is finite; and
\item $\rk(M)+1=\rk(N)+2=\rk(G)$.
\end{enumerate}
\end{cor}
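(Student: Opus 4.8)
The plan is to read this off as the composite of Lemma~\ref{lem:BAG-rank} and Corollary~\ref{cor:ext_EA_2}; essentially all of the genuine work has already been done in Lemma~\ref{lem:BAG-rank}, and what remains is to check that its output feeds correctly into the hypotheses of Corollary~\ref{cor:ext_EA_2}. The first step is a reduction to reduced form. Lemma~\ref{lem:BAG-rank} requires $G$ to be in reduced form, whereas the statement here only assumes $G$ is a self-replicating bounded automata group, so I would begin by invoking Corollary~\ref{cor:reduction-reduced-form} to replace $G$ by an isomorphic copy $r_k(G)\leq \Aut((X^k)^*)$ that is a self-replicating bounded automata group in reduced form. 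Since being elementary amenable, being virtually abelian, and the construction rank are all isomorphism invariants, the hypotheses (elementary amenable, not virtually abelian) and the target conclusions persist under this reduction, so we may simply assume $G=\grp{Y}\leq\Aut(\Xt)$ is itself in reduced form.

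Next I would apply Lemma~\ref{lem:BAG-rank} to $G$. As $G$ is now a self-replicating bounded automata group in reduced form that is elementary amenable but not virtually abelian, the lemma produces a weakly self-replicating $M_0\normal G$ with $G/M_0$ finite and $\rk(M_0)+1=\rk(G)$. The key observation is that this $M_0$ is exactly the extra hypothesis demanded by Corollary~\ref{cor:ext_EA_2}: being a bounded automata group, $G$ is finitely generated by definition; it is elementary amenable, self-replicating, and not virtually abelian; and $M_0$ is a normal subgroup of finite index (since $G/M_0$ is finite) satisfying $\rk(M_0)+1=\rk(G)$.

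I would then invoke Corollary~\ref{cor:ext_EA_2} with this $M_0$, obtaining weakly self-replicating subgroups $A\normal G$ and $F\normal G$ with $A\leq F$, $F/A$ abelian, $G/F$ finite, and $\rk(F)+1=\rk(A)+2=\rk(G)$. Setting $N:=A$ and $M:=F$ yields precisely the asserted subgroups: $N\leq M$ are weakly self-replicating and normal, $M/N$ is abelian, $G/M$ is finite, and $\rk(M)+1=\rk(N)+2=\rk(G)$. There is no substantive obstacle at this stage—the argument is pure bookkeeping—and the only point requiring care is the initial reduction to reduced form, which is needed solely so that Lemma~\ref{lem:BAG-rank} applies; all the analytic difficulty (producing a finite-index normal subgroup of the correct rank for an arbitrary reduced self-replicating bounded automata group) is absorbed into that lemma.
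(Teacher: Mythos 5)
Your proposal is correct and is essentially the paper's own proof: the paper derives Corollary~\ref{cor:ext_EA_3} with the single remark that, in view of Corollary~\ref{cor:ext_EA_2}, Lemma~\ref{lem:BAG-rank} yields the statement, which is exactly your composition of the two results. The only difference is that you make explicit the preliminary passage to reduced form via Corollary~\ref{cor:reduction-reduced-form}, a step the paper leaves implicit (and which, as in the paper, silently identifies weak self-replication across the identification $G\cong r_k(G)$).
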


Corollary~\ref{cor:ext_EA_3} ensures that various examples of elementary amenable groups do not have faithful representations as self-replicating bounded automata groups.

\begin{prop}\label{prop:no_rep}
Suppose that $G$ is an elementary amenable group that is not virtually abelian. If every finite index subgroup has the same rank as $G$, then $G$ has no faithful representation as a self-replicating bounded automata group.
\end{prop}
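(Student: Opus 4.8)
The plan is to prove the contrapositive. Suppose $G$ admits a faithful representation as a self-replicating bounded automata group; I will show that $G$ must have a proper finite-index subgroup of strictly smaller rank, contradicting the hypothesis that every finite-index subgroup has the same rank as $G$. Since $G$ is elementary amenable and not virtually abelian, Corollary~\ref{cor:ext_EA_3} applies directly to this faithful representation and supplies weakly self-replicating normal subgroups $N\normal G$ and $M\normal G$ with $N\leq M$, $M/N$ abelian, $G/M$ finite, and the rank relations $\rk(M)+1=\rk(N)+2=\rk(G)$.

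The key observation is that the subgroup $M$ now serves as the desired witness. First, $M$ is of finite index in $G$, since $G/M$ is finite. Second, its rank is strictly smaller: the relation $\rk(M)+1=\rk(G)$ gives $\rk(M)=\rk(G)-1<\rk(G)$ (here one uses that $\rk(G)$ is a successor, which is guaranteed because $G$ is neither finite nor abelian, so by Corollary~\ref{cor:EA-rank} its construction rank is of the form $\rk(L)+1$). Thus $M$ is a finite-index subgroup of $G$ whose rank is strictly less than that of $G$.

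This immediately contradicts the standing hypothesis that every finite-index subgroup of $G$ has the same rank as $G$. Therefore no such faithful representation can exist, which is exactly the desired conclusion.

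I do not anticipate a serious obstacle here: the entire argument is a short deduction from Corollary~\ref{cor:ext_EA_3}, whose proof has already absorbed all the genuine difficulty (namely producing the weakly self-replicating subgroups and controlling their ranks). The only subtlety worth flagging is that one must apply Corollary~\ref{cor:ext_EA_3} to a \emph{self-replicating} bounded automata representation; this is precisely what the faithful representation provides, so the hypotheses line up. One should also confirm that $G$ inherits the properties ``elementary amenable'' and ``not virtually abelian'' under the isomorphism with its representation, but these are isomorphism-invariant, so no work is needed there.
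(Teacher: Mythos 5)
Your proof is correct and follows essentially the same route as the paper: the paper passes to a reduced-form representation via Corollary~\ref{cor:reduction-reduced-form} and then invokes Lemma~\ref{lem:BAG-rank} to produce a finite-index subgroup of strictly smaller rank, while you cite Corollary~\ref{cor:ext_EA_3}, which is just the packaged form of that same lemma (and whose statement indeed requires no reduced-form hypothesis), so the contradiction you reach is identical. Your aside about $\rk(G)$ being a successor is unnecessary -- $\rk(M)+1=\rk(G)$ already forces $\rk(M)<\rk(G)$ -- but it does no harm.
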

\begin{proof}
Suppose for contradiction $G$ has a faithful representation as a self-replicating bounded automata group. Appealing to Corollary~\ref{cor:reduction-reduced-form}, we may assume that $G\leq\Aut(\Xt)$ is a representation of $G$ as a bounded automata group in reduced form. Lemma~\ref{lem:BAG-rank} now implies that $G$ has a finite index subgroup of lower rank. This contradicts our hypotheses.
\end{proof}

\begin{cor}\label{cor:lamplighter}
The groups $A\wr \Zb$ where $A$ is a non-trivial abelian group do not have faithful representations as self-replicating bounded automata groups.
\end{cor}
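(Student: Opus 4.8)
The plan is to derive the statement directly from Proposition~\ref{prop:no_rep}, so the work is entirely in checking its three hypotheses for $G := A\wr\Zb = B\rtimes\Zb$, where $B := \bigoplus_{\Zb} A$ is the base group and $\Zb$ acts on $B$ by the shift $\sigma$. First I would record the easy facts: since $B$ is abelian and $G/B\cong\Zb$ is abelian, $G$ is abelian-by-abelian, hence $G\in\EA_1$ and in particular $G$ is elementary amenable. As $A$ is non-trivial, $B$ is infinite and the shift action is non-trivial, so $G$ is neither finite nor abelian; therefore $\rk(G)=1$.

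The substantive step, and the one I expect to require the most care, is showing that $G$ is not virtually abelian. I would argue by contradiction: suppose $H\leq G$ is abelian of finite index. Then $H\cap B$ has finite index in the infinite group $B$, so $H\cap B$ is infinite; moreover the image of $H$ under the projection $G\to\Zb$ is $n\Zb$ for some $n\geq 1$, so $H$ contains an element $h$ projecting to $n$. Because $H$ is abelian, $h$ centralizes $H\cap B$. But conjugation by $h$ acts on $B$ as the shift $\sigma^{n}$ with $n\geq 1$, so every $b\in H\cap B$ satisfies $\sigma^{n}(b)=b$. A non-trivial element of $B$ has finite non-empty support, which $\sigma^{n}$ genuinely translates when $n\geq 1$, so the only fixed point of $\sigma^{n}$ in $B$ is the identity; hence $H\cap B$ is trivial, contradicting that it is infinite. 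Thus no finite-index subgroup of $G$ is abelian, i.e.\ $G$ is not virtually abelian.

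With these two facts in hand, the third hypothesis of Proposition~\ref{prop:no_rep} is automatic. For any finite-index $H\leq G$, Proposition~\ref{prop:rank_stability}(1) gives $\rk(H)\leq\rk(G)=1$; and $H$ is infinite, being of finite index in an infinite group, so $\rk(H)=0$ would force $H$ to be abelian, contradicting that $G$ is not virtually abelian. Hence $\rk(H)=1=\rk(G)$ for every finite-index subgroup $H$. All three hypotheses of Proposition~\ref{prop:no_rep} now hold, and the proposition yields that $G=A\wr\Zb$ has no faithful representation as a self-replicating bounded automata group, which is the assertion of the corollary. (No separate treatment of the non-finitely-generated case is needed: if $A$ is such that $G$ is not finitely generated, then $G$ cannot be a bounded automata group at all, and the argument via Proposition~\ref{prop:no_rep} covers this vacuously.)
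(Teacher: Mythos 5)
Your proof is correct and follows exactly the route the paper intends: Corollary~\ref{cor:lamplighter} is stated as an immediate consequence of Proposition~\ref{prop:no_rep}, and your verification of its three hypotheses (elementary amenability with $\rk(G)=1$, failure of virtual abelianness via the fixed-point argument for the shift, and equality of ranks for finite-index subgroups) supplies precisely the details the paper leaves implicit. No gaps.
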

Corollary~\ref{cor:lamplighter} is rather interesting because the classical lamplighter group $\Zb/2\Zb\wr \Zb$ does have representations as a self-replicating group \cite[cf. Proposition 1.9.1]{N05} (see also~\cite{SS05} where $\Zb/2\Zb$ is replaced by an arbitrary finite abelian group), and on the other hand, Example~\ref{ex:lamp} below shows it has representations as a bounded automata group. However, one can never find a representation which is simultaneously as a self-replicating group and as a bounded automata group

\subsection{The general case}
\begin{thm}\label{thm:odometers}
Suppose that $G\leq \Aut(\Xt)$ is a bounded automata group. If $G$ contains an odometer, then either $G$ is virtually abelian or $G$ is not elementary amenable.
\end{thm}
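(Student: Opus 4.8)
The plan is to reduce to a self-replicating bounded automata group in reduced form and then to run the argument of Theorem~\ref{thm:trans}, using the general Corollary~\ref{cor:ext_EA_3} where that proof used input special to generalized basilica groups, and disposing of the finitary sections that now appear by taking sections a second time.

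First I would arrange that $G$ is in reduced form and self-replicating. By Theorem~\ref{thm:reduction-reduced-form} there are $k\geq 1$ and a bounded automata group $H\leq \Aut((X^k)^*)$ in reduced form, isomorphic to a subgroup of $G$, with $G\injects \Sym(X^k)\ltimes H^{X^k}$. As $\Sym(X^k)$ is finite and both elementary amenability and virtual abelianness pass to subgroups, to finite direct powers, and to finite extensions, $G$ is elementary amenable (respectively, virtually abelian) exactly when $H$ is. Moreover the odometer of $G$ has every section trivial or equal to itself, so it belongs to the generating set furnished by Theorem~\ref{thm:reduction-reduced-form}; being strongly directed, its image in $H$ is again an odometer by Lemma~\ref{lem:directed_preserve} and level transitivity. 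It therefore suffices to prove the theorem for $H$, so I assume $G$ is in reduced form. A second application of this transfer, now through the embedding $G\injects \Sym(X)\ltimes \grp{Z}^X$ of Lemma~\ref{lem:reduce_to_directed} with $Z=\{d_x\mid d\in Y\text{ directed},\ x\in X\}$, reduces to $\grp{Z}$; this group contains the odometer (a directed generator, hence an element of $Z$) and is self-replicating by Proposition~\ref{prop:self-rep_char_gen}. Henceforth $G=\grp{Z}$ is a self-replicating bounded automata group in reduced form containing an odometer $h$.

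Assume toward a contradiction that $G$ is elementary amenable but not virtually abelian. Corollary~\ref{cor:ext_EA_3} provides weakly self-replicating $N\leq M$ normal in $G$ with $M/N$ abelian, $G/M$ finite, and $\rk(M)+1=\rk(N)+2=\rk(G)$; in particular $N\neq 1$, for otherwise $M=M/N$ would be abelian, giving $\rk(M)=0$, whereas $\rk(M)=\rk(N)+1=1$. Put $L:=N\grp{h}$. Since $h$ is an odometer, $\grp{h}$ is level transitive, and the argument of Lemma~\ref{lem:trans_no_section}, applied one level at a time, joins any two words of equal length by a power of $h$ with trivial section at the source. Hence, for each word $v$, the group $\sec_L(v)$ depends only on the length of $v$ and contains $g_v$ for every $g\in L$; in particular $\sec_L(v)\supseteq \sec_N(v)=N$ and $h\in\sec_L(v)$. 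It now suffices to prove $\sec_L(v)=G$ for some $v$: then $N=\sec_N(v)$ is normal in $\sec_L(v)=G$, and $G/N$ is a quotient of $L_{(v)}/N_{(v)}$, which is a subgroup of the cyclic group $L/N$ and so abelian; thus $G/N$ is abelian and $\rk(G)\leq \rk(N)+1$, contradicting $\rk(G)=\rk(N)+2$.

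For a directed generator $d$ with active vertex $v^d$, Lemma~\ref{lem:fix} yields $a\in N$ moving $d(v^d)$, and $d^{-1}ad\in N$ has first-level section $(d^{-1}ad)_{v^d}=\sigma_d\, a_{d(v^d)}\, d$, where $\sigma_d\in\pi_1(G)$ is finitary and $a_{d(v^d)}\in N$. Since $N\subseteq\sec_L(x)$ for a fixed $x\in X$ and $\sigma_d$ normalizes $N$, this shows $\sigma_d d\in\sec_L(x)$. The main obstacle, and the genuine difference from Theorem~\ref{thm:trans}, is the finitary factor $\sigma_d$ and the finitary elements of $Z$ (the non-active sections of the directed generators), which are trivial in the generalized basilica setting but not here, so one does not recover $d$ itself at the first level. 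I would dispose of them by taking sections once more: as $\sigma_d$ is finitary of depth one, $(\sigma_d d)_{v^d}=d$ and $(\sigma_d d)_u=d_u$ for $u\neq v^d$, and by the length-dependence established above every second-level section of an element of $L$ lies in $\sec_L(v)$ for $v$ of length two. Choosing $g\in L_{(x)}$ with $g_x=\sigma_d d$ and reading off its second-level sections, each directed generator $d$ and each finitary generator $d_u$ lies in $\sec_L(v)$ for $|v|=2$. Therefore $\sec_L(v)\supseteq Z$, whence $\sec_L(v)=G$, which furnishes the required contradiction.
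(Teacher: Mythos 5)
Your proposal follows the paper's proof in all essentials: the same two reductions (Theorem~\ref{thm:reduction-reduced-form}, then Lemma~\ref{lem:reduce_to_directed} together with Proposition~\ref{prop:self-rep_char_gen} to reach a self-replicating group in reduced form whose generating set contains the odometer), the same appeal to Corollary~\ref{cor:ext_EA_3}, the same auxiliary group $L=N\grp{h}$ with the odometer-conjugation trick showing that $\sec_L(v)$ contains every section of every element of $L$ at every word of the same length as $v$, the same idea of passing to second-level sections to kill depth-one finitary factors, and the same final contradiction ($G/N$ abelian versus $\rk(G)=\rk(N)+2$). There is, however, one unjustified step: the assertion that $a_{d(v^d)}\in N$. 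Weak self-replication says $\sec_N(w)=\phi_w(N_{(w)})=N$, so it controls sections of elements of $N$ only at vertices that those elements \emph{fix}; your $a$ was chosen, via Lemma~\ref{lem:fix}, precisely so that it \emph{moves} $d(v^d)$, and nothing in the paper (or in general) places the section of an element of $N$ at a moved vertex back inside $N$. Consequently the intermediate conclusion $\sigma_d d\in\sec_L(x)$ --- which rests on writing $\sigma_d a_{d(v^d)}d=\bigl(\sigma_d a_{d(v^d)}\sigma_d^{-1}\bigr)\bigl(\sigma_d d\bigr)$ and absorbing the first factor into $N\subseteq \sec_L(x)$ --- is not established, and your last paragraph depends on it, since it requires some $g\in L_{(x)}$ with $g_x=\sigma_d d$.

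The gap is local, and the second-level device you already set up repairs it in exactly the way the paper's proof is organized: skip the level-one waypoint entirely. The element $d^{-1}ad$ lies in $N\leq L$, so \emph{all} of its second-level sections lie in $\sec_L(v)$ for $v\in X^2$; computing,
\[
(d^{-1}ad)_{v^d u}=\bigl((d^{-1}ad)_{v^d}\bigr)_u=\bigl(\sigma_d\, a_{d(v^d)}\, d\bigr)_u=a_{d(v^d)d(u)}\,d_u ,
\]
because $\sigma_d$ is finitary of depth one. The stray factor $a_{d(v^d)d(u)}$ is itself a second-level section of $a\in L$, hence also lies in $\sec_L(v)$ by the property you established --- no membership in $N$ and no normalization by $\sigma_d$ is needed. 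Cancelling it gives $d_u\in\sec_L(v)$ for every $u\in X$ and every directed generator $d$, so $Z\subseteq\sec_L(v)$ and $\sec_L(v)=G$, after which your contradiction goes through verbatim. This is precisely the computation $(k^{-1}ak)_{xy}=a_{k(x)k(y)}k_y$ in the paper's proof of Theorem~\ref{thm:odometers}; with that one step rerouted, your argument and the paper's coincide.
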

\begin{proof}
Suppose toward a contradiction that $G$ is elementary amenable but not virtually abelian. Via Theorem~\ref{thm:reduction-reduced-form}, it is enough to consider $G=\grp{Y}\leq \Aut(\Xt)$ to be in reduced form. Without loss of generality, we may assume that $Y$ contains an odometer $h$.

Setting $Z:=\{d_x\mid d\in Y\text{ is directed and }x\in X\}$, Lemma~\ref{lem:reduce_to_directed} implies that $H:=\grp{Z}$ is a bounded automata group in reduced form, and $G\injects \Sym(X)\ltimes H^{X}$. Moreover, $h\in Z$, so $H$ contains an odometer and is elementary amenable but not virtually abelian. Proposition~\ref{prop:self-rep_char_gen} further implies that $H$ is self-replicating.

Applying Corollary~\ref{cor:ext_EA_3}, there is a weakly self-replicating $M\normal H$ such that $H/M$ is virtually abelian and $\rk(M)+2=\rk(H)$. Using the odometer $h\in H$, set $L:=M\grp{h}$. For each $u$ and $w\in X^2$ distinct, Lemma~\ref{lem:trans_no_section} supplies $i$ such that $h^i(u)=w$ and $(h^i)_u=1$. Fixing $v\in X^2$, for any $m\in M$  and $w\in X^2$, there are $i$ and $j$ such that $h^jmh^i$ fixes $v$ and $(h^jmh^i)_v=m_w$. The image of the section homomorphism $\sec_L(v)=\phi_v(L_{(v)})$ therefore contains $m_w$ for every $m\in M$ and $w\in X^2$.

For any directed $k\in Y$ and $x\in X$ the active vertex of $k$, there is some $a\in M$ such that $ak(x)\neq k(x)$, by Lemma~\ref{lem:fix}. The section $(k^{-1})_{ak(x)}$ is finitary of depth one, so
\[
(k^{-1})_{ak(x)x'}=((k^{-1})_{ak(x)})_{x'}=1
\]
for any $x'\in X$. The element $k^{-1}ak$ is a member of $M$, and for any $y\in X$,
\[
(k^{-1}ak)_{xy}=(k^{-1})_{ak(x)a_{k(x)}(k(y))}a_{k(x)k(y)}k_y=a_{k(x)k(y)}k_y
\]
 The group $\sec_L(v)$ thus contains $a_{k(x)k(y)}k_y$ and $a_{k(x)k(y)}$. Hence, $k_y\in \sec_L(v)$. We deduce that $Z\subseteq \sec_L(v)$, so $\sec_L(v)=H$. On the other hand, $M_{(v)}\normal L_{(v)}$, and $L_{(v)}/M_{(v)}$ is abelian. Since $M=\sec_M(v)\normal \sec_L(v)=H$, we deduce that $H/M$ is abelian. Therefore, $\rk(H)=\rk(M)+1$ which is absurd.
\end{proof}

\section{Kneading automata groups}
\subsection{Preliminaries}

For a permutation $\sigma\in \Sym(X)$, a \textbf{complete cycle decomposition} of $\sigma$ is a cycle decomposition including all length one cycles.
\begin{defn}
Let $X$ be a finite set and $\alpha=(\sigma_1,\dots,\sigma_n)$ be a sequence of permutations of $X$. Say that each $\sigma_i$ has complete cycle decomposition $c_{i,1}\dots c_{i,k_i}$. We define the \textbf{cycle graph} of $\alpha$, denoted by $\Gamma_{\alpha}$, as follows:
\[
V\Gamma_{\alpha}:=X\sqcup \{(i,j)\mid 1\leq i\leq n \text{ and } 1\leq j \leq k_i\}
\]
and
\[
E\Gamma_{\alpha}:=\left\{\{(i,j),x\}\mid x \text{ appears in }c_{i,j}\right\}.
\]

\end{defn}
The graph $\Gamma$ is a bipartite graph with bipartition consisting of $X$ and $\mc{C}:= \{(i,j)\mid 1\leq i\leq n \text{ and } 1\leq j \leq k_i\}$.  Some authors define the cycle graph using only non-trivial cycles. We allow trivial cycles to make our discussion more streamlined later. Allowing for trivial cycles only adds leaves to the graph. In particular, these leaves to do affect whether or not the cycle graph is a tree.

\begin{defn}
For $X$ a finite set, a finite sequence $\alpha$ of elements of $\Sym(X)$ is called \textbf{tree-like} if the cycle graph is a tree.
\end{defn}

We will find tree-like sequences in the definition of kneading automata groups. Let us note several facts for later use.

\begin{lem}
If $(\sigma_1,\dots,\sigma_n)$ is a tree-like sequence of permutations of a finite set $X$, then $H:=\grp{\sigma_1,\dots,\sigma_n}$ acts transitively on $X$.
\end{lem}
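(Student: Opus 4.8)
The plan is to exploit the connection between the cycle graph being a tree and the transitivity of the generated group via a counting or connectedness argument. First I would recall the basic structure: the cycle graph $\Gamma_{\alpha}$ is bipartite with parts $X$ and $\mathcal{C}$, where each element $x \in X$ is joined to exactly one cycle-vertex $(i,j)$ for each index $i$ (namely the unique cycle of $\sigma_i$ containing $x$). Thus every $x \in X$ has degree exactly $n$ in $\Gamma_{\alpha}$, and the key observation is that two vertices $x, y \in X$ lie in a common cycle of some $\sigma_i$ if and only if they share a common neighbor $(i,j)$ in $\Gamma_{\alpha}$.

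The main idea is that transitivity of $H = \langle \sigma_1,\dots,\sigma_n\rangle$ on $X$ is equivalent to connectedness of $\Gamma_{\alpha}$. To see the forward-relevant direction: I would define a graph $G$ on vertex set $X$ where $x \sim y$ whenever some cycle $c_{i,j}$ contains both $x$ and $y$; then $H$ acts transitively on $X$ precisely when $G$ is connected, because the orbits of $H$ are exactly the connected components of $G$ (each $\sigma_i$ moves points only within its own cycles, so an orbit of $H$ is a union of cycles closed under the incidence relation). The next step is to relate connectedness of $G$ to connectedness of $\Gamma_{\alpha}$: two elements of $X$ are connected by a path in $G$ if and only if they are connected by a path in $\Gamma_{\alpha}$ (a single $G$-edge $x \sim y$ corresponds to a length-two path $x - (i,j) - y$ through the shared cycle vertex, and conversely any $\Gamma_{\alpha}$-path between elements of $X$ decomposes into such length-two hops since $\Gamma_{\alpha}$ is bipartite). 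Hence $G$ is connected if and only if $\Gamma_{\alpha}$ is connected.

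Finally, since $\Gamma_{\alpha}$ is assumed to be a tree, it is in particular connected, so $G$ is connected, so $H$ acts transitively on $X$. I would phrase the argument to only use the ``tree implies connected'' half, since that is all the lemma requires. The one point needing a small amount of care is handling trivial (length-one) cycles: these contribute leaf vertices $(i,j)$ of degree one attached to a single $x$, and they do not create any new $G$-edges, so they are harmless for the transitivity argument — this matches the remark in the text that trivial cycles only add leaves.

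The step I expect to be the main obstacle, or at least the one requiring the most careful bookkeeping, is establishing the equivalence between $H$-orbits and connected components of $G$, specifically verifying that the $H$-orbit of a point is closed under the relation ``lies in a common cycle.'' This needs the observation that applying a generator $\sigma_i$ keeps a point within its $\sigma_i$-cycle, together with an induction on word length in the generators to show that reachability under $H$ coincides with reachability in $G$; neither direction is deep, but it is the conceptual heart of the proof rather than the combinatorics of the cycle graph itself.
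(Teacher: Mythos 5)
Your proposal is correct and follows essentially the same argument as the paper: length-two paths in the bipartite cycle graph correspond to pairs lying in a common cycle (hence in a common $\grp{\sigma_i}$-orbit), arbitrary paths between elements of $X$ decompose into such length-two hops, and connectivity of the tree then yields transitivity. The only difference is cosmetic --- your auxiliary graph $G$ on $X$ is not needed, and the step you flag as the main obstacle is in fact immediate in the only direction the lemma requires (common cycle $\Rightarrow$ same orbit, then chain along the path); the reverse inclusion of orbits in components is never used.
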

\begin{proof}
First note that if $x,x'\in X$ are connected by a path of length $2$, then they belong to a cycle of some $\sigma_i$ and are hence in the same orbit of $H$.  Since the cycle graph $\Gamma$ is bipartite, and hence any path between elements of $X$ decomposes as a composition of length two paths of the above sort, $H$ is transitive by the connectivity of $\Gamma$.
\end{proof}

For a permutation $\sigma\in \Sym(X)$, we denote the collection of fixed points in $X$ of $\sigma$ by $\fix(\sigma)$.

\begin{lem}[{\cite[Lemma 6.5]{J15}}]\label{lem:fixed_pts_tree-like}
Let $X$ be a finite set and $\alpha=(\sigma_1,\dots,\sigma_n)$ be a tree-like sequence of permutations of $X$. Then
\begin{enumerate}
\item For any $i\neq j$, $|\fix(\sigma_i)|+|\fix(\sigma_j)|\geq 2$.
\item If $|\fix(\sigma_i)|+|\fix(\sigma_j)|\leq 3$ for some $i\neq j$, then $\sigma_k=1$ for all $k\notin\{i,j\}$.
\end{enumerate}
\end{lem}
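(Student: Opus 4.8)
The plan is to work entirely with the combinatorics of the cycle graph, via the following reduction: for a fixed pair $i\neq j$, consider the subgraph $\Gamma_{i,j}$ of $\Gamma_\alpha$ induced on the vertex set $X\cup\{(k,l)\mid k\in\{i,j\}\}$, that is, the cycle graph of the pair $(\sigma_i,\sigma_j)$ together with all of $X$. Since $\Gamma_\alpha$ is a tree, this induced subgraph is a forest. Its crucial feature is that every vertex $x\in X$ has degree exactly $2$ in $\Gamma_{i,j}$, because $x$ lies in exactly one cycle of $\sigma_i$ and exactly one cycle of $\sigma_j$. Consequently no $X$-vertex is a leaf, while a cycle-vertex $(k,l)$ is a leaf if and only if the cycle $c_{k,l}$ has length one, i.e.\ records a fixed point. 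Thus the number of leaves of $\Gamma_{i,j}$ is exactly $|\fix(\sigma_i)|+|\fix(\sigma_j)|$.

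For part (1), I would count components. Each cycle-vertex is adjacent only to $X$-vertices, and each $X$-vertex (having degree $2$) is adjacent only to cycle-vertices, so every connected component of $\Gamma_{i,j}$ contains an $X$-vertex together with its two neighbors; in particular each component is a tree on at least two vertices and therefore has at least two leaves. Writing $c$ for the number of components, the leaf count gives $|\fix(\sigma_i)|+|\fix(\sigma_j)|\geq 2c\geq 2$, since $c\geq 1$ as $X$ is nonempty. This is exactly (1).

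For part (2), assume $|\fix(\sigma_i)|+|\fix(\sigma_j)|\leq 3$. The inequality $|\fix(\sigma_i)|+|\fix(\sigma_j)|\geq 2c$ then forces $c=1$, so $\Gamma_{i,j}$ is connected; being a connected subforest of the tree $\Gamma_\alpha$ it is itself a subtree, and it contains every vertex of $X$. Now I would add back a cycle-vertex $(k,l)$ with $k\notin\{i,j\}$ and show its cycle must be trivial. If $c_{k,l}$ had length at least two, choose distinct $x,x'\in c_{k,l}$; in $\Gamma_\alpha$ both edges $\{(k,l),x\}$ and $\{(k,l),x'\}$ are present, while $x$ and $x'$ are already joined by a path inside the connected subtree $\Gamma_{i,j}$, a path avoiding $(k,l)$. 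Together these form a cycle in $\Gamma_\alpha$, contradicting that $\Gamma_\alpha$ is a tree. Hence every cycle of $\sigma_k$ has length one for each $k\notin\{i,j\}$, that is, $\sigma_k=1$.

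The main obstacle is identifying the right object to analyze: once one restricts to the induced subgraph $\Gamma_{i,j}$ and observes that all $X$-vertices have degree exactly $2$ there, both statements reduce to short tree/forest arguments. The part (2) step is the more delicate one, since it requires passing from the numerical consequence $c=1$ back to a structural statement about the whole sequence using the acyclicity of $\Gamma_\alpha$; the key point is that connectivity of $\Gamma_{i,j}$ across all of $X$ leaves no room for any further cycle-vertex to meet two elements of $X$ without closing a loop.
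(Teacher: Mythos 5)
Your proof is correct. Note that the paper itself does not prove this lemma at all: it is quoted verbatim from \cite[Lemma 6.5]{J15}, so your argument is a self-contained substitute rather than a variant of anything in the text. Your key reduction --- passing to the subgraph $\Gamma_{i,j}$ induced on $X$ and the cycle-vertices of $\sigma_i,\sigma_j$, observing that every $X$-vertex has degree exactly $2$ there, and identifying the leaves with the fixed points of $\sigma_i$ and $\sigma_j$ --- is sound, and both steps check out: each component of $\Gamma_{i,j}$ contains an $X$-vertex (every cycle-vertex has degree at least $1$), hence is a tree on at least two vertices with at least two leaves, giving (1); and for (2), once $\Gamma_{i,j}$ is forced to be connected and to contain all of $X$, any cycle $c_{k,l}$ of length at least two with $k\notin\{i,j\}$ would close a genuine cycle in $\Gamma_{\alpha}$ (the connecting path in $\Gamma_{i,j}$ avoids the vertex $(k,l)$), contradicting acyclicity. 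For comparison, the standard route --- and the one consistent with how the paper handles the cycle graph elsewhere (see the proof of Lemma~\ref{lem:kneading_reduced_form}) --- is an Euler-characteristic count: $|V\Gamma_{\alpha}|-|E\Gamma_{\alpha}|=1$ yields $\sum_{i}k_i=(n-1)|X|+1$, and bounding the number of cycles of each $\sigma_i$ by $(|X|+|\fix(\sigma_i)|)/2$ gives $\sum_i|\fix(\sigma_i)|\geq (n-2)|X|+2$, from which both (1) and (2) follow by arithmetic. The counting proof is shorter and gives a quantitative statement about all the $\sigma_i$ at once; your structural argument is more local, avoids the arithmetic entirely, and makes transparent \emph{why} a third nontrivial permutation is impossible in case (2), namely that its cycle-vertices have no room to attach to $X$ without creating a loop.
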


\subsection{Kneading automata}
A set $Y\subseteq \Aut(\Xt)$ is called \textbf{self-similar} if $g_x\in Y$ for every $x\in X$ and $g\in Y$.

\begin{defn}
We say that $G\leq\Aut(\Xt)$ is a \textbf{kneading automata group} if $G$ admits a finite self-similar generating set $Y$ such that the following hold:
\begin{enumerate}
\item For each non-trivial $h\in Y$, there is a unique $g\in Y$ and $x\in X$ such that $g_x=h$.
\item For each $h\in Y$ and each cycle $(x_1\dots x_n)$ of $\pi_1(h)$, possibly of length one, there is at most one $1\leq i\leq n$ such that $h_{x_i}\neq 1$.
\item The sequence $(\pi_1(h))_{h\in Y\setminus\{1\}}$ is tree-like.
\end{enumerate}
We call $Y$ a \textbf{distinguished generating set} and write $G=\grp{Y}\leq\Aut(\Xt)$ to indicate that $Y$ is a distinguished generating set for $G$.
\end{defn}

A straightforward verification shows that a kneading automata group is a bounded automata group. Furthermore, kneading automata groups naturally arise in the study of iterated monodromy groups.

\begin{thm}[{Nekrashevych, \cite[Theorem 6.10.8]{N05}}]\label{thm:iterated_monodromy}
Every iterated monodromy group of a post-critically finite polynomial has a faithful representation as a kneading automata group
\end{thm}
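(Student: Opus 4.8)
The plan is to build the representation directly from the topological dynamics of $f$ and then verify the three kneading axioms against the geometry of the associated branched cover. Let $f$ be a post-critically finite polynomial of degree $d$, let $C$ be its critical set, and let $P=\bigcup_{n\geq 1}f^n(C)$ be the finite post-critical set; since $f(P)\subseteq P$, the map $f\colon \mathbb{C}\setminus f^{-1}(P)\to\mathbb{C}\setminus P$ is a covering. Fix a basepoint $t\in\mathbb{C}\setminus P$. As $t$ is a critical value of no iterate $f^n$, every fibre $f^{-n}(t)$ has exactly $d^n$ points, and the iterated preimages $\bigsqcup_{n\geq 0}f^{-n}(t)$, with $z\in f^{-n-1}(t)$ joined to $f(z)$, form a $d$-regular rooted tree which I identify with $\Xt$ for a fixed alphabet $X$ of size $d$. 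The group $\pi_1(\mathbb{C}\setminus P,t)$ acts on this tree by lifting loops, and $\mathrm{IMG}(f)$ is by definition the faithful quotient recording this action. Faithfulness is therefore automatic, and the real task is to exhibit a finite self-similar generating set satisfying axioms (1)--(3).

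First I would fix a standard generating set of $\pi_1(\mathbb{C}\setminus P,t)$ consisting of small loops $\ell_p$ around the points $p\in P$, organised by a system of disjoint arcs issuing from $t$ so that the planar cyclic order is respected and the ordered product of the $\ell_p$ is homotopic to a loop around $\infty$. Lifting $\ell_p$ through $f$ produces its wreath recursion: the permutation $\pi_1(\ell_p)$ is the monodromy on $f^{-1}(t)\cong X$, and each section $(\ell_p)_x$ is the lift starting in sheet $x$, which is a loop around a preimage $p'$ of $p$. Such a loop is null-homotopic in $\mathbb{C}\setminus P$ unless $p'\in P$, so the nontrivial sections are exactly the loops $\ell_{p'}$ with $p'\in P$ and $f(p')=p$. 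Since $P$ is finite, closing the $\ell_p$ under sections yields a finite self-similar generating set $Y$, with the nontrivial elements of $Y$ in bijection with $P$.

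With this recursion in hand, axioms (1) and (2) should follow from elementary features of the cover. Axiom (1) is forward-determinism: a nontrivial section of $\ell_p$ has the form $\ell_{p'}$ with $f(p')=p$, and since $f$ is a function the parent $p=f(p')$ --- together with the sheet $x$ singled out by the arc system --- is uniquely determined by $p'$. Axiom (2) is local to a single critical value $p$: the sheets forming one cycle of $\pi_1(\ell_p)$ are the branches meeting at one preimage of $p$, and the returning lift deposits the loop around that preimage on a single sheet of the cycle, so at most one section per cycle is nontrivial.

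The hard part will be axiom (3), the tree-like condition, which is the genus-zero constraint in combinatorial disguise. I would prove it by an Euler-characteristic count for the cycle graph $\Gamma_\alpha$ of $\alpha=(\pi_1(h))_{h\in Y\setminus\{1\}}$. Writing $n$ for the number of permutations and $k_i$ for the number of cycles of the $i$-th one, $\Gamma_\alpha$ has $d+\sum_i k_i$ vertices and $nd$ edges, so $\Gamma_\alpha$ is a tree if and only if it is connected and $\sum_i(d-k_i)=d-1$. Connectivity follows from transitivity of the monodromy on the fibre. The numerical identity is exactly Riemann--Hurwitz: the degree-$d$ self-cover of the sphere has total ramification $2d-2$, the totally ramified point $\infty$ accounts for $d-1$ of it, and $\sum_i(d-k_i)$ is the ramification over the finite branch points, equal to $\deg f'=d-1$. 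The crux of the whole argument is thus to set up the branched-cover bookkeeping carefully: to verify that $\infty$ contributes a full $d$-cycle --- which is precisely what distinguishes polynomials among rational maps --- and to check that loops around non-critical-value points of $P$ have trivial permutation and so add only leaves, leaving tree-ness undisturbed. Once the three axioms are verified for the finite self-similar set $Y$, the faithful action of $\mathrm{IMG}(f)$ on $\Xt$ is by a kneading automaton, as desired.
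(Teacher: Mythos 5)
First, a caveat about the comparison: the paper does not prove this statement at all --- it is imported verbatim from Nekrashevych \cite[Theorem 6.10.8]{N05} --- so there is no internal proof to measure your attempt against, and the only sensible benchmark is the proof in that reference. Your outline reproduces that standard argument in structure: the action on the preimage tree, faithfulness by the very definition of $\mathrm{IMG}(f)$, the planar system of generating loops $\ell_p$ around the post-critical points, and the verification of the tree-like axiom. That last verification is correct and cleanly done: connectivity of the cycle graph is equivalent to transitivity of the monodromy, and the Euler-characteristic identity $\sum_i (d-k_i)=d-1$ is exactly the statement that all finite ramification of $f$ (totalling $\deg f' = d-1$) lies over $P$, equivalently Riemann--Hurwitz together with total ramification at $\infty$. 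This is the same counting style the paper itself uses in the proof of Lemma~\ref{lem:kneading_reduced_form}, and it correctly isolates the one place where ``polynomial'' rather than ``rational'' is essential.

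The genuine gap is in the middle step, and it is precisely the technical core of Nekrashevych's proof. You assert that the nontrivial sections of $\ell_p$ \emph{are} the loops $\ell_{p'}$ with $f(p')=p$. What lifting gives for free is weaker: the composite of the lifts of $\ell_p$ along one cycle of $\pi_1(\ell_p)$ is freely homotopic to a small loop around the corresponding preimage $p'$, so with arbitrary choices the relevant section is only a \emph{conjugate} of $\ell_{p'}$, the conjugator depending on the connecting paths from $t$ to the fibre $f^{-1}(t)$. Equality on the nose is what self-similarity of $Y$, your claimed bijection of $Y\setminus\{1\}$ with $P$, and axioms (1)--(2) all require; with mere conjugates, ``closing the $\ell_p$ under sections'' does not obviously terminate in a finite set, let alone in one satisfying the kneading conditions. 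Repairing this requires showing that the arcs, loops, and connecting paths can be chosen compatibly with the planar cyclic order and --- the key point --- that this planar configuration is reproduced, up to the right kind of isotopy, when lifted through $f$. That planarity-is-preserved-under-lifting argument is the heart of \cite[Theorem 6.10.8]{N05} and is the one ingredient missing from your proposal; the rest of your sketch is sound.
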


Kneading automata groups enjoy several useful properties.
\begin{lem}\label{lem:kneading_BAG}
If $G=\grp{Y}\leq \Aut(\Xt)$ is a kneading automata group, then $G\leq \Aut(\Xt)$ is a self-replicating bounded automata group
\end{lem}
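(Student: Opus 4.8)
The plan is to verify the three requirements for $G$ to be self-replicating — self-similarity, transitivity on $X$, and $\sec_G(x)=G$ for every $x\in X$ — with the last being the substantive point. That $G$ is a bounded automata group is the straightforward verification already noted just after the definition: finite generation is immediate, self-similarity holds because $Y$ is self-similar and sections of products are products of sections by the cocycle identity, and condition~(2) is exactly the requirement that forces each generator to be bounded and finite state. Transitivity on $X$ is also quick: condition~(3) says $(\pi_1(h))_{h\in Y\setminus\{1\}}$ is tree-like, so $\pi_1(G)$ acts transitively on $X$ by the lemma preceding Lemma~\ref{lem:fixed_pts_tree-like}, and hence so does $G$.

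For the section groups I will show that $h\in\sec_G(x)$ for every $h\in Y$ and every $x\in X$; since $\sec_G(x)\leq G$ by self-similarity and $Y$ generates $G$, this gives $\sec_G(x)=G$. Fix a nontrivial $h$ and, using condition~(1), let $(g,y)$ be the unique pair with $g_y=h$. Let $k$ be the length of the cycle of $\pi_1(g)$ through $y$. By condition~(2), $y$ is the only point of this cycle at which $g$ has a nontrivial section, so expanding $(g^k)_y$ with the cocycle identity collapses all factors but one and yields $(g^k)_y=h$, while $g^k$ fixes $y$. Thus $t:=g^k\in G_{(y)}$ realizes $h\in\sec_G(y)$.

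It then remains to transport this section from $y$ to an arbitrary $x$, for which the key step is to produce, for any $x,y\in X$, an element $f\in G$ with $f(x)=y$ and $f_x=1$. I will show that the relation $x\sim y$, defined by ``there is $f\in G$ with $f(x)=y$ and $f_x=1$,'' is an equivalence relation (reflexivity is trivial, symmetry uses $(f^{-1})_{f(x)}=(f_x)^{-1}$, and transitivity uses the cocycle identity), and that it has a single class. Since the $G$-action on $X$ is transitive, it suffices to check $x\sim d(x)$ for every generator $d\in Y$ and every $x$: if $d_x=1$ this is witnessed by $d$ itself, while if $d_x\neq 1$ then condition~(2) forces $x$ to be the unique point of its $\pi_1(d)$-cycle with nontrivial section, so travelling the other way around the cycle via the power $d^{1-k}$ reaches $d(x)$ with every section encountered trivial, giving $(d^{1-k})_x=1$. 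With such an $f$ in hand, $u:=f^{-1}tf$ lies in $G_{(x)}$, and the cocycle identity gives $u_x=(f^{-1})_y\,t_y\,f_x=h$, so $h\in\sec_G(x)$, as desired.

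The main obstacle is the transport step, namely the analysis of the relation $\sim$: this is where condition~(2) must be leveraged to move a point to its image along a trivial-section path, and it is essentially a kneading-automaton refinement of Lemma~\ref{lem:trans_no_section}, in which the single nontrivial section per cycle need no longer be a power of the moving element. Once this is in place, assembling the realization $t=g^k$ with the conjugation by $f$ is routine bookkeeping with the cocycle identity, and the three defining properties of a self-replicating group follow.
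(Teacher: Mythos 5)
Your proof of self-replication is correct and is essentially the paper's argument: transitivity comes from the tree-like condition; the substantive step is a transport lemma producing, for any $x,y\in X$, an element $f\in G$ with $f(x)=y$ and $f_x=1$; and condition (1) then realizes each generator as a section at every vertex. Your packaging of the transport step as an equivalence relation, reduced to the single-generator moves $x\sim d(x)$ (witnessed by $d$ itself, or by $d^{1-k}$ going the other way around the cycle when $d_x\neq 1$), is slightly tidier bookkeeping than the paper's minimal-exponent word argument, and your realization via $t=g^k$ conjugated by one transport element replaces the paper's sandwich $\delta h\gamma$ between two transport elements; but these are organizational differences, not a different route.

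One detail is wrong, though it is peripheral to the main argument: you assert that condition (2) is exactly what forces each generator to be bounded. It is not. Condition (2) alone allows a generator to occur as its own section on two \emph{different} cycles: on $X=\{0,1,2,3\}$, the automorphism $g$ with $\pi_1(g)=(01)(23)$, $g_0=g_2=g$, and $g_1=g_3=1$ satisfies condition (2) (one nontrivial section per cycle) yet has $2^n$ nontrivial sections at level $n$, so it is not bounded. What yields boundedness is condition (1) together with finiteness and self-similarity of $Y$: since each nontrivial element of $Y$ occurs exactly once as a first-level section of a member of $Y$, an induction shows the nontrivial level-$n$ sections of members of $Y$, counted over all of $Y$, form the set $Y\setminus\{1\}$ each occurring once, so every generator has at most $|Y|-1$ nontrivial sections on each level. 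The paper relegates this verification to the remark following the definition, so your deferral is reasonable, but the justification you sketch for it would fail as stated.
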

\begin{proof}
To see that $G$ is self-replicating, we must verify that $G$ acts transitively on $X$ and the section homomorphisms are onto for every $x\in X$. Since $(\pi_1(y))_{g\in Y\setminus\{1\}}$ is tree-like, $\pi_1(G)$ acts transitively on $X$, so $G$ acts on $X$ transitively.

To see that the section homomorphisms are onto, we first argue that for each $v,w\in X$, there is $k\in G$ such that $k(v)=w$ with $k_v=1$. Fix $g\in Y$ and say that $\{O_1,\dots,O_n\} $ lists the orbits of $\grp{g}$ on $X$. Take $v,w\in O_i$ and fix $|j|\geq 1$ least such that $g^j(v)=w$. If $(g^j)_v=1$, we are done, so let us suppose that $(g^j)_v$ is non-trivial.  By the definition of a kneading automata group, there is a most one $z\in O_i$ such that $g_{z}$ is non-trivial. Say that $g_{z}=h$ and let $m\geq 1$ be least such that $g^m$ fixes $z$. The element $g^m$ fixes $O_i$ and $(g^m)_u=h$ for every $u\in O_i$. On the other hand, $(g^j)_v=h^{\pm 1}$, since we chose $j$ with $|j|$ least. Either $g^{j-m}$ or $g^{j+m}$, depending on the sign of $(g^j)_v$, then sends $v$ to $w$ with trivial section at $v$.

For an arbitrary $v,w\in X$, there is a word $\gamma=g_n^{i_n}\dots g_{1}^{i_1}$ with $g_k\neq g_{k+1}\in Y$ such that $\gamma(v)=w$. We may further assume that $\Sigma_{j=1}^n|i_j|$ is least among all such words $\gamma$. The image $g_1^{i_1}(v)$ lies in the orbit of $v$ under $\grp{g_1}$. Appealing to the previous paragraph, we may replace $g_1^{i_1}$, if necessary, with $g_1^{i_1\pm m_i}$ for some $m_i$ such that that $g_1^{i_1\pm m_i}(v)=g_1^{i_1}(v)$ and $(g_1^{i_1\pm m_i})_v=1$. Doing this for each $g_j$ as necessary, we produce a new word $\gamma'$ such that $\gamma'(v)=w$ and $(\gamma')_v=1$.

Fix $x\in X$ and let $g\in Y$. By the definition of a kneading automata group, there is $h\in Y$ and $z\in X$ such that $h_z=g$. The previous paragraph supplies $\gamma$ and $\delta$ such that $\gamma(x)=z$ with $\gamma_x=1$ and $\delta(h(z))=x$ with $\delta_{h(z)}=1$. The element $\delta h \gamma$ fixes $x$, and
\[
(\delta h \gamma )_x =\delta_{h(z)}h_z\gamma_x=h_z=g.
\]
The image $\phi_x(G_{(x)})$ of the section homomorphism at $x$ thus contains $Y$. We conclude that $\phi_x(G_{(x)})=G$, and $G$ is self-replicating.
\end{proof}

In addition to self-replication, kneading automata groups enjoy a robustness property. Specifically, the representation $r_k(G)$ is again a kneading automata group, for any $k\geq 1$.  This was proved in~\cite[Proposition 6.7.5]{N05} using the dual automaton and a topological argument; here we provide a simple counting argument.

\begin{lem}[{\cite[Proposition 6.7.5]{N05}}]\label{lem:kneading_reduced_form}
If $G=\grp{Y}\leq \Aut(\Xt)$ is a kneading automata group, then
\begin{enumerate}
\item for all $k\geq 1$, $r_k(G)=\grp{r_k(Y)}\leq\Aut((X^k)^*)$ is a kneading automata group;

\item there is $k\geq 1$ such that $r_k(G)=\grp{r_k(Y)}\leq\Aut((X^k)^*)$ is a kneading automata group in reduced form.
\end{enumerate}
In particular, every kneading automata group has a faithful representation as a kneading automata group in reduced form.
\end{lem}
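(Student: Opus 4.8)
The plan is to handle the two parts by different means: part~(1) transfers the three defining properties of a kneading generating set along the embedding $r_k$, and part~(2) is an instance of the reduction-to-reduced-form procedure of Theorem~\ref{thm:reduction-reduced-form} carried out without changing the generating set. Throughout set $O:=X^k$. Two facts drive everything: by Lemma~\ref{lem:directed_preserve}(1) one has $(r_k(g))_w=r_k(g_w)$ for all $w\in O^{*}$, and $\pi_1\circ r_k=\pi_k$, so the action of $r_k(h)$ on $O$ is $\pi_k(h)$. Since $r_k$ is injective, $r_k(g_w)=r_k(h)$ if and only if $g_w=h$, and $r_k(h)=1$ if and only if $h=1$; hence each condition in the definition of a kneading generating set for $r_k(Y)$ on $O$ is literally a statement about the sections $g_w$ with $w\in X^k$ and the permutations $\pi_k(h)$. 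Self-similarity of $r_k(Y)$ is immediate, as $(r_k(h))_w=r_k(h_w)$ with $h_w\in Y$.

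For condition~(1) I would show each nontrivial $h\in Y$ has a unique pair $(g,w)\in Y\times X^k$ with $g_w=h$: condition~(1) for $Y$ gives a unique predecessor for each nontrivial element, and iterating this map $k$ times produces a chain $h=h_0,h_1,\dots,h_k$ in $Y\setminus\{1\}$ and letters $x_1,\dots,x_k$ with $(h_{j+1})_{x_{j+1}}=h_j$; then $g=h_k$, $w=x_k\cdots x_1$ works, and uniqueness follows by peeling off one letter at a time. For condition~(2) I would induct on $k$ (the base case $k=1$ being condition~(2) for $Y$): fixing $h$ and a cycle $c=(x_1\cdots x_m)$ of $\pi_1(h)$, condition~(2) for $Y$ gives at most one special letter $x_{l_0}\in c$ with $h_{x_{l_0}}=h'\neq 1$; the cocycle identity yields $(h^{m})_{x_i}=h'$, so the words of a $\grp{h}$-orbit that begin with $x_{l_0}$ are exactly $x_{l_0}$ followed by a single $\grp{h'}$-orbit on $X^{k-1}$, while words beginning with another letter of $c$ have trivial section. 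The inductive hypothesis applied to $h'$ then gives at most one nontrivial section in the orbit.

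The main obstacle is condition~(3): that $(\pi_k(h))_{h\in Y\setminus\{1\}}$ on $O$ is again tree-like. I would use the bipartite-tree criterion, connectedness together with $\lvert V\rvert=\lvert E\rvert+1$. With $n:=\lvert Y\setminus\{1\}\rvert$ and $S_k:=\sum_{h}(\text{number of cycles of }\pi_k(h))$ counted with trivial cycles, the cycle graph has $\lvert X\rvert^{k}+S_k$ vertices and exactly $n\lvert X\rvert^{k}$ edges, so tree-likeness is equivalent to $S_k=(n-1)\lvert X\rvert^{k}+1$ plus connectedness. The orbit analysis above yields a recursion for the cycle counts: a cycle of $\pi_1(h)$ with no special letter contributes $\lvert X\rvert^{k-1}$ orbits at level $k$, and a cycle with special child $h'$ contributes the number of cycles of $\pi_{k-1}(h')$; condition~(1) bijects the special-letter cycles across all $h$ with $Y\setminus\{1\}$, which together with the base case $S_1=(n-1)\lvert X\rvert+1$ gives
\[
S_k=(S_1-n)\lvert X\rvert^{k-1}+S_{k-1}=(n-1)(\lvert X\rvert-1)\lvert X\rvert^{k-1}+S_{k-1},
\]
and this telescopes to $S_k=(n-1)\lvert X\rvert^{k}+1$. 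Connectedness holds because the components of the cycle graph are the orbits of $\grp{\pi_k(h)\mid h\in Y}=\pi_k(G)$ on $X^k$, and $\pi_k(G)$ is transitive since $G$ is self-replicating, hence level transitive, by Lemma~\ref{lem:kneading_BAG}. This proves part~(1).

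For part~(2) I first note that condition~(1) forces every $h\in Y$ to be finitary or directed: the predecessor map of condition~(1) is a self-map of $Y\setminus\{1\}$ whose periodic points are directed (the cycle supplies the unique self-section, and companion sections are finitary since a branch leaving the cycle can never return), while the remaining elements are finitary. This step is genuinely necessary, because $r_k(h)$ is directed (resp.\ has all $O$-sections trivial) if and only if $h$ is. I then take $k$ to be a common multiple of the periods of the directed elements of $Y$ with $k$ at least the maximal depth $D$ of the finitary elements of $Y$. For finitary $h$ we get $h_w=1$ for all $w\in O$, so $r_k(h)$ has all $O$-sections trivial; for directed $h$ there is a unique $W_0\in O$ with $h_{W_0}=h$, while for $W\neq W_0$ the section $h_W\in Y$ is finitary of depth at most $D\leq k$, so $r_k(h)_{W}=r_k(h_W)$ has depth at most one and hence lies in $\pi_1(r_k(G))$. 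Thus $r_k(Y)$ is in reduced form, and as part~(1) holds for every $k$ the same $k$ realizes $r_k(G)=\grp{r_k(Y)}$ as a kneading automata group in reduced form; faithfulness is the injectivity of $r_k$.
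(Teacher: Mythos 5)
Your proof is correct. For part (1) you follow essentially the paper's route: conditions (1) and (2) of the kneading definition are transferred along $r_k$ (the paper dismisses this as ``easily verified''; your predecessor-chain argument and the orbit-splitting induction are precisely those verifications), and condition (3) is established by the same Euler-characteristic-plus-connectedness criterion, with connectedness coming from level transitivity via Lemma~\ref{lem:kneading_BAG}. The only difference there is bookkeeping: the paper splits words at the last letter and runs a strong induction on the counts, via $l_{i+1}(g)=|X|\left(l_i(g)-|Y_{g,i}|\right)+\sum_{h\in Y_{g,i}}l_1(h)$, invoking tree-likeness at all lower levels, while you split at the first letter and obtain the closed recursion $S_k=(S_1-n)|X|^{k-1}+S_{k-1}$, which telescopes from the base case alone; both computations land on $(n-1)|X|^k+1$. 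For part (2) you genuinely diverge from the paper, which simply invokes Corollary~\ref{cor:reduction-reduced-form} --- resting on the nucleus theorem (Theorem~\ref{thm:gen_by_nuc}) and on \cite[Proposition 2.11.3]{N05} --- and combines the resulting reduced form with part (1). You instead read off from condition (1) that the kneading automaton has in-degree one at every non-trivial state, deduce that every generator is finitary or directed (a path leaving a cycle can never re-enter one), and choose $k$ to be a common multiple of the periods that also dominates the maximal finitary depth. Your route buys two things: it is self-contained, avoiding the nucleus machinery and the self-replication fact quoted from \cite{N05}; and it produces a single generating set $r_k(Y)$ witnessing the kneading property and the reduced form simultaneously, whereas the paper's argument a priori furnishes two possibly different distinguished generating sets (the image of $Y$ for the kneading structure, the image of the nucleus for the reduced form). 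Since the later applications (Lemma~\ref{lem:inf_dihedral_char} and Theorem~\ref{thm:kneading}) work with one set $Y$ enjoying both properties at once, your version of part (2) is, if anything, the more directly usable one.
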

\begin{proof}
For any $k$, conditions (1) and (2) of the definition of a kneading automata group are easily verified to hold for $r_k(G)$. To prove (3), we show that $(\pi_k(y))_{y\in Y\setminus\{1\}}$ is a tree-like sequence in $\Sym(X^k)$ for any $k\geq 1$.  We use that a finite connected graph is a tree if and only if it has Euler characteristic $1$. Set $Z:=Y\setminus\{1\}$

Let us begin by making several observations. Let $\Gamma_{i}$ be the cycle graph of $(\pi_{i}(y))_{y\in Z}$.  Define $l_i:Z\rightarrow \Nb$ by setting $l_i(g)$ to be the number of orbits of $\grp{g}$ on $X^i$. For $g\in Z$, let $Y_{g,i}:=\{g_x\mid x\in X^i\}\setminus \{1\}$. As $G$ is a kneading automata group, it follows that $Z=\sqcup_{g\in Z}Y_{g,i}$.

We observe that the functions $l_{i+1}$ are defined in terms of $l_1$ and $l_i$. Fix $g\in Z$. The difference $l_i(g)-|Y_{g,i}|$ is the number of orbits $O$ of $\grp{g}$ on $X^i$ such that $g_x=1$ for every $x\in O$. For each such orbit $O$ and $x\in X$, the set of words $Ox$ is an orbit of $\grp{g}$ on $X^{i+1}$. There are thus $|X|(l_i(g)-|Y_{g,i}|)$ many orbits of $\grp{g}$ on $X^{i+1}$ of this type. For any other orbit $O$, let $h\in Y_{g,i}$ be the non-trivial section of $g$ for some $x\in O$; the orbit $O$ for a given $h$ is unique. For any $W$ an orbit of $\grp{h}$ acting on $X$, the argument in the second paragraph of the proof of Lemma~\ref{lem:kneading_BAG} (which only uses conditions (1) and (2) of a kneading automata group) shows that $OW$ is an orbit of $\grp{g}$ acting on $X^{i+1}$. We conclude that
\[
l_{i+1}(g)=|X|\left(l_i(g)-|Y_{g,i}|\right)+\sum_{h\in Y_{g,i}}l_1(h).
\]

Note further that $\Gamma_i$ is connected for every $i$, since $G$ acts transitively on $X^i$. Additionally,
\[
|V\Gamma_i|=|X|^i+\sum_{g\in Z}l_i(g)\text{ and } |E\Gamma_i|=|Z||X|^i.
\]

We now argue by strong induction on $i\geq 1$ for the claim that $(\pi_i(y))_{y\in Z}$ is a tree-like sequence. The base case holds by definition. Suppose $(\pi_i(y))_{y\in Z}$ is a tree-like sequence for all $i\leq k$. To verify the inductive claim, it suffices to show that $|V\Gamma_{k+1}|-|E\Gamma_{k+1}|=1$.

The inductive hypothesis ensures that
\[
\sum_{g\in Z}l_i(g)=|Z||X|^i-|X|^i+1
\]
for any $i\leq k$. For $k+1$, we have
\[
|V\Gamma_{k+1}|=|X|^{k+1}+\sum_{g\in Z}l_{k+1}(g).
\]
In view of the relationship between the functions $l_i$, we substitute to obtain
\[
\begin{array}{rcl}
|V\Gamma_{k+1}|& =&  |X|^{k+1}+\sum_{g\in Z}\left( |X|\left(l_k(g)-|Y_{g,k}|\right)+\sum_{h\in Y_{g,k}}l_1(h)\right)\\
& = & |X|^{k+1}+|X|\sum_{g\in Z}l_k(g)-|X||Z|+\sum_{g\in Z}l_1(g)\\
& = & |X|^{k+1} +|X|^{k+1}|Z|-|X|^{k+1}+|X|-|X||Z|+|Z||X|-|X|+1\\
& = & |X|^{k+1}|Z|+1.
\end{array}
\]
Hence, $|V\Gamma_{k+1}|-|E\Gamma_{k+1}|=1$, as required. Therefore, $r_k(G)$ is kneading automata group.

Lemma~\ref{lem:kneading_BAG} ensures that $G$ is a self-replicating bounded automata group, and by Corollary~\ref{cor:reduction-reduced-form}, there is $k$ such that $r_k(G)$ is in reduced form. Hence, $r_k(G)$ is a kneading automata group in reduced form.
\end{proof}

\subsection{Elementary amenable kneading automata groups}

We begin by extracting a result from the proof of \cite[Theorem 8.2]{J15}. For $g\in \Aut(\Xt)$, let $\fix_k(g)$ be the collection of fixed points of $\grp{g}$ acting on $X^k$.

\begin{lem}\label{lem:inf_dihedral_char}
Let $G=\grp{Y}\leq \Aut(\Xt)$ be a kneading automata group in reduced form with $|X|>3$. If there are distinct $g$ and $h$ in $Y$ such that $|\fix_1(g)|+|\fix_1(h)|=2$, then either
\begin{enumerate}
\item $|\fix_2(g')|+|\fix_2(h')|>3$ for all distinct, non-trivial $g'$ and $h'$ in $Y$, or
\item $G$ is the infinite dihedral group.
\end{enumerate}
\end{lem}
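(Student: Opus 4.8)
The plan is to prove the stated dichotomy by assuming alternative (1) fails and deducing that $G$ is the infinite dihedral group. So suppose there are distinct non-trivial $g',h'\in Y$ with $|\fix_2(g')|+|\fix_2(h')|\le 3$, and I will identify $G$.

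First I would reduce to a two-generator group. By Lemma~\ref{lem:kneading_reduced_form}(1), $r_2(G)=\grp{r_2(Y)}$ is again a kneading automata group, so $(\pi_2(y))_{y\in Y\setminus\{1\}}$ is a tree-like sequence in $\Sym(X^2)$, using $\pi_1\circ r_2=\pi_2$. Applying Lemma~\ref{lem:fixed_pts_tree-like}(2) to this sequence and the pair $g',h'$ gives $\pi_2(k)=1$ for every non-trivial $k\in Y\setminus\{g',h'\}$. The key observation is that for a generator in reduced form, $\pi_2(k)=1$ forces $k=1$: running through the two cases of the reduced-form definition (either $k_x=1$ for all $x$, or $k$ is directed with $k_u=k$ for a unique $u$ and the remaining first-level sections in $\pi_1(G)$), a short computation on length-two words $uz$ and $yz$ shows $\pi_1(k)=1$ and all first-level sections of $k$ are trivial, whence $k=1$ by recursion. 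Therefore $Y\setminus\{1\}=\{g',h'\}$. Since the hypothesis supplies non-trivial $g,h$, both lie in $Y\setminus\{1\}$, so $\{g,h\}=\{g',h'\}$ and $G=\grp{g,h}$, with both the level-one equality $|\fix_1(g)|+|\fix_1(h)|=2$ and the level-two bound $|\fix_2(g)|+|\fix_2(h)|\le 3$ now available for this single pair.

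Next I would pin down the level-one permutations by an Euler-characteristic count. The cycle graph $\Gamma_1$ of $(\pi_1(g),\pi_1(h))$ is a tree with $|X|+c(g)+c(h)$ vertices and $2|X|$ edges, where $c(\cdot)$ counts cycles including trivial ones; hence $c(g)+c(h)=|X|+1$. Writing each $c$ as (number of fixed points) plus (number of non-trivial cycles) and using $|\fix_1(g)|+|\fix_1(h)|=2$, the non-trivial cycles of $g$ and $h$ sum to $|X|-1$. Comparing with the number of moved points gives $2|X|-2\ge 2(|X|-1)$, so equality holds throughout and every non-trivial cycle is a transposition; that is, $\pi_1(g)$ and $\pi_1(h)$ are involutions. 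As tree-like sequences generate transitive groups and the degree constraints force $\Gamma_1$ to be a path, $\pi_1(G)$ is dihedral.

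The main obstacle is the final step: upgrading this level-one dihedral picture to $G\cong D_\infty$. Here I would use that $G$ is self-replicating (Lemma~\ref{lem:kneading_BAG}) and self-similar with all sections in $\{1,g,h\}$, so at least one of $g,h$ is directed and, in reduced form, its non-active sections lie in $\pi_1(G)\cap\{1,g,h\}$. Condition (1) of a kneading automata group — each of $g,h$ occurring exactly once as a section — together with condition (2) and the level-two bound $|\fix_2(g)|+|\fix_2(h)|\le 3$, severely restricts the active vertices and the section pattern. I expect the delicate case analysis, on which generators are directed versus finitary and on the positions of the active vertices, to show that $g$ and $h$ are involutions whose product is directed and hence of infinite order, yielding the presentation $\grp{g,h\mid g^2=h^2=1}$ of the infinite dihedral group; the configurations incompatible with $D_\infty$ should be ruled out precisely because they violate the $\le 3$ bound or tree-likeness. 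Carrying out this bookkeeping, rather than any single conceptual leap, is where the real work lies.
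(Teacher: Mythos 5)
Your first two steps are correct and essentially follow the paper's route: applying Lemma~\ref{lem:fixed_pts_tree-like}(2) to the tree-like sequence $(\pi_2(y))_{y\in Y\setminus\{1\}}$ (via Lemma~\ref{lem:kneading_reduced_form}) to conclude $Y\setminus\{1\}=\{g,h\}=\{g',h'\}$, and the Euler-characteristic count on the cycle graph of $(\pi_1(g),\pi_1(h))$ forcing both level-one permutations to be products of disjoint transpositions. (Your bookkeeping in the second step is in fact slightly cleaner than the paper's, since you count moved points directly rather than first observing that $\fix_1(g)$ and $\fix_1(h)$ are disjoint.)

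However, the final step is a genuine gap, not a proof. You write that you ``expect the delicate case analysis\ldots to show that $g$ and $h$ are involutions'' and that ``carrying out this bookkeeping\ldots is where the real work lies'' --- but that bookkeeping \emph{is} the content of the lemma, and you have not done it. The missing mechanism, which the paper exploits, is this: since $|\fix_2(g)|,|\fix_2(h)|\leq 3<|X|$, neither generator can fix a level-one vertex at which its section is trivial, for it would then fix the entire fiber above that vertex and have at least $|X|>3$ fixed points on $X^2$. This single observation drives both cases of the paper's analysis. If $Y$ contains a non-trivial finitary element, say $g$, then $g$ must act fixed-point freely on $X$ (its sections are all trivial), so $h$ carries both level-one fixed points, and the sections of $h$ there must be non-trivial, hence --- by self-similarity of $Y=\{1,g,h\}$ and kneading condition (1) --- equal to $h$ and $g$ respectively; a direct section computation then gives $h^2=1$, while $g^2=1$ since $\pi_1(g)$ is an involution of depth one. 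If instead $Y$ has no non-trivial finitary element, then all non-active sections of $g$ and $h$ are trivial, so the only possible fixed point of each is its active vertex; since the fixed points total two, each generator fixes exactly its active vertex, and again a section computation yields $g^2=h^2=1$. In both cases $G$ is infinite (it is self-replicating by Lemma~\ref{lem:kneading_BAG}) and generated by two involutions, hence infinite dihedral. Note also that your parenthetical inference ``whose product is directed and hence of infinite order'' is false in general: directed automorphisms can be torsion (the Grigorchuk generators $b$, $c$, $d$ are directed involutions), so infiniteness must come from self-replication, not from directedness.
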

\begin{proof}
Since $G$ is self-replicating, it is infinite.
Suppose that $(1)$ fails. Say that $g'$ and $h'$ in $Y$ are such that $|\fix_2(g')|+|\fix_2(h')|\leq 3$. Lemma~\ref{lem:kneading_reduced_form} tells us that $r_2(G)$ is a kneading automata group, so Lemma~\ref{lem:fixed_pts_tree-like} implies that $\pi_2(d)$ acts trivially on $X^2$ for all $d\in Y\setminus\{g',h'
\}$.  If $h$ or $g$ is an element of $Y\setminus\{g',h'\}$, then $h$ or $g$ fixes $X$, which contradicts that $|\fix_1(g)|+|\fix_1(h)|=2$. We conclude that $\{g,h\}=\{g',h'\}$. In particular, $|\fix_2(g)|\leq 3$ and $|\fix_2(h)|\leq 3$.

 As $G$ is in reduced form, all finitary elements of $Y$ have depth one, so there are no non-trivial finitary elements in $Y\setminus\{g,h\}$, since all elements of $Y\setminus\{g,h\}$ act trivially on $X$. If $d\in Y\setminus\{g,h\}$ is directed, then the previous paragraph ensures that it fixes $X^2$. Every section $d_x$ for $x\in X\setminus\{v^d\}$ is trivial  where $v^d$ is the active vertex for $d$ on level one, as $d$ fixes $X^2$. The element $d$ therefore acts trivially on $\Xt$, and $d=1$. Hence, $Y=\{g,h,1\}$.

Take $\Gamma$ the cycle graph for $(\pi_1(g),\pi_1(h))$ and set $n:=|X|$. Let $K_g$ and $K_h$ be the number of non-trivial cycles in $g$ and $h$, respectively, and $F_g$ and $F_h$ be the number of trivial cycles in $g$ and $h$, respective. Euler's formula for trees implies that $|V\Gamma|-|E\Gamma|=1$. In view of the definition of $\Gamma$, we see that
\[
|V\Gamma|=n+K_g+F_g+K_h+F_h
\]
and $|E\Gamma|=2n$. Therefore, $K_g+K_h=n-1$. Observe that $|\fix_1(g)\cup \fix_1(h)|=2$, as $G=\grp{g,h}$ acts transitively on $X$ and hence $g,h$ do not have a common fixed point.  If $\fix_1(g)\cup \fix_1(h)=\{x,x'\}$, then it follows that every element of $X\setminus \{x,x'\}$ belongs to both a non-trivial cycle of $\pi_1(g)$ and a non-trivial cycle of $\pi_1(h)$ and $x,x'$ each belong to a non-trivial cycle of exactly one of $\pi_1(g)$ or $\pi_1(h)$.  It follows that $2n-2\geq 2K_g+2K_h$ with equality if and only if $\pi_1(g),\pi_1(h)$ are both products of disjoint two-cycles in their respective cycle decompositions (omitting trivial cycles).  As $K_g+K_h=n-1$, we deduce that $\pi_1(g)$ and $\pi_1(h)$ are products of disjoint two-cycles, so $|\pi_1(g)|=|\pi_1(h)|=2$.

We now have two cases: (a) $Y$ contains a non-trivial finitary element, and (b) $Y$ contains no non-trivial finitary element. For case (a), suppose that $g$ is finitary. The element $g$ therefore has order two, since $G$ is in reduced form. If $g$ fixes a point $u\in X$, then $g$ must fix pointwise $uX$ in $X^2$, since $g_u=1$. This implies that $|\fix_2(g)|>3$, which is absurd. We conclude that $g$ acts fixed point freely on $X$, so $h$ fixes two points $v_1,v_2$ of $X$.  If some $h_{v_i}=1$, then $h$ fixes $v_iX$ contradicting that  $|\fix_2(h)|\leq 3$.  Since $G$ is a kneading automata group with self-similar generating set $Y=\{g,h,1\}$, we deduce that, up to relabelling, $h_{v_1}=h$, $h_{v_2}=g$ and all other sections of $h$ are trivial. Since $\pi_1(h)$ is a product of disjoint two-cycles and $g$ is an involution, it now follows that $h^2=1$. Hence, $G$ is an infinite dihedral group.

For case (b), both $g$ and $h$ are directed, and as $Y$ is self-similar, neither $g$ nor $h$ has non-trivial finitary sections. Take $u\in X\setminus\{v^g\}$. If $g$ fixes $u$, then $|\fix_2(g)|>3$, since $g_u=1$; this is absurd. The only possible fixed point of $g$ in $X$ is its active vertex $v^g$. Likewise, the only possible fixed point of $h$ in $X$ is its active vertex $v^h$. Therefore, $g$ fixes $v^g$, and $h$ fixes $v^h$. We conclude that $g$ and $h$ are torsion with $|g|=|h|=2$. The group $G$ is thus infinite and generated by two involutions, so $G$ is an infinite dihedral group.
\end{proof}

We are now ready to prove the desired theorem.

\begin{thm}\label{thm:kneading}
Suppose that $G=\grp{Y}\leq \Aut(\Xt)$ is a kneading automata group. If $G$ is elementary amenable, then $G$ is virtually abelian.
\end{thm}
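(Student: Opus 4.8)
The plan is to assume $G$ is elementary amenable and \emph{not} virtually abelian and to derive a contradiction. By Lemma~\ref{lem:kneading_BAG} the group $G$ is a self-replicating bounded automata group, so by Lemma~\ref{lem:kneading_reduced_form} I may replace $G$ by $r_k(G)$ for a suitable $k$ and assume $G=\grp{Y}$ is a kneading automata group in reduced form; one further application of $r_2$ lets me also assume $|X|>3$, so that Lemmas~\ref{lem:fixed_pts_tree-like} and~\ref{lem:inf_dihedral_char} apply. As these reductions preserve the isomorphism type of $G$, a contradiction for one such representation suffices.

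Two degenerate cases are immediate. If $G$ contains an odometer, then Theorem~\ref{thm:odometers} forces $G$ to be virtually abelian, contrary to assumption; hence $G$ contains no odometer. (In reduced form a directed generator that is transitive on $X$ is automatically an odometer, since its single level-one cycle carries exactly one nontrivial section by the kneading hypothesis, namely the copy of itself at the active vertex; so no directed generator is transitive on $X$.) Similarly, if $G$ is the infinite dihedral group it is virtually abelian, so I may assume it is not. With the dihedral case excluded, Lemma~\ref{lem:inf_dihedral_char} guarantees an abundance of fixed points: either every pair of distinct nontrivial generators already has at least three fixed points on $X$, or, after passing once more to $r_2$, every such pair has more than three fixed points on $X^{2}$.

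Now I would run the rank machinery. Since $G$ is elementary amenable, self-replicating, and not virtually abelian, Corollary~\ref{cor:ext_EA_3} supplies weakly self-replicating subgroups $A\normal G$ and $F\normal G$ with $A\leq F$, $F/A$ abelian, $G/F$ finite, and $\rk(A)+2=\rk(F)+1=\rk(G)$. Because $F$ is weakly self-similar of finite index in the weakly self-replicating group $G$, Lemma~\ref{lem:weakly_self-similar_trans} shows $F$ acts transitively on $X$. The whole contradiction will follow once I show that every generator of $G$ lies in $F$: then $G=F$, whence $[G,G]=[F,F]\leq A$, so $G/A$ is abelian and $\rk(G)\leq\rk(A)+1$, contradicting $\rk(A)+2=\rk(G)$. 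The device for placing generators in $F$ is the template from the basilica computation in Section~\ref{ex:basilica_1}. Given a directed generator $g$, let $C_{0}$ be the level-one cycle through its active vertex and $c:=|C_{0}|$; as $g_{v}=g$ at the active vertex and every other section on $C_{0}$ is trivial, the power $g^{c}$ fixes $C_{0}$ pointwise with $(g^{c})_{y}=g$ for $y\in C_{0}$. If $g$ possesses a \emph{quiet} fixed point, that is a point $x$ with $g(x)=x$ and $g_{x}=1$, then $g^{c}$ fixes both $x$ and any $y\in C_{0}$ with $(g^{c})_{x}=1$, so Lemma~\ref{lem:com_contain_sections_2}, applied with the transitive subgroup $F$ in the role of $M$, yields $g=(g^{c})_{y}\in F$. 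A nontrivial finitary generator is the unique section of some directed generator $d$ at a vertex $u$ by the kneading hypothesis, and a suitable power of $d$ fixes $u$ with that generator as its section, so the same lemma places it in $F$ whenever a quiet fixed point is available.

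The crux, and the step I expect to be genuinely hard, is guaranteeing these quiet fixed points. The number of vertices at which a bounded element has a nontrivial section is bounded independently of the level, whereas the fixed-point lower bounds of Lemmas~\ref{lem:fixed_pts_tree-like} and~\ref{lem:inf_dihedral_char} should, after passing to $r_{k}$ and exploiting that finitary sections have depth one (so their own sections vanish one level down), force the relevant elements to have strictly more fixed points than active vertices on some $X^{k}$. The pathological obstructions are fixed-point-free generators and derangement finitary sections, which admit no quiet fixed point on any level; ruling these out is exactly the purpose of Lemma~\ref{lem:inf_dihedral_char}, which confines every such degeneracy to the infinite dihedral group already disposed of, while in all other cases supplying, at level two or beyond, enough quiet fixed points to complete the extraction and hence the contradiction.
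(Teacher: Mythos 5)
Your skeleton---the reductions to reduced form with $|X|>3$, Corollary~\ref{cor:ext_EA_3}, transitivity of $F$ via Lemma~\ref{lem:weakly_self-similar_trans}, and the extraction of generators into $F$ by applying Lemma~\ref{lem:com_contain_sections_2} to powers of a generator at ``quiet'' fixed points---is exactly the paper's strategy, and your subsidiary observations (a transitive directed generator in reduced form is an odometer; a non-trivial finitary generator is the section of a unique directed generator) are correct. The first shortfall is that you never actually produce the quiet fixed points in the main case: you offer an asymptotic comparison of fixed points versus active vertices along $X^k$ that ``should'' work, whereas the paper does this with a concrete two-level argument. If every element of $Y$ fixes at least two points of $X$, then a directed $h\in Y$ has a fixed point $x$ whose section $\sigma:=h_x$ is finitary (only the active vertex carries the section $h$); self-similarity of $Y$ puts $\sigma\in Y$, so $\sigma$ fixes some $x'\in X$, and then $xx'$ is a quiet fixed point of $h$ on $X^2$ because $h_{xx'}=\sigma_{x'}=1$, finitary generators having depth one in reduced form. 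This part of your sketch is repairable, though not quite by the route you indicate.

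The fatal gap is your claim that Lemma~\ref{lem:inf_dihedral_char} ``confines every such degeneracy to the infinite dihedral group.'' It does not. That lemma, like Lemma~\ref{lem:fixed_pts_tree-like}, constrains only the \emph{sum} of fixed-point counts over \emph{pairs} of distinct generators; after all the reductions one may still have exactly one generator $g$ with $|\fix_1(g)|\leq 1$ while every other generator fixes at least three points, and such a group need not be dihedral. Excluding odometers does not help either: a fixed-point-free permutation need not be transitive (it can be a product of two disjoint cycles), and if $g$ is fixed-point-free on $X$, or its only fixed point is its active vertex, then $g$ has no quiet fixed point on \emph{any} level, so no passage to $r_k$ will ever let you place $g$ in $F$, and your intended conclusion $G=F$ is unreachable. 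This is precisely the second case of the paper's proof, which requires a genuinely different argument: all generators other than $g$ are pushed into the smaller subgroup $A$ (using that $F/A$ is abelian and $F$ is transitive, via a second application of Lemma~\ref{lem:com_contain_sections_2}); then, since $G/A$ cannot be abelian by rank considerations, $g$ must have a non-trivial finitary section $\sigma=g_x$, and one works with the powers $g^m$ (with $m$ least such that $g^m$ fixes $x$) and $g^{d^2m}$ (with $d$ least such that $g^d$ fixes the active vertex $v$) to show first $g^m\in F$, then $g^m\in A$, then $\sigma\in A$; the contradiction is that $G/A$ is then generated by the image of $g$, hence abelian, against $\rk(A)+2=\rk(G)$. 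Without an argument for this exceptional generator, your proof is incomplete.
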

\begin{proof}
Suppose toward a contradiction that $G$ is not virtually abelian. We begin with several reductions. In view of Lemma~\ref{lem:kneading_BAG}, $G$ is a self-replicating bounded automata group, and by Lemma~\ref{lem:kneading_reduced_form}, we may assume that $G$ is in reduced form. By passing to $r_2(G)$, where $r_2:\Aut(\Xt)\rightarrow \Aut((X^2)^*)$ is the canonical inclusion, we may assume that $|X|>3$. If $g,h\in Y$ are distinct elements such that $|\fix_1(g)|+|\fix_1(h)|=2$, then Lemma~\ref{lem:inf_dihedral_char} ensures that $|\fix_2(g')|+|\fix_2(h')|>3$ for all distinct, non-trivial $g'$ and $h'$ in $Y$. Passing to $r_2(G)$ a second time, we may additionally assume that $|\fix_1(g)|+|\fix_1(h)|>3$ for all distinct, non-trivial $g$ and $h$ in $Y$.

Applying Corollary~\ref{cor:ext_EA_3}, we obtain weakly self-replicating $M\normal G$ and $F\normal G$ such that $\rk(M)+2=\rk(G)=\rk(F)+1$, $M\leq F$, $F/M$ is abelian, and $G/F$ is finite.  By Lemma~\ref{lem:weakly_self-similar_trans}, $F$ also acts transitively on $X$. We now have two cases (1) every $g\in Y$ fixes at least two points on $X$, and (2) some unique $g\in Y$ fixes one or fewer points on $X$.

Let us suppose first that every $g\in Y$ has at least two fixed points. Fix $h\in Y$ directed. Since $h$ fixes at least two points, we may find $x\in X$ such that $h$ fixes $x$ and $h_x=:\sigma$ is finitary. The section $\sigma$ is again an element of $Y$, so it fixes some $x'\in X$, as it also fixes two points. The element $h$ therefore fixes $xx'$, and $h_{xx'}=\sigma_{x'}=1$, since $G$ is in reduced form. Let $v^h$ be the active vertex of $h$ in $X^2$ and $d \geq 1$ be least such that $h^d$ fixes $v^h$. For $O$ the orbit of $v^h$ under $\grp{h}$, that $G$ is a kneading automata group ensures that $v^h$ is the only element of $O$ for which $h$ has a non-trivial section.  It follows that $(h^d)_{v^h}=h$. Since $(h^d)$ also fixes $xx'$, Lemma~\ref{lem:com_contain_sections_2} implies that $h\in F$.

For any non-trivial section $h_z$ of $h$, the definition of the kneading automata ensures that $h_z$ is the only non-trivial section of $h$ on the orbit for $z$ under $\grp{h}$. Taking $k\geq 1$ least such that $h^k$ fixes $z$, it follows that $(h^k)_z=h_z$. Recalling that $F$ is weakly self-similar, we conclude that $h_z\in F$. It now follows from Proposition~\ref{prop:self-rep_char_gen} (as $G$ is self-replicating) that $F=G$, which contradicts the rank of $G$.

Suppose next that there is a unique $g\in Y$ that has at most one fixed point on $X$. Fix $h\in Y\setminus \{g\}$ directed. The element $h$ has at least three fixed points in $X$, so there is $x\in X$ such that $h$ fixes $x$, $h_x$ is finitary, and $h_x\neq g$. The section $h_x$ fixes some $x'\in X$, as it differs from $g$. Hence, $h$ fixes $xx'$, and $h_{xx'}=1$. Just as in the previous case, it now follows that $h\in F$. The quotient $F/M$ is abelian, and $F$ acts on $X$ transitively. As Lemma~\ref{lem:com_contain_sections_2} also applies in this setting, we can run the argument again to deduce that $h\in M$, and as in the previous case,  $h_x\in M$ for all $x\in X$.  We conclude that $h_x\in M$ for all $x\in X$ and $h\in Y\setminus \{g\}$. The subgroup $M$ therefore contains $Y\setminus(\{g_x\mid x\in X\}\cup \{g\})$.

Let us now consider the element $g$. The quotient $G/M$ cannot be abelian because $\rk(G)=\rk(M)+2$ and so $\{g_x\mid x\in X\}\cup \{g\}$ is not equal to $\{1,g\}$. The element $g$ thereby admits non-trivial finitary sections and, in particular, is not, itself, finitary. Fix $x\in X$ such that $g_x$ is a non-trivial finitary element $\sigma$ and note that $\sigma\in Y\setminus \{g\}$ fixes a point $x'\in X$.

Let $d\geq 1$ be least such that $g^d$ fixes $v:=v^g$ and let $m\geq 1$ be least such that $g^m$ fixes $x$. Since $G$ is a kneading automaton group, and hence each orbit of $g$ has at most one non-trivial section, $(g^d)_v=g$ and $(g^m)_x=\sigma$.  Observe that $g^{d^2}$ fixes $vv$, $g^m$ fixes $xx'$
$(g^m)_{xx'}=1$ and $(g^{d^2})_{vv}=g$.   The element $g^{d^2m}$ thus fixes $vv$ and $xx'$. Moreover, $(g^{d^2m})_{xx'}=1$ and $(g^{d^2m})_{vv}=g^m$. Applying Lemma~\ref{lem:com_contain_sections_2},  we deduce that $g^m\in F$. The element $g^{d^2m}$ is therefore also in $F$, and since $F/M$ is abelian, a second application of Lemma~\ref{lem:com_contain_sections_2} ensures that $g^m\in M$. The element $g^m$ fixes $x$, and $(g^m)_{x}=\sigma$. Therefore, $\sigma\in M$, since $M$ is weakly self-similar. We conclude that $M$ contains every finitary section of $g$ and hence $Y\setminus \{g\}$. The quotient $G/M$ is thus abelian, which contradicts the rank of $G$.
\end{proof}

Theorems~\ref{thm:iterated_monodromy} and \ref{thm:kneading} yield an immediate corollary.

\begin{cor}\label{cor:monodromy}
Every iterated monodromy group of a post-critically finite polynomial is either virtually abelian or not elementary amenable.
\end{cor}

\section{Generalized basilica groups}

\begin{defn}
We say that $G\leq\Aut(\Xt)$ is a \textbf{generalized basilica group} if $G$ admits a finite generating set $Y$ such that for every $g\in Y$ either $g_x=1$ for all $x\in X$ or $g_x\in \{1,g\}$  for all $x\in X$ with exactly one $x$ such that $g_x=g$. We call $Y$ a \textbf{distinguished generating set} and write $G=\grp{Y}\leq\Aut(\Xt)$ to indicate that $Y$ is a distinguished generating set for $G$.
\end{defn}

A straightforward verification shows that a generalized basilica group is a bounded automata group.

\subsection{A reduction theorem}
The reduction result established herein reduces many questions for generalized basilica groups to the self-replicating case. It follows by induction on the number of directed elements of a distinguished generating set.

Our first lemma gives a tool to reduce to groups with possibly fewer number of directed generators, based on the orbits of the group on the first level of the tree.

\begin{lem}\label{lem:reduction_GBgroups}
	Suppose that $G=\grp{Y}\leq \Aut(\Xt)$ is a generalized basilica group, let $O_1,\dots,O_n$ list the orbits of $G$ on $X$, and let $Y_i\subseteq Y$ be the collection of $g\in Y$ such that $v^g\in O_i$. For each $i$, $\grp{Y_i}\leq \Aut(\Xt)$ is a generalized basilica group generated by directed elements, and
	\[
	G\hookrightarrow \prod_{i=1}^n\Sym(O_i)\ltimes \prod_{i=1}^n\grp{Y_i}^{O_i} .
	\]
\end{lem}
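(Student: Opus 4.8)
The plan is to realize the asserted embedding as the restriction to $G$ of the standard wreath recursion
\[
\Psi:\Aut(\Xt)\to \Sym(X)\ltimes \Aut(\Xt)^X,\qquad g\mapsto \pi_1(g)\,(g_x)_{x\in X}.
\]
Recall that $\Psi$ is an injective homomorphism: the cocycle identity $(gh)_x=g_{h(x)}h_x$ together with $\pi_1(gh)=\pi_1(g)\pi_1(h)$ shows $\Psi$ is a homomorphism into the semidirect product, whose multiplication (as one verifies directly from the cocycle, using that the first-level action of $h$ is $\pi_1(h)$) is $(\sigma,(a_x))(\tau,(b_x))=(\sigma\tau,(a_{\tau(x)}b_x))$; injectivity is immediate, since an automorphism is determined by its first-level permutation together with its first-level sections. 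So it remains only to check that $\Psi(G)$ lands in the stated subgroup $\prod_i\Sym(O_i)\ltimes\prod_i\grp{Y_i}^{O_i}$ and to identify the factors $\grp{Y_i}$.

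For the permutation part, since $O_1,\dots,O_n$ are the orbits of $G$ on $X$, every $g\in G$ permutes each $O_i$, so $\pi_1(g)$ lies in the partition-stabilizer $\prod_i\Sym(O_i)\leq\Sym(X)$. The substantive point is the section part: I claim that for every $g\in G$ and every $x\in O_i$ one has $g_x\in\grp{Y_i}$.

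I would prove this claim by induction on the length of a word for $g$ in $Y\cup Y^{-1}$. For the base case, let $s\in Y\cup Y^{-1}$ and $y\in O_i$. If $s$ or $s^{-1}$ is a finitary generator, then $s_y=1$. If $s=h^{\pm1}$ for $h\in Y$ directed, then $h_u\in\{1,h\}$ with $h_u=h$ only at $u=v^h$; hence, using $(h^{-1})_y=(h_{h^{-1}(y)})^{-1}$ and that $O_i$ is $G$-invariant, $s_y$ equals $1$ unless $v^h\in O_i$, in which case $s_y\in\{1,h^{\pm1}\}\subseteq\grp{Y_i}$, because $h\in Y_i$ exactly when $v^h\in O_i$. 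Thus $s_y\in\grp{Y_i}$ for all $y\in O_i$. For the inductive step, write $g=s\,g'$ with $s\in Y\cup Y^{-1}$ and $g'$ shorter; then $g_x=s_{g'(x)}\,g'_x$, and since $g'\in G$ preserves $O_i$ we have $g'(x)\in O_i$, so $s_{g'(x)}\in\grp{Y_i}$ by the base case and $g'_x\in\grp{Y_i}$ by induction, whence $g_x\in\grp{Y_i}$. This establishes the claim and shows $\Psi(G)\subseteq\prod_i\Sym(O_i)\ltimes\prod_i\grp{Y_i}^{O_i}$.

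Finally, each $Y_i$ consists of directed elements of $Y$, each satisfying $g_x\in\{1,g\}$ for all $x$ with a unique active vertex; hence $\grp{Y_i}$ is a generalized basilica group generated by directed elements with distinguished generating set $Y_i$. Combining these observations, $\Psi$ restricts to the desired injection $G\injects\prod_i\Sym(O_i)\ltimes\prod_i\grp{Y_i}^{O_i}$. The only real content is the inductive section computation; everything else is a direct consequence of $\Psi$ being the (injective, homomorphic) wreath recursion and of the $O_i$ being $G$-orbits, so I anticipate the orbit-preserving bookkeeping in that induction to be the main—though still routine—obstacle.
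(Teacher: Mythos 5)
Your proposal is correct and follows essentially the same route as the paper's proof: an induction on word length (via the cocycle identity $(hk)_v=h_{k(v)}k_v$) showing that all sections $g_x$ with $x\in O_i$ lie in $\grp{Y_i}$, followed by the observation that the wreath recursion $g\mapsto \pi_1(g)\,(g_x)_{x\in X}$ restricts to a monomorphism into $\prod_i\Sym(O_i)\ltimes\prod_i\grp{Y_i}^{O_i}$. You merely spell out the base case (inverses, finitary versus directed generators) and the injectivity of the wreath recursion, which the paper leaves as "one verifies."
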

\begin{proof}
That each $\grp{Y_i}\leq \Aut(\Xt)$ is a generalized basilica group generated by directed elements is immediate.

For each $x\in O_i$ and $h\in Y$, the section $h_x$ is an element of $\grp{Y_i}$. Inducting on the word length, one sees that $g_x\in \grp{Y_i}$ for all $g\in G$ and $x\in O_i$. Letting $\pi_1:G\rightarrow \Sym(X)$ be the induced homomorphism, we see that $\pi_1(G)\leq \prod_{1=1}^n\Sym(O_i)$. We may now define
\[
	G\rightarrow \prod_{i=1}^n\Sym(O_i)\ltimes \prod_{i=1}^n\grp{Y_i}^{O_i} .
\]
by $g\mapsto \pi_1(g)((g_x)_{x\in O_i})_{i=1}^n$. One verifies that this function is a monomorphism.
\end{proof}

Our next lemmas address the case in which Lemma~\ref{lem:reduction_GBgroups} does not reduce the number of directed elements.

\begin{lem}\label{lem:almost_self-replicating_0}
	Suppose that $G=\grp{Y}\leq\Aut(\Xt)$ is a generalized basilica group. If $O\subseteq X$ is setwise preserved by the action of $G$ on $X$, then $O^*$ is setwise preserved by $G$, and the induced homomorphism $\phi:G\rightarrow \Aut(O^*)$ is such that $\phi(G)$ is a generalized basilica group with distinguished generating set $\phi(Y)$.
\end{lem}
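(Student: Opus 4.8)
The plan is to prove the two assertions in turn, after recording one crucial preliminary observation: $G$ is self-similar. Indeed, a generalized basilica group is a bounded automata group, and bounded automata groups are self-similar by definition, so $g_v\in G$ for every $g\in G$ and $v\in\Xt$. Granting this, I would first show that $O^*$ is setwise preserved by $G$ by induction on word length, using the basic identity $g(ov)=g(o)\,g_o(v)$ for $o\in O$ and $v\in O^*$. The first letter $g(o)=\pi_1(g)(o)$ lies in $O$ because $O$ is invariant under the level-one action of $G$, while the tail $g_o(v)$ lies in $O^*$ by the inductive hypothesis applied to $g_o\in G$. The base cases (the empty word and single letters) are immediate, so $g(O^*)\subseteq O^*$ for all $g\in G$; applying this to $g^{-1}$ as well gives equality, and hence the restriction homomorphism $\phi\colon G\rightarrow\Aut(O^*)$ is well defined.

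The technical heart is the section-compatibility formula $(\phi(g))_o=\phi(g_o)$ for every $g\in G$ and $o\in O$, where on the left the section is taken in the tree $O^*$, and on the right $\phi(g_o)=g_o\rest_{O^*}$ makes sense precisely because $g_o\in G$ and $O^*$ is $G$-invariant. I would verify this by unwinding the definition of section inside $O^*$: writing $\psi_o$ for the prefix map $v\mapsto ov$ within $O^*$, one has for $v\in O^*$ that $(\phi(g))_o(v)=\psi_{g(o)}^{-1}\big(g(ov)\big)=\psi_{g(o)}^{-1}\big(g(o)\,g_o(v)\big)=g_o(v)=\phi(g_o)(v)$. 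This is the analogue of part (1) of Lemma~\ref{lem:directed_preserve}, now applied to the (generally non-regular) sub-tree $O^*$ rather than to a power of the alphabet; the same computation goes through verbatim.

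With the formula in hand, the second assertion reduces to a case check on the generators, using that $\phi(Y)$ generates $\phi(G)$ since $\phi$ is a homomorphism and $Y$ generates $G$. For $g\in Y$ of finitary type, $g_x=1$ for all $x\in X$, so $(\phi(g))_o=\phi(g_o)=1$ for all $o\in O$ and $\phi(g)$ is again of finitary type. For $g\in Y$ of directed type, let $v^g\in X$ be its active vertex. If $v^g\notin O$ (or if $\phi(g)=1$), then $g_o=1$ for every $o\in O$, so all level-one sections of $\phi(g)$ vanish and $\phi(g)$ is of finitary type. If instead $v^g\in O$ and $\phi(g)\neq 1$, then $(\phi(g))_{v^g}=\phi(g)$ while $(\phi(g))_o=\phi(g_o)=1$ for $o\in O\setminus\{v^g\}$, so $\phi(g)$ is directed with active vertex $v^g$, this being the unique such vertex because $\phi(g)\neq 1$. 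In every case $\phi(g)$ satisfies the defining condition of a generalized basilica generator, so $\phi(Y)$ is a distinguished generating set for $\phi(G)$.

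No single step here is genuinely hard; the point most likely to cause trouble is the bookkeeping in the final paragraph. One must keep track that the active vertex $v^g$ can fall outside $O$, and that $\phi(g)$ may collapse to the identity on $O^*$ even when $g\neq 1$, and confirm that both degenerate situations land cleanly in the finitary alternative of the definition. Everything else is a routine consequence of self-similarity together with the section identity $(\phi(g))_o=\phi(g_o)$.
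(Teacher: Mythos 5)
Your proof is correct and follows essentially the same route as the paper's: self-similarity of $G$ (via the fact that generalized basilica groups are bounded automata groups) gives invariance of $O^*$, and the heart of both arguments is the restriction identity $\phi(g_w)=(\phi(g))_w$, obtained by the same unwinding of $\psi_{g(w)}^{-1}\circ g\circ \psi_w$ on $O^*$. The only difference is one of detail: the paper compresses your final case analysis on generators into the phrase ``it now follows,'' so your bookkeeping (active vertex outside $O$, or $\phi(g)=1$, both landing in the finitary alternative) is precisely the verification the paper leaves implicit.
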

\begin{proof}
That $G$ is self-similar ensures that $O^*$ is setwise preserved by $G$. Take $g\in G$ and $w\in O^*$. By definition, $g_w=\psi_{g(w)}^{-1}\circ g\circ \psi_w$ where $\psi_v:X^*\rightarrow vX^*$ by $x\mapsto vx$. The map $\phi$ is nothing but restriction to $O^*$, so we see that
	\[
	\phi(g_w)=(g_w)\rest_{O^*}=\psi_{g(w)}^{-1}\rest_{O^*}\circ g\rest_{O^*}\circ \psi_w\rest_{O^*}=\phi(g)_w.
	\]
It now follows that $\phi(G)\leq \Aut(O^*)$ is a generalized basilica group with generating set $\phi(Y)$.
\end{proof}

We call the action of $G$ on $O^*$ a \textbf{sub-basilica action} of $G$ induced from $\Xt$.

\begin{lem}\label{lem:loc.finite.sym}
Let $O\subseteq X$ and let $G$ be the subgroup of $\Aut(\Xt)$ consisting of those elements that fix $O^*$ pointwise and have trivial sections outside of $O^*$.  Then $G\cong \sym(X\setminus O)^{O^*}$ and hence is locally finite.
\end{lem}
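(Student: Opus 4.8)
The plan is to produce an explicit isomorphism by recording, for each word $w\in O^*$, the way an element of $G$ permutes the letters of $X\setminus O$ sitting immediately below $w$. First I would analyze the sections of a fixed $g\in G$. Since $g$ fixes $O^*$ pointwise, for $w\in O^*$ and $x\in O$ we have $g(wx)=wx$, so the first-level permutation $\pi_1(g_w)$ fixes $O$ pointwise and hence restricts to an element $\sigma_w\in\sym(X\setminus O)$. The section cocycle $g_{v_1v_2}=(g_{v_1})_{v_2}$ gives $(g_w)_x=g_{wx}$, and for $x\in X\setminus O$ the word $wx$ lies outside $O^*$, so $g_{wx}=1$. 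Unwinding this together with $g(wxu)=w\,g_w(xu)$ yields the key structural description: if $v\notin O^*$ has maximal $O^*$-prefix $w$, say $v=wxu$ with $x\in X\setminus O$, then $g(v)=w\,\sigma_w(x)\,u$, while $g$ fixes every $v\in O^*$. Thus $g$ is completely determined by the family $(\sigma_w)_{w\in O^*}$.

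I would then define $\Phi\colon G\to \sym(X\setminus O)^{O^*}$ by $\Phi(g)=(\sigma_w)_{w\in O^*}$ and verify the three required properties. Injectivity is immediate from the displayed structural description. For the homomorphism property, the cocycle $(gh)_w=g_{h(w)}h_w$ together with $h(w)=w$ (as $h$ fixes $O^*$) gives $(gh)_w=g_w h_w$; applying $\pi_1$ and restricting to $X\setminus O$ shows $\sigma_w(gh)=\sigma_w(g)\sigma_w(h)$, so $\Phi$ is multiplicative coordinatewise. For surjectivity, given an arbitrary family $(\sigma_w)_{w\in O^*}$ I would \emph{define} $g$ by the structural formula above (and $g=\mathrm{id}$ on $O^*$) and check that it is a prefix-preserving bijection of $X^*$ lying in $G$. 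Here I expect the main obstacle: one must confirm prefix preservation across the boundary between the maximal $O^*$-prefix and the rest of a word, and that $g_v=1$ for every $v\notin O^*$ (which holds because appending a suffix never changes the maximal $O^*$-prefix); the inverse is furnished by the family $(\sigma_w^{-1})$. This bookkeeping of maximal $O^*$-prefixes is the only place requiring care, the other two checks being routine.

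The final clause, local finiteness, needs a genuine if short argument, since an \emph{infinite} direct product of finite groups need not be locally finite in general; the decisive point is that here every factor is the single fixed finite group $B:=\sym(X\setminus O)$. Given $g_1,\dots,g_k\in B^{O^*}$, I would partition $O^*$ according to the local configuration $w\mapsto(g_1(w),\dots,g_k(w))\in B^k$, of which there are at most $|B|^k$ distinct values; since any word in the generators takes equal values on two coordinates having equal configuration, the subgroup $\grp{g_1,\dots,g_k}$ embeds into $B^m$ with $m\le |B|^k$ and is therefore finite. Hence $G\cong B^{O^*}$ is locally finite.
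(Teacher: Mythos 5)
Your proof is correct, and at its core it is the same as the paper's: your map $\Phi$ is exactly the paper's map $g\mapsto(\pi_1(g_w))_{w\in O^*}$ (the paper views $\sym(X\setminus O)$ as a subgroup of $\sym(X)$ rather than restricting to $X\setminus O$), and your multiplicativity check is the identical cocycle computation $(fg)_w=f_{g(w)}g_w=f_wg_w$, using that $g$ fixes $O^*$ pointwise. The differences lie in what is proved versus cited. For bijectivity, the paper invokes the ``portrait'' bijection $\Aut(\Xt)\cong\sym(X)^{\Xt}$ and observes that membership in $G$ is characterized portrait-wise by $g_w=1$ off $O^*$ and $\pi_1(g_w)\in\sym(X\setminus O)$ on $O^*$; you instead derive the explicit normal form $g(wxu)=w\,\sigma_w(x)\,u$ (with $w$ the maximal $O^*$-prefix) and build the inverse of $\Phi$ by hand, which is more bookkeeping but makes surjectivity and the verification that the constructed element lies in $G$ completely transparent. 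For local finiteness, the paper cites the fact that the variety generated by a finite group is locally finite, whereas you give a direct pigeonhole argument: finitely many elements $g_1,\dots,g_k$ of $B^{O^*}$ realize at most $|B|^k$ distinct coordinate configurations, so $\grp{g_1,\dots,g_k}$ embeds in a finite power of $B$. Your argument is essentially the standard proof of that variety fact specialized to direct powers, and your caveat is well taken: it is essential that all factors are a single fixed finite group, since for instance $\prod_n \Zb/n\Zb$ contains an element of infinite order and so a general product of finite groups need not be locally finite.
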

\begin{proof}
It is clear that $G$ is a subgroup of $\Aut(\Xt)$  since if $f,g\in G$ and $w\notin O^*$, we have $(fg)_w= f_{g(w)}g_w=1$ as $g(w)\notin O^*$.  There is a well-known set-theoretic bijection of $\Aut(\Xt)$ with $\sym(X)^{X^*}$ sending an element $g$  to its `portrait' $(\pi_1(g_w))_{w\in X^*}\in \sym(X)^{X^*}$. Put $Y:=X\setminus O$. An element $g\in \Aut(\Xt)$ belongs to $G$ if and only if $g_w=1$ for $w\notin O^*$ and $\pi_1(g_w)\in \sym(Y)$ for $w\in O^*$ (where $\sym(Y)$ is viewed as a subgroup of $\sym(X)$ in the usual way).  There is thus a bijection $\pi\colon G\to \sym(Y)^{O^*}$ given by $\pi(g)=(\pi_1(g_w))_{w\in O^*}$.  The mapping $\pi$ is a homomorphism because if $f,g\in G$ and $w\in O^*$, then $(fg)_w=f_{g(w)}g_w=f_wg_w$ as $g$ fixes $O^*$ pointwise. This proves the first statement.  Since the variety of groups generated by any finite group is locally finite, it follows that $\sym(Y)^{O^*}$ is locally finite.
\end{proof}

\begin{lem}\label{lem:almost_self-replicating}
	Suppose that $G=\grp{Y}\leq\Aut(\Xt)$ is a generalized basilica group generated by directed elements. If there is an orbit $O\subseteq X$ of $G$ such that $v^g\in O$ for all $g\in Y$, then the homomorphism $\phi:G\rightarrow \Aut(O^*)$ induced by the action $G\acts O^*$ enjoys the following properties:
	\begin{enumerate}
	\item $\phi(G)$ is a self-replicating generalized basilica group with generating set $\phi(Y)$, and
	\item $\ker(\phi)$ is locally finite.
	\end{enumerate}
\end{lem}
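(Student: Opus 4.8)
The plan is to deduce (1) from the characterization of self-replication in Proposition~\ref{prop:self-rep_char}, and to deduce (2) by showing that $\ker(\phi)$ is contained in the locally finite group appearing in Lemma~\ref{lem:loc.finite.sym}. Throughout I will use the cocycle identities $g_{v_1v_2}=(g_{v_1})_{v_2}$ and $(hk)_v=h_{k(v)}k_v$ freely, together with the fact that, since $O$ is a $G$-orbit, both $O$ and $X\setminus O$ are setwise preserved by $G$.

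For (1), Lemma~\ref{lem:almost_self-replicating_0} already gives that $\phi(G)$ is a generalized basilica group with distinguished generating set $\phi(Y)$ and that $\phi(g)_w=\phi(g_w)$ for all $g\in G$ and $w\in O^*$. Fix $g\in Y$: by hypothesis $g$ is directed with $v^g\in O$, so $g_{v^g}=g$ and $g_x=1$ for $x\in X\setminus\{v^g\}$. Evaluating at $x\in O$ gives $\phi(g)_{v^g}=\phi(g)$ and $\phi(g)_x=1$ for $x\in O\setminus\{v^g\}$, so each $\phi(g)$ is either trivial or directed with active vertex $v^g\in O$; in particular $\phi(G)$ is generated by the directed elements of $\phi(Y)$. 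As $O$ is a single $G$-orbit, the first-level action of $\phi(G)$ on $O$ is transitive, and Proposition~\ref{prop:self-rep_char} then yields that $\phi(G)$ is self-replicating.

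For (2), the crucial claim is that every $h\in G$ satisfies $h_x=1$ for all $x\in X\setminus O$. Each generator $g\in Y$ has its only nontrivial first-level section at $v^g\in O$, so $g_x=1$ for every $x\notin O$; combining $(g^{-1})_y=(g_{g^{-1}(y)})^{-1}$ with the $G$-invariance of $X\setminus O$ shows $(g^{\pm1})_y=1$ for all $y\notin O$. Writing an arbitrary $h=w_1\cdots w_m$ as a word in the generators and their inverses, the cocycle identity expands $h_x=(w_1)_{z_1}\cdots(w_m)_{z_m}$, where each $z_j$ is the image of $x$ under a product of generators and hence lies in $X\setminus O$; thus every factor is trivial and $h_x=1$. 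Consequently, for any $w\notin O^*$ we may write $w=uxw'$ with $u\in O^*$ and $x\notin O$, and then $h_w=((h_u)_x)_{w'}=1$ since $h_u\in G$. Hence every element of $G$ has trivial sections outside $O^*$.

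Finally, an element of $\ker(\phi)$ fixes $O^*$ pointwise and, by the claim, has trivial sections outside $O^*$, so $\ker(\phi)$ is contained in the subgroup of $\Aut(\Xt)$ described in Lemma~\ref{lem:loc.finite.sym}. That subgroup is isomorphic to $\sym(X\setminus O)^{O^*}$ and hence locally finite, and subgroups of locally finite groups are locally finite, which gives (2). The main obstacle is the crucial claim of the previous paragraph; once one observes that the only nontrivial first-level sections of the generators occur at active vertices inside $O$ and that $X\setminus O$ is $G$-invariant, the rest is formal bookkeeping with the section cocycle.
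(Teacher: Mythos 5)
Your proof is correct and follows essentially the same route as the paper: both use Lemma~\ref{lem:almost_self-replicating_0} for the generalized basilica structure of $\phi(G)$, deduce self-replication from the section analysis of the directed generators (you via Proposition~\ref{prop:self-rep_char}, the paper via the underlying Lemma~\ref{lem:section_sgrp}), and prove part (2) by showing every element of $G$ has trivial sections outside $O^*$ and invoking Lemma~\ref{lem:loc.finite.sym}. Your verification of the ``crucial claim'' is just a more explicit write-up of the cocycle argument the paper leaves implicit.
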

\begin{proof}
	For (1), Lemma~\ref{lem:almost_self-replicating_0} ensures that $\phi(G)\leq \Aut(O^*)$ is a generalized basilica group with generating set $\phi(Y)$. Lemma~\ref{lem:section_sgrp} implies that $\sec_G(v)=G$ for all $v\in O$, so $\sec_{\phi(G)}(v)=\phi(G)$ for all $v\in O$.  Since $G$ acts transitively on $O$, $\phi(G)$ is self-replicating.

For (2), clearly $\ker(\phi)$ fixes $O^*$ pointwise.  Since $v^g\in O$ for all $g\in Y$, it follows that $f_v=1$ for all $v\in X\setminus O$ and $f\in G$.
As $G$ is self-similar, it follows that $f_w=1$ for all $f\in G$ and $w\notin O^*$.  Indeed, we may write $w=uav$ with $u\in O^*$, $a\in X\setminus O$, and $v\in X^*$, so $f_w = ((f_u)_a)_v = 1_v=1$.  Thus, $\ker(\phi)$ is isomorphic to a subgroup of $\sym(X\setminus O)^{O^*}$, and hence is locally finite, by Lemma~\ref{lem:loc.finite.sym}. 	
\end{proof}

Bringing together our lemmas produces the desired reduction theorem. To state our main theorem in full generality requires a definition.
\begin{defn}
Suppose that $Q$ is a property of generalized basilica groups. We say that $Q$ is a \textbf{sub-basilica stable} property if for every generalized basilica group $G=\grp{Y}\leq\Aut(\Xt)$ with property $Q$, the following holds: For every generalized basilica group of the form $\grp{Y'}\leq \Aut(\Xt)$ with $Y'\subset Y$, any generalized basilica group $\widetilde{\grp{Y'}}$ induced by a sub-basilica action of $\grp{Y'}$ has property $Q$. In particular, $\grp{Y'}$ has $Q$.
\end{defn}
Observe that any property of groups stable under passing to subgroups and quotients is sub-basilica stable.

\begin{thm}\label{thm:reduction_thm_GB}
Let $Q$ be a sub-basilica stable property  and suppose that $P$ is a property of groups enjoyed by abelian groups and stable under taking subgroups, finite direct products, and $P$-by-finite groups. If every self-replicating generalized basilica group with property $Q$ has property $P$, then every generalized basilica group with property $Q$ is (locally finite)-by-$P$.
\end{thm}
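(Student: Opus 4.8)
The plan is to induct on the number $d$ of directed elements in a distinguished generating set, leaning on the two structural reductions already proved: Lemma~\ref{lem:reduction_GBgroups}, which splits a generalized basilica group along the orbits of its first-level action, and Lemma~\ref{lem:almost_self-replicating}, which collapses a directed-generated group all of whose active vertices lie in a single orbit onto a self-replicating quotient with locally finite kernel. Property $Q$ is carried along the reductions by sub-basilica stability, and the standing hypothesis ``self-replicating with $Q$ implies $P$'' is applied exactly once the induction bottoms out at a self-replicating group.

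Before the induction I would record the closure properties of the class of (locally finite)-by-$P$ groups that the bookkeeping needs. From the hypotheses that $P$ is closed under subgroups, finite direct products, and extensions by finite groups, one checks routinely that (locally finite)-by-$P$ is closed under subgroups, under finite direct products, and under forming $\Sym(O)\ltimes H^{O}$ for finite $O$ whenever $H$ is (locally finite)-by-$P$: in each case one takes the evident locally finite normal subgroup, namely a power of the locally finite part of $H$, which the coordinate-permuting action preserves, and observes that the resulting quotient is $P$ or $P$-by-finite, hence $P$. In particular the target $\prod_i\Sym(O_i)\ltimes\prod_i\grp{Y_i}^{O_i}$ of Lemma~\ref{lem:reduction_GBgroups} is (locally finite)-by-$P$ as soon as every $\grp{Y_i}$ is.

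With these in hand I would first reduce to the directed-generated case. Given a generalized basilica group $G=\grp{Y}$ with $Q$, Lemma~\ref{lem:reduction_GBgroups} embeds $G$ into $\prod_i\Sym(O_i)\ltimes\prod_i\grp{Y_i}^{O_i}$ with each $\grp{Y_i}$ generated by directed elements and, since $Y_i\subseteq Y$, having $Q$ by sub-basilica stability; by the closure properties it suffices to treat directed-generated groups. For these I would induct on $d=|Y|$. The base case $d=0$ is the trivial group, which is abelian and hence has $P$. For $d\geq1$, I would compare the active vertices $\{v^g\mid g\in Y\}$ with the orbits $O_1,\dots,O_n$ of $G$ on $X$. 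If these active vertices meet at least two orbits, then Lemma~\ref{lem:reduction_GBgroups} embeds $G$ into $\prod_i\Sym(O_i)\ltimes\prod_i\grp{Y_i}^{O_i}$ where each $\grp{Y_i}$ is directed-generated with strictly fewer than $d$ directed generators and has $Q$; the induction hypothesis and the closure properties conclude. If instead all active vertices lie in one orbit $O$, then Lemma~\ref{lem:almost_self-replicating} applies: the induced map $\phi\colon G\to\Aut(O^*)$ has locally finite kernel, and $\phi(G)$ is a self-replicating generalized basilica group with distinguished generating set $\phi(Y)$. Since $\phi(G)$ is induced by a sub-basilica action of $G=\grp{Y}$, sub-basilica stability gives that $\phi(G)$ has $Q$, so the standing hypothesis yields that $\phi(G)$ has $P$; thus $G$ is (locally finite)-by-$P$.

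The main point requiring care is the dichotomy driving the induction: when the active vertices span several orbits the orbit splitting strictly lowers the count of directed generators, whereas when they are confined to a single orbit that count need not drop, and it is precisely here that Lemma~\ref{lem:almost_self-replicating} is indispensable, trading the failure of self-replication for a locally finite kernel and handing back a genuinely self-replicating quotient to which the hypothesis on $P$ applies. The remaining work is the routine verification of the closure properties of (locally finite)-by-$P$ and the repeated checking that sub-basilica stability supplies $Q$ at each stage, both immediate from the definitions.
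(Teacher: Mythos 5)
Your proof is correct and follows essentially the same route as the paper: induction on the number of directed generators, using Lemma~\ref{lem:reduction_GBgroups} to split along first-level orbits (which strictly decreases the count when the active vertices meet at least two orbits) and Lemma~\ref{lem:almost_self-replicating} together with sub-basilica stability when they all lie in a single orbit. The only differences are organizational: you make explicit the closure properties of (locally finite)-by-$P$ that the paper leaves implicit, and you normalize to directed-generated groups before inducting (base case the trivial group) rather than carrying finitary generators through the induction (base case $\delta(Y)\leq 1$).
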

\begin{proof}
Suppose that $G=\grp{Y}\leq\Aut(\Xt)$ is a generalized basilica group with property $Q$. We argue by induction on the number of directed elements in $Y$, denoted by $\delta(Y)$, for the theorem. If $\delta(Y)\leq 1$, then $G$ is virtually abelian by Lemma~\ref{lem:reduce_to_directed}, and we are done. Suppose the theorem holds up to $n$ and say that $\delta(Y)=n+1$. Let $O_1,\dots,O_l$ list the orbits of $G$ on $X$ and let $Y_i\subseteq Y$ be the collection of directed $d\in Y$ such that $v^d\in O_i$. Lemma~\ref{lem:reduction_GBgroups} supplies an injection
 		\[
 		G\hookrightarrow \prod_{i=1}^l\Sym(O_i)\ltimes \prod_{i=1}^lH_i^{O_i}.
 		\]
It is enough to show that each $H_i$ is (locally finite)-by-$P$
 		
 	Each $H_i\leq \Aut(\Xt)$ is again a generalized basilica group with property $Q$, since $Q$ is sub-basilica stable. If no $O_i$ contains $v^d$ for every directed $d\in Y$, each $H_i$ is such that $\delta(Y_i)<n+1$. The inductive hypothesis then ensures that each $H_i$ is (locally finite)-by-$P$.
 	
 	Suppose some $O_i$ contains $v^d$ for every directed $d\in Y$.  Without loss of generality, $i=1$, so $H_j=\{1\}$ for $j>1$. Clearly each $H_j$ with $j>1$ is (locally finite)-by-$P$. We apply Lemma~\ref{lem:almost_self-replicating} to the group $H_1$. Letting $K:=\ker(H_1\acts O_1^*)$, Lemma~\ref{lem:almost_self-replicating} ensures that $H_1/K\leq \Aut(O_1^*)$ is a self-replicating generalized basilica group, and $K$ is locally finite. Since $H_1/K$ is the group induced by a sub-basilica action, $H_1/K$ has property $Q$, hence $H_1/K$ has property $P$. We conclude that $H_1$ is (locally finite)-by-$P$. 	
\end{proof}

We can do better for torsion-free bounded automata groups. The proof is essentially the same as that of Theorem~\ref{thm:reduction_thm_GB}, so we leave it to the reader.
\begin{thm}
Suppose that $P$ is a property of groups enjoyed by abelian groups and stable under taking subgroups, finite direct products, and $P$-by-finite groups. If every self-replicating torsion-free bounded automata group has property $P$, then every torsion-free bounded automata group has $P$.
\end{thm}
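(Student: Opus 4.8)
The plan is to mirror the proof of Theorem~\ref{thm:reduction_thm_GB} almost verbatim, the only genuine change being that the locally finite pieces appearing there are now forced to be trivial. I would first reduce to torsion-free generalized basilica groups. Given a torsion-free bounded automata group $G$, Corollary~\ref{cor:torsion-free_GBG} supplies $k\geq 1$ and a torsion-free generalized basilica group $H$ with
\[
G\injects \Sym(X^k)\ltimes H^{X^k};
\]
here the passage from reduced form to generalized basilica is exactly where torsion-freeness enters at the outset, since in a torsion-free group every finitary element is trivial. Supposing it known that every torsion-free generalized basilica group has $P$, the group $H^{X^k}$ has $P$ as a finite direct product, the semidirect product $\Sym(X^k)\ltimes H^{X^k}$ is then $P$-by-finite and hence has $P$, and finally $G$ has $P$ as a subgroup. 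Thus it suffices to establish $P$ for torsion-free generalized basilica groups.

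For this I would run the induction on the number $\delta(Y)$ of directed generators used in Theorem~\ref{thm:reduction_thm_GB}. When $\delta(Y)\leq 1$ the group is virtually abelian by Lemma~\ref{lem:reduce_to_directed}, and being abelian-by-finite it has $P$. For the inductive step, let $O_1,\dots,O_l$ be the orbits of $G$ on $X$, let $Y_i$ be the directed generators with active vertex in $O_i$, and set $H_i:=\grp{Y_i}$. Lemma~\ref{lem:reduction_GBgroups} gives
\[
G\hookrightarrow \prod_{i=1}^l\Sym(O_i)\ltimes \prod_{i=1}^l H_i^{O_i},
\]
so by closure of $P$ under finite direct products, $P$-by-finite extensions, and subgroups it is enough to show each torsion-free generalized basilica group $H_i$ has $P$. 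If no single orbit $O_i$ contains the active vertex of every directed generator, then $\delta(Y_i)<\delta(Y)$ for each $i$ and the inductive hypothesis applies.

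The sole place where torsion-freeness does essential work is when some orbit, say $O_1$, contains every active vertex, so that $H_j=\{1\}$ for $j>1$. Applying Lemma~\ref{lem:almost_self-replicating} to $H_1$, the induced map $\phi\colon H_1\to\Aut(O_1^*)$ has self-replicating generalized basilica image, and its kernel $K$ is locally finite. In Theorem~\ref{thm:reduction_thm_GB} this kernel produced the ``(locally finite)-by-$P$'' conclusion, but here $K$ is a subgroup of the torsion-free group $G$, hence torsion-free, and a torsion-free locally finite group is trivial; thus $\phi$ is injective. Consequently $H_1\cong\phi(H_1)$ is a self-replicating torsion-free bounded automata group and has $P$ by hypothesis. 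Reassembling through the closure properties of $P$ then yields $P$ for $G$. The one point demanding care---and the natural place for an argument to slip---is tracking torsion-freeness down the chain $G\geq H_1\geq K$ so that the kernel legitimately collapses; everything else is the routine bookkeeping already done for Theorem~\ref{thm:reduction_thm_GB}.
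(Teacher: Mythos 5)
Your proposal is correct and matches the paper's intended argument: the paper leaves this proof to the reader, remarking that it is essentially the same as that of Theorem~\ref{thm:reduction_thm_GB}, and your writeup supplies exactly that adaptation. The reduction through Corollary~\ref{cor:torsion-free_GBG} and the observation that the locally finite kernel $K$ of Lemma~\ref{lem:almost_self-replicating} is trivial, being a subgroup of a torsion-free group, are precisely the points where torsion-freeness upgrades ``(locally finite)-by-$P$'' to $P$.
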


\subsection{Groups with center}
A \textbf{system of imprimitivity} for a group action $G\acts X$ is a $G$-equivariant equivalence relation on $X$. That is, $x\sim y$ if and only if $g(x)\sim g(y)$. The equivalence classes of a system of imprimitivity are called \textbf{blocks of imprimitivity}. A block $B$ is called \textbf{trivial} if it equals either $X$ or a singleton set.

\begin{lem}\label{lem:sections_blocks}
Suppose that $h\in \Aut(\Xt)$ is a directed automorphism such that $h_x\in \{1,h\}$ for all $x\in X$. Let $B\subseteq X$ be a non-trivial block of imprimitivity for the action of $\grp{h}$ on $X$ and $O$ be the orbit of $v^h$ under the action of $\grp{h}$ on $X$. If $|O\cap B|\geq 2$, then there is $j\in \Zb$ and $w\in B$ such that $h^j$ stabilizes $B$ setwise, $(h^j)_w=h^{\pm 1}$, and $(h^{-j})_w=1$.
\end{lem}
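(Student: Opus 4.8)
The plan is to reduce everything to the cyclic combinatorics of a single cycle of $\pi_1(h)$. Write $\sigma:=\pi_1(h)\in\Sym(X)$ and $a:=v^h$; directedness together with the hypothesis $h_x\in\{1,h\}$ forces $h$ to have period one, with $h_a=h$ and $h_x=1$ for every $x\neq a$. The orbit $O$ of $a$ under $\grp{h}\acts X$ is then precisely the $\sigma$-cycle through $a$, and writing $\ell:=|O|$ I identify $O$ with $\Zb/\ell\Zb$ so that $a\leftrightarrow 0$ and $\sigma$ restricted to $O$ is translation by $1$. The gain from this identification is that sections of powers of $h$ become hit-counts: an easy induction from the cocycle identity $(gk)_v=g_{k(v)}k_v$ shows that, for $j\geq 1$ and any $w$, one has $(h^j)_w=h^{c_+}$ where $c_+$ is the number of $i\in\{0,\dots,j-1\}$ with $\sigma^i(w)=a$, and dually $(h^{-j})_w=h^{-c_-}$ where $c_-$ counts $i\in\{1,\dots,j\}$ with $\sigma^{-i}(w)=a$. (All factors are powers of $h$, hence commute, so only the counts matter.)

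The structural heart is to understand $B\cap O$. Restricting the equivariant equivalence relation defining the block system to the $\sigma$-invariant set $O$ yields an equivariant equivalence relation on $O$, one of whose classes is $B\cap O$; hence $B\cap O$ is a block for the transitive, and therefore regular, cyclic action $\grp{\sigma}\acts O\cong\Zb/\ell\Zb$. Blocks of a regular cyclic action are cosets of subgroups, so $B\cap O=c+\langle d\rangle$ for some divisor $d\mid\ell$, and the hypothesis $|O\cap B|\geq 2$ gives $|B\cap O|=\ell/d\geq 2$, i.e.\ $\ell\geq 2d$. I then claim that $j:=d$ already stabilizes all of $B$: indeed $\sigma^d$ acts on $O$ as translation by $d$, so $\sigma^d(B\cap O)=B\cap O$, whence $B\cap O\subseteq B\cap\sigma^d(B)$; as this intersection is non-empty and $B,\sigma^d(B)$ are blocks, the block dichotomy forces $\sigma^d(B)=B$.

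It remains to pick $w$. Since $B\cap O$ is a coset of the index-$d$ subgroup $\langle d\rangle$, exactly one of the $d$ consecutive points $a,\sigma^{-1}(a),\dots,\sigma^{-(d-1)}(a)$ lies in $B\cap O$; let $i_0\in\{0,\dots,d-1\}$ be the corresponding index and set $w:=\sigma^{-i_0}(a)\in B\cap O\subseteq B$. Taking $j=d$, the forward orbit of $w$ first meets $a$ at step $i_0<d$ and next at step $i_0+\ell\geq 2d>d-1$, so $c_+=1$ and $(h^d)_w=h$; the backward orbit of $w$ first meets $a$ at step $\ell-i_0\geq \ell-d+1>d$, so $c_-=0$ and $(h^{-d})_w=1$. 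Together with $\sigma^d(B)=B$ this is exactly the desired conclusion, with $j=d$ and section equal to $h^{+1}$.

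The only genuinely nontrivial step is the implication $\sigma^d(B\cap O)=B\cap O\Rightarrow\sigma^d(B)=B$; everything else is bookkeeping on $\Zb/\ell\Zb$. I would be careful to record that $B\cap O\neq\emptyset$ (guaranteed by $|O\cap B|\geq 2$), so that the block-disjointness dichotomy yields equality rather than the empty alternative, and that the inequality $\ell\geq 2d$ is precisely what makes a single forward hit and the absence of any backward hit compatible for one and the same value $j=d$.
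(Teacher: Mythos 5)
Your proof is correct, and it reaches the conclusion by a genuinely different route than the paper. The paper's proof is a two-case minimality argument: it takes $i\geq 0$ least with $h^i(v^h)\in B$, sets $w:=h^i(v^h)$, takes $j>0$ least such that $h^{i+j}(v^h)\in B$, gets setwise stabilization of $B$ from the block dichotomy, and then locates the non-trivial sections of $h^{\pm j}$ by hand; in the case $i>0$ the output is $(h^j)_w=1$ and $(h^{-j})_w=h^{-1}$, the sign being absorbed by the statement's allowance $j\in\Zb$. You instead invoke the classification of blocks of a regular cyclic action as cosets of subgroups, so that $B\cap O=c+\grp{d}$ with $d\mid\ell$ and $\ell\geq 2d$; this removes the case split, fixes $j=d$ once and for all, and lets you pick $w$ as the unique point of $B\cap O$ at backward distance less than $d$ from $v^h$, giving $(h^d)_w=h^{+1}$ uniformly. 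Your cocycle hit-counting formula for sections of powers of $h$ is also a tidy packaging of what the paper verifies ad hoc. In fact the two arguments end up with the same $j$ (the paper's minimal return time is exactly your $d$) and mirror-image choices of $w$, so the difference is in the justification machinery rather than the witnesses. What your approach buys is uniformity and fully explicit bookkeeping on $\Zb/\ell\Zb$ --- in particular the inequality $\ell\geq 2d$, which reconciles one forward hit with zero backward hits, is front and center, whereas the analogous constraint $i+j\leq\ell$ needed for $(h^j)_w=1$ is left implicit in the paper's second case; what it costs is the appeal to the block-classification fact, which the paper's self-contained minimality argument never needs.
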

\begin{proof}Let $i\geq 0$ be least such that $h^i(v^h)\in B$.

Suppose first that $i=0$, so $v^h\in B$. Set $w:=v^h$.  Let $j>0$ be least such that $h^j(w)\in B$ and observe that $h^j(w)\neq w$ since $|O\cap B|\geq 2$.  The element $h^j$ stabilizes $B$ setwise, and it is clear that $(h^j)_{w}=h$. On the other hand, $(h^{-j})$ has non-trivial sections at $\{h(w),\dots, h^j(w)\}$, and this set excludes $w$. We conclude that $(h^{-j})_{w}=1$, establishing the lemma in this case.

Suppose next that $i>0$ and set $w:=h^i(v^h)$. Let $j>0$ be least such that $h^{j+i}(v^h)\in B$. The element $h^{-j}$  setwise stabilizes the block $B$, so  $h^{-j}(w)=h^{i-j}(v^h)\in B$. If $j\leq i$, then we contradict the choice of $i$, so $j>i$. It now follows that $(h^{-j})_{w}=h^{-1}$. On the other hand, $h^j(w)=h^{j+i}(v^h)$. As $j+i>j$, $h^{-j}$ has a trivial section at $h^j(w)$. We deduce that $(h^j)_w=1$.
\end{proof}

\begin{lem}\label{lem:commute}
	Let $G\leq \Aut(X^*)$ be self-similar, $1\neq d\in G$ be directed such that $d_x\in \{1,d\}$ for all $x\in X$, and $O$ be the orbit of $v^d\in X$ under the action of $\grp{d}$. Then, $O$ is setwise stabilized by $C_G(d)$, and $g_v\in C_G(d)$ for all $g\in C_G(d)$ and $v\in O$ .
\end{lem}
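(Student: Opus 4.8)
The plan is to avoid working with $d$ directly and instead pass to the power $d^m$, where $m:=|O|$, since $d^m$ has an exceptionally transparent section structure. First I would note that $m$ is the least positive integer with $d^m$ fixing $v^d$, so $d^m$ fixes $O$ pointwise while permuting the other $\grp{d}$-orbits among themselves. Then I would compute the level-one sections of $d^m$ from the cocycle identity $(d^m)_v=d_{d^{m-1}(v)}\cdots d_{d(v)}d_v$, using the hypothesis $d_x\in\{1,d\}$: each factor equals $d$ exactly when the corresponding iterate of $v$ lands on the active vertex $v^d$, and is trivial otherwise. Since $v^d$ is the unique point of $X$ with a nontrivial section of $d$, for $v\in O$ exactly one of $v,d(v),\dots,d^{m-1}(v)$ equals $v^d$, giving $(d^m)_v=d$, whereas for $v\notin O$ no iterate meets $v^d$, giving $(d^m)_v=1$. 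This section computation is the technical core of the argument.

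With this in hand, fix $g\in C_G(d)$, so that $g$ also commutes with $d^m$. For $v\in O$ I would apply the two cocycle identities to $gd^m=d^mg$, exploiting $d^m(v)=v$ on the left-hand side:
\[
g_v\,d \;=\; g_v\,(d^m)_v \;=\; (gd^m)_v \;=\; (d^mg)_v \;=\; (d^m)_{g(v)}\,g_v.
\]
The dichotomy for $(d^m)_{g(v)}$ now does everything. If $g(v)\notin O$, then $(d^m)_{g(v)}=1$ and the identity collapses to $g_v\,d=g_v$, forcing $d=1$ and contradicting $d\neq 1$. Hence $g(v)\in O$ for every $v\in O$; as $g$ is injective and $O$ is finite, this yields $g(O)=O$, proving the first assertion.

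For the second assertion I would reuse the same identity: since $g(v)\in O$ we have $(d^m)_{g(v)}=d$, so the display becomes $g_v\,d=d\,g_v$, i.e.\ $g_v$ centralizes $d$. Self-similarity of $G$ gives $g_v\in G$, and therefore $g_v\in C_G(d)$, as required.

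I expect the only genuine obstacle to be the opening move. Working directly with $d$ via the relation $g_{d(v)}d_v=d_{g(v)}g_v$ looks circular, because evaluating $d_{g(v)}$ appears to presuppose knowing whether $g$ preserves $O$. Passing to $d^m$ dissolves this difficulty in one stroke: $d^m$ fixes $O$ pointwise, which removes the shifted argument $d^m(v)$ from the left-hand side, and its level-one sections detect membership in $O$ exactly. Once the section computation for $d^m$ is in place, both conclusions fall out of the single displayed line.
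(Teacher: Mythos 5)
Your proof is correct and is essentially identical to the paper's: the paper also passes to the least power $d^n$ fixing $O$ pointwise, records that $(d^n)_v=d$ for $v\in O$ and $(d^n)_v=1$ otherwise, and extracts both conclusions from the section computation $(d^{-n})_{z(v)}z_vd=(d^{-n}zd^{n})_v=z_v$ for $z\in C_G(d)$, which is your displayed identity in rearranged form. The dichotomy you emphasize (a trivial section of the power at $g(v)$ would force $d=1$) is exactly how the paper concludes $z(v)\in O$ and $[z_v,d]=1$.
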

\begin{proof}
	Take  $n\geq 1$ least such that $d^n$ fixes $O$. The element $d^n$ is such that
	\[
	(d^n)_v=
	\begin{cases}
	d & \text{ if } v\in O\\
	1 & \text{ else. }
	\end{cases}
	\]
	For any $v\in O$ and $z\in C_G(d)$,  we see that
	\[
	(d^{-n})_{z(v)}z_vd=(d^{-n}zd^{n})_v=z_v.
	\]
	The sections of $d^{-n}$ are either $d^{-1}$ or $1$, and the non-trivial sections occur for exactly the $v\in O$. It is therefore the case that $z(v)\in O$ and $d$ commutes with $z_v$. The group $C_G(d)$ thus setwise stabilizes $O$, and $g_v\in C_G(d)$ for all $g\in C_G(d)$ and $v\in O$.
\end{proof}

\begin{lem}\label{lem:wself-similar}
	For $G\leq \Aut(X^*)$, if $v\in \Xt$ is such that $\sec_G(v)=G$, then $\sec_{Z(G)}(v)\leq Z(G)$. In particular, if $G$ is weakly self-replicating, then $Z(G)$ is weakly self-similar.
\end{lem}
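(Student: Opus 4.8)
The plan is to unwind the definition of $\sec_{Z(G)}(v)$ and reduce to a single commutator computation, using the two hypotheses available: that $\sec_G(v)=G$ realizes \emph{every} element of $G$ as a section at $v$ of an element of $G_{(v)}$, and the cocycle identity $(hk)_v=h_{k(v)}k_v$ from the section lemma. First I would fix $z\in Z(G)\cap G_{(v)}$; since $\sec_{Z(G)}(v)$ is by definition the image $\phi_v(Z(G)\cap G_{(v)})$, it suffices to show that its section $z_v$ lies in $Z(G)$. Note first that $z_v\in \sec_G(v)=G$ automatically, so the only content is centrality.

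To verify centrality, let $g\in G$ be arbitrary. The crucial step is that the hypothesis $\sec_G(v)=G$ (and not merely self-similarity) supplies some $h\in G_{(v)}$ with $h_v=g$. As $z\in Z(G)$ we have $zh=hz$, and both $z$ and $h$ fix $v$. Taking sections at $v$ and applying the cocycle identity then yields $(zh)_v=z_{h(v)}h_v=z_vh_v$ and $(hz)_v=h_{z(v)}z_v=h_vz_v$, where I have used $h(v)=v$ in the first computation and $z(v)=v$ in the second. Comparing the two, $z_vh_v=h_vz_v$, i.e.\ $z_v$ commutes with $g$. Since $g$ was arbitrary, $z_v\in Z(G)$, which proves $\sec_{Z(G)}(v)\leq Z(G)$.

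For the final clause, I would simply observe that if $G$ is weakly self-replicating then $\sec_G(v)=G$ holds at every $v\in \Xt$ by definition, so the first part applies at each vertex and gives $\sec_{Z(G)}(v)\leq Z(G)$ for all $v$; this is exactly the assertion that $Z(G)$ is weakly self-similar. I do not expect a genuine obstacle here: the argument is a direct application of the cocycle formula. The one point demanding care is that surjectivity of $\phi_v$ onto \emph{all} of $G$ is essential---without $\sec_G(v)=G$ one could only conclude that $z_v$ centralizes $\sec_G(v)$, which need not be all of $G$---so the hypothesis must be invoked precisely at the step where $g$ is written as $h_v$.
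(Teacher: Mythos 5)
Your proof is correct and is essentially the same argument as the paper's: the paper writes it as the one-line chain $\sec_{Z(G)}(v)=\phi_v(Z(G)_{(v)})\leq \phi_v(Z(G_{(v)}))\leq Z(\phi_v(G_{(v)}))=Z(G)$, using that the section homomorphism $\phi_v$ carries central elements into the center of its image, with surjectivity ($\sec_G(v)=G$) invoked exactly where you invoke it. Your commutator computation via the cocycle identity is just the element-level unwinding of that same fact, and your handling of the ``in particular'' clause matches the paper.
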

\begin{proof}
	We see that $Z(G)_{(v)}\leq Z(G_{(v)})\leq G_{(v)}$. Therefore,
	\[
	\sec_{Z(G)}(v)=\phi_v(Z(G)_{(v)})\leq\phi_v(Z(G_{(v)}))\leq Z(\phi_v(G_{(v)}))=Z(G).
	\]
The lemma now follows.
\end{proof}

\begin{thm}\label{thm:center_self-replicating}
	Suppose that $G=\grp{Y}\leq \Aut(X^*)$ is a self-replicating generalized basilica group. If $Z(G)$ is non-trivial, then $G$ is abelian.
\end{thm}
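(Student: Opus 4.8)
The plan is to first strip the hypotheses down to a manageable form. Since $G$ is self-replicating, Proposition~\ref{prop:self-rep_char} lets us assume the distinguished generating set $Y$ consists entirely of directed elements, each with $g_x\in\{1,g\}$ for all $x$ and a unique active vertex $v^g$. Being self-replicating, $G$ is weakly self-replicating, so by Lemma~\ref{lem:wself-similar} the center $Z(G)$ is weakly self-similar; it is of course characteristic, hence normal. As $Z(G)$ is assumed nontrivial, Lemma~\ref{lem:fix} forces $Z(G)$ to fix no vertex at any level. Consequently, for each $k$ the orbits of $Z(G)$ on $X^k$ are nontrivial blocks of imprimitivity for the level-transitive action of $G$. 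The goal is then to show every directed generator $d$ lies in $Z(G)$, which yields $G=Z(G)$ abelian.

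Fix a directed generator $d$ and let $O$ be the $\grp{d}$-orbit of $v^d$. Because $Z(G)\leq C_G(d)$, Lemma~\ref{lem:commute} tells us $Z(G)$ setwise stabilizes $O$; hence the $Z(G)$-block $B$ through $v^d$ satisfies $B=Z(G)\cdot v^d\subseteq O$, and $|B|\geq 2$ gives $|O\cap B|\geq 2$. This is exactly the hypothesis of Lemma~\ref{lem:sections_blocks}, applied with $h=d$, so we obtain $j\in\Zb$ and $w\in B$ such that $d^{j}$ setwise stabilizes $B$, with the crucial section asymmetry $(d^{j})_w=d^{\pm 1}$ and $(d^{-j})_w=1$.

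Now choose $z\in Z(G)$ with $z(w)\neq w$, which exists since $B=Z(G)\cdot w$ has size at least $2$, and note $z(w)\in B$ as $z$ preserves the block. Evaluating the relations $zd^{j}=d^{j}z$ and $zd^{-j}=d^{-j}z$ at $w$ via the cocycle identity $(gk)_v=g_{k(v)}k_v$, and using $(d^{-j})_w=1$, gives
\[
z_{d^{j}(w)}\,(d^{j})_w = (d^{j})_{z(w)}\,z_w \quad\text{and}\quad z_{d^{-j}(w)} = (d^{-j})_{z(w)}\,z_w.
\]
Solving the second identity for $z_w$ and substituting into the first isolates the factor $(d^{j})_w=d^{\pm 1}$ as a product of sections of $z$ together with sections of $d^{\pm j}$ at $z(w)$. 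The sections of $z$ that occur are central by the weak self-similarity of $Z(G)$ (the argument of Lemma~\ref{lem:wself-similar}: a section of a central element at a fixed vertex is again central), and the sections of $d^{\pm j}$ at $z(w)$ are, by Lemma~\ref{lem:trans_no_section}, controlled powers of $d$. Unwinding these identities then expresses $d^{\pm 1}$ in terms of central elements, forcing $d\in Z(G)$. As $d$ ranges over the directed generators of $G$, we conclude $G$ is abelian.

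The main obstacle is the final extraction in the last paragraph: tracking the sections of $z$ and the powers of $d$ through the two commutator identities to conclude that $d$ itself is central, rather than merely some power or some conjugate. In particular one must perform the sign case analysis on $(d^{j})_w=d^{\pm 1}$ and ensure that every section of $z$ entering the computation is genuinely central; where a relevant vertex is not fixed by $z$, this will likely require replacing $z$ by a suitable power (so the orbit-point in question becomes fixed) or iterating the block argument, and then checking that the resulting central relations still pin down $d$ and not just $d^{m}$.
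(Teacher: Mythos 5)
Your setup (reduction to directed generators via Proposition~\ref{prop:self-rep_char}, weak self-similarity of $Z(G)$ via Lemma~\ref{lem:wself-similar}, no fixed vertices via Lemma~\ref{lem:fix}, and the block $B\subseteq O$ fed into Lemma~\ref{lem:sections_blocks}) matches the paper's toolkit, and your two displayed identities are correct. But the final ``extraction'' is not a loose end to be checked --- it is the entire content of the theorem, and as sketched it cannot work. Solving your two identities gives
\[
d^{\pm 1}=(d^j)_w = z_{d^j(w)}^{-1}\,(d^j)_{z(w)}\bigl((d^{-j})_{z(w)}\bigr)^{-1} z_{d^{-j}(w)},
\]
where the middle factors are powers of $d$ and the outer factors are sections of $z$ at vertices that $z$ need not fix. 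Your claimed centrality of these sections is unjustified: Lemma~\ref{lem:wself-similar} only makes $z_v$ central when $z$ fixes $v$, and in fact ``sections of central elements are central'' is essentially equivalent to the theorem itself (once $G$ is abelian, every section of every element is central), so it cannot serve as an input. What \emph{is} available, from Lemma~\ref{lem:commute}, is that $z_{d^{j}(w)}$ and $z_{d^{-j}(w)}$ commute with $d$ --- but then every factor in the displayed relation lies in $C_G(d)$, so the relation collapses to the vacuous statement that an element of $C_G(d)$ is a product of elements of $C_G(d)$. Even in the most optimistic reading (granting centrality of the sections) you would only get $d^{m}\in Z(G)$ for some integer $m$ that might be $0$, and $d^m\in Z(G)$ with $m\neq 0$ still does not yield $d\in Z(G)$ or abelianness. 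The structural problem is that your computation involves only $d$ and central elements: no other generator $c\in Y$ ever enters, so nothing in it can certify that $d$ commutes with $c$.

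The paper's proof is built precisely to bridge this gap. For fixed $d$ it takes $W\subseteq X$ maximal with $O_d\subseteq W$ such that $d$ commutes with every first-level section $z_v$ ($z\in Z(G)$, $v\in W$); it then proves $W$ is invariant under $Z(G)$ and under every generator $c\in Y$ --- this is where Lemma~\ref{lem:sections_blocks} is applied, but to the \emph{other} generators $c$ whose orbit meets $W$, producing the relation $z_w=z_{c^j(w)}c^{\pm 1}$, so that commutation of $d$ with the two sections forces $[d,c]=1$. Transitivity of $G$ on $X$ then gives $W=X$, and the cocycle identity $z_{v^c}=z_{c(v^c)}c$ (for $z$ central moving $c(v^c)$) converts ``$d$ commutes with all first-level sections of central elements'' into ``$d$ commutes with every generator.'' There is also a smaller unhandled case in your sketch: if $Z(G)$ acts transitively on $X$, then $B=Z(G)\cdot v^d=X=O$ is a trivial block and Lemma~\ref{lem:sections_blocks} does not apply; the paper is immune to this because in that situation its set $W$, being nonempty and $Z(G)$-invariant, is automatically all of $X$.
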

\begin{proof}
By Proposition~\ref{prop:self-rep_char}, we may assume that $Y$ consists of directed elements. Set $Z:=Z(G)$ and for $O\subseteq X$, define
\[
S_O:=\{z_v\mid z\in Z\text{ and }v\in O\}\cup Z.
\]

 Fix $d\in Y$ and let $O_d$ be the orbit of $\grp{d}$ on $X$ such that $v^d\in O_d$.  Lemma~\ref{lem:commute} ensures that $C_G(d)$ contains $S_{O_d}$, so $d\in C_G(S_{O_d})$. Let $W\subseteq X$ be maximal such that $O_d\subseteq W$ and $d\in C_G(S_W)$. We argue that $W=X$.

Let us first see that $W$ is invariant under the action of $Z$. Take $v\in W$ and $z\in Z$. For $g\in Z$, we see that $ (gz)_v=g_{z(v)}z_v$. Since $d$ commutes with $(gz)_v$ and $z_v$, it follows that $d$ commutes with $g_{z(v)}$. The element $d$ thus commutes with $g_{z(v)}$ for every $g\in Z$, hence $z(v)\in W$, since $W$ is maximal.

We next argue that $W$ is invariant under each $c\in Y$. Fixing $c\in Y$, we have two cases. For the first case, suppose that $O_c\cap W=\emptyset$, where $O_c\subseteq X$ is the orbit of $v^c$ under $\grp{c}$. Take $g\in Z$ and fix $v\in W$. We see that
\[
g_v=(c^{-1}gc)_v=(c^{-1})_{gc(v)}g_{c(v)}c_v=(c^{-1})_{cg(v)}g_{c(v)}
\]
Since $W$ is invariant under $Z$, $g(v)\in W$. The element $cg(v)$ is thus not in $O_c$, so $(c^{-1})_{cg(v)}=1$. We conclude that $g_v=g_{c(v)}$, hence $d$ commutes with $g_{c(v)}$. Therefore, $c(v)\in W$ by maximality, and $W$ is invariant under the action of $c$.

For the second case, suppose that $O_c\cap W\neq \emptyset$. Applying Lemma~\ref{lem:commute}, $O_c$ is $Z$-invariant. The intersection $O_c\cap W$ is then also $Z$-invariant, so $|O_c\cap W|\geq 2$, since $Z$ cannot fix any vertex of $X$ by Lemmas~\ref{lem:fix} and \ref{lem:wself-similar}. The orbits of $Z$ on $O_c$ form a non-trivial system of imprimitivity for $\grp{c}$, so there is a non-trivial block of imprimitivity $B\subseteq O_c\cap W$ for the action of $\grp{c}$ on $X$. Via Lemma~\ref{lem:sections_blocks}, we may find $w\in B$ and $j\in \Zb$ such that $(c^j)_w=c^{\pm 1}$, $(c^{-j})_w=1$, and $c^j$ setwise fixes $B$.

Let $z\in Z$ be such that $z(c^j(w))=w$. The element $c^{-j}zc^j$ is an element of $Z$, and $w\in W$. The element $d$ thus commutes with $(c^{-j}zc^j)_w$.  On the other hand,
\[
(c^{-j}zc^{j})_w=(c^{-j})_{w}z_{c^j(w)}(c^j)_{w}=z_{c^j(w)}c^{\pm 1},
\]
so $d$ commutes with $z_{c^j(w)}c^{\pm 1}$. Furthermore, $d$ commutes with $z_{c^j(w)}$ since $c^j(w)\in B\subseteq  W$. We deduce that $d$ commutes with $c$.

For any vertex $u\in W$ and $g\in Z$, $d$ commutes with $(c^{-1}gc)_u$. On the other hand,
\[
(c^{-1}gc)_u=(c^{-1})_{g(c(u))}g_{c(u)}c_{u}
\]
Since the sections of $c$ are either equal $c$ or trivial, $d$ commutes with $ c_{u}$ and $(c^{-1})_{g(c(u))}$. We conclude that $d$ commutes with $g_{c(u)}$, so $c(u)\in W$  by the choice of $W$. The set $W$ is thus setwise fixed by every $c\in Y$. Our group $G=\grp{Y}$ acts transitively on $X$, so it is the case that $W=X$.

For any $c\in Y$, there is $z\in Z$ such that $z(c(v^c))\neq c(v^c)$ by Lemma~\ref{lem:fix}. Taking such a $z$, $c^{-1}zc\in Z$, and we have that $d$ commutes with $(c^{-1}zc)_{v^c}$, by our work above. Furthermore,
\[
(c^{-1}zc)_{v^c}=z_{c(v^c)}c,
\]
and $d$ commutes with $z_{c(v^c)}$. We infer that $d$ commutes with $c$.

Since $d$ is arbitrary, it now follows that all elements of $Y$ commute, hence $G$ is abelian.
\end{proof}

We can  now upgrade Theorem~\ref{thm:trans} in the self-replicating case. We note that Theorem~\ref{thm:trans} cannot be upgraded itself since one can always add finitary elements.
\begin{cor}\label{cor:trans_virt-ab}
Suppose that $G=\grp{Y}\leq \Aut(\Xt)$ is a self-replicating generalized basilica group. If $G$ contains an odometer, then either $G$ is abelian or $G$ is not elementary amenable.
\end{cor}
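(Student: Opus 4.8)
The plan is to combine the two principal results of this section. Theorem~\ref{thm:trans} already shows $G$ is either virtually abelian or not elementary amenable, and Theorem~\ref{thm:center_self-replicating} promotes a nontrivial center to full abelianness. So I would assume $G$ is elementary amenable, invoke Theorem~\ref{thm:trans} to conclude $G$ is virtually abelian, and reduce the whole corollary to the single claim that $Z(G)\neq 1$; Theorem~\ref{thm:center_self-replicating} then forces $G$ abelian. The remark that Theorem~\ref{thm:trans} itself cannot be sharpened, since finitary elements can always be adjoined, signals that self-replication is exactly the hypothesis that must be exploited to rule out the virtually-abelian-but-not-abelian case. By Proposition~\ref{prop:self-rep_char} I may take the distinguished generating set $Y$ to consist of directed elements.

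First I would set up the machinery coming from the odometer $h$. The element $h$ is directed with $h_x\in\{1,h\}$, and since $\langle h\rangle$ is transitive on $X$ the $\langle h\rangle$-orbit of its active vertex is all of $X$. Applying Lemma~\ref{lem:commute} with $d=h$ and $O=X$ gives $g_v\in C_G(h)$ for every $g\in C_G(h)$ and $v\in X$; inducting on word length through the section cocycle shows that $C_G(h)$ is weakly self-similar. As $G$ is self-replicating, hence weakly self-replicating, Lemma~\ref{lem:wself-similar} shows that $Z(G)\le C_G(h)$ is itself weakly self-similar and normal, so, by Lemma~\ref{lem:fix}, a nontrivial $Z(G)$ would fix no vertex of the tree.

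To manufacture a central element I would use virtual abelianness. Fix a finite-index free abelian normal subgroup $A\cong\mathbb{Z}^n$ of $G$ and put $P:=G/C_G(A)$, a finite group acting faithfully on $A$; note $A\cap Z(G)=A^P$, the $P$-fixed points. Since $h$ has infinite order, some power $v:=h^m$ lies in $A\setminus\{1\}$, and because $A$ is normal the finitely many $G$-conjugates of $v$ all lie in $A$. Their product $c:=\prod_{\bar g\in P}\bar g(v)$ is $P$-invariant, hence $c\in A\cap Z(G)$. Writing $A$ additively, $c$ is a positive multiple of the sum of the $P$-orbit of $v$.

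The hard part, and the only step that is more than a formal concatenation of the two theorems, is showing $c\neq 1$, i.e. that the $P$-orbit of $v$ does not sum to zero. This is precisely where the infinite dihedral phenomenon must be excluded: for $D_\infty$ the relevant orbit is $\{v,-v\}$ and its sum vanishes, and what rescues us is that $D_\infty$ is not a generalized basilica group. Since $\bar h$ fixes $v$, the obstruction is the possible presence in $P$ of further elements reflecting the direction of $v$. I would rule this out by exploiting that every distinguished generator (and $h$) is directed and hence of infinite order, so each contributes an element of $P$ with a nonzero fixed vector, and then use the weak self-similarity of $C_G(h)$ together with Lemma~\ref{lem:fix} and the trivial-section transport of Lemma~\ref{lem:trans_no_section} to force a common nonzero fixed direction for $P$, so that the orbit sum cannot cancel. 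Once $c\neq 1$ is in hand we have $Z(G)\neq 1$, and Theorem~\ref{thm:center_self-replicating} yields that $G$ is abelian. I expect this ``no balanced orbit'' nonvanishing step to be the main obstacle, whereas the surrounding reductions are essentially bookkeeping built on the lemmas already established.
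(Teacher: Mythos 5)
Your skeleton coincides with the paper's: assume elementary amenability, get virtual abelianness from Theorem~\ref{thm:trans}, reduce everything to proving $Z(G)\neq 1$, and finish with Theorem~\ref{thm:center_self-replicating} (the paper also reduces to directed generators via Proposition~\ref{prop:self-rep_char}). But the one step you yourself flag as the mathematical content --- non-vanishing of the orbit sum $c=\prod_{\bar g\in P}\bar g(v)$ --- is not proved, and both claims you lean on to suggest it can be proved are false. First, the infinite dihedral group \emph{is} a self-replicating generalized basilica group: Example~\ref{ex:dihedral} exhibits $D_\infty=\grp{a,b}\leq\Aut([3]^*)$ with $a=(01)(1,1,a)$ and $b=(02)(1,b,1)$. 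What that representation lacks is an odometer, so any correct non-vanishing argument must use the odometer structurally; your concrete construction uses $h$ only through ``$h$ has infinite order,'' and infinite-order elements exist in $D_\infty$ (e.g.\ $ab$), where $P\cong\Zb/2\Zb$ acts on $A\cong\Zb$ by inversion and \emph{every} orbit sum vanishes. Second, ``directed hence of infinite order'' is wrong: the generators $a,b$ above are directed involutions (one computes $a^2$ fixes $[3]$ pointwise with $(a^2)_2=a^2$ and all other sections trivial, so $a^2=1$). So the proposed mechanism --- each generator contributes a nonzero fixed vector, forcing a common fixed direction for $P$ --- has no basis; moreover ``$P$ has a nonzero fixed vector in $A$'' is just a restatement of the goal $Z(G)\cap A\neq 1$, so the sketch is essentially circular. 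The odometer-specific leverage is confined entirely to the unproven part.

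For contrast, the paper produces the central element explicitly: it is a power of the odometer itself. Corollary~\ref{cor:ext_EA} gives a \emph{weakly self-replicating} abelian $H\normal G$ of finite index --- weak self-replication is precisely what your arbitrary free abelian $A$ lacks, and it is what powers all the section arguments. Choosing $i$ with $h^i\in H$ and setting $m:=|X|i$, the odometer property gives that $h^m$ fixes $X$ pointwise with $(h^m)_x=h^i$ for \emph{every} $x\in X$; hence commutation of $h^m$ with $f\in H$ passes to sections, so $C_G(h^i)$ contains $H$ together with all first-level sections of elements of $H$. By Lemma~\ref{lem:weakly_self-similar_trans}, $H$ acts transitively on $X$, and the section identity $(d^{-1}fd)_{v^d}=f_{d(v^d)}d$ (for $f\in H$ moving $d(v^d)$) then forces every generator $d\in Y$ into $C_G(h^i)$, so $h^i\in Z(G)\setminus\{1\}$. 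If you wanted to salvage your norm-map approach, you would have to show that $h^m$ has nonzero projection to the $P$-fixed space, and the only available tool is this same ``all sections of $h^m$ are equal'' property --- at which point you would be reproving the paper's argument in heavier clothing.
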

\begin{proof}
Suppose that $G$ is elementary amenable. By Theorem~\ref{thm:trans}, $G$ is virtually abelian. Suppose for contradiction that $G$ is not abelian. Applying Corollary~\ref{cor:ext_EA}, there is $H\normal G$ such that $H$ is weakly self-replicating, $\rk(H)+1=\rk(G)$, and $G/H$ is finite. Observe that $H$ is abelian, since it must be infinite with rank $0$.

Take $h\in G$ an odometer. Since $G/H$ is finite, there is $i>0$ such that $h^i\in H$. Set $m:=|X|i$. The element $h^m$ is in $H$, and $(h^m)_x=h^i$ for all $x\in X$. Since $H$ is abelian, $f^{-1}h^mf=h^m$ for any $f\in H$. For any $x\in X$, it is then the case that $(f_x)^{-1}h^if_x=h^i$. We conclude that $C_G(h^i)$ contains $H$ along with $f_x$ for all $f\in H$ and $x\in X$.

Take $d\in Y$.  In view of Lemma~\ref{lem:weakly_self-similar_trans}, there is $f\in H$ such that $f(d(v^d)))\neq d(v^d)$. The element $d^{-1}fd$ is in $H$, and $(d^{-1}fd)_{v^d}=f_{d(v^d)}d$. The centralizer $C_G(h^i)$ therefore contains $f_{d(v^d)}d$. Since $C_G(h^i)$ also contains $f_{d(v^d)}$, we infer that $d\in C_G(h^i)$. The element $h^i$ is therefore central in $G$.

The group $G$ thus has non-trivial center. Applying Theorem~\ref{thm:center_self-replicating}, we conclude that $G$ is abelian, which is absurd.
\end{proof}

Theorem~\ref{thm:center_self-replicating} and Theorem~\ref{thm:reduction_thm_GB} together imply all nilpotent generalized basilica groups are virtually abelian.

 \begin{cor}\label{cor:nilpotent}
 Every nilpotent generalized basilica group is virtually abelian.
 \end{cor}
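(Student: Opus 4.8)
The plan is to feed the two cited theorems into the machinery of Theorem~\ref{thm:reduction_thm_GB}, taking $Q$ to be nilpotency and $P$ to be virtual abelianness, and then to upgrade the resulting structural conclusion using finite generation. First I would verify the hypotheses of the reduction theorem. The property $Q$ (nilpotency) is stable under passing to subgroups and to quotients, so by the observation following the definition of sub-basilica stability, $Q$ is sub-basilica stable. The property $P$ (virtual abelianness) is enjoyed by abelian groups and is routinely checked to be closed under subgroups, finite direct products, and $P$-by-finite extensions: if $N\normal K$ is virtually abelian with $K/N$ finite, then a finite-index abelian $B\leq N$ is also of finite index in $K$, so $K$ is virtually abelian.

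The essential input is that every self-replicating generalized basilica group $G$ with property $Q$ has property $P$. Since $G$ is self-replicating it is infinite, in particular non-trivial, and a non-trivial nilpotent group has non-trivial center. Theorem~\ref{thm:center_self-replicating} then forces $G$ to be abelian, hence virtually abelian. With all hypotheses of Theorem~\ref{thm:reduction_thm_GB} in hand, I conclude that every nilpotent generalized basilica group $G$ is (locally finite)-by-(virtually abelian); that is, there is $L\normal G$ with $L$ locally finite and $G/L$ virtually abelian.

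It remains to upgrade this to virtual abelianness, and this is the step where the special structure of nilpotent groups, beyond what the reduction theorem sees, must be exploited; I expect it to be the main (if essentially routine) obstacle, precisely because \emph{finite}-by-(virtually abelian) groups need not be virtually abelian without a finiteness hypothesis. Here $G$ is finitely generated by definition, and being finitely generated nilpotent it is polycyclic, hence Noetherian: every subgroup is finitely generated. In particular $L$ is finitely generated and locally finite, and therefore finite. Passing to the core of a finite-index abelian subgroup of $G/L$, I obtain a normal finite-index $A\normal G$ with $A/L$ abelian, so that $[A,A]\leq L$ is finite. Finally, a finitely generated group with finite commutator subgroup has a center of finite index: the conjugation action on the finite group $[A,A]$ has a finite-index kernel $C$, in which $[C,C]$ is finite and central, which forces $C/Z(C)$ to be finite. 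Thus $Z(C)$ is an abelian subgroup of finite index in $A$, so $A$, and hence $G$, is virtually abelian. The only genuinely non-formal ingredient is recognizing that finite generation is exactly what converts the reduction theorem's output into the sharper stated conclusion.
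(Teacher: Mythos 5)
Your proof is correct, and its architecture is essentially the paper's: Theorem~\ref{thm:center_self-replicating} settles the self-replicating case (a non-trivial nilpotent group has non-trivial center, hence is abelian), Theorem~\ref{thm:reduction_thm_GB} with $Q$ nilpotency and $P$ virtual abelianness yields (locally finite)-by-(virtually abelian), and finite generation of all subgroups of a finitely generated nilpotent group shrinks the locally finite kernel $L$ to a finite one. The one genuine divergence is the final upgrade from finite-by-(virtually abelian) to virtually abelian. The paper does this by invoking residual finiteness of generalized basilica groups (they sit in $\Aut(\Xt)$): one picks a finite-index normal subgroup meeting the finite kernel $L$ trivially, which then embeds in $G/L$ and is virtually abelian. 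You instead argue abstractly: pass to a finite-index normal $A\normal G$ with $[A,A]\leq L$ finite, centralize $[A,A]$ to get a finite-index $C$ with $[C,C]$ finite and central, and observe that for finitely generated $C$ the homomorphism $x\mapsto([x,c_1],\dots,[x,c_n])$ into $[C,C]^n$ has kernel $Z(C)$, so $Z(C)$ has finite index --- the standard converse-of-Schur (B.~H.~Neumann) argument. Your route is purely group-theoretic and does not use that the group acts on a tree, so it is marginally more self-contained and portable; the paper's route is shorter given that residual finiteness comes for free in this setting. Both arguments are complete.
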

 	\begin{proof}
 	Nilpotent groups have non-trivial centers. Theorem~\ref{thm:center_self-replicating} thus implies that every self-replicating nilpotent generalized basilica group is virtually abelian.
 	
 		The property $Q$ of being nilpotent is closed under subgroups and quotients, so a fortiori it is a sub-basilica stable property. The property $P$ of being virtually abelian is enjoyed by abelian groups and stable under taking subgroups, finite direct products, and forming $P$-by-finite groups. We may thus apply Theorem~\ref{thm:reduction_thm_GB} to conclude that every nilpotent generalized basilica group is (locally finite)-by-virtually abelian.
 		
 	Every subgroup of a finitely generated nilpotent group is finitly generated. Hence, every nilpotent generalized basilica group is finite-by-virtually abelian. Since generalized basilica groups are residually finite, it follows that every nilpotent generalized basilica group is virtually abelian.
 	\end{proof}
 	
 	\begin{cor}
 	Every torsion-free nilpotent bounded automata group is virtually abelian.
 	\end{cor}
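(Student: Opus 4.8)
The plan is to reduce the torsion-free bounded automata case to the generalized basilica case settled in Corollary~\ref{cor:nilpotent}, using the structural embedding furnished by Corollary~\ref{cor:torsion-free_GBG}.

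Let $G\leq \Aut(\Xt)$ be a torsion-free nilpotent bounded automata group. First I would invoke Corollary~\ref{cor:torsion-free_GBG} to produce a torsion-free generalized basilica group $H$, isomorphic to a subgroup of $G$, together with some $k\geq 1$ and an embedding
\[
G\injects \Sym(X^k)\ltimes H^{X^k}.
\]
Since $H$ is isomorphic to a subgroup of the nilpotent group $G$, the group $H$ is itself nilpotent; being a nilpotent generalized basilica group, Corollary~\ref{cor:nilpotent} applies and shows that $H$ is virtually abelian.

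It then remains to propagate virtual abelianness through the embedding. Choosing an abelian subgroup $A\leq H$ of finite index, the subgroup $A^{X^k}$ is an abelian, finite-index subgroup of $H^{X^k}$, and because $\Sym(X^k)$ is finite while $H^{X^k}$ is a finite-index subgroup of $\Sym(X^k)\ltimes H^{X^k}$, the subgroup $A^{X^k}$ has finite index in the whole semidirect product. Hence $\Sym(X^k)\ltimes H^{X^k}$ is virtually abelian, and therefore so is its subgroup $G$.

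This argument is essentially formal once Corollary~\ref{cor:nilpotent} is available, so I do not expect a serious obstacle. The only points requiring any care are in the final step: one should note that the finite-index abelian subgroup $A^{X^k}$ need not be normal in the semidirect product, but virtual abelianness requires only a finite-index abelian subgroup rather than a normal one, and that virtual abelianness is inherited by subgroups, so the passage from $\Sym(X^k)\ltimes H^{X^k}$ down to $G$ is legitimate.
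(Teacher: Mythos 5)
Your proof is correct and is essentially the derivation the paper intends: the corollary is stated without proof immediately after Corollary~\ref{cor:nilpotent}, and the evident route is exactly yours, combining the embedding of Corollary~\ref{cor:torsion-free_GBG} with Corollary~\ref{cor:nilpotent} and the stability of virtual abelianness under finite direct products, finite extensions, and passage to subgroups. Your closing remark about not needing normality of the finite-index abelian subgroup is also correct.
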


\subsection{Balanced groups}

\begin{defn}
Let $G=\grp{Y}\leq \Aut(\Xt)$ be a generalized basilica group. We say that $G$ is \textbf{balanced} if for each directed $g\in Y$, the least $n\geq 1$ for which $g^n$ fixes $v^g\in X$ is also such that $g^n$ fixes $X$ pointwise.
\end{defn}
For $G=\grp{Y}\leq \Aut(\Xt)$ balanced, take $g\in Y$ directed and let $c_1\dots c_n$ be the cycle deomposition of $\pi_1(g)\in \Sym(X)$. That $G$ is balanced ensures that for $c_i$ such that $v^g$ appears in $c_i$, the order $|c_j|$ divides $|c_i|$ for all $ 1\leq j\leq n$. This condition gives us control over the generators analogous to how the tree-like condition did for kneading automata groups.

\begin{thm}\label{thm:balanced_self-rep}
Suppose that $G=\grp{Y}\leq \Aut(\Xt)$ is a self-replicating generalized basilica group. If $G$ is balanced, then either $G$ is abelian or $G$ is not elementary amenable.
\end{thm}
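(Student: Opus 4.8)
The plan is to reduce to directed generators, split on whether any generator is an odometer, and in the non-odometer case exploit balancedness to push every generator into a rank-lowering normal subgroup. Since $G$ is self-replicating, Proposition~\ref{prop:self-rep_char} lets me assume $Y$ consists of directed elements. I will assume $G$ is elementary amenable and show it is abelian, which yields the stated dichotomy. The key preliminary observation is that a directed generator $g\in Y$ already has $g_x\in\{1,g\}$ with a single active vertex $v^g$, so $g$ is an odometer precisely when $\grp{g}$ acts transitively on $X$, i.e.\ when $\pi_1(g)$ is a single $|X|$-cycle.

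If some $g\in Y$ is an odometer, then $G$ contains an odometer and Corollary~\ref{cor:trans_virt-ab} immediately forces $G$ to be abelian. So the remaining case is that no generator is an odometer, and here I would argue by contradiction, assuming $G$ is not abelian. Corollary~\ref{cor:ext_EA} then supplies a weakly self-replicating $M\normal G$ with $\rk(M)+1=\rk(G)$ and $G/M$ finite or abelian; Lemma~\ref{lem:fin_abelianization} upgrades the abelian case to finite, so $G/M$ is finite, and Lemma~\ref{lem:weakly_self-similar_trans} then gives that $M$ acts transitively on $X$.

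The crux is now a short computation driven by balancedness. Fixing a directed $g\in Y$, since $g$ is not an odometer $\grp{g}$ is not transitive, so the orbit $O$ of $v^g$ under $\grp{g}$ is a proper subset of $X$; I choose $x_0\in X\setminus O$. Let $m$ be least with $g^m$ fixing $v^g$; the balanced hypothesis guarantees $g^m$ fixes $X$ pointwise. A cocycle computation shows the trajectory of $v^g$ under $g$ meets $v^g$ exactly once in $m$ steps while the trajectory of $x_0$ avoids $v^g$ entirely, whence $(g^m)_{v^g}=g$ and $(g^m)_{x_0}=1$. Thus $g^m$ fixes the two points $v^g$ and $x_0$ with trivial section at $x_0$, so Lemma~\ref{lem:com_contain_sections_2} gives $g=(g^m)_{v^g}\in M$. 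As $g$ was arbitrary, $M$ contains $Y$, so $M=G$, contradicting $\rk(M)+1=\rk(G)$, and therefore $G$ is abelian.

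The main obstacle, and the place where balancedness is indispensable, is producing for each non-odometer generator a power that simultaneously returns $g$ as a section at $v^g$ and has a trivial section at a genuinely distinct fixed point: without the balanced hypothesis the least power fixing $v^g$ need not fix all of $X$, so I could not locate a fixed point $x_0$ with $(g^m)_{x_0}=1$, and Lemma~\ref{lem:com_contain_sections_2} would not apply. The only other step requiring care is the reduction of $G/M$ to finite via Lemma~\ref{lem:fin_abelianization}, since it is precisely the finiteness of $G/M$ that makes $M$ transitive on $X$ and legitimizes the invocation of Lemma~\ref{lem:com_contain_sections_2}; note that this clean argument sidesteps the heavier machinery of Corollary~\ref{cor:ext_EA_3}, relegating all of the virtually-abelian-to-abelian difficulty to the odometer case handled by Corollary~\ref{cor:trans_virt-ab}.
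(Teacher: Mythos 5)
Your proof is correct and follows essentially the same route as the paper: reduce to directed generators via Proposition~\ref{prop:self-rep_char}, dispatch the odometer case with Corollary~\ref{cor:trans_virt-ab}, and in the non-odometer case use balancedness to produce, for each generator $d$, a power $d^n$ fixing $X$ with $(d^n)_{v^d}=d$ and a trivial section off the orbit of $v^d$, so that Lemma~\ref{lem:com_contain_sections_2} pushes $d$ into the rank-lowering subgroup from Corollary~\ref{cor:ext_EA}, yielding the contradiction $M=G$. Your extra step of upgrading the abelian quotient to a finite one via Lemma~\ref{lem:fin_abelianization} and then invoking Lemma~\ref{lem:weakly_self-similar_trans} is harmless but not needed, since Lemma~\ref{lem:com_contain_sections_2} already applies when $G/M$ is abelian; otherwise the two arguments coincide.
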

\begin{proof} By Proposition~\ref{prop:self-rep_char}, we may assume that $Y$ consists of directed elements.

Suppose that $G$ is elementary amenable. If $Y$ contains an element that acts transitively, i.e., an odometer, then Corollary~\ref{cor:trans_virt-ab} implies that $G$ is abelian, and we are done.

Let us suppose that each element of $Y$ does not act transitively and suppose toward a contradiction that $G$ is not abelian. Applying Corollary~\ref{cor:ext_EA}, there is $H\normal G$ such that $H$ is weakly self-replicating, $\rk(H)+1=\rk(G)$, and $G/H$ is either finite or abelian.

For each $d\in Y$, let $n$ be least such that $d^n$ fixes $v^d$. Since $G$ is balanced, $d^n$ fixes $X$, and as $\grp{d}$ does not at transitively on $X$, there is $x\in Y$ such that $(d^n)_x=1$. In view of Lemma~\ref{lem:com_contain_sections_2}, we deduce that $d\in H$. As $d$ is arbitrary, we conclude that $H=G$ which is absurd since $\rk(H)<\rk(G)$.  The group $G$ is thus abelian.
\end{proof}

\begin{cor}\label{cor:balanced}
Every balanced generalized basilica group is either (locally finite)-by-(virtually abelian) or not elementary amenable.
\end{cor}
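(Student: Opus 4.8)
The plan is to deduce the corollary formally from the reduction theorem, Theorem~\ref{thm:reduction_thm_GB}, by choosing the right pair of properties. I would take the property $P$ to be ``virtually abelian'' and the sub-basilica stable property $Q$ to be ``balanced and elementary amenable.'' Granting that $Q$ is indeed sub-basilica stable and that $P$ meets the structural hypotheses of Theorem~\ref{thm:reduction_thm_GB}, the essential input is that every \emph{self-replicating} generalized basilica group with property $Q$ has property $P$. This is exactly Theorem~\ref{thm:balanced_self-rep}: a self-replicating balanced generalized basilica group that is elementary amenable is abelian, hence virtually abelian. Theorem~\ref{thm:reduction_thm_GB} would then yield that every generalized basilica group with property $Q$ is (locally finite)-by-(virtually abelian). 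Unwinding $Q$, this says precisely that a balanced generalized basilica group which is elementary amenable is (locally finite)-by-(virtually abelian), which is the claimed dichotomy.

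First I would dispatch the structural hypotheses on $P$, which are routine: abelian groups are virtually abelian, the class is closed under subgroups and finite direct products, and a (virtually abelian)-by-finite group is virtually abelian, since a finite-index abelian subgroup of a finite-index normal subgroup remains of finite index in the ambient group. Next I would check that $Q$ is sub-basilica stable. The elementary amenability clause is harmless, because $\EA$ is closed under subgroups and quotients, and both passage to $\grp{Y'}$ for $Y'\subseteq Y$ and passage to a sub-basilica action are instances of these two operations.

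The substantive point, and the one I expect to require genuine (if short) work, is that the balancedness clause survives sub-basilica actions. Suppose $G=\grp{Y}\leq\Aut(\Xt)$ is balanced, fix $Y'\subseteq Y$, and let $\widetilde{\grp{Y'}}=\phi(\grp{Y'})\leq\Aut(O^*)$ be induced by a sub-basilica action of $\grp{Y'}$ on $O^*$, where $O\subseteq X$ is preserved by $\grp{Y'}$; by Lemma~\ref{lem:almost_self-replicating_0} its distinguished generating set is $\phi(Y')$. A directed generator of $\widetilde{\grp{Y'}}$ arises as $\phi(g)$ for a directed $g\in Y'$ whose active vertex $v^g$ lies in $O$, since if $v^g\notin O$ then $\phi(g)$ is finitary and imposes no balancedness requirement. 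Because $O$ is $\grp{g}$-invariant, the $\grp{g}$-orbit of $v^g$ lies entirely in $O$, so the least $n$ with $\phi(g)^n(v^g)=v^g$ equals the least $n$ with $g^n(v^g)=v^g$, namely the length of the cycle of $\pi_1(g)$ through $v^g$. Balancedness of $g$ in $\Xt$ forces $g^n$ to fix $X$ pointwise, a fortiori to fix $O$ pointwise, whence $\phi(g)^n$ fixes $O$ pointwise. Thus $\widetilde{\grp{Y'}}$ is balanced. Balancedness of $\grp{Y'}$ itself with generating set $Y'$ is inherited directly, as each directed $g\in Y'$ still acts on the same set $X$ with the same active vertex.

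The hard part, then, is only this last verification, and it reduces to the observation that restricting to a $\grp{g}$-invariant subset leaves the cycle of $\pi_1(g)$ through the active vertex unchanged while discarding only other cycles, so the defining divisibility condition on cycle lengths can only become weaker. Once $Q$ is confirmed to be sub-basilica stable, the corollary follows by combining Theorems~\ref{thm:reduction_thm_GB} and~\ref{thm:balanced_self-rep} as above.
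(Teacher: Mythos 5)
Your proposal is correct and follows essentially the same route as the paper: apply Theorem~\ref{thm:reduction_thm_GB} with $P$ being ``virtually abelian'' and $Q$ being ``balanced and elementary amenable,'' with Theorem~\ref{thm:balanced_self-rep} supplying the self-replicating case. The only difference is one of detail, not of substance: the paper simply asserts that $Q$ is sub-basilica stable, whereas you verify it explicitly (correctly observing that restricting to a $\grp{g}$-invariant set $O$ containing $v^g$ preserves the cycle through the active vertex, so the balancedness condition persists).
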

\begin{proof}
Theorem~\ref{thm:balanced_self-rep} ensures that every self-replicating  generalized basilica group that is balanced and elementary amenable is virtually abelian.
 	
 		The property $Q$ of being balanced and elementary amenable is a sub-basilica stable property.  The property $P$ of being virtually abelian is enjoyed by abelian groups and stable under taking subgroups, finite direct products, and forming $P$-by-finite groups. We may thus apply Theorem~\ref{thm:reduction_thm_GB} to conclude that every balanced and elementary amenable generalized basilica group is (locally finite)-by-virtually abelian.
\end{proof}
\begin{rmk}
Example~\ref{ssec:balanced} below shows that Theorem~\ref{cor:balanced} is sharp.
\end{rmk}

We conclude this section with a sufficient condition to be a balanced generalized basilica group.
\begin{prop}\label{prop:balanced}
Suppose that $G=\grp{Y}\leq \Aut(\Xt)$ is a generalized basilica group that acts transitively on $X$. If $\pi_1(G)\leq \Sym(X)$ is abelian, then $G$ is balanced.
\end{prop}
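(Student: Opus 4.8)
The plan is to reduce the balanced condition entirely to a statement about the finite permutation group $\pi_1(G)\leq \Sym(X)$ and then exploit the abelian hypothesis. The crucial observation is that a transitive abelian permutation group acts \emph{regularly}. Since the action of $\pi_1(G)$ on $X$ is transitive, all point stabilizers are conjugate, and as $\pi_1(G)$ is abelian they therefore coincide; their common value is the kernel of the action on $X$, which is trivial because $\pi_1(G)$ is by definition a subgroup of $\Sym(X)$ and hence acts faithfully. Thus every point stabilizer in $\pi_1(G)$ is trivial, i.e. $\pi_1(G)$ acts freely and transitively on $X$.

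With regularity in hand, the verification of the balanced condition is immediate. Fix a directed $g\in Y$ and set $\sigma:=\pi_1(g)\in\pi_1(G)$. For any $n\geq 1$, the automorphism $g^n$ fixes the point $v^g\in X$ if and only if $\sigma^n(v^g)=v^g$, that is, if and only if $\sigma^n$ lies in the stabilizer of $v^g$ in $\pi_1(G)$. By freeness this stabilizer is trivial, so $g^n$ fixes $v^g$ precisely when $\sigma^n=1$. Consequently the least $n\geq 1$ for which $g^n$ fixes $v^g$ is exactly the order of $\sigma$ in $\pi_1(G)$. For this value of $n$ we have $\pi_1(g^n)=\sigma^n=1$, which says precisely that $g^n$ fixes $X$ pointwise. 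This is exactly the balanced condition, so $G$ is balanced.

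There is essentially no technical obstacle here; the entire content lies in recognizing that the abelian, transitive hypothesis forces the point stabilizers in $\pi_1(G)$ to vanish. Once freeness is established, the chain ``$g^n$ fixes $v^g$ $\Longleftrightarrow$ $\sigma^n=1$ $\Longleftrightarrow$ $g^n$ fixes all of $X$'' does all of the work, and no induction or analysis of sections is required. The only point to handle with care is the faithfulness of the action of $\pi_1(G)$ on $X$, but this is automatic from the containment $\pi_1(G)\leq\Sym(X)$.
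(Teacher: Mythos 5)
Your proof is correct. It differs from the paper's in the classical permutation-group fact it invokes. The paper observes that, since $\pi_1(G)$ is abelian, the cyclic subgroup $\pi_1(\grp{d})$ generated by the image of a directed generator $d$ is normal in $\pi_1(G)$, so its orbits form a system of imprimitivity; transitivity of $\pi_1(G)$ then forces all of these orbits (equivalently, all cycles of $\pi_1(d)$) to have the same length, which yields the balanced condition. You instead invoke the stronger global fact that a transitive abelian subgroup of $\Sym(X)$ acts regularly, so the stabilizer of $v^g$ in all of $\pi_1(G)$ is trivial and the least $n$ with $g^n(v^g)=v^g$ is exactly the order of $\pi_1(g)$; the balanced condition then reads off immediately. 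Both routes are elementary and complete. Yours is arguably cleaner: it pins down the least $n$ precisely and needs no discussion of blocks. The paper's is slightly more flexible in scope: it uses only the normality of the particular cyclic subgroups $\pi_1(\grp{d})$ for directed $d\in Y$, so it would go through under weaker hypotheses (for instance, if each such image merely generates a normal subgroup of a possibly nonabelian transitive $\pi_1(G)$), whereas your argument genuinely needs the full point stabilizers in $\pi_1(G)$ to vanish.
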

\begin{proof}
Let $d\in Y$ be directed. The image $\pi_1(\grp{d})$ is a normal subgroup of $\pi_1(G)$, so the orbits of $\grp{\pi_1(d)}$ form a system of imprimitivity for the action of $\pi_1(G)$ on $X$. Since $\pi_1(G)$ is transitive, all orbits of $\grp{\pi_1(d)}$ have the same size, and it follows that $d$ is balanced.
\end{proof}

\section{Groups of abelian wreath type}

\begin{defn}
We say that $G\leq\Aut(\Xt)$ is of \textbf{abelian wreath type} if $\pi_1(G)\leq \Sym(X)$ is abelian and $G$ admits a finite self-similar generating set $Y$ such that for every $g\in Y$ either $g_x=1$ for all $x\in X$ or $g_x\in \{g\}\cup \pi_1(G)$ for all $x\in X$ with exactly one $x$ such that $g_x=g$. We call $Y$ a \textbf{distinguished generating set} and write $G=\grp{Y}\leq\Aut(\Xt)$ to indicate that $Y$ is a distinguished generating set for $G$.
\end{defn}
A straightforward verification shows that a group of abelian wreath type is a bounded automata group.  Let us note several immediate consequences of abelian wreath type; the proofs are elementary and so are left to the reader.
\begin{obs} Let $G\leq \Aut(\Xt)$ be a self-replicating group of abelian wreath type.
\begin{enumerate}
\item For all non-trivial $f\in \Sym(X)\cap G$ and $v\in X$, $f(v)\neq v$.
\item For all $v\in X$, $G_{(v)}=G_{(X)}$.
\item For all $g,h\in G$, $[g,h]\in G_{(X)}$
\end{enumerate}
\end{obs}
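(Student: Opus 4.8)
The plan is to derive all three parts from the single structural fact that the finite permutation group $\pi_1(G)\leq\Sym(X)$ acts \emph{regularly} (that is, freely and transitively) on $X$. First I would record why this holds. Since $G$ is self-replicating it acts transitively on $X$, so its image $\pi_1(G)$ is transitive; by definition of abelian wreath type $\pi_1(G)$ is abelian, and it acts faithfully because it is literally a subgroup of $\Sym(X)$. For a transitive abelian group the point stabilizers all coincide: given $x,y\in X$ choose $a\in\pi_1(G)$ with $a(x)=y$, so that $\pi_1(G)_y=a\,\pi_1(G)_x\,a^{-1}=\pi_1(G)_x$. This common stabilizer is the kernel of the action, hence trivial by faithfulness, and so the action is regular. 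In particular every non-trivial element of $\pi_1(G)$ moves every point of $X$.

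With regularity in hand, the three claims follow immediately. For (1), a non-trivial $f\in\Sym(X)\cap G$ satisfies $\pi_1(f)=f$ under the identification of $\Sym(X)$ with finitary elements, and $\pi_1(f)\in\pi_1(G)$ is non-trivial, so $f(v)\neq v$ for all $v\in X$. For (2), the inclusion $G_{(X)}\leq G_{(v)}$ is trivial; conversely if $g\in G_{(v)}$ then $\pi_1(g)$ fixes $v$, whence $\pi_1(g)=1$ by regularity, so $g\in\ker(\pi_1\rest_G)=G_{(X)}$. For (3), applying the homomorphism $\pi_1$ gives $\pi_1([g,h])=[\pi_1(g),\pi_1(h)]=1$ because $\pi_1(G)$ is abelian, so $[g,h]\in\ker(\pi_1\rest_G)=G_{(X)}$.

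There is no genuine obstacle here, in keeping with the authors' remark that the proofs are elementary; the only substantive ingredient is the classical fact that a faithful transitive action of an abelian group is regular, and once that is isolated each of (1)--(3) is a one-line consequence. The single point worth stating carefully is the identification $G_{(X)}=\ker(\pi_1\rest_G)$, which is what makes (2) and (3) transparent.
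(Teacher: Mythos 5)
Your proof is correct. The paper offers no proof of this observation (it explicitly leaves the ``elementary'' proofs to the reader), and your argument is precisely the intended one: the classical fact that a faithful transitive action of an abelian group is regular, applied to $\pi_1(G)$, together with the identification $G_{(X)}=\ker(\pi_1\rest_G)$, yields all three parts immediately.
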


\subsection{The case of the binary tree}
We here consider the bounded automata groups $G$ that have a faithful representation in reduced form on $[2]^*$.  Any faithful representation $G=\grp{Y}\leq\Aut([2]^*)$ in reduced form is necessarily of abelian wreath type.

Let us first list all possible non-trivial automorphisms of $[2]^*$ that can appear as the distinguished generators of a bounded automata group $G=\grp{Y}\leq \Aut([2]^*)$ in reduced form. We give the automorphisms in wreath recursion.

\begin{obs}
Letting $\sigma\in \Sym(2)$ be the non-trivial element, any non-trivial generator $g$ of a bounded automata group $G=\grp{Y}\leq\Aut([2]^*)$ in reduced form  has one of the following types, up to taking an inverse:
\begin{enumerate}[(I)]
\item $g=\sigma$,
\item $g=(\sigma,g)$ or $g=(g,\sigma)$,
\item $g=\sigma(1,g)$, or
\item $g=\sigma(\sigma,g)$.
\end{enumerate}
\end{obs}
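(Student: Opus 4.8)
The plan is to write an arbitrary non-trivial generator $g\in Y$ in wreath recursion as $g=\tau(g_0,g_1)$, where $\tau=\pi_1(g)$ and $g_0,g_1$ are its two sections, and then to run through the finitely many possibilities permitted by the reduced form condition. Since $X=[2]$, we have $\Sym(X)=\{1,\sigma\}$, so $\tau\in\{1,\sigma\}$, and, viewing $\pi_1(G)$ as a set of finitary automorphisms, $\pi_1(G)\subseteq\{1,\sigma\}$. The reduced form condition then says that either both sections are trivial (the \emph{finitary} case) or exactly one section equals $g$ while the other lies in $\pi_1(G)\subseteq\{1,\sigma\}$ (the \emph{directed} case).

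First I would dispatch the finitary case: if $g_0=g_1=1$, then $g=\tau$, and since $g\neq 1$ we must have $\tau=\sigma$, giving $g=\sigma$, which is type (I). For the directed case there are three binary choices to record: the value of $\tau\in\{1,\sigma\}$, which of the two sections carries the copy of $g$, and whether the remaining section is $1$ or $\sigma$. This produces eight candidate forms, which I would simply tabulate.

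Next I would eliminate and merge. The two forms $g=(g,1)$ and $g=(1,g)$ (trivial top permutation, trivial non-active section) are degenerate: a short induction on word length shows that a minimal-length vertex $v$ with $\pi_1(g_v)\neq 1$ cannot exist for a solution of the fixed-point equation $g=(g,1)$, so every local permutation is trivial and hence $g=1$; the case $(1,g)$ is identical. These are therefore excluded by non-triviality. The remaining six forms collapse in pairs under inversion. Using the standard formulas $\pi_1(g^{-1})=\tau^{-1}$ and $(g^{-1})_y=(g_{\tau^{-1}(y)})^{-1}$, a direct computation gives that the inverse of $\sigma(1,g)$ has the form $\sigma(g^{-1},1)$ and the inverse of $\sigma(\sigma,g)$ has the form $\sigma(g^{-1},\sigma)$; thus when $\tau=\sigma$ inversion interchanges the two possible positions of the active section, so these two families are represented, up to inverse, by $\sigma(1,g)$ (type (III)) and $\sigma(\sigma,g)$ (type (IV)). By contrast, when $\tau=1$ the same formula shows inversion fixes the position of the active section, so $(\sigma,g)$ and $(g,\sigma)$ are genuinely distinct and must both be listed as type (II).

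There is no real obstacle here beyond careful bookkeeping. The only point requiring an argument rather than inspection is the degeneracy $g=(g,1)\Rightarrow g=1$, and the only genuine subtlety is keeping straight why the $\tau=1$ case yields two surviving forms while each $\tau=\sigma$ case yields one; this asymmetry is exactly what the inverse computation above explains, and it is precisely what the statement records by listing both $(\sigma,g)$ and $(g,\sigma)$ under type (II) while giving a single representative for types (III) and (IV).
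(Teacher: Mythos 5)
Your proof is correct: the paper states this as an Observation without proof, and your case enumeration from the reduced-form definition---dispatching the finitary case as type (I), tabulating the eight directed forms, discarding the degenerate solutions of $g=(g,1)$ and $g=(1,g)$, and merging forms under inversion---is precisely the routine verification the paper leaves implicit. Your inversion computations are accurate, including the key bookkeeping point that inversion swaps the active position exactly when $\pi_1(g)=\sigma$, which explains why type (II) must list both $(\sigma,g)$ and $(g,\sigma)$ while types (III) and (IV) need only one representative each.
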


\begin{lem}\label{lem:type_4} If $G=\grp{Y}\leq\Aut([2]^*)$ is a bounded automata group in reduced form and $G$ contains an element of type $(IV)$, then $G$ is not elementary amenable.
\end{lem}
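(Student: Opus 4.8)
The plan is to reduce to a two-generated self-replicating subgroup and then run the rank-contradiction strategy of Section~\ref{ex:basilica_1}. Up to inverses and relabelling we may take the type $(IV)$ generator to be $g=\sigma(\sigma,g)$, where $\sigma$ is the nontrivial element of $\Sym(2)$. Since $G$ is a bounded automata group it is self-similar, so $\sigma=g_0\in G$; set $H:=\grp{g,\sigma}\sleq G$. Because $\EA$ is closed under subgroups, it suffices to show $H$ is not elementary amenable. First I would record the wreath computation $b:=g\sigma=(g,\sigma)$, from which one checks that $H$ is self-similar (all sections of $g,\sigma$ lie in $\{1,\sigma,g\}$), acts transitively on $X=\{0,1\}$, and has $\sec_H(0)=H$: the element $b$ fixes level one with $b_0=g$, while $\sigma b\sigma^{-1}=(\sigma,g)$ contributes $\sigma$ to $\sec_H(0)$. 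Hence $H$ is self-replicating; in particular it is infinite, and it is non-abelian since $g\sigma=(g,\sigma)\neq(\sigma,g)=\sigma g$.

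Now suppose toward a contradiction that $H$ is elementary amenable. As $H$ is finitely generated, self-replicating, and non-abelian, Corollary~\ref{cor:ext_EA} supplies a weakly self-replicating $M\normal H$ with $H/M$ finite or abelian and $\rk(M)+1=\rk(H)$. The key point is to force $b\in M$: since $b=(g,\sigma)$ fixes level one and $M$ is weakly self-similar, $b\in M$ yields at once $g=b_0\in M$ and $\sigma=b_1\in M$, so $M=H$, contradicting $\rk(M)<\rk(H)$. To locate $b$ inside $M$ I would compute $b^2=(g^2,1)$ together with $g^2=(b,\sigma g)$, so that after passing to the action on $X^2$ via $r_2$ the element $b^2$ fixes every length-two word and has portrait $(b,\sigma g,1,1)$; thus $b^2$ has section $b$ at $00$ and trivial section at $10$, and by Lemma~\ref{lem:directed_preserve} these are exactly the level-one sections of $r_2(b^2)$ on $(X^2)^*$.

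The remaining step is a single application of Lemma~\ref{lem:com_contain_sections_2} to $r_2(H)\acts X^2$ with normal subgroup $r_2(M)$: since $H$ is level transitive, $r_2(H)$ is transitive on $X^2$, and $r_2(b^2)$ fixes $00$ and $10$ with trivial section at $10$, so the lemma gives $r_2(b)\in r_2(M)$, i.e. $b\in M$. The main obstacle is checking the hypothesis of that lemma when $H/M$ is finite but not abelian, for then $[H,H]\not\sleq M$ and one cannot simply invoke $[g,\sigma]=(b,b^{-1})\in M$ (which is what makes the abelian case transparent). I would treat both cases uniformly at level two: if $H/M$ is abelian the quotient clause of Lemma~\ref{lem:com_contain_sections_2} applies directly; if $H/M$ is finite, then $M$ is weakly self-replicating of finite index in the self-replicating group $H$, so Lemma~\ref{lem:weakly_self-similar_trans} makes $M$ transitive on $X$, and bootstrapping through $\sec_M(v)=M$ shows $M$ is level transitive, so $r_2(M)$ is transitive on $X^2$ and the transitivity clause applies instead. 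In either case $b\in M$, hence $g,\sigma\in M$ and $M=H$, the rank drops, and we reach the desired contradiction; therefore $H$, and with it $G$, is not elementary amenable.
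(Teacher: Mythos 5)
Your proof is correct, and its skeleton is the same as the paper's: pass to $H=\grp{\sigma,g}$, check that $H$ is self-replicating and non-abelian, invoke Corollary~\ref{cor:ext_EA} to get a weakly self-replicating $M\normal H$ with $\rk(M)+1=\rk(H)$, and then force the generators into $M$ to reach the rank contradiction. The only real divergence is how you secure the hypothesis needed to push sections into $M$, and at which level you run the computation. The paper stays entirely at level one: it notes that $M$ is nontrivial, normal, and weakly self-similar inside the level transitive group $H$, so by Lemma~\ref{lem:fix} it fixes no vertex of $[2]$ and hence (the alphabet having size $2$) acts transitively on $[2]$; this handles the finite and abelian quotient cases uniformly. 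It then computes $(\sigma g)^2=(1,g^2)$ and a commutator with $m\in M$, $m(0)=1$, by hand (this is exactly the transitivity clause of Lemma~\ref{lem:com_contain_sections_2} unwound), getting $g^2\in M$, then $\sigma g=(g^2)_1\in M$, then $\sigma,g\in M$. You instead pass to $r_2$, use the portrait $(b,\sigma g,1,1)$ of $b^2$ on $X^2$, cite Lemma~\ref{lem:com_contain_sections_2} as a black box, and split into cases, handling the finite-index case via Lemma~\ref{lem:weakly_self-similar_trans} plus a level-transitivity bootstrap for $M$. Both case-resolutions are sound (your bootstrap using $\sec_M(v)=M$ and transitivity of $M$ on $X$ does give transitivity on $X^2$), but the paper's appeal to Lemma~\ref{lem:fix} is the cleaner move on a binary alphabet: it makes the case distinction, the $r_2$ detour, and the bootstrapping unnecessary.
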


\begin{proof}
The group $G$ must contain $\sigma$, since it is self similar and contains an element of type $(IV)$. Say that $g=\sigma(\sigma,g)$ is an element of type $(IV)$ in $G$. It suffices to show $H:=\grp{\sigma,g}$ is not elementary amenable.

Let us suppose toward a contradiction that $H$ is elementary amenable. The group $H$ is a self-replicating and clearly non-abelian, so we apply Corollary~\ref{cor:ext_EA} to find a weakly self-replicating $M\normal G$ such that $\rk(M)+1=\rk(G)$. In view of Lemma~\ref{lem:fix}, we infer that $M$ acts transitively on $[2]$.

We see that $\sigma g=(\sigma,g)$, so $(\sigma g)^2=(1,g^{2})$. Taking $m\in M$ such that $m(0)=1$, we have that $[(1,g^{2}),m]\in M$, and as $M$ is weakly self-similar, $[(1,g^{2}),m]_1=g^{2}\in M$. The square $g^2$ equals $(g\sigma,\sigma g)$, so $\sigma g=(\sigma,g)\in M$, using again that $M$ is weakly self-similar. A final application of weak self-similarity implies that $\sigma$ and $g$ are elements of $M$. This is absurd, since $M$ must be a proper subgroup.
\end{proof}

\begin{lem}\label{lem:type_2+3}
If $G\leq\Aut([2]^*)$ is a bounded automata group of abelian wreath type and $G$ contains elements of type $(II)$ and $(III)$, then $G$ is not elementary amenable.
\end{lem}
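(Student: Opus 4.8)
The plan is to locate an element of type (IV) inside $G$ and then invoke Lemma~\ref{lem:type_4}. Groups of abelian wreath type are in particular in reduced form, so Lemma~\ref{lem:type_4} applies as soon as such an element is found, and no further amenability machinery is needed. Let $a\in Y$ be the type (II) generator and $b\in Y$ the type (III) generator, and let $\sigma$ denote the nontrivial element of $\Sym(2)$. By the list of possible generator types recorded above, after possibly replacing $a$ and $b$ by their inverses --- which again lie in $G$ and retain their respective types --- I may assume that $b=\sigma(1,b)$ (so that $b$ has active vertex $1$) and that $a=(\sigma,a)$ or $a=(a,\sigma)$, according to whether the finitary section $\sigma$ of $a$ occurs at vertex $0$ or at vertex $1$.

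The crux is a single wreath-recursion computation via the cocycle identity $(hk)_v=h_{k(v)}k_v$; since $\pi_1(a)=1$ fixes both letters, the two cases are handled by the mirror-image products $ba$ and $ab$. When $a=(\sigma,a)$, I would compute
\[
(ba)_0=b_{a(0)}a_0=b_0a_0=1\cdot\sigma=\sigma,\qquad (ba)_1=b_{a(1)}a_1=b_1a_1=b\cdot a=ba,
\]
together with $\pi_1(ba)=\pi_1(b)\pi_1(a)=\sigma$, whence $ba=\sigma(\sigma,ba)$. When instead $a=(a,\sigma)$, the symmetric computation gives $(ab)_0=a_{b(0)}b_0=a_1\cdot 1=\sigma$ and $(ab)_1=a_{b(1)}b_1=a_0\cdot b=ab$, with $\pi_1(ab)=\sigma$, whence $ab=\sigma(\sigma,ab)$. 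In either case $G$ contains an element $c$ --- namely $c=ba$ or $c=ab$ --- satisfying $c=\sigma(\sigma,c)$, that is, an element of type (IV).

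With a type (IV) element of $G$ in hand, Lemma~\ref{lem:type_4} immediately yields that $G$ is not elementary amenable, which is the desired conclusion.

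The only genuine subtlety is the bookkeeping of which product produces the type (IV) element: one must combine the finitary section $\sigma$ of $a$ with $b$ so that $\sigma$ lands at the non-active vertex of the product while the self-section survives at the active vertex. This forces the choice $ab$ when the $\sigma$-section of $a$ is aligned with the active vertex of $b$, and $ba$ otherwise. Beyond this alignment check the argument is a mechanical wreath-recursion computation, the substance of the non-amenability having already been packaged into Lemma~\ref{lem:type_4}.
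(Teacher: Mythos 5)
Your proof is correct, and it takes a genuinely different route from the paper's. The paper proves this lemma directly, repeating the strategy of Lemma~\ref{lem:type_4}: it sets $H:=\grp{\sigma,g,h}$ with $g=(\sigma,g)$ and $h=\sigma(1,h)$, invokes Corollary~\ref{cor:ext_EA} to obtain a proper weakly self-replicating $M\normal H$ with $\rk(M)+1=\rk(H)$, and then uses commutator and section computations to force $\sigma,g,h\in M$, a contradiction. You instead observe that a type (II) and a type (III) element multiply, in the right order, to an element of type (IV). Your cocycle computations check out --- with $b=\sigma(1,b)$ and $a=(\sigma,a)$ one gets $ba=\sigma(\sigma,ba)$, while with $a=(a,\sigma)$ one gets $ab=\sigma(\sigma,ab)$ --- and your normalization up to inverses is legitimate, since inversion preserves type (II) and exchanges the two mirror forms of type (III). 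Since abelian wreath type implies reduced form, Lemma~\ref{lem:type_4} (whose hypothesis only asks that $G$ \emph{contain} a type (IV) element, not that one be a distinguished generator) then applies to $G$ itself and finishes the argument; for the same reason, your harmless assumption that $a,b$ lie in $Y$ is never actually used. Your route is more economical: the rank-contradiction machinery is run exactly once, inside Lemma~\ref{lem:type_4}, and the present lemma becomes a short corollary. What the paper's direct argument buys is the explicit three-generated subgroup $\grp{\sigma,g,h}$ exhibited as non-elementary amenable, which is the group referred to in the remark following these two lemmas about the ``smallest'' non-elementary amenable bounded automata groups; in substance the arguments are close, since your type (IV) element (which is $hg$ in the paper's notation) together with $\sigma$ generates a subgroup of that same $H$.
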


\begin{proof}
The group $G$ must contain $\sigma$, since it is self similar and contains an element of type $(II)$. As the proofs are the same, let us assume that $G$ contains $g=(\sigma,g)$. Let us also assume that the type $(III)$ element is $h=\sigma(1,h)$. It now suffices to show $H:=\grp{\sigma,g,h}$ is not elementary amenable.

Let us suppose toward a contradiction that $H$ is elementary amenable. The group $H$ is a self-replicating and clearly non-abelian, so we apply Corollary~\ref{cor:ext_EA} to find a weakly self-replicating $M\normal G$ such that $\rk(M)+1=\rk(G)$. In view of Lemma~\ref{lem:fix}, we infer that $M$ acts transitively on $[2]$.

We  see that $\sigma h=(1,h)$. Taking $m\in M$ such that $m(0)=1$, we have that $[(1,h),m]\in M$, and as $M$ is weakly self-similar, $[(1,h),m]_1=h\in M$. The commutator $[g,h]$ is then also an element of $M$. Moreover,
\[
[g,h]=ghg^{-1}h^{-1}=(\sigma,g)\sigma(1,h)(\sigma,g^{-1})\sigma(h^{-1},1)=(\sigma h g^{-1}h^{-1},g\sigma)
\]
we conclude that $g\sigma \in M$, since $M$ is weakly self-replicating. As $h=\sigma(1,h)\in M$, it is then the case that $g\sigma h=(\sigma,g)(1,h)=(\sigma,gh)\in M$. From a second application of weak self-similarity, it follows that $\sigma$, $g$, and $h$ are elements of $M$. This is absurd as $M$ is a proper subgroup of $H$.
\end{proof}

\begin{rmk} The groups $H$ arising in the proofs of Lemma~\ref{lem:type_4} and \ref{lem:type_2+3} seem like they may be of independent interest. They appear to be the ``smallest" non-elementary amenable bounded automata groups.
\end{rmk}

\begin{thm}\label{thm:2-reg_BAG}
If $G=\grp{Y}\leq \Aut([2]^*)$ is a bounded automata group in reduced form, then either $G$ is virtually abelian or $G$ is not elementary amenable.
\end{thm}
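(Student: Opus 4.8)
The plan is to assume $G$ is not virtually abelian and show it is not elementary amenable, organizing the argument by the types of the distinguished generators. By the remark following the definition of reduced form we may take $Y$ to be self-similar, so by the Observation each nontrivial element of $Y$ is of type (I)--(IV), up to inverse. Since each of the recursions $x=(\sigma,x)$, $x=(x,\sigma)$, $x=(01)(1,x)$, $x=(01)(\sigma,x)$ has a unique solution in $\Aut([2]^*)$, the only possible nontrivial distinguished generators are $\sigma$, the two type-(II) elements $p_1=(\sigma,p_1)$ and $p_2=(p_2,\sigma)$, the type-(III) odometer $d=(01)(1,d)$, and the type-(IV) element $(01)(\sigma,\cdot)$, together with inverses. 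I would note in particular that $p_1^2=p_2^2=1$, since $p_i^2$ fixes $X$ with a trivial section and the only solution of $x=(1,x)$ is trivial.

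The existing lemmas settle most cases. If $G$ contains a type-(IV) element, Lemma~\ref{lem:type_4} gives the conclusion. If $Y$ contains both a type-(II) and a type-(III) element, Lemma~\ref{lem:type_2+3} applies. If $Y$ contains the odometer $d$ but no type-(II) element, then $G$ contains an odometer and Theorem~\ref{thm:odometers} yields the dichotomy, forcing $G$ not elementary amenable since it is not virtually abelian. After these reductions every directed generator is of type (II), so $Y\subseteq\{\sigma,p_1,p_2\}$. If at most one of $p_1,p_2$ occurs, then $G$ is a subgroup of $\grp{\sigma,p_1}$ or of $\grp{\sigma,p_2}$, a group generated by two involutions and hence dihedral, so $G$ is virtually abelian, contrary to hypothesis.

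Thus the one genuinely new case is $G=\grp{\sigma,p_1,p_2}$, for which there is no ready-made lemma; this is the main obstacle. I would first check that $G$ is self-replicating: $\sigma$ acts transitively on $X=[2]$, and a Reidemeister--Schreier computation shows that the level-one stabilizer $G_{(0)}$ is generated by $p_1,p_2,\sigma p_1\sigma,\sigma p_2\sigma$, whose sections at $0$ are $\sigma,p_2,p_1,\sigma$ and hence generate $\grp{\sigma,p_1,p_2}=G$. Since $G$ is non-abelian (already $\grp{\sigma,p_1}\cong D_\infty$), if $G$ were elementary amenable then Corollary~\ref{cor:ext_EA} would supply a weakly self-replicating $M\normal G$ with $G/M$ finite or abelian and $\rk(M)+1=\rk(G)$.

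The decisive computation is $\sigma p_1\cdot\sigma p_2=(p_1p_2,1)$: this element fixes $X$ and has trivial section at $1$, so Lemma~\ref{lem:com_contain_sections_2} --- applicable because $G$ acts transitively on $X$ and either $G/M$ is abelian or, by Lemma~\ref{lem:weakly_self-similar_trans}, $M$ acts transitively on $X$ --- forces $p_1p_2\in M$. Now $p_1p_2=(\sigma p_2,\,p_1\sigma)$ also fixes $X$, and $M$ is weakly self-replicating, so its sections $\sigma p_2$ and $p_1\sigma$ lie in $M$; equivalently $\sigma p_1,\sigma p_2\in M$. As $G=\grp{\sigma,\sigma p_1,\sigma p_2}$, the quotient $G/M$ is generated by the image of $\sigma$, whence $[G:M]\le 2$. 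A finite-index subgroup has the same construction rank by Proposition~\ref{prop:rank_stability}, so $\rk(M)=\rk(G)$, contradicting $\rk(M)+1=\rk(G)$. Hence $G$ is not elementary amenable, completing the case analysis. The only substantive work is this final rank argument, which circumvents the absence of a lemma for two coexisting type-(II) generators; everything else is bookkeeping against the Observation and the cited results.
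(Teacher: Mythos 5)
Your case analysis, the verification that $G=\grp{\sigma,p_1,p_2}$ is self-replicating, the identity $\sigma p_1\cdot\sigma p_2=(p_1p_2,1)$, and the applications of Lemmas~\ref{lem:com_contain_sections_2} and~\ref{lem:weakly_self-similar_trans} yielding $p_1p_2,\sigma p_1,\sigma p_2\in M$ and hence $[G:M]\le 2$ are all correct. The gap is the final sentence: it is \emph{false} that a finite-index subgroup of an elementary amenable group has the same construction rank. In the paper's convention $\rk(\Zb)=0$ while $\rk(D_\infty)=1$, so rank can genuinely drop when passing to a finite-index subgroup; indeed Corollary~\ref{cor:ext_EA} --- the very result you invoke --- explicitly produces $M\normal G$ with $G/M$ \emph{finite} and $\rk(M)+1=\rk(G)$, so ``$[G:M]\le 2$ together with $\rk(M)+1=\rk(G)$'' is not a contradiction; it is precisely the expected situation. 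Proposition~\ref{prop:rank_stability} cannot rescue this: parts (1), (2), (4) say nothing of the sort, and part (3), if read literally with $\rk(\mathrm{finite})=0$, would contradict Corollary~\ref{cor:EA-rank} itself, so it is evidently misstated and the only usable consequence is $\rk(G)\le\rk(M)+1$, which you already know. As written, your argument proves nothing in the decisive case.

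The repair stays inside the paper's toolkit and is exactly what the rank-drop-two corollaries are for. Since $G$ is (by assumption) elementary amenable, not virtually abelian, and is a self-replicating bounded automata group in reduced form, Corollary~\ref{cor:ext_EA_3} (equivalently, Corollary~\ref{cor:ext_EA_2} applied to your finite-index $M$) gives weakly self-replicating $N\normal G$ and $M\normal G$ with $N\le M$, $M/N$ abelian, $G/M$ finite, and $\rk(N)+2=\rk(M)+1=\rk(G)$. Run your computation twice: first with the pair $(G,M)$, using that $M$ is transitive on $X$ by Lemma~\ref{lem:weakly_self-similar_trans}, to get $p_1p_2\in M$ and then $\sigma p_1,\sigma p_2\in M$; second, since $(p_1p_2,1)=\sigma p_1\cdot\sigma p_2\in M$ and $M/N$ is abelian, apply Lemma~\ref{lem:com_contain_sections_2} with ambient group $M$ and normal subgroup $N$ to get $p_1p_2\in N$, whence $\sigma p_1,\sigma p_2\in N$ by weak self-replication and $[G:N]\le 2$. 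Now the contradiction is genuine: $G$ is $N$-by-finite, so $\rk(G)\le\rk(N)+1<\rk(N)+2=\rk(G)$. I will add that your instinct that this case requires real work is sounder than the paper's own treatment: the paper disposes of it by asserting that conjugation by $\sigma$ carries one type-(II) generator to the other, making $G$ dihedral, but $\sigma p_1\sigma=(p_1,\sigma)$, which equals $p_2=(p_2,\sigma)$ only if $p_1=p_2$; so the paper's dihedral claim is unjustified, and the rank argument sketched above (not the dihedral shortcut) is what actually closes this case.
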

\begin{proof}
Suppose that $G$ is elementary amenable. In view of Lemma~\ref{lem:type_4}, $Y$ contains no element of type $(IV)$. We now have two cases.

Suppose first that $Y$ contains one or two elements of type $(II)$. The case of one element is an easy adaptation of the proof for the case of two elements, so we only consider the latter. In view of Lemma~\ref{lem:type_2+3}, $Y$ does not contain an element of type $(III)$. It now follows that $Y$ has  three non-trivial elements: $Y=\{\sigma, (g,\sigma),(\sigma,h)\}$.  Seeing as $\sigma (\sigma ,g)\sigma=(h,\sigma)$, the group $G$ is generated by two involutions, $\sigma, (\sigma,g)$, so $G$ is either finite or the infinite dihedral group.

Suppose next that $Y$ contains an element of type $(III)$. As in the previous case, $Y$ does not contain an element of type $(II)$. The non-trivial elements of $Y$ then consist of either a single type $(III)$ element or a type $(III)$ element and a type $(I)$ element. In the former case, $G$ is  abelian, and in the latter case, $G$ is virtually abelian.
\end{proof}

\subsection{Elementary amenable groups}
We are now prepared to prove our main theorem of this section.

\begin{lem}\label{lem:F_non-trivial}
Suppose that $G=\grp{Y}\leq \Aut(\Xt)$ is a self-replicating group of abelian wreath type that is elementary amenable. If $G$ is non-abelian, then there is some directed $d\in Y$ and $x\in X$ such that $d_x$ is a non-trivial finitary element.
\end{lem}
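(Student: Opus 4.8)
The plan is to argue by contradiction, and the key realization is that the failure of the conclusion forces $G$ to be an honest generalized basilica group, at which point the machinery already developed for balanced groups finishes the job immediately. So I would begin by assuming that no directed $d\in Y$ admits an $x\in X$ with $d_x$ a non-trivial finitary element, and then unwind what this means. For a directed generator $d\in Y$, the definition of abelian wreath type gives $d_x\in \{d\}\cup \pi_1(G)$ for all $x$, with exactly one $x$ satisfying $d_x=d$. The remaining sections therefore lie in $\pi_1(G)$, and under the identification of $\pi_1(G)\leq \Sym(X)$ with finitary elements these are precisely the finitary sections of $d$. By the contradiction hypothesis each such section is trivial, so in fact $d_x\in\{1,d\}$ for every $x\in X$. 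The finitary generators of $Y$ (those $g$ with $g_x=1$ for all $x$) satisfy the generalized basilica condition vacuously. Hence $G=\grp{Y}\leq\Aut(\Xt)$ is a generalized basilica group with distinguished generating set $Y$.

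With this identification in hand, I would simply invoke the earlier structural results. Since $G$ is self-replicating it acts transitively on $X$, and $\pi_1(G)\leq\Sym(X)$ is abelian by the standing hypothesis on abelian wreath type, so Proposition~\ref{prop:balanced} applies directly to show that $G$ is balanced. Now $G$ is a self-replicating balanced generalized basilica group that is elementary amenable, so Theorem~\ref{thm:balanced_self-rep} forces $G$ to be either abelian or not elementary amenable; being elementary amenable, $G$ must be abelian. This contradicts the hypothesis that $G$ is non-abelian, completing the proof.

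The only genuine content here is the recognition step in the first paragraph, where an abelian-wreath-type group with no directed-generator finitary sections collapses into a generalized basilica group; everything afterward is an application of Proposition~\ref{prop:balanced} and Theorem~\ref{thm:balanced_self-rep}. The point I would take care to verify is that the non-$d$ sections appearing for a directed generator are genuinely \emph{finitary} (they live in $\pi_1(G)$, hence are finitary of depth one), so that the phrase ``not a non-trivial finitary element'' really does force them to be trivial, rather than merely forcing them to be non-directed. Once that is checked, the argument is essentially immediate.
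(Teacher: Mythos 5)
Your proposal is correct and follows essentially the same route as the paper: the paper proves the contrapositive (your contradiction framing is the same maneuver), observing that if every directed $d\in Y$ has $d_x\in\{1,d\}$ then $G$ is a generalized basilica group, which is balanced by Proposition~\ref{prop:balanced} and hence abelian by Theorem~\ref{thm:balanced_self-rep}. Your care in checking that the non-$d$ sections lie in $\pi_1(G)$ and are therefore finitary of depth one, so that the hypothesis forces them to be trivial, is exactly the right point to verify and is implicit in the paper's one-line argument.
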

\begin{proof}
We prove the contrapositive. If every directed  $d\in Y$ is such that $d_x\in\{1,d\}$ for all $x\in X$, then $G$ is a generalized basilica group, and in view of Proposition~\ref{prop:balanced}, $G$ is furthermore balanced. Theorem~\ref{thm:balanced_self-rep} thus implies that $G$ is abelian.
\end{proof}

Suppose that $G=\grp{Y}\leq\Aut(\Xt)$ is a self-replicating group of abelian wreath type with $|X|>2$ and suppose that $G$ is elementary amenable but not virtually abelian. Corollary~\ref{cor:ext_EA_3} supplies a weakly self-replicating $M\normal G$ such that $\rk(M)+1=\rk(G)$ and $G/M$ is finite.  Recall that $M$ acts transitively on $X$ by Lemma~\ref{lem:weakly_self-similar_trans}.  Setting $F:=\pi_1(G)\cap G$, Lemma~\ref{lem:factorization} below claims that $G=FM$; by Lemma~\ref{lem:F_non-trivial}, $F$ contains non-trivial elements. To prove this requires three preliminary lemmas, that amount to checking three cases.

Fix $c\in Y$ some directed element, let $v\in X$ be the active vertex of $c$ on $X$, and say that $O\subseteq X$ is the orbit of $v$ under the action of $\grp{c}$ on $X$.

\begin{lem}\label{lem:factorization1}
If there is $l\in F$ such that $lc$ fixes some $x\in X$, then there is $k\in F$ such that $kc\in M$.
\end{lem}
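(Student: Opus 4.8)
The plan is first to exploit the regularity of $\pi_1(G)$ to upgrade the hypothesis, and then to use Lemma~\ref{lem:com_contain_sections_2} to deposit a section of the form $c$ times a finitary element into $M$. Since $G$ is self-replicating, $\pi_1(G)\leq \Sym(X)$ is a transitive abelian group, hence regular; in particular every non-trivial element of $\pi_1(G)$ is fixed-point-free. As $lc$ fixes some $x\in X$, this forces $\pi_1(lc)=1$, so in fact $lc$ fixes $X$ pointwise. I would next record that $l\in F$ is finitary of depth one, so $l_z=1$ for all $z\in X$ and the cocycle identity gives $(lc)_y=c_y$ for every $y\in X$. Writing $g:=lc$, we thus have $g\in G_{(X)}$ with $g_v=c$ and $g_y=c_y\in \pi_1(G)$ finitary for each $y\neq v$.

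The aim is then to find $g'\in G$ that fixes $v$, has $g'_v$ equal to $c$ times an element of $F$, and has a trivial section at some other point $y\neq v$; feeding such a $g'$ into Lemma~\ref{lem:com_contain_sections_2} (which applies since $M\normal G$ is weakly self-similar and, by Lemma~\ref{lem:weakly_self-similar_trans}, acts transitively on $X$) yields $g'_v\in M$, and hence $c\in FM=MF$, which is precisely the assertion that $kc\in M$ for some $k\in F$. In the favorable situation that $c_{y_0}=1$ for some $y_0\neq v$, the element $g$ itself works, with $g'_v=c$ and $k=1$. In general I would produce $g'$ as a commutator: for $h\in M$ with $h^{-1}(v)=u$ one computes, using that $g$ fixes $X$,
\[
[g,h]_v=g_v\bigl(h_u\,(g_u)^{-1}\,h_u^{-1}\bigr)=c\,\bigl(h_u\,c_u^{-1}\,h_u^{-1}\bigr),
\]
and since $h\in M\normal G$ forces $[g,h]\in M\cap G_{(X)}$, this section already lies in $\sec_M(v)=M$. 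Choosing $h\in M$ so that its section $h_u$ is trivial, or at least lies in $\pi_1(G)$, makes the finitary correction $h_u\,c_u^{-1}\,h_u^{-1}$ an element of $F$ (here I use that $\pi_1(G)$ is abelian), giving $c\cdot(\text{element of }F)\in M$ and therefore $c\in FM$.

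The main obstacle is exactly the control of this finitary correction: I must guarantee either a genuinely trivial section of $g$ at some $y\neq v$, or an auxiliary $h\in M$ realizing the correction inside the depth-one finitary group $F$ rather than merely inside the finitary elements of $G$. The tools at hand are that $M$ is transitive and weakly self-replicating, so $\sec_M(v)=M$ and sections of $M$-elements fixing a vertex return to $M$; that $\pi_1(G)$ is regular, so $G_{(v)}=G_{(X)}$ and $\phi_v(G_{(X)})=G$; and the freedom to replace $G$ by $r_2(G)$, under which all hypotheses persist and which can create additional trivial sections. I expect the delicate point to be the case in which every section $c_y$ with $y\neq v$ is non-trivial and lies outside $M$; there one must use the regular action together with the normality of $M$ to manufacture the required trivial section before invoking Lemma~\ref{lem:com_contain_sections_2}.
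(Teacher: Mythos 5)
Your setup and your easy case are correct and coincide with the paper's: regularity of the transitive abelian group $\pi_1(G)$ forces $d:=lc$ to fix $X$ pointwise, the cocycle identity gives $d_v=c$ and $d_y=c_y\in F$ for $y\neq v$, and if some section $d_w$ is trivial then Lemma~\ref{lem:com_contain_sections_2} immediately yields $c=d_v\in M$. Your commutator identity $[d,h]_v=c\,(h_u\,c_u^{-1}\,h_u^{-1})$ for $h\in M$ with $u=h^{-1}(v)\neq v$ is also correct, and such a section does land in $\sec_M(v)=M$. But the argument then hinges on choosing $h\in M$, moving $v$, whose section $h_u$ is trivial or lies in $\pi_1(G)$ --- and this is precisely the step you cannot justify, as you concede in your closing paragraph, so the hard case (every $d_w\neq 1$) remains open. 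Nothing in the hypotheses gives any control over sections of elements of $M$: it is merely a weakly self-replicating normal subgroup of finite index, $h_u$ can be an arbitrary element of $G$, and so the correction $h_u c_u^{-1}h_u^{-1}$ need not be finitary at all. Your fallback of passing to $r_2(G)$ does not repair this: abelian wreath type is \emph{not} preserved, since $\pi_2(G)$ need not be abelian (commutators in $G$ fix $X$ pointwise, but their first-level sections need not act trivially on $X$), so the standing hypotheses of the lemma are lost.

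The paper's proof of the hard case avoids ever needing such control. It fixes a non-trivial $f\in F$ (Lemma~\ref{lem:F_non-trivial}) and \emph{any} $m\in M$ with $m(v)\notin\{v,f(v)\}$, and sets $y:=m_v$, an uncontrolled element of $G$. The two commutators $[m^{-1},d^{-1}]$ and $[m^{-1},fd^{-1}f^{-1}]$ lie in $M$ and fix $X$ pointwise, and their sections at $v$ compute to $y^{-1}hyc$ and $y^{-1}h'yk$ respectively, with $h,h'\in F$ and $k:=d_{f^{-1}(v)}\in F$ non-trivial; both sections lie in $\sec_M(v)=M$. The key point is that the \emph{same} unknown conjugator $y$ appears in both expressions, so $c$ and $k$ both lie in the subgroup $y^{-1}FyM$, whose quotient by $M$ is abelian no matter what $y$ is, because $F$ is abelian. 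Hence $[k,c^{-1}]\in M$, and since $k$ is depth-one finitary (all its sections are trivial) and $k^{-1}$ moves $c(v)$, a direct computation gives $[k,c^{-1}]_v=k'c$ with $k'\in F$; as $[k,c^{-1}]$ fixes $X$ and lies in $M$, this section lies in $M$, which is the conclusion. In short, where your plan requires manufacturing an element of $M$ with a finitary section --- which is not available --- the paper neutralizes the unknown section $y$ by working modulo $M$ inside $y^{-1}FyM$.
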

\begin{proof}
Set $d:=lc$. Since $G$ has abelian wreath structure, $d$ fixes $X$. If $d_w=1$ for some $w\in X$, then $c \in M$ by Lemma~\ref{lem:com_contain_sections_2}, since $d_{v}=c$. We thus suppose that $d_w\neq 1$ for all $w\in X$.  In view of Lemma~\ref{lem:F_non-trivial}, we may fix a non-trivial $f\in F$, and we may also fix $m\in M$ such that $m(v)\in X\setminus\{v,f(v)\}$. Such an $m$ exists since $|X|>2$ and $M$ is weakly self-replicating and of finite index. Set $y:=m_{v}$.

The commutator $[m^{-1},d^{-1}]$ is in $M$, so
\[
 [m^{-1},d^{-1}]_{v}=(m^{-1}d^{-1}md)_{v}=y^{-1}hyc
\]
is in $M$ for some $h\in F$. On the other hand, $[m^{-1},fd^{-1}f^{-1}]$ is also an element of $M$, so
\[
 [m^{-1},fd^{-1}f^{-1}]_{v}=y^{-1}h'yd_{f^{-1}(v)}
\]
is in $M$ for some $h'\in F$. The element $k:=d_{f^{-1}(v)}$ is also a non-trivial finitary element.

We now have that $c$ and a non-trivial finitary element $k$ are elements of $y^{-1}FyM$. The quotient $y^{-1}FyM/M$ is abelian, so $[k,c^{-1}]\in M$. As $k^{-1}$ moves $c(v)$, it follows that $[k,c^{-1}]_{v}=k'c$ for some $k'\in F$. Thus, $k'c\in M$, completing the proof.
\end{proof}

\begin{lem}\label{lem:factorization2}
If some non-trivial element of $F$ setwise stabilizes $O$, then there is $k\in F$ such that $kc\in M$.
\end{lem}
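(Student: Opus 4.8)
The plan is to reduce to the configuration already analysed in the proof of Lemma~\ref{lem:factorization1}: an element of $G$ fixing $X$ pointwise whose section at $v$ is a finitary multiple of $c$.

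First I would translate the hypothesis. Since $G$ is self-replicating it is transitive on $X$, so $\pi_1(G)\leq \Sym(X)$ is an abelian transitive group and hence acts regularly. Under this regular action the orbit $O$ of $v$ under $\grp{c}$ is a coset of the cyclic group $\grp{\pi_1(c)}$, and (using that $\pi_1(G)$ is abelian) a finitary element stabilises $O$ setwise if and only if its image in $\Sym(X)$ lies in $\grp{\pi_1(c)}$. Thus the hypothesis furnishes a non-trivial $f\in F$ with $\pi_1(f)=\pi_1(c)^{j}$ for some $0<j<|O|$.

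Next I would set $d:=f^{-1}c^{j}$. As $\pi_1(d)=\pi_1(f)^{-1}\pi_1(c)^{j}=1$, the element $d$ fixes $X$ pointwise. Because $f$ is finitary of depth one, $d_v=(c^{j})_v$, and since the active vertex $v$ is visited exactly once along the first $j$ steps of the $\grp{c}$-orbit (as $j<|O|$), we get $d_v=\psi c$ with $\psi\in F$ a product of finitary sections of $c$. As $\psi\in F$, we have $c\in FM$ if and only if $\psi c=d_v\in FM$, so it suffices to show $d_v\in FM$. Now $d$ plays the role of the element ``$lc$'' of Lemma~\ref{lem:factorization1}. If some section $d_w$ is trivial, then Lemma~\ref{lem:com_contain_sections_2} applied to $d$ (which fixes both $v$ and $w$) yields $d_v=\psi c\in M$. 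Otherwise every section of $d$ is non-trivial, and I would reproduce the double-commutator step of Lemma~\ref{lem:factorization1}: using that $M$ is transitive (Lemma~\ref{lem:weakly_self-similar_trans}) and $|X|>2$, choose $m\in M$ and a non-trivial $f'\in F$, put $y:=m_v$, and evaluate $[m^{-1},d^{-1}]_v$ and $[m^{-1},f'd^{-1}f'^{-1}]_v$; these place both $\psi c$ and a non-trivial finitary section of $d$ inside the conjugate $y^{-1}FyM$. Since $F$ is abelian, the commutator of these two elements lies in $M$, and its section at $v$ is a finitary multiple of $c$ lying in $M$, which gives the desired $k\in F$ with $kc\in M$.

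The main obstacle is a bookkeeping point absent from Lemma~\ref{lem:factorization1}. There the element $lc$ had a unique non-finitary section (at $v$); here, when $j>1$, the element $d=f^{-1}c^{j}$ can carry non-finitary (``$c$-containing'') sections at as many as $j$ vertices of $O$. Consequently the conjugated sections $(d^{-1})_{m(v)}$ and $d_{f'^{-1}(v)}$ entering the two commutators are not automatically finitary, so $\psi c$ and $d_{f'^{-1}(v)}$ could fail to lie in the abelian group $y^{-1}Fy$ modulo $M$. The crux is therefore to exploit the freedom in choosing $m\in M$ and $f'\in F$: since $d$ has at most $j<|X|$ non-finitary sections, all of them inside $O$, I would take $m(v)$ (and hence $f^{-1}(m(v))$) off the orbit $O$, and take $f'=f$, so that $f'^{-1}(v)$ is again a finitary-section vertex of $d$; then every section feeding into the commutators stays finitary and the computation remains inside $y^{-1}FyM$, letting the final commutator identity close exactly as in Lemma~\ref{lem:factorization1}. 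The degenerate case $O=X$, where $\grp{\pi_1(c)}$ is already transitive and there are no vertices off $O$, would be handled separately via the odometer results of the previous section.
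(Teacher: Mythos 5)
Your treatment of the case $O\neq X$ is correct and is essentially the paper's strategy in different clothing: the paper also replaces $c$ by an element that fixes $X$ pointwise and whose section at $v$ is a finitary multiple of $c$ (it uses $fc^{i}$ with $i$ least such that $fc^{i}$ fixes $O$ pointwise; by regularity of $\pi_1(G)$ this is the same kind of object as your $f^{-1}c^{j}$), splits according to whether some section is trivial (Lemma~\ref{lem:com_contain_sections_2}) or not, and closes with the same two-commutator computation inside $y^{-1}FyM$. The only real divergence is in the choices: you send $m(v)$ off the orbit $O$ and conjugate the second commutator by $f$, whereas the paper keeps $m(v)$ inside $O$, choosing it in $O\setminus(\Omega\cup c^{-1}(\Omega))$, and conjugates by $c$, which enlarges the set of non-finitary-section vertices by only one element instead of potentially doubling it.

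That divergence is exactly where your proof has a genuine gap: the case $O=X$. You propose to dispatch it "via the odometer results of the previous section," but Theorem~\ref{thm:odometers} (likewise Theorem~\ref{thm:trans} and Corollary~\ref{cor:trans_virt-ab}) requires an honest odometer: an element $d$ with $d_x\in\{1,d\}$ for all $x$ and exactly one active section. In a group of abelian wreath type, a strongly active directed element $c$ acting transitively on $X$ need not be an odometer, because its finitary sections lie in $\pi_1(G)$ and may be non-trivial; the paper explicitly warns in the introduction that a level-transitive element is conjugate to an odometer only in $\Aut(\Xt)$, and that the odometer theorems need the element to be an odometer in the given bounded representation. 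So your appeal covers only the sub-case in which every finitary section of $c$ is trivial. The remaining sub-case cannot be discarded: in Lemma~\ref{lem:factorization}, any strongly active directed generator acting transitively on $X$ automatically lands in Lemma~\ref{lem:factorization2} with $O=X$ (every non-trivial element of $F$ stabilizes $X$ setwise, and $F\neq\{1\}$ by Lemma~\ref{lem:F_non-trivial}), and there your main computation breaks because no vertex off $O$ is available for $m(v)$. The repair is what the paper does uniformly: first, if $j=|O|-1$ then $\pi_1(fc)=1$, so $fc$ fixes a point and Lemma~\ref{lem:factorization1} applies; otherwise $j\leq|O|-2$, so the set $\Omega$ of non-finitary-section vertices of your $d$ satisfies $|\Omega\cup c^{-1}(\Omega)|\leq|O|-1$, and one can take $m(v)\in O\setminus(\Omega\cup c^{-1}(\Omega))$ and extract the second relation from the conjugate $c^{-1}dc$ rather than from $fdf^{-1}$, after which the abelianness of $F$ and the final commutator close the argument exactly as in your $O\neq X$ case.
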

\begin{proof}
 Fix a non-trivial $f\in F$ such that $f$ stabilizes $O$. If $|O|\leq 2$, then $fc$ fixes pointwise $O$, and Lemma~\ref{lem:factorization1} implies that there is $k\in F$ such that $kc\in M$. We thus assume that $|O|>2$.

Let $j>1$ be least such that $c^j$ fixes $O$ and take $i<j$ least such that $fc^i$ fixes $O$ pointwise. If $i=j-1$, then $fc^{-1}$ fixes $O$ pointwise, and there is $k\in F$ such that $kc^{-1}\in M$ by Lemma~\ref{lem:factorization1}. In this case, $ck^{-1}\in M$, so conjugating by $k^{-1}$, we see that $k^{-1}c\in M$. We may thus assume that $i<j-1$. If $fc^i$ has a trivial section at some $z\in X$, then as in the previous lemma, Lemma~\ref{lem:com_contain_sections_2} implies that $kc\in M$ for some $k\in F$. We may thus also assume that all sections of $fc^i$ at $z\in X$ are non-trivial.

 The element $fc^{i}$ has non-finitary sections at
\[
\Omega:=\{v,c^{-1}(v),\dots,c^{-i+1}(v)\}.
\]
Note that $|O\setminus \Omega| \geq 2$, since $|\Omega|=i$ while $|O|=j$. We may find $m\in M$ such that
\[
m(v)\in O\setminus (\Omega\cup c^{-1}(\Omega)),
\]
since  $\Omega\cup c^{-1}(\Omega)$ is only one element larger than $\Omega$. Set $y:=m_{v}$.

The commutator $[m^{-1},(fc^{i})^{-1}]$ is in $M$, so
\[
 [m^{-1},(fc^{i})^{-1}]_{v}=y^{-1}h_1yh_0c
\]
is in $M$ for some $h_1,h_0\in F$. On the other hand, $c^{-1}fc^{i+1}$ has non-finitary sections exactly at $\Omega\cup c^{-1}(\Omega)$. In particular, $(c^{-1}fc^{i+1})_{v}=c^{-1}k_0c$ for some non-trivial $k_0\in F$, since we assume that $fc^i$ has no trivial sections. The commutator $[m^{-1},(c^{-1}fc^{i+1})^{-1}]$ is in $M$, so
\[
 [m^{-1},(c^{-1}fc^{i+1})^{-1}]_{v}=y^{-1}k_1yc^{-1}k_0c
\]
is in $M$, for some $k_1\in F$.

We now have that $h_0c$ and $c^{-1}k_0c$ are elements of $y^{-1}FyM$, hence $h_0c$ and $k_0$ are elements of $y^{-1}FyM$, since $F$ is an abelian group.  The quotient $y^{-1}FyM/M$ is abelian, so $[k_0,(h_0c)^{-1}]\in M$. As $k_0^{-1}$ moves $c(v)$, it follows that $[k_0,(h_0c)^{-1}]_{v}=k'c$ where $k'\in F$. Thus, $k'c\in M$, completing the proof.
\end{proof}

\begin{lem}\label{lem:factorization3}
If $c$ strongly active and no non-trivial element of $F$ setwise stabilizes $O$, then there is $k\in F$ such that $kc\in M$.
\end{lem}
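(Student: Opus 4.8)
The plan is to mimic the proof of Lemma~\ref{lem:factorization1}, replacing the single $X$-fixing element $d=lc$ used there by the power $c^{n}$. Throughout I would work in the regular representation: since $\pi_1(G)$ is abelian and acts transitively on $X$ (as $M\leq G$ does), it acts regularly, so $\pi_1(G)$ may be identified with $X$ and every non-trivial element of $\pi_1(G)$ is fixed-point-free. Under this identification the $\langle\pi_1(c)\rangle$-orbit $O$ of $v$ is a coset of $\langle\pi_1(c)\rangle$; an element $f\in F$ stabilizes $O$ setwise, and equivalently satisfies $f^{-1}(v)\in O$, precisely when $f\in\langle\pi_1(c)\rangle$. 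The hypothesis that no non-trivial $f\in F$ stabilizes $O$ thus reads $F\cap\langle\pi_1(c)\rangle=\{1\}$, and by Lemma~\ref{lem:F_non-trivial} we have $F\neq\{1\}$, so $O\neq X$ (every element of $F$ stabilizes $X$). Writing $n:=|O|$, the element $D:=c^{n}$ fixes $X$ pointwise, and a short section computation along the orbit gives $D_v=\phi c$ for some $\phi\in F$, while $D_w\in F$ for every $w\notin O$.

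First I would dispose of an easy case: if $D_w=1$ for some $w\neq v$, then $D$ fixes $v$ and $w$ with $D_w=1$, so Lemma~\ref{lem:com_contain_sections_2} (applicable since $M$ acts transitively) yields $D_v=\phi c\in M$, and $k:=\phi$ works. Hence I may assume $D_w\neq1$ for all $w\in X\setminus O$.

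Next I would run the commutator machinery. Fix a non-trivial $f\in F$; since $F\cap\langle\pi_1(c)\rangle=\{1\}$ we have $f^{-1}(v)\notin O$. Choose $m\in M$ with $w:=m(v)\notin O\cup f(O)$ and set $y:=m_v$. Both $[m^{-1},D^{-1}]$ and $[m^{-1},E]$, where $E:=fD^{-1}f^{-1}$, lie in $M$ and fix $v$, so their sections at $v$ lie in $M$ because $M$ is weakly self-replicating. Computing exactly as in Lemma~\ref{lem:factorization1},
\[
[m^{-1},D^{-1}]_v=y^{-1}(D^{-1})_w\,y\,\phi c\in M,\qquad [m^{-1},E]_v=y^{-1}E_w\,y\,(E_v)^{-1}\in M.
\]
Here $(D^{-1})_w\in F$ since $w\notin O$; the element $E_v=(D^{-1})_{f^{-1}(v)}$ is a non-trivial finitary element because $f^{-1}(v)\notin O$ and we are in the main case; and $E_w=(D^{-1})_{f^{-1}(w)}\in F$ because $w\notin f(O)$ forces $f^{-1}(w)\notin O$. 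The first relation gives $\phi c\in y^{-1}Fy\,M$, and the second gives a non-trivial $k_0:=(E_v)^{-1}\in F$ with $k_0\in y^{-1}Fy\,M$. Since $y^{-1}Fy$ is abelian, the quotient $y^{-1}Fy\,M/M$ is abelian, whence $[k_0,(\phi c)^{-1}]\in M$. As $k_0$ is fixed-point-free it moves $(\phi c)(v)$, so the section identity $[k_0,(\phi c)^{-1}]_v=k'c$ with $k'\in F$ holds, and therefore $k'c\in M$, as required.

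The one place this breaks down is the choice $w\notin O\cup f(O)$, which presupposes a third coset of $\langle\pi_1(c)\rangle$, i.e.\ $[\pi_1(G):\langle\pi_1(c)\rangle]\geq 3$; I expect the index-two case to be the main obstacle. There one is forced into $\pi_1(G)=\langle\pi_1(c)\rangle\times F$ with $|F|=2$, say $F=\{1,t\}$, and $X=O\sqcup t(O)$, so no admissible $w$ exists. In this case the off-orbit section $D_w$ is a single element of $F$, independent of $w\in t(O)$ (conjugation inside the abelian $F$ being trivial): if it is trivial we are in the easy case, and if it equals $t$ then $c^{2n}$ has trivial sections on all of $t(O)$, so Lemma~\ref{lem:com_contain_sections_2} yields $(\phi c)^2=(c^{2n})_v\in M$. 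Promoting $(\phi c)^2\in M$ to $\phi c\in FM$ (equivalently $c\in FM$, as $\phi\in F$) is the delicate remaining point; I would attack it by a direct analysis of the two-block action of $G$ on $X=O\sqcup t(O)$, using transitivity and weak self-replication of $M$ to produce a non-trivial finitary element of the coset $y^{-1}Fy\,M$ and then applying the same final commutator identity.
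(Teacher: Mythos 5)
Your main case (at least three $\grp{\pi_1(c)}$-cosets) is correct and is essentially the paper's own argument: the paper likewise disposes first of the easy sub-case where $c^{|O|}$ has a trivial section via Lemma~\ref{lem:com_contain_sections_2}, then places $m(v)$ in a third orbit, extracts $h_0c$ and a non-trivial finitary element $k_0$ from the sections of two commutators with $m^{-1}$, and finishes with $[k_0,(h_0c)^{-1}]_v=k'c\in M$ using that $y^{-1}FyM/M$ is abelian. Your regular-representation bookkeeping (all $\grp{c}$-orbits on $X$ have equal size, and the hypothesis reads $F\cap\grp{\pi_1(c)}=\{1\}$) is a harmless, indeed clarifying, reformulation of what the paper uses implicitly.

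The genuine gap is the two-orbit case, exactly where you flag it. There you only reach $(\phi c)^2\in M$ (in the sub-case where the off-orbit section of $c^{|O|}$ equals $t$), and promoting $(\phi c)^2\in M$ to $kc\in M$ is not a loose end to be tidied up: it is the entire content of this case. Moreover your proposed attack --- producing a non-trivial finitary element of $y^{-1}FyM$ so as to reuse the final commutator identity --- is not obviously available, since with only two cosets nothing forces $F\cap y^{-1}FyM\neq\{1\}$. The paper instead finishes by explicit section computations starting from $(hc)^2\in M$ (here $h=\phi$, and $F=\{1,t\}$ has exponent two). If $h=1$, so that $c^2\in M$, it computes the section $[f,c^{-2}]_v$ for the non-trivial $f\in F$ (which swaps the two orbits) and obtains an element of the form $kc$ lying in $M$. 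If $h\neq 1$, it uses that $(hc)^{2}$ has non-finitary sections exactly at $v$ and $c^{-1}h(v)$, while $(hc)^{-2}$ has them exactly at $c^2(v)$ and $ch(v)$, and evaluates the commutator $[c^{-1},(hc)^{-2}]$ --- which lies in $M$ by normality of $M$ and fixes $X$ pointwise since $\pi_1(G)$ is abelian --- at the vertex $c^{-1}h(v)$; this section has the form $l_1cl_0$ with $l_0,l_1\in F$ and lies in $M$ by weak self-replication, and conjugating by $l_0$ gives $kc\in M$. Without some computation of this kind, your proof of the lemma is incomplete.
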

\begin{proof} Suppose first that $\grp{c}$ has only two orbits $O_1=O$ and $O_2$ on $X$. The group $F$ therefore has exponent two. Take $i>1$ least such that $c^i$ fixes $O_1$. Let $m\in M$ be such that $m(v)\in O_2$ and set $y:=m_{v}$. We now see that $[m^{-1},c^{-i}]_{v}=y^{-1}h_1yhc$ is in $M$, where $h_1,h\in F$.  The group $M$ is normal, so $hcy^{-1}h_1y\in M$. Hence,
\[
hcy^{-1}h_1yy^{-1}h_1yhc=(hc)^2\in M.
\]
If $h=1$, then considering the section $[f,c^{-2}]_{v}$ for $f\in F$ non-trivial shows that $kc\in M$ for some $k\in F$, since $f^{-1}(O_1)=O_2$ and $c^2$ is not trivial. Let us thus suppose that $h$ is non-trivial, so $h(O_1)=O_2$. The element $(hc)^{2}$ has non-finitary sections exactly at $v$ and $c^{-1}h(v)$. The inverse $(hc)^{-2}$ therefore has non-finitary sections exactly at $c^2(v)$ and $ch(v)$.  We compute:
\[
[c^{-1},(hc)^{-2}]_{c^{-1}h(v)}=(c^{-1})_{h(v)}((hc)^{-2})_{c^2h(v)}c_{ch(v)}((hc)^{2})_{c^{-1}h(v)}.
\]
It now follows that $[c^{-1},(hc)^{-2}]_{c^{-1}h(v)}=l_1cl_0$ for $l_1,l_0\in F$. Conjugating by $l_0$, we see that there is $k\in F$ such that $kc\in M$.

Let us suppose finally that $\grp{c}$ has at least three orbits $O_1$, $O_2$, and $O_3$ and let $i$ be least such that $c^i$ fixes $O_1$. If $c^i$ has a trivial section, then Lemma~\ref{lem:com_contain_sections_2} ensures that $kc\in M$ for some $k\in F$. We may thus assume that every section of $c^i$ is non-trivial.  Fix $f\in F$ non-trivial, and without loss of generality, $f(O_1)=O_2$. Let $m\in M$ be such that $m(v)\in O_3$ and set $y:=m_{v}$. The commutator $[m^{-1},c^{-i}]$ is in $M$, so
\[
[m^{-1},c^{-i}]_{v}=y^{-1}h_1yh_0c
\]
is in $M$, where $h_1,h_0\in F$. The commutator $[m^{-1},fc^{-i}f^{-1}]$ is also in $M$, so
\[
[m^{-1},fc^{-i}f^{-1}]_{v}=y^{-1}k_1yk_0
\]
is in $M$ where $k_1,k_0\in F$, since $fc^if^{-1}$ has non-finitary sections only on $O_2$. Note also that $k_0$ is non-trivial since $c^i$ has every section non-trivial.

We now have that $h_0c$ and a non-trivial finitary element $k_0$ are elements of $y^{-1}FyM$. The quotient $y^{-1}FyM/M$ is abelian, so $[k_0,(h_0c)^{-1}]\in M$. As $k_0$ moves $v$, it follows that $[k_0,(h_0c)^{-1}]_{v}=k'c$ where $k'\in\grp{F}$. Thus, $k'c\in M$.
\end{proof}

Bringing together the previous three lemmas, we obtain the desired result.
\begin{lem}\label{lem:factorization}
Suppose that $G=\grp{Y}\leq \Aut(\Xt)$ with $|X|>2$ is a self-replicating group of abelian wreath type. If $G$ is elementary amenable but not virtually abelian, then the weakly self-replicating $M\normal G$ supplied by Corollary~\ref{cor:ext_EA_3} is such that $(\pi_1(G)\cap G)M=G$ and $\rk(M)+1=\rk(G)$. In particular, $G/M$ is finite and abelian.
\end{lem}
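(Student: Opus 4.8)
The plan is to show that every element of the distinguished generating set $Y$ lies in $FM$; since $M\normal G$, the product $FM$ is a subgroup of $G$, and as $Y$ generates $G$ this forces $G=FM$. By the definition of abelian wreath type, each $g\in Y$ either satisfies $g_x=1$ for all $x\in X$ or is directed. For a generator of the first type, $g$ is a depth-one finitary automorphism, so under the identification $\Sym(X)\hookrightarrow\Aut(\Xt)$ we have $g=\pi_1(g)\in \pi_1(G)\cap G=F$; thus these generators already lie in $F\subseteq FM$. It therefore remains only to place the directed generators in $FM$.

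Fix a directed $c\in Y$ with active vertex $v=v^c$ and let $O$ be the $\grp{c}$-orbit of $v$, as in the setup preceding the factorization lemmas. I would verify that the hypotheses of Lemmas~\ref{lem:factorization1}, \ref{lem:factorization2}, and \ref{lem:factorization3} are jointly exhaustive, so that one of them applies and produces $k\in F$ with $kc\in M$, whence $c=k^{-1}(kc)\in FM$. The case split runs as follows: if there is $l\in F$ such that $lc$ fixes a point of $X$, apply Lemma~\ref{lem:factorization1}; otherwise, if some non-trivial element of $F$ setwise stabilizes $O$, apply Lemma~\ref{lem:factorization2}; otherwise apply Lemma~\ref{lem:factorization3}. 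The one point needing an argument is that in this last case $c$ is automatically strongly active, as Lemma~\ref{lem:factorization3} requires: were $c$ not strongly active we would have $c(v)=v$, and taking $l=1\in F$ the element $lc=c$ would fix $v$, placing us back in the first case. This observation is the crux of the exhaustiveness, though it is only one line.

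Having shown $Y\subseteq FM$, I conclude $G=\grp{Y}\leq FM\leq G$, so $G=FM$, which is exactly $(\pi_1(G)\cap G)M=G$. The equality $\rk(M)+1=\rk(G)$ is inherited directly from Corollary~\ref{cor:ext_EA_3}, which supplies $M$. Finally, $G/M=FM/M\cong F/(F\cap M)$; this is finite because $F\leq\Sym(X)$ is finite, and abelian because $F\leq\pi_1(G)$, which is abelian by the definition of abelian wreath type. This yields the ``in particular'' clause. I do not expect a genuine obstacle in this step: all of the real work resides in the three preceding factorization lemmas, and the present statement is an organizational result combining them, with the only subtlety being the exhaustiveness check for the directed generators.
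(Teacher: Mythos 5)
Your proposal is correct and takes essentially the same approach as the paper: the paper likewise reduces to the directed generators and invokes Lemmas~\ref{lem:factorization1}, \ref{lem:factorization2}, and \ref{lem:factorization3} via an exhaustive case split, handling the non-strongly-active case by applying Lemma~\ref{lem:factorization1} with $f=1$ --- which is precisely your exhaustiveness observation stated in contrapositive form. The only cosmetic difference is that the paper splits first on whether $d$ is strongly active rather than on the existence of $l\in F$ with $lc$ fixing a point; the content is identical.
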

\begin{proof}
Let $M\normal G$ be the finite index subgroup given by Corollary~\ref{cor:ext_EA_3} and take $d\in Y$ directed. If $d$ is not strongly active, then $f:=1\in F:=\pi_1(G)\cap G$ is such that $fd$ fixes a vertex in $X$. Lemma~\ref{lem:factorization1} thus gives $k\in F$ such that $kd\in M$. If $d$ is strongly active, then either there is some non-trivial $f\in F$ such that $f$ setwise stabilizes the orbit of $v^d\in X$ under the action of $\grp{d}$ acting on $X$ or there is not. In the former case, Lemma~\ref{lem:factorization2} supplies $k\in F$ such that $kd\in M$ and in the latter, Lemma~\ref{lem:factorization3} gives $k\in F$ such that $kd\in M$.

It now follows that $FM=G$, and the lemma is verified.
\end{proof}

The desired theorem is now in hand.
\begin{thm}\label{thm:aws_groups}
If $G=\grp{Y}\leq \Aut(\Xt)$ is a self-replicating group of abelian wreath type, then either $G$ is virtually abelian, or $G$ is not elementary amenable.
\end{thm}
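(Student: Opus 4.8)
The plan is to argue by contradiction: suppose $G$ is elementary amenable but not virtually abelian, and split on the size of $X$. First I would dispose of the case $|X|=2$. On the binary tree $\pi_1(G)\leq\Sym(X)$ is automatically abelian, so a self-similar generating set witnessing abelian wreath type is already a distinguished generating set putting $G$ into reduced form. Theorem~\ref{thm:2-reg_BAG} then forces $G$ to be virtually abelian, contrary to assumption. It is worth recording why this case must be handled on its own: one cannot reduce the general situation to a larger alphabet by passing to $r_k(G)$, since $\pi_k(G)$ need not be abelian even when $\pi_1(G)$ is, so the abelian wreath hypothesis is not preserved.

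So assume $|X|>2$. Corollary~\ref{cor:ext_EA_3} supplies weakly self-replicating subgroups $N\normal M\normal G$ with $M/N$ abelian, $G/M$ finite, and $\rk(N)+2=\rk(M)+1=\rk(G)$; by Lemma~\ref{lem:weakly_self-similar_trans} the subgroup $M$ acts transitively on $X$. Feeding this same $M$ into Lemma~\ref{lem:factorization} gives $G=FM$, where $F:=\pi_1(G)\cap G$ is finite abelian, and in particular $G/M$ is abelian. Hence $G/N$ is generated by the image of $F$ together with the abelian normal subgroup $M/N$, so $G/N$ is metabelian, and the whole problem collapses to the single claim that $G/N$ is actually \emph{abelian}. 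Indeed, once $G/N$ is abelian, $G$ is (a group of rank $\leq\rk(N)$)-by-abelian, so $\rk(G)\leq\rk(N)+1$, contradicting $\rk(N)+2=\rk(G)$.

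Since $G/N=(FN/N)(M/N)$ with $M/N$ abelian and normal and $FN/N$ abelian, showing $G/N$ abelian is equivalent to showing that $F$ centralizes $M/N$, that is, $[F,M]\leq N$. This is where the abelian wreath structure and the self-replicating geometry must do the work. For $f\in F$ and $m\in M$ one has $\pi_1([f,m])=[\pi_1(f),\pi_1(m)]=1$ because $\pi_1(G)$ is abelian, so $[f,m]\in G_{(X)}\cap M$ fixes $X$ pointwise. I would then establish a relative version of Lemma~\ref{lem:com_contain_sections_2}, valid because $M$ acts transitively on $X$ and $M/N$ is abelian, to the effect that an element of $M$ fixing $x,y\in X$ whose section at $x$ lies in $N$ also has its section at $y$ in $N$. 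Combining this with the section-homomorphism device from the proof of Theorem~\ref{thm:odometers}, the target is to manufacture an auxiliary subgroup $L$ with $N\leq L$, $L/N$ abelian, and $\sec_L(v)=G$ for a suitable $v$; since $N$ is weakly self-replicating, $\phi_v$ then induces a surjection $L_{(v)}/N_{(v)}\onto G/N$ whose source is abelian, forcing $G/N$ abelian and closing the argument.

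The hard part is precisely this last construction. In Theorem~\ref{thm:odometers} the odometer furnished a single transitive element with trivial sections (via Lemma~\ref{lem:trans_no_section}) that simultaneously generated an abelian extension and recovered every section; a general self-replicating group of abelian wreath type, however, need contain no odometer. The ingredients I expect to need to build $L$ — equivalently, to push the commutators $[f,m]$, already known to lie in $G_{(X)}\cap M$, all the way down into $N$ — are the factorization $G=FM$ with $F$ finite abelian, the transitivity and weak self-replication of $M$ (together with Lemma~\ref{lem:fix}, which forbids global fixed points of nontrivial normal subgroups), and the fixed-point analysis of $F$ on the $\grp{d}$-orbits that already drove the three cases of Lemma~\ref{lem:factorization}. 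Getting from ``all sections of $[f,m]$ lie in $N$'' to ``$[f,m]\in N$'' is the delicate step, and is where I anticipate the genuine difficulty to concentrate.
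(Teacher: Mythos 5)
Your setup reproduces the paper's skeleton faithfully: the case $|X|=2$ is dispatched by Theorem~\ref{thm:2-reg_BAG} (and your side remark about why one cannot simply pass to $r_k(G)$ is correct, since abelian wreath type is not preserved under $r_k$); for $|X|>2$ you invoke Corollary~\ref{cor:ext_EA_3} to get weakly self-replicating $N\leq M$ with $M/N$ abelian, $G/M$ finite and $\rk(N)+2=\rk(G)$, then Lemma~\ref{lem:factorization} to get $G=FM$ with $F=\pi_1(G)\cap G$, and you correctly identify that everything reduces to showing the quotient by the bottom subgroup is abelian, which contradicts the rank. This is exactly the paper's strategy. But the proposal stops there: the entire mathematical content of the theorem is that verification, and you explicitly defer it (``the hard part is precisely this last construction,'' ``where I anticipate the genuine difficulty to concentrate''). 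What you offer in its place --- show $[F,M]\leq N$ by pushing each commutator $[f,m]$, which fixes $X$ pointwise, down into $N$ --- is a restatement of the problem together with a list of ingredients, not a proof.

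Moreover, the bridge you single out as the delicate step --- passing from ``all sections of $[f,m]$ lie in $N$'' to ``$[f,m]\in N$'' --- is not an available inference. Weak self-replication gives only the opposite direction: an element \emph{of} $N$ fixing $v$ has its section at $v$ in $N$. A weakly self-replicating normal subgroup need not contain every element of $G_{(X)}$ all of whose sections lie in it (rigid-stabilizer-type elements are exactly the obstruction), and no lemma in the paper supplies such a closure property. The paper's proof is engineered so that this implication is never needed: it proves $FL=G$ (hence $G/L\cong F/(F\cap L)$ is abelian, where $L$ is your $N$) by producing, for each directed generator $d\in Y$, an element $k\in F$ with $kd\in L$. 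Such elements arise as sections, at vertices they fix, of commutators like $[f,(hd)^{-1}]$ (when $F\cap M\neq\{1\}$) or $[f\tilde{d}^{-1}f^{-1},\tilde{d}^{-1}]$ with $\tilde{d}=hd$ (when $F\cap M=\{1\}$), which are \emph{already known} to lie in $L$ because $M/L$ is abelian and both commutator entries lie in $M$; only the legitimate direction of weak self-replication is then applied. The case split on whether $M\cap F$ is trivial, and within the trivial case on whether $\grp{\tilde{d}}$ acts transitively on $X$, organizes which commutators to use and is the genuine core of the argument; none of it appears in your proposal.
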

\begin{proof}
The case that $|X|=2$ is settled by Theorem~\ref{thm:2-reg_BAG}. We thus assume that $|X|>2$. Set $F:=\pi_1(G)\cap G$. We suppose toward a contradiction that $G$ is elementary amenable but not virtually abelian.

 Applying Corollary~\ref{cor:ext_EA_3}, we obtain non-trivial weakly self-replicating $L\normal G$ and $M\normal G$ such that $L\leq M$, $G/M$ is finite, $M/L$ is abelian, and $\rk(L)+2=\rk(M)+1=\rk(G)$. Lemma~\ref{lem:factorization} ensures further that $FM=G$. We now argue that $G/L$ is abelian, which contradicts the rank of $G$.

We have two cases: (1) $M\cap F\neq\{1\}$, and (2) $M\cap F=\{1\}$. For case (1), fix a non-trivial $f\in F\cap M$. By Lemma~\ref{lem:factorization}, each $d\in Y$ directed admits $h\in F$ such that $hd\in M$. Taking $hd\in M$, $[f,(hd)^{-1}]\in L$, so $[f,(hd)^{-1}]_{v^d}=kd$  is an element of $L$ where $k\in F$. We conclude that for each directed $d\in Y$ there is $h\in F$ such that $hd\in L$. It is thus the case that $FL=G$, so $G/L$ is abelian, giving the desired contradiction.

For case (2), fix $d\in Y$ directed and let $h\in F$ be such that $hd\in M$. If $hd$ fixes $v^d$, then $(hd)_{v^d}=d\in M$. Hence $h\in F\cap M=\{1\}$.  The element $d$ must be strongly active since else since $d\neq 1$, it has a finitary section that belongs to $M$ and so $M$ contains a non-trivial element of $F$. We conclude that $d$ does not fix $v^d$. We may thus assume that $hd$ does not fix $v^d$. Set $\tilde{d}:=hd$ and observe that $\tilde{d}$ has exactly one non-finitary section, namely $\tilde{d}_{v^d}=d$.

If $\grp{\tilde{d}}$ acts intransitively on $X$, let $i$ be least such that $\tilde{d}^i$ fixes $X$. Since $M$ intersects $F$ trivially, it must be the case that $\tilde{d}^i$ has trivial sections at all $w$ outside the orbit of $v^d$ under $\grp{\tilde{d}}$. The group $M$ acts transitively on $X$, so Lemma~\ref{lem:com_contain_sections_2} implies that $h'd\in L$ for some $h'\in F$.

Suppose that $\grp{\tilde{d}}$ acts transitively on $X$. Fix $f\in F$ that is non-trivial; such an element exists by Lemma~\ref{lem:F_non-trivial}. If $f(v^d)=\tilde{d}(v^d)$, then $f$ acts like a $|X|$-cycle on $X$, since $\pi_1(G)$ is abelian. Since $|X|>2$, $f^2$ is non-trivial, and $f^2(v^d)\neq \tilde{d}(v^d)$. By possibly replacing $f$ with $f^2$, we may assume that $f(v^d)\neq \tilde{d}(v^d)$.  The elements $f\tilde{d}^{-1}f^{-1}$ and $\tilde{d}$ are in $M$, so $[f\tilde{d}^{-1}f^{-1},\tilde{d}^{-1}]\in L$. Moreover,
\[
[f\tilde{d}^{-1}f^{-1},\tilde{d}^{-1}]_{v^d}=(\tilde{d}^{-1})_{f^{-1}\tilde{d}(v^d)}(\tilde{d}^{-1})_{\tilde{d}^2(v^d)}\tilde{d}_{f^{-1}\tilde{d}(v^d)}d = h'd,
\]
where $h'\in F$.

We now conclude that for all $d\in D$, there is $h\in F$ such that $hd\in L$. Hence, $FL=G$, and so $G/L$ is abelian, giving the desired contradiction.
\end{proof}

The infinite dihedral group shows that virtually abelian groups indeed arise as self-replicating groups of abelian wreath type; see Example~\ref{ssec:dihedral}.

\subsection{GGS groups}
We pause here to recall the GGS groups and observe that Theorem~\ref{thm:aws_groups} applies to these groups. See \cite{Ba93} for more information on these groups.

For $p$ an odd prime, form the $p$-regular rooted tree $[p]^*$. Let $a\in \Aut([p]^*)$ be such that $a(iz)=((i+1)\mod p) z$ for word $iz\in [p]^*$, where $i\in [p]$. The automorphism $a$ cyclically permutes the first level of the tree and acts rigidly below the first level. Take a $p-1$-tuple $\alpha:=(e_0,\dots,e_{p-2})$ where $e_i\in\{0,\dots, p-1\}$ for each $0\leq i\leq p-2$. Using wreath recursion, we define $b_{\alpha}\in \Aut([p]^*)$ by $b_{\alpha}:=(a^{e_1},\dots,a^{e_{p-2}},b)$.
%

\begin{defn}
For $\alpha=(e_1,\dots,e_{p-1})$ a $p-1$-tuple where $e_i\in\{0,\dots, p-1\}$ for each $1\leq i\leq p-1$, the \textbf{GGS group} associated to $\alpha$ is the group $G_{\alpha}:=\grp{a,	b_{\alpha}}\leq \Aut([p]^*)$.
\end{defn}

\begin{prop}
If some coordinate of $\alpha$ is non-zero, then the $GGS$ group $G_{\alpha}$ is a self-replicating group of abelian wreath type.
\end{prop}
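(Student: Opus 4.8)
The plan is to exhibit an explicit finite self-similar generating set, verify the two defining conditions of abelian wreath type directly, and then deduce self-replication from the fact that $a$ has only trivial sections.

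First I would record the wreath data. Writing $\sigma:=\pi_1(a)$ for the $p$-cycle $(0\,1\,\cdots\,p-1)$, the automorphism $a$ is finitary of depth one with $a_x=1$ for every $x\in[p]$, while $b_\alpha$ fixes $[p]$ pointwise and satisfies $(b_\alpha)_x=a^{e_{x+1}}$ for $x\neq p-1$ and $(b_\alpha)_{p-1}=b_\alpha$. Since $\pi_1(a)=\sigma$ and $\pi_1(b_\alpha)=1$, we obtain $\pi_1(G_\alpha)=\grp{\sigma}\cong \Zb/p\Zb$, which is abelian. I would then take $Y:=\{a^i\mid 0\le i\le p-1\}\cup\{b_\alpha\}$ as the candidate distinguished generating set. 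It is finite and generates $G_\alpha$. It is self-similar because every section of a power $a^i$ is trivial (hence the trivial element lies in $Y$), while the sections of $b_\alpha$ are the powers $a^{e_{x+1}}\in Y$ together with $b_\alpha\in Y$. Under the identification of $\pi_1(G_\alpha)$ with finitary automorphisms, each $a^{e_j}$ lies in $\pi_1(G_\alpha)$, so $b_\alpha$ satisfies $(b_\alpha)_x\in\{b_\alpha\}\cup\pi_1(G_\alpha)$ for all $x$ with exactly one $x$ (namely $x=p-1$) giving $(b_\alpha)_x=b_\alpha$; every $a^i$ has all sections trivial. Together with the abelianness of $\pi_1(G_\alpha)$, this shows $G_\alpha$ is of abelian wreath type.

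For self-replication I would check the three requirements in turn. Self-similarity of the group $G_\alpha$ follows from that of the set $Y$ via the cocycle identity $(hk)_v=h_{k(v)}k_v$ and the corresponding formula for inverses. Transitivity on $[p]$ is immediate, since $\grp{a}$ already acts as the full $p$-cycle. The crux is to show $\sec_{G_\alpha}(x)=G_\alpha$ for every $x$. Here I would first observe that all section groups coincide: given $x,y$, choose $k$ with $a^k(x)=y$; conjugation by $a^k$ carries $(G_\alpha)_{(x)}$ bijectively onto $(G_\alpha)_{(y)}$, and since every section of a power of $a$ is trivial the cocycle computation gives $(a^kga^{-k})_y=(a^k)_{g(x)}\,g_x\,(a^{-k})_y=g_x$ for $g\in(G_\alpha)_{(x)}$, whence $\sec_{G_\alpha}(y)=\sec_{G_\alpha}(x)=:S$. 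Because $b_\alpha$ fixes the first level, $b_\alpha\in(G_\alpha)_{(p-1)}$ and $(b_\alpha)_{p-1}=b_\alpha$, so $b_\alpha\in S$; and choosing $j$ with $e_j\neq 0$, the section $(b_\alpha)_{j-1}=a^{e_j}$ lies in $S$, and this element generates $\grp{a}$ because $p$ is prime, so $a\in S$. Hence $S\supseteq\grp{a,b_\alpha}=G_\alpha$, and as trivially $S\le G_\alpha$ we conclude $S=G_\alpha$, giving self-replication.

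The only genuinely delicate points are the equality of all the section groups and the observation that a single nonzero exponent recovers all of $\grp{a}$; both rest on the triviality of the sections of $a$ and the primality of $p$, so I expect no serious obstacle beyond careful bookkeeping of the wreath-recursion indices.
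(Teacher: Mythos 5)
Your proof is correct and takes the same route as the paper, whose entire argument is to note that $\pi_1(G_\alpha)=\grp{a}=C_p$ (giving abelian wreath type) and to assert that self-replication is ``clear, and well-known.'' Your write-up simply supplies the verification the paper leaves implicit: the explicit self-similar generating set, the equality of all section subgroups via conjugation by powers of $a$ (which have trivial sections), and the recovery of $a$ inside a section group from a single nonzero exponent $e_j$ using the primality of $p$ --- which is also where the hypothesis that $\alpha\neq 0$ enters.
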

\begin{proof}
We see that $\pi_1(G_{\alpha})=\grp{a}=C_p$. Thus, $G_{\alpha}$ is of abelian wreath type. That $G_{\alpha}$ is self-replicating is also clear, and well-known.
\end{proof}

\begin{cor}
The only elementary amenable GGS groups are the cyclic groups $C_p$.
\end{cor}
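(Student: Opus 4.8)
The plan is to combine the preceding Proposition with Theorem~\ref{thm:aws_groups}, reducing the whole statement to the single classical fact that a GGS group with nonzero defining vector is never virtually abelian. First I would dispose of the degenerate case: if $\alpha=0$, then every coordinate $e_i$ vanishes, so the wreath recursion $b_\alpha=(1,\dots,1,b_\alpha)$ forces $b_\alpha=1$ (the only tree automorphism fixing the first level with all first-level sections trivial except a single copy of itself is the identity). Hence $G_\alpha=\grp{a}\cong C_p$, which is finite and therefore elementary amenable; this accounts for the groups $C_p$ in the statement.

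Now fix $\alpha\neq 0$. By the preceding Proposition, $G_\alpha$ is a self-replicating group of abelian wreath type, so Theorem~\ref{thm:aws_groups} applies and gives that $G_\alpha$ is either virtually abelian or not elementary amenable. It therefore suffices to show that $G_\alpha$ is \emph{not} virtually abelian, for then it is not elementary amenable, and together with the previous paragraph this shows that the only elementary amenable GGS groups are the cyclic groups $C_p$.

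The heart of the argument, and the step I expect to be the main obstacle, is establishing that $G_\alpha$ is not virtually abelian when $\alpha\neq 0$. I would supply this by the well-known structure theory of GGS groups (see \cite{Ba93} and the references therein), which in particular shows that $G_\alpha$ is infinite. I would then split into two cases. If $G_\alpha$ is periodic, then it is an infinite torsion group; since a virtually abelian group has a finite-index subgroup isomorphic to $\Zb^d$ and is thus virtually torsion-free, an infinite torsion group cannot be virtually abelian, and we are done. In the remaining case $G_\alpha$ is weakly (regular) branch, so its rigid stabilizers are nontrivial at every level; taking products of rigid vertex stabilizers across a level yields finite subgroups of $G_\alpha$ of unbounded order. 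On the other hand, in a virtually abelian group the orders of finite subgroups are bounded: a finite subgroup meets the finite-index subgroup $\Zb^d$ trivially (as $\Zb^d$ is torsion-free) and hence injects into the finite quotient. This contradiction shows $G_\alpha$ is not virtually abelian in this case either.

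In writing this up I would lean on the literature for the dichotomy above rather than reproving it, since the only delicate input is the classical analysis of GGS groups, whereas the reduction that converts ``not virtually abelian'' into ``not elementary amenable'' is furnished entirely by Theorem~\ref{thm:aws_groups}. (If one prefers to avoid case analysis, it is enough to cite the uniform statement that every GGS group with $\alpha\neq 0$ is infinite and weakly branch, which yields the non-virtually-abelian conclusion directly.)
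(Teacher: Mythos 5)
Your overall route coincides with the paper's (implicit) one: handle $\alpha=0$ by noting $b_\alpha=1$, and for $\alpha\neq 0$ combine the preceding Proposition with Theorem~\ref{thm:aws_groups}, so that everything reduces to knowing that $G_\alpha$ is not virtually abelian (a fact the paper treats as known from the GGS literature). The genuine gap is in your justification of that fact in the non-periodic case. The step ``$G_\alpha$ is weakly branch, so taking products of rigid vertex stabilizers across a level yields finite subgroups of unbounded order'' is unjustified, and the general principle behind it is false: rigid stabilizers of a weakly branch group need not contain any nontrivial element of finite order. The basilica group, which recurs throughout this paper, is weakly branch and torsion-free, hence has no nontrivial finite subgroups at all. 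Since the non-periodic case is precisely the one where torsion in $\rist_{G_\alpha}(v)$ is in doubt, your case analysis fails exactly where you need it, and your parenthetical fallback (``infinite and weakly branch yields the non-virtually-abelian conclusion directly'') begs the same question, because the only mechanism you offer for passing from weak branchness to non-virtual-abelianness is this flawed one. (Your periodic case is fine, granted that GGS groups are finitely generated, and your treatment of $\alpha=0$ is correct.)

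The implication you need --- weakly branch implies not virtually abelian --- is true, but requires a different argument. Either: (i) quote Ab\'ert's theorem that weakly branch groups satisfy no group law, and observe that a group $G$ with an abelian subgroup $A$ of index $n$ satisfies the law $[x^{n!},y^{n!}]=1$, since the permutation induced by any $g\in G$ on the $n$ cosets of $A$ has order dividing $n!$, whence $g^{n!}\in A$; or (ii) argue elementarily that a finitely generated virtually abelian group is Noetherian (every subgroup is finitely generated), whereas a weakly branch group contains a subgroup that is an infinite direct sum of nontrivial groups: pick pairwise incomparable vertices $v_1,v_2,\dots$ (for instance, siblings of the vertices along an infinite ray) and nontrivial $g_n\in\rist_{G_\alpha}(v_n)$; the $g_n$ have pairwise disjoint supports, so $\grp{g_n\mid n\geq 1}\cong\bigoplus_{n\geq 1}\grp{g_n}$ is not finitely generated. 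Either repair, combined with the literature fact you already cite (GGS groups with $\alpha\neq 0$ are weakly branch), closes the gap and recovers the corollary.
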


\section{Examples}

\subsection{Balanced generalized basilica groups}\label{ssec:balanced}
Define $a,b\in \Aut([5]^*)$ by wreath recursion:
\[
a:=(01)(34)(1,1,1,1,a) \text{ and } b:=(12)(34)(1,1,1,b,1).
\]
The group $G:=\grp{a,b}\leq \Aut([5]^*]$ is a balanced generalized basilica group. On $[5]$, $G$ has two orbits $U:=\{0,1,2\}$ and $V:=\{3,4\}$, and $v^a,v^b$ are elements of $V$.

Setting $c:=ab$, an easy computation shows that $c=(012)(1,1,1,c,1)$. The element $c$ fixes $V$, and it follows that $c\in K:=\ker(G\acts V^*)$. In view of Lemma~\ref{lem:almost_self-replicating}, $K$ is locally finite, and since $G$ is two generated, it follows that $G/K\simeq \Zb$. The group $G$ is thus elementary amenable.

\begin{claim*}
 The set $\{b^{-n}cb^n\mid n\geq 0\}$ consists of pairwise distinct elements of $K$.
\end{claim*}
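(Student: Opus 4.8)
The plan is to establish two facts: (i) each $b^{-n}cb^{n}$ lies in $K$, and (ii) the elements $b^{-n}cb^{n}$, $n\geq 0$, are pairwise distinct. Fact (i) is immediate: the sub-basilica action of Lemma~\ref{lem:almost_self-replicating_0} gives a homomorphism $G\to\Aut(V^*)$ whose kernel is $K$, so $K\normal G$; as $c\in K$, every conjugate $b^{-n}cb^{n}$ lies in $K$ as well. The content of the claim is therefore fact (ii), and for this I write $c^{(n)}:=b^{-n}cb^{n}$ for $n\geq 0$.

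The engine of the argument is a recursion that recovers $c^{(\lfloor n/2\rfloor)}$ as a section of $c^{(n)}$. First I would compute $b^{2}=(1,1,1,b,b)$: since $\pi_1(b)=(12)(34)$ is an involution, $b^{2}$ fixes $X$ pointwise with $(b^{2})_{3}=(b^{2})_{4}=b$ and all other sections trivial, so $b^{2k}$ fixes $X$ pointwise with $(b^{2k})_{3}=(b^{2k})_{4}=b^{k}$. Feeding this and $c=(012)(1,1,1,c,1)$ through the section cocycle $(hk)_{v}=h_{k(v)}k_{v}$ yields, after cancellation, the even case
\[
c^{(2k)}=(012)\,(1,1,1,c^{(k)},1),
\]
the only surviving nontrivial section being $(c^{(2k)})_{3}=b^{-k}cb^{k}=c^{(k)}$ (the contribution at vertex $4$ cancels as $b^{-k}\cdot 1\cdot b^{k}=1$). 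Conjugating once more by $b$ and using that $b$ swaps $3,4$ on $X$ with $b_{3}=b$ and $b_{4}=1$, I obtain the odd case
\[
c^{(2k+1)}=(021)\,(1,1,1,1,c^{(k)}),
\]
with active vertex $4$ and $(c^{(2k+1)})_{4}=c^{(k)}$. In both cases $\pi_{1}(c^{(n)})$ equals $(012)$ or $(021)$ according as $n$ is even or odd, and the section of $c^{(n)}$ at its (unique) active vertex is exactly $c^{(\lfloor n/2\rfloor)}$.

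Distinctness now follows by infinite descent. Suppose $c^{(n)}=c^{(m)}$ with $0\leq n<m$, and take $m$ minimal among all such pairs. Comparing $\pi_{1}$ forces $n\equiv m\pmod 2$, and comparing the section at the common active vertex forces $c^{(\lfloor n/2\rfloor)}=c^{(\lfloor m/2\rfloor)}$. Since $n<m$ have the same parity, $\lfloor n/2\rfloor<\lfloor m/2\rfloor$, while $\lfloor m/2\rfloor<m$ because $m\geq 1$; this contradicts the minimality of $m$. Hence no such pair exists and the $c^{(n)}$ are pairwise distinct. The only delicate point is the bookkeeping in the cocycle computation that produces the recursion, in particular tracking how the active vertex alternates between $3$ and $4$ under conjugation by $b$; once the recursion is in hand, the descent is routine. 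Equivalently, the sequence of active vertices down the spine of $c^{(n)}$ reads off the binary digits of $n$, which makes the distinctness transparent.
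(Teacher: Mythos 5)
Your proof is correct and follows essentially the same route as the paper: both establish the recursion $b^{-2k}cb^{2k}=(012)(1,1,1,b^{-k}cb^{k},1)$ and $b^{-2k-1}cb^{2k+1}=(021)(1,1,1,1,b^{-k}cb^{k})$, and then deduce pairwise distinctness inductively (your minimal-counterexample descent is just a repackaging of the paper's induction on $i$). The extra observation that membership in $K$ follows from normality is a harmless, correct addition.
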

\begin{proof}
Noting that $b^2=(1,1,1,b,b)$, it is immediate that
\[
b^{-2n}cb^{2n}=(012)(1,1,1,b^{-n}cb^{n},1).
\]
An easy calculation shows that
\[
b^{-2n-1}cb^{2n+1}=(021)(1,1,1,1,b^{-n}cb^{n}).
\]
Arguing by induction on $i$ for the claim that $\{b^{-n}cb^n\mid i\geq n\geq 0\}$ consists of pairwise distinct elements proves the claim.
\end{proof}

The claim implies the kernel $K$ is infinite. Since all subgroups of a finitely generated virtually abelian group are finitely generated, we deduce that $G$ is \textit{not} virtually abelian. This example shows that Corollary~\ref{cor:balanced} is sharp.

We next exhibit a large family of generalized basilica groups.

\begin{defn}
For a prime $p\geq 3$, let $c$ be the cycle $(0\dots p-1)$ and define $g_{p},h_{p}\in \Aut([p]^*)$ by $g_{ p}:=c(g_{ p},1,\dots,1)$ and $h_{p}:=c(1,h_{p},1,\dots,1)$. We define $G_p:=\grp{g_{p},h_{p}}$.
\end{defn}
\begin{prop}\label{prop:G_p-properties}
For each prime $p\geq 3$, $G_p$ is a self-replicating balanced generalized basilica group, weakly branch, and not elementary amenable.
\end{prop}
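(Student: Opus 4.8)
The structural properties come quickly. Write $g:=g_p$ and $h:=h_p$. Both generators are directed with active vertices $v^g=0$ and $v^h=1$ and with sections in $\{1,g\}$, respectively $\{1,h\}$, so $Y:=\{g,h\}$ witnesses that $G_p$ is a generalized basilica group. Since $\pi_1(g)=\pi_1(h)=c$ is a $p$-cycle, $G_p$ acts transitively on $[p]$, and as $Y$ consists entirely of directed elements, Proposition~\ref{prop:self-rep_char} gives that $G_p$ is self-replicating (hence level transitive). Because $\pi_1(G_p)=\grp{c}$ is cyclic, hence abelian, and $G_p$ is transitive on $[p]$, Proposition~\ref{prop:balanced} shows $G_p$ is balanced.

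For non-elementary-amenability I would first record that $G_p$ is non-abelian. Using $(hk)_w=h_{k(w)}k_w$, a direct wreath-recursion computation gives $(gh)_0=g_{h(0)}h_0=g_1=1$, while $(hg)_0=h_{g(0)}g_0=h_1g=hg\neq 1$, so $gh\neq hg$ (here $p\geq 3$ guarantees $c(1)\neq 0$). Thus $G_p$ is a self-replicating balanced generalized basilica group that is not abelian, so Theorem~\ref{thm:balanced_self-rep} immediately yields that $G_p$ is not elementary amenable.

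The substantive part is weak branching; here I must show $\rist_{G_p}(v)\neq 1$ for every vertex $v$, level transitivity being already in hand. The engine is an explicit localized commutator. Since $c^p=1$ one computes $g^p=(g,\dots,g)$, and then, again via $(hk)_w=h_{k(w)}k_w$, that $[g^p,h]=(1,\dots,1,[g,h],1,\dots,1)$ with a single nontrivial section $[g,h]$ at vertex $c(1)=2$; as $G_p$ is non-abelian this is a nontrivial member of $\rist_{G_p}(2)$ lying in $[G_p,G_p]$. Two standard facts drive an induction: first, because $G_p$ is self-replicating, $\phi_v\colon (G_p)_{(v)}\to G_p$ is onto and $\rist_{G_p}(v)\normal (G_p)_{(v)}$, so $\phi_v(\rist_{G_p}(v))\normal G_p$; second, as $G_p=\grp{g,h}$, the subgroup $[G_p,G_p]$ equals the normal closure $\grp{[g,h]^{G_p}}$. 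I would prove by induction on $|v|$ that $\phi_v(\rist_{G_p}(v))\supseteq[G_p,G_p]$. The base $v=\emptyset$ is trivial. For the step from $v$ to $vx$: the inductive hypothesis puts $[g^p,h]$ and all its $\grp{g}$-conjugates $g^{k}[g^p,h]g^{-k}$ into $\phi_v(\rist_{G_p}(v))$; these are localized at the sub-vertices $c^k(2)$, which range over all of $[p]$ as $k$ varies, each carrying a nontrivial (conjugate-of-$[g,h]$) section. Pulling such an element back through $\phi_v$ produces a nontrivial element of $\rist_{G_p}(vx)$ whose section at $vx$ is a conjugate of $[g,h]$. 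Since $\phi_{vx}(\rist_{G_p}(vx))$ is normal in $G_p$ and contains a conjugate of $[g,h]$, the second fact forces it to contain $[G_p,G_p]$. This closes the induction, so $\rist_{G_p}(v)$ is nontrivial for every $v$ (as $[G_p,G_p]\neq 1$), and $G_p$ is weakly branch.

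The main obstacle is exactly this downward propagation: level transitivity alone gives nontrivial rigid stabilizers only on the first level, and the descent to deeper vertices genuinely requires combining recurrence (so that section-images of rigid stabilizers are normal) with the explicit localized commutator (to seed a conjugate of $[g,h]$ at every vertex). I would take particular care that the $\grp{g}$-conjugates of $[g^p,h]$ localize to all $p$ first-level sub-vertices with nontrivial section; this is the only point needing a short case-check, since for the conjugating power $k=p-1$ the relevant section of $g^{k}$ at vertex $2$ is $g$ rather than $1$, yielding section $g[g,h]g^{-1}$, which is still a nontrivial element of $[G_p,G_p]$.
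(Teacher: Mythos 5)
Your proof is correct, and everything up to weak branching is in substance what the paper does: the paper gets self-replication by computing $g^p=(g,\dots,g)$ and $h^p=(h,\dots,h)$ rather than citing Proposition~\ref{prop:self-rep_char}, verifies balancedness directly rather than via Proposition~\ref{prop:balanced}, makes the identical section computation $(gh)_0=1$, $(hg)_0=hg$ for non-abelianness, and invokes Corollary~\ref{cor:trans_virt-ab} where you invoke Theorem~\ref{thm:balanced_self-rep}; since both generators are odometers these citations are interchangeable here.

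Where you genuinely diverge is the weakly branch claim. The paper's argument is explicit and non-inductive: it checks that $[g^{p^i},h]$ fixes $X^i$ pointwise, has section $[g,h]$ at the vertex $2^i$ (the word of $i$ twos) and trivial sections elsewhere, so $[g^{p^i},h]$ is a non-trivial element of $\rist_G(2^i)$; level transitivity then transports non-triviality to every vertex of level $i$ by conjugation. You instead prove by induction on $|v|$ that $\phi_v(\rist_G(v))\supseteq [G,G]$, combining three facts: self-replication makes $\phi_v$ surjective, so $\phi_v(\rist_G(v))\normal G$; in the $2$-generated group $G=\grp{g,h}$ the commutator subgroup $[G,G]$ is the normal closure of $[g,h]$; and the $\grp{g}$-conjugates $g^k[g^p,h]g^{-k}$ are each supported on a single first-level subtree $c^k(2)X^*$, with these subtrees covering all of $X$, so pulling them back through $\phi_v$ seeds a conjugate of $[g,h]$ into $\rist_G(vx)$ for every $x$. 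Your route is longer but proves strictly more — the uniform containment $[G,G]\leq\phi_v(\rist_G(v))$ at every vertex, a weak-regular-branch-type statement — and it avoids having to guess the explicit deeper-level witnesses $[g^{p^i},h]$; the paper's route buys brevity by exhibiting those witnesses directly. Your attention to the exceptional conjugating power $k=p-1$, where $(g^{p-1})_2=g$ and the localized section is $g[g,h]g^{-1}$ rather than $[g,h]$, is exactly the case that needed checking, and since a conjugate of $[g,h]$ still generates the same normal closure, the induction closes as you say.
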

\begin{proof}
It is immediate that $G:=G_p$ is a balanced generalized basilica group, since $g:=g_p$ and $h:=h_p$ act transitively on $[p]$. The elements $g^p$ and $h^p$ are such that $(g^p)_x=g$ and $(h^p)_x=h$ for all $x\in X$. It follows that $G_p$ is self-replicating. In view of Corollary~\ref{cor:trans_virt-ab}, we need only to check that $G$ is not abelian to conclude that $G$ is not elementary amenable. This is immediate by considering sections:
\[
(gh)_0=1\text{ and }(hg)_0=hg.
\]
It is clear that $hg$ is non-trivial, so $g$ and $h$ do not commmute.

Let us finally verify that $G$ is weakly branch. The commutator $[g^p,h]$ fixes $X$ and
\[
[g^p,h]_i=
\begin{cases}
[g,h] & i=2\\
1 & \text{else}
\end{cases}
\]
The rigid stabilizer $\rist_{G}(2)$ is non-trivial, and it follows that $\rist_G(i)$ is non-trivial for all $i\in X$. One verifies that $[g^{p^i},h]$ lies in $\rist_G(2^i)$ and $[g^{p^i},h]_{2^i}=[g,h]$, where $2^i$ is the word in $[p]^*$ consisting of $i$ many $2$'s. We infer that all rigid stabilizers are infinite. Since $G$ acts transitively on each level, $G$ is weakly branch.
\end{proof}

To distinguish the $G_p$ requires a result from the literature on weakly branch groups. For a tree $X^*$, the \textbf{boundary} of $X^*$, denoted by $\partial X^*$, is the collection of infinite words $X^{\omega}$ with the product topology. The group $\Aut(\Xt)$ has an action on $\partial \Xt$ by homeomorphisms which is induced from the action $\Aut(\Xt)\acts \Xt$. The product measure on $\partial \Xt$ arising from the uniform measure on $X$ is preserved by $\Aut(\Xt)$. This measure is called the \textbf{Bernoulli measure}.

\begin{thm}[{See \cite[Theorem 2.10.1]{N05}}]\label{thm:boundary_iso_wbranch}
Suppose  that $G\leq \Aut(X^*)$ and $H\leq \Aut(Y^*)$ are weakly branch groups. If $\phi: G\rightarrow H$ is an isomorphism of groups, then there is a measure preserving homeomorphism $F:\partial X^*\rightarrow \partial Y^*$ such that $\phi(g)(F(\alpha))=F(g(\alpha))$ for all $\alpha\in \partial \Xt$, where $\partial \Xt$ and $\partial Y^*$ are equipped with the respective Bernoulli measures
\end{thm}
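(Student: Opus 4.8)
The plan is to reconstruct the topological dynamical system $G\acts \partial \Xt$ from the abstract group $G$ in a canonical, functorial way, so that the abstract isomorphism $\phi$ is forced to be \emph{spatial}. The central objects are the rigid stabilizers: for $v\in \Xt$ write $\rist_G(v)$ for the subgroup of elements of $G$ supported on the cylinder $[v]:=v X^{\omega}\subseteq \partial \Xt$. Since $G$ is weakly branch, every $\rist_G(v)$ is non-trivial, and rigid stabilizers at incomparable vertices commute because their supports are disjoint. My first step would be to show that all of this data can be read off from the group structure alone: the supports $\Supp(H)$ of subgroups $H\leq G$ are open subsets of $\partial \Xt$, the cylinders $[v]$ arise precisely as the supports of the rigid stabilizers, and the containment, disjointness, and overlap relations among cylinders are mirrored by the corresponding inclusion and commuting relations among rigid stabilizers. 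The goal is to characterize the family $\{\rist_G(v)\mid v\in \Xt\}$ intrinsically—say as the maximal members of a suitable lattice of pairwise-commuting subgroups with non-trivial mutual overlap—so that any isomorphism $\phi\colon G\to H$ necessarily carries rigid stabilizers of $G$ to rigid stabilizers of $H$.

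The hard part is exactly this intrinsic characterization, which is the substance of a Rubin-type reconstruction theorem: one must verify that the boundary action of a weakly branch group is faithful (a non-trivial tree automorphism moves some ray, so $G\acts \partial\Xt$ is faithful) and sufficiently non-degenerate—``locally dense''—that the Boolean algebra of cylinders together with its $G$-action is determined by $G$ up to isomorphism. The weakly branch hypothesis enters precisely to guarantee that \emph{every} cylinder carries a non-trivial rigid stabilizer, so that no region of the boundary is invisible to the algebra. Granting this, Stone duality (or, equivalently, a direct inverse-limit construction along decreasing chains $\rist_G(\xi_1)\geq \rist_G(\xi_1\xi_2)\geq\cdots$, which recover the infinite rays of $\partial\Xt$) produces a homeomorphism $F\colon\partial\Xt\to \partial Y^*$ sending $[v]$ to $\Supp(\phi(\rist_G(v)))$, and transporting the $G$-action through $\phi$ yields the intertwining identity $\phi(g)(F(\alpha))=F(g(\alpha))$ for all $\alpha$.

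It remains to check that $F$ carries the Bernoulli measure $\mu$ on $\partial\Xt$ to the Bernoulli measure $\nu$ on $\partial Y^*$. Here I would exploit level-transitivity. For fixed $n$, the rigid stabilizers $\{\rist_G(v)\mid v\in X^n\}$ are permuted transitively by $G$, and their supports partition $\partial\Xt$ into $|X|^n$ cylinders of equal measure $|X|^{-n}$. Applying $\phi$, their images $\phi(\rist_G(v))$ are pairwise conjugate rigid stabilizers of $H$ (conjugacy in $G$ is transported by $\phi$), hence all supported on cylinders of $\partial Y^*$ lying at one common level $m=m(n)$ and of equal $\nu$-measure, since conjugation in $H$ maps $\rist_H(w)$ to $\rist_H(h(w))$ and so preserves the length of $w$. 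As the $|X|^n$ image cylinders partition $\partial Y^*$, they must constitute the full level-$m$ partition, forcing $|Y|^m=|X|^n$ and $\nu(F[v])=|Y|^{-m}=|X|^{-n}=\mu([v])$. Running this for $jn$ in place of $n$ gives matching at the cofinal sequence of levels $jm$, so $F_*\mu$ and $\nu$ agree on a family of cylinders generating the Borel $\sigma$-algebra of $\partial Y^*$; a routine $\pi$-system argument then shows $F_*\mu=\nu$, i.e.\ $F$ is measure preserving.
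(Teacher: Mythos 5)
The paper gives no proof of this theorem: it is imported wholesale from \cite[Theorem 2.10.1]{N05} (the Lavreniuk--Nekrashevych rigidity theorem) and used as a black box in the proof of Theorem~\ref{thm:size_GBG}. So your proposal must stand on its own, and it has a genuine gap exactly where you place all the weight. You want an intrinsic, isomorphism-invariant characterization of the family $\{\rist_G(v)\mid v\in\Xt\}$, so that $\phi$ carries rigid stabilizers to rigid stabilizers and hence $F$ carries cylinders to cylinders at matching levels. No such characterization can exist. Let $G\leq \Aut(\Xt)$ be any weakly branch group, let $H:=r_2(G)\leq \Aut((X^2)^*)$ be the same group acting on the tree of even-length words, and let $\phi:=r_2\rest_G$, a group isomorphism onto the weakly branch group $H$. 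For $x\in X$, the closure of the support of $\phi(\rist_G(x))$, under the canonical identification $\partial (X^2)^*=\partial \Xt$, is the cylinder $[x]=xX^{\omega}=\bigcup_{x'\in X}[xx']$, a union of $|X|$ level-one cylinders of $(X^2)^*$. Since weak branchness forces the closure of the support of $\rist_H(w)$ to be exactly $[w]$ (its support meets every sub-cylinder $[wu]$ because $\rist_H(wu)\neq 1$), the subgroup $\phi(\rist_G(x))$ is not a rigid stabilizer of any vertex of $(X^2)^*$. Your measure argument then fails concretely: for $n=1$ the supports of the $|X|$ images partition the boundary, but level $m$ of $(X^2)^*$ consists of $|X|^{2m}$ cylinders, so your equation $|Y|^m=|X|^n$ has no solution. (This example also explains why the theorem concludes only with a measure-preserving homeomorphism: reconstruction can at best recover the $G$-space $\partial \Xt$, never the tree $\Xt$.)

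Two repairs are needed. The construction of $F$, which you defer with ``granting this,'' is the real content of the theorem, and it is not a routine invocation of Rubin's theorem: Rubin's local-density hypothesis is not immediate from the weakly branch condition, since non-triviality of $\rist_G(v)$ says nothing about the orbit closure of a given point of $[v]$ under $\rist_G(v)$; this is why the cited source proves rigidity by a direct support-based reconstruction rather than by quoting Rubin. Measure preservation, by contrast, has a short argument that avoids tracking cylinders entirely: weakly branch groups are level transitive by definition, and a level-transitive subgroup of $\Aut(Y^*)$ admits the Bernoulli measure as its \emph{unique} invariant Borel probability measure, because invariance forces the $|Y|^m$ cylinders at level $m$ to be permuted transitively and hence to have equal measure $|Y|^{-m}$, and cylinders form a $\pi$-system generating the Borel $\sigma$-algebra. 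Once the intertwining homeomorphism $F$ is in hand, $F_*\mu$ is an $H$-invariant probability measure, hence equals $\nu$. So the correct division of labor is the reverse of your proposal: all of the difficulty lies in producing $F$, and none in checking that it preserves the measure.
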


\begin{thm}\label{thm:size_GBG}
For $p\neq q$ with $p,q\geq 3$, $G_p$ and $ G_q$ are not isomorphic.
\end{thm}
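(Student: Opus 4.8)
The plan is to assume $G_p\cong G_q$ and derive a contradiction purely at the level of the boundary dynamics. Since $p,q\geq 3$ are prime, Proposition~\ref{prop:G_p-properties} tells us that $G_p\leq \Aut([p]^*)$ and $G_q\leq \Aut([q]^*)$ are both weakly branch. Thus any isomorphism $\phi\colon G_p\to G_q$ would, via Theorem~\ref{thm:boundary_iso_wbranch}, produce a measure-preserving homeomorphism $F\colon \partial[p]^*\to \partial[q]^*$, where the two boundaries carry their respective Bernoulli measures $\mu_p$ and $\mu_q$. I would then observe that the mere existence of such an $F$ is impossible when $p\neq q$, so no isomorphism can exist. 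Note that the group-theoretic intertwining supplied by the theorem is not needed for this; only the existence of a measure-preserving homeomorphism between the two measured Cantor sets is used.

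The contradiction comes from the set of measures attained on clopen subsets. For the Bernoulli measure $\mu_p$, every cylinder $\partial(w[p]^*)$ with $w\in [p]^k$ has measure $p^{-k}$, and every clopen subset of $\partial[p]^*$ is a finite disjoint union of such cylinders by compactness; hence $\{\mu_p(U)\mid U\subseteq \partial[p]^* \text{ clopen}\}\subseteq \Zb[1/p]\cap[0,1]$, and likewise the clopen values of $\mu_q$ lie in $\Zb[1/q]\cap[0,1]$. Now fix a first-level cylinder $C:=\partial(x[p]^*)$ for some $x\in [p]$, so that $\mu_p(C)=1/p$. Since $F$ is a homeomorphism, $F(C)$ is clopen in $\partial[q]^*$, and since $F$ is measure preserving, $\mu_q(F(C))=\mu_p(C)=1/p$. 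Therefore $1/p\in \Zb[1/q]$, i.e. $1/p=a/q^n$ for some integers $a$ and $n\geq 0$, whence $q^n=ap$ and $p\mid q^n$. As $p$ and $q$ are distinct primes this is impossible, giving the desired contradiction.

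The only substantive input is Theorem~\ref{thm:boundary_iso_wbranch}, which converts an abstract isomorphism into a measure-preserving homeomorphism of boundaries; once that rigidity is in hand, the rest is an elementary remark about the clopen value sets of Bernoulli measures. Accordingly, the only thing to verify with care is that the hypotheses of Theorem~\ref{thm:boundary_iso_wbranch} are met, namely that each $G_p$ is weakly branch, which is exactly the content of Proposition~\ref{prop:G_p-properties}. I do not expect a genuine obstacle here; the one point worth flagging is that the argument must use the measure-preserving clause of the theorem rather than the homeomorphism alone, since $\partial[p]^*$ and $\partial[q]^*$ are both homeomorphic to the Cantor set and so are topologically indistinguishable. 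It is precisely the interaction of the homeomorphism with the distinct prime denominators of $\mu_p$ and $\mu_q$ that forbids the identification.
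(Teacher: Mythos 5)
Your proposal is correct and takes essentially the same route as the paper: both pass through Proposition~\ref{prop:G_p-properties} and Theorem~\ref{thm:boundary_iso_wbranch} to obtain a measure-preserving homeomorphism $F\colon\partial[p]^*\to\partial[q]^*$ (discarding the equivariance), and both derive the contradiction by decomposing the image of a measure-$\frac{1}{p}$ clopen set into disjoint basic cylinders of $\partial[q]^*$, forcing $p$ to divide a power of $q$. The only cosmetic difference is that the paper writes this as $\frac{1}{p}=\sum_{i=1}^n q^{-k_i}$ and clears denominators, whereas you phrase it via the clopen value set $\Zb[1/q]$; the arithmetic content is identical.
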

\begin{proof}
Suppose for contradiction there are $p\neq q$ such that $G_p$ is isomorphic to $G_q$. Appealing to Theorem~\ref{thm:boundary_iso_wbranch}, there is a measure preserving homeomorphism $F:\partial [p]^*\rightarrow \partial [q]^*$.

Fix $O\subseteq \partial [p]^*$ clopen with measure $\frac{1}{p}$. The image $F(O)$ is a clopen set in $\partial[q]^*$ with measure $\frac{1}{p}$. We may then find disjoint basic clopen sets $W_1,\dots,W_n$ of $\partial [q]^*$ such that $F(O)=\sqcup_{i=1}^nW_i$. The sets $W_i$ are basic, so for each $1\leq i \leq n$ there is $k_i\geq 1$ such that $\mu(W_i)=\frac{1}{q^{k_i}}$. We deduce that
\[
\frac{1}{p}=\sum_{i=1}^n\frac{1}{q^{k_i}}.
\]
Letting $m:=\max\{k_i\mid 1\leq i\leq n\}$, we see that
\[
\frac{q^m}{p}=\sum_{i=1}^{n}q^{m-k_i},
\]
and the right hand side is an integer. This implies that $p$ divides $q$ which is absurd.
\end{proof}

\subsection{The infinite dihedral group}\label{ssec:dihedral}
\begin{example}\label{ex:dihedral}
Let $X:=[3]$ and via wreath recursion, define $a,b\in \Aut(X^*)$  by $a:=(01)(1,1,a)$ and $b:=(02)(1,b,1)$. The group $G:=\grp{a,b}$ is a self-replicating generalized basilica group isomorphic to the infinite dihedral group.
\end{example}
In regards to Question~\ref{qu:main}, Example~\ref{ex:dihedral} shows that virtually abelian is the best one can hope for in the case of self-replicating generalized basilica groups.

\begin{example}\label{ex:dihedral_AWS}
Let $X:=[2]$ and via wreath recursion, define $a,b\in \Aut(X^*)$  by $a:=(01)(1,1)$ and $b:=(a,b)$. The group $G:=\grp{a,b}$ is a self-replicating bounded automata group of abelian wreath type and is isomorphic to the infinite dihedral group.
\end{example}
Example~\ref{ex:dihedral_AWS} shows that Theorem~\ref{thm:aws_groups} is sharp. One naturally wonders if the infinite dihedral group is the only possible non-abelian, but virtually abelian example.

\subsection{The lamplighter group}\label{ex:lamp}
Let $X:=[4]$ and form $\Aut(\Xt)$. Define elements of $\Aut(\Xt)$ by $a:=(02)(13)(1,1,1,a)$ and $b:=(02)(1,b,1,1)$. One easily verifies that $G:=\grp{a,b}\leq \Aut(\Xt)$ is a bounded automata group. In fact $G$ is a bounded automata group that is both a generalized basilica group and of abelian wreath type.

The element $b$ has order $2$, and a simple calculation shows the following:
\[
a^{2n}ba^{-2n}=(02)(1,a^nba^{-n},1,1) \text{ and } a^{2n+1}ba^{-2n-1}=(02)(1,1,1,a^nba^{-n})
\]
for all $n\geq \Zb$.

We now argue that $C_2\wr \Zb$ is isomorphic to $G$; recall that $C_2$ denotes $\Zb/2\Zb$. This requires two claims.
\begin{claim*}
For any $n\neq m$ in $\Zb$, $a^nba^{-n}$ and $a^mba^{-m}$ commute
\end{claim*}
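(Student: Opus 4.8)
The plan is to write $b_n := a^n b a^{-n}$ for $n \in \Zb$ and to prove, by strong induction on $N := |n|+|m|$, that $b_n$ and $b_m$ commute for all \emph{distinct} $n,m$ with $|n|+|m|=N$. The only inputs needed are the two recursions displayed immediately above, namely $b_{2n}=(02)(1,b_n,1,1)$ and $b_{2n+1}=(02)(1,1,1,b_n)$ (valid for every $n\in\Zb$), together with the cocycle identity $(gh)_v=g_{h(v)}h_v$ from the lemma in Section~\ref{sec:prelim}, used to compute level-one sections of products on $[4]$. I would split the argument into three cases according to the parities of $n$ and $m$.

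First, suppose $n$ and $m$ have opposite parities, say $n=2n'$ and $m=2m'+1$. Then $b_n=(02)(1,b_{n'},1,1)$ and $b_m=(02)(1,1,1,b_{m'})$, and since $(02)$ is an involution the top permutation of each product is trivial; a routine section computation via the cocycle identity gives $b_nb_m=(1,b_{n'},1,b_{m'})=b_mb_n$. Thus distinct indices of opposite parity commute \emph{outright}, with no appeal to the inductive hypothesis. This also grounds the induction: $N=0$ forces $n=m=0$ (vacuous), and every $N=1$ instance is an opposite-parity pair.

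Second, if $n=2n'$ and $m=2m'$ are both even, then $n'\neq m'$ and the same kind of computation yields $b_nb_m=(1,b_{n'}b_{m'},1,1)$ and $b_mb_n=(1,b_{m'}b_{n'},1,1)$; hence $b_n,b_m$ commute iff $b_{n'},b_{m'}$ do. As $|n'|+|m'|=N/2<N$, the inductive hypothesis applies. The both-odd case $n=2n'+1,\ m=2m'+1$ (so $n'=(n-1)/2$, $m'=(m-1)/2$, $n'\neq m'$) is identical: commutation of $b_n,b_m$ reduces to that of $b_{n'},b_{m'}$.

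The one delicate point—the main obstacle—is verifying that the measure $|n|+|m|$ strictly decreases in the both-odd case, where $|n|$ becomes $|(n-1)/2|$. I would check that for $n\geq 1$ one has $|n|-|n'|=(n+1)/2\geq 1$, while for $n\leq -1$ one has $|n|-|n'|=(|n|-1)/2\geq 0$ with equality exactly when $n=-1$ (the only index fixed in absolute value by $n\mapsto (n-1)/2$). Consequently $N-(|n'|+|m'|)\geq 0$, and equality would force $n=m=-1$, contradicting $n\neq m$; so $|n'|+|m'|<N$ holds in the both-odd case as well. With the measure shown to be well-founded in all three cases, the strong induction closes and the claim follows.
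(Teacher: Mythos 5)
Your proof is correct, and its computational core is the same as the paper's: writing $b_n:=a^nba^{-n}$, both arguments use the two displayed recursions to show that an opposite-parity pair commutes outright by a level-one section computation, while a same-parity pair $(b_n,b_m)$ commutes if and only if the halved pair $(b_{n'},b_{m'})$ does, and then close by induction. The difference lies in how the induction is organized. The paper first reduces to $n,m\geq 0$ (simultaneous conjugation by a power of $a$ shifts both indices and preserves commutation --- a step it asserts without justification) and then inducts on the largest index, so the halving step decreases the measure for free. You instead work over all of $\Zb$ and induct on $|n|+|m|$, which obliges you to notice that the both-odd reduction $n\mapsto (n-1)/2$ has the fixed point $n=-1$ (concretely, $b_{-1}=(02)(1,1,1,b_{-1})$, so halving can fail to make progress on a single index); your observation that equality of the measures would force $n=m=-1$, contradicting $n\neq m$, is exactly the right repair, and it is the one delicate point your route must handle that the paper's normalization sidesteps. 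In exchange, you dispense with the conjugation-reduction step entirely. Both proofs are complete; the trade-off is a small conjugation-invariance argument on one side versus a small well-foundedness check at the fixed point $-1$ on the other.
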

\begin{proof}
It suffices to show that $a^nba^{-n}$ and $a^mba^{-m}$ commute for all $n,m\geq 0$. Set $\Omega_n:=\{a^iba^{-i}\mid 0\leq i \leq n\}$. We argue by induction on $n$ that $a^{n+1}ba^{-n-1}$ commutes with every $g\in \Omega_n$. For the base case, $n=0$, we see
\[
\begin{array}{rcl}
baba^{-1} & = &(02)(1,b,1,1)(02)(1,1,1,b)\\
			& =& (1,b,1,b)\\
			& =& (02)(1,1,1,b)(02)(1,b,1,1)\\
			& =& aba^{-1}b,
\end{array}
\]
and thus $b$ and $aba^{-1}$ commute.

Suppose the inductive claim holds for $n$ and take $a^mba^{-m}\in \Omega_{n+1}$. The cases that $m$ is odd while $n+1$ is even and $m$ is even while $n+1$ odd follow as the base case.  Let us now suppose that $m$ and $n+1$ are both even; the case where they are both odd is similar. We then have that $a^{n+2}ba^{-n-2}=(1,a^kba^{-k},1,1)$ where $k<n+2$ and that $a^mba^{-m}=(02)(1,a^lba^{-l},1,1)$ where $l<n+2$. We see that
\[
a^{n+2}ba^{-n-2}a^mba^{-m}=(1,a^kba^{-k}a^lba^{-l},1,1)
\]
and
\[
 a^mba^{-m}a^{n+2}ba^{-n-2}=(1,a^lba^{-l}a^kba^{-k},1,1).
\]
The inductive hypothesis ensures that $a^kba^{-k}a^lba^{-l}=a^lba^{-l}a^kba^{-k}$. Therefore, $a^mba^{-m}$ and $a^{n+2}ba^{-n-2}$ commute, completing the induction.
\end{proof}

\begin{claim*} For any distinct $i_1,\dots,i_n$ in $\Zb$, the product $\prod_{j=1}^na^{i_j}ba^{-i_j}$ is not $1$.
\end{claim*}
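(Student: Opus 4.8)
The plan is to prove the stronger statement that the commuting involutions $b_k:=a^kba^{-k}$ are $\mathbb{F}_2$-independent: for every finite $T\subseteq\Zb$ one has $\prod_{k\in T}b_k=1$ if and only if $T=\emptyset$. The claim is the case $T=\{i_1,\dots,i_n\}$; by the previous claim the $b_k$ commute, so such a product depends only on the set $T$, and writing $b_m:=a^mba^{-m}$ costs nothing.

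First I would record the level-one behaviour of $P:=\prod_{k\in T}b_k$ using the two recursions already established, $b_{2n}=(02)(1,b_n,1,1)$ and $b_{2n+1}=(02)(1,1,1,b_n)$. Every $b_k$ has first-level permutation $(02)$, so the cocycle rule $(hk)_v=h_{k(v)}k_v$ gives $\pi_1(P)=(02)^{|T|}$; moreover $(02)$ fixes the letters $1$ and $3$ and interchanges $0,2$, where all sections of every $b_k$ are trivial. Hence, using that the $b_n$ commute,
\[
P_0=P_2=1,\qquad P_1=\prod_{n\in T_0}b_n,\qquad P_3=\prod_{n\in T_1}b_n,
\]
where $T_0:=\{k/2\mid k\in T\text{ even}\}$ and $T_1:=\{(k-1)/2\mid k\in T\text{ odd}\}$. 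Consequently $P=1$ forces $|T|$ to be even and both $\prod_{T_0}b_n=1$ and $\prod_{T_1}b_n=1$; that is, each ``child'' $T_0,T_1$ is again a relation.

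Then I would run an infinite descent. Suppose a nonempty relation exists, and choose one, $T$, minimizing the weight $\mu(T):=\sum_{k\in T}(|k|+1)$. By the above $|T|$ is even, so $|T|\geq 2$, the children $T_0,T_1$ are relations, and $(T_0,T_1)\neq(\emptyset,\emptyset)$. A termwise comparison shows $\mu(T_0)+\mu(T_1)\leq\mu(T)$ always. If both children are nonempty, then since $\mu$ of a nonempty set is at least $1$ we get $\mu(T_0)\leq\mu(T)-\mu(T_1)<\mu(T)$, so $T_0$ is a smaller nonempty relation, contradicting minimality. If $T_1=\emptyset$ (all exponents even), then $\mu(T)-\mu(T_0)=\sum_{k\in T}|k|/2>0$, because $T$ has at least two distinct even elements and hence a nonzero one; again $T_0$ is a smaller nonempty relation. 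The case $T_0=\emptyset$ (all exponents odd) is symmetric: each positive odd $k$ drops $\mu$ by $(|k|+1)/2\geq 1$, and among the negatives at most one equals $-1$ (which drops $\mu$ by $0$) while any other has $|k|\geq 3$ and drops $\mu$ by $(|k|-1)/2\geq 1$, so $\mu(T_1)<\mu(T)$. In every case we contradict minimality, so no nonempty relation exists, proving the claim.

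The main obstacle is precisely this descent step. A naive induction on the number $n$ of factors fails, since when all exponents share a parity the passage $T\mapsto T_0$ (or $T\mapsto T_1$) preserves cardinality; and descending on $\max_k|k|$ also fails because $k\mapsto\lfloor k/2\rfloor$ fixes both $0$ and $-1$. The resolution is to descend on the weight $\mu$ and to exploit the distinctness of $i_1,\dots,i_n$: distinctness guarantees that an equal-parity family always contains an element forcing a strict decrease of $\mu$, while the parity obstruction ``$|T|$ even'' eliminates the would-be fixed configurations $\{0\}$ and $\{-1\}$ (each of odd cardinality).
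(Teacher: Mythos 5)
Your proof is correct, and it runs on the same engine as the paper's: both arguments read off the sections of a putative trivial product at the vertices $1$ and $3$ (which every $b_k:=a^kba^{-k}$ fixes), use the two displayed recursions to see that these sections are again trivial products with halved exponent sets, and then run an infinite descent. Where you genuinely differ is in how the descent is organized and terminated. The paper first takes the number of factors $n$ minimal, which forces all exponents to share a parity in one line, then conjugates by a power of $a$ to make all exponents positive and minimizes $\sum_j i_j$ within that normalized family. You instead keep arbitrary integer exponent sets, add the observation $\pi_1(P)=(02)^{|T|}$ (so any relation has even length, which kills the would-be fixed configurations $\{0\}$ and $\{-1\}$ of the halving maps), and descend on the single weight $\mu(T)=\sum_{k\in T}(|k|+1)$, handling the mixed-parity and pure-parity cases directly. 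The paper's scheme buys brevity; yours buys self-containedness and robustness: you need no conjugation normalization and no ``without loss of generality,'' and in particular you sidestep the one point the paper glosses --- in the all-odd case the halved exponents $(i_j-1)/2$ may include $0$ and thus leave the positive-exponent family over which the paper's sum was minimized, so its symmetry claim secretly needs an extra (easy) renormalization, whereas your weight function absorbs this automatically via the $k=-1$ bookkeeping. Both routes are sound; yours is slightly longer but airtight at exactly the spot where the paper is terse.
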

\begin{proof}
Suppose for a contradiction that the claim is false, and let $n$ be least such that there are distinct $i_1,\dots,i_n$ for which $g:=\prod_{j=1}^na^{i_j}ba^{-i_j}=1$. By conjugating with an appropriate power of $a$, we may assume that all $i_j$ are positive. We may also take the $i_1,\dots,i_n$ to be such that $\sum_{j=1}^ni_j$ is least.

The section of $g$ at $1$ must have the form $\prod_{j=1}^sa^{k_j}ba^{-k_j}$. Likewise, the section of $g$ at $3$ must have the form $\prod_{j=1}^ra^{l_j}ba^{-l_j}$. These two sections must also be trivial, and since $n\geq 2$, it cannot be the case that $r=0=s$. We may thus assume without loss of generality that $s\geq 1$. Since we choose $n$ to be least, it is indeed the case that $s=n$. However, $l_j<i_j$, so $\sum_{j=1}^nl_j<\sum_{j=1}^ni_j$. This contradicts out choice of $i_1,\dots,i_j$. This completes the reductio.
\end{proof}

\begin{prop}
The lamplighter group $C_2\wr \Zb$ is isomorphic to $G$.
\end{prop}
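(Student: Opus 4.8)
The plan is to realize $G$ as an internal semidirect product $N\rtimes \grp{a}$ with $N\cong \bigoplus_{\Zb}C_2$ and $\grp{a}\cong \Zb$ acting by the shift, which is precisely $C_2\wr \Zb$. Set $N:=\grp{a^nba^{-n}\mid n\in \Zb}$ and write $g_n:=a^nba^{-n}$. The first Claim shows the $g_n$ pairwise commute, and each has order two since $b$ does, so the map $\bigoplus_{\Zb}C_2\to N$ sending the standard generator $e_n$ to $g_n$ is a well-defined surjective homomorphism; the second Claim shows that any product over a nonempty finite set of distinct indices is nontrivial, whence this map is injective and $N\cong \bigoplus_{\Zb}C_2$ with $e_n\leftrightarrow g_n$. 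Since $ag_na^{-1}=g_{n+1}$, conjugation by $a$ corresponds to the shift $e_n\mapsto e_{n+1}$. As $b=g_0\in N$ and $N$ is abelian, $b$ centralizes $N$, and $a$ normalizes $N$; hence $N\normal G$ and $G=N\grp{a}$. It therefore remains only to verify that $a$ has infinite order and that $\grp{a}\cap N=\{1\}$, for then $G=N\rtimes \grp{a}$ is the lamplighter group.

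That $a$ has infinite order is immediate from the computation $a^2=(1,a,1,a)$ in wreath recursion, which gives $a^{2^k}=(1,a^{2^{k-1}},1,a^{2^{k-1}})$ by induction, so $a^{2^k}\neq 1$ for all $k$. The genuine obstacle is the disjointness $\grp{a}\cap N=\{1\}$, which I would establish by an \emph{infinite descent} on exponents. First note that $\pi_1(g_n)=(02)$ for every $n$, since conjugating $(02)$ by the commuting permutation $(02)(13)=\pi_1(a)$ leaves it fixed; thus $\pi_1(N)\leq \{1,(02)\}$, while $\pi_1(a^k)=((02)(13))^k$ equals $(02)(13)$ for $k$ odd. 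Consequently any $a^k\in N$ forces $k$ to be even.

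To run the descent, observe that every element of $N$ fixes the vertices $1$ and $3$ of $[4]$ (as $(02)$ does), so taking the section at vertex $1$ defines a homomorphism on $N$; the recursions $g_{2m}=(02)(1,g_m,1,1)$ and $g_{2m+1}=(02)(1,1,1,g_m)$ from the displayed formulas above show this section map carries $g_{2m}\mapsto g_m$ and $g_{2m+1}\mapsto 1$, so it maps $N$ into $N$. Now suppose $\grp{a}\cap N\neq\{1\}$ and let $k>0$ be least with $a^k\in N$; by the preceding paragraph $k=2l$ with $l>0$. Since $a^{2l}=(a^2)^l=(1,a^l,1,a^l)$ fixes vertex $1$ with section $a^l$, applying the section-at-$1$ homomorphism to $a^{2l}\in N$ yields $a^l\in N$ with $0<l<k$, contradicting minimality of $k$. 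Hence $\grp{a}\cap N=\{1\}$, and the identification $G\cong N\rtimes\grp{a}\cong C_2\wr\Zb$ follows.
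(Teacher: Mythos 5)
Your proof is correct, and it rests on the same core ingredients as the paper's: the two Claims (commutativity and independence of the conjugates $g_n=a^nba^{-n}$) together with the shift relation $ag_na^{-1}=g_{n+1}$. The packaging differs: the paper writes down an explicit map $\Phi\colon \bigoplus_{\Zb}C_2\rtimes \Zb\to G$ and checks directly that it is a bijective homomorphism, whereas you exhibit $G$ as the internal semidirect product $N\rtimes\grp{a}$ with $N=\grp{g_n\mid n\in\Zb}$. The substantive difference is at the injectivity/disjointness step. The paper's injectivity argument simply asserts that $\left(\prod_{i\in\Supp(f)}a^iba^{-i}\right)a^n=1$ forces $n=0$; this is exactly the statement $\grp{a}\cap N=\{1\}$, and the paper offers no justification for it. You prove it: the parity observation via $\pi_1$ (elements of $N$ project into $\{1,(02)\}$, while odd powers of $a$ project to $(02)(13)$) forces any $a^k\in N$ to have even exponent, and then the infinite descent using the section-at-vertex-$1$ homomorphism (which maps $N$ into $N$ by the displayed recursions, and sends $a^{2l}=(1,a^l,1,a^l)$ to $a^l$) rules out a minimal positive exponent. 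Your computations ($a^2=(1,a,1,a)$, the sections of $g_{2m}$ and $g_{2m+1}$, the shift action) are all correct, so your write-up is a genuine completion of the one detail the paper elides, at the cost of a few extra wreath-recursion computations.
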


\begin{proof}
We have that $C_2\wr\Zb=\bigoplus_{\Zb}C_2\rtimes \Zb$. We define $\Phi:\bigoplus_{\Zb}C_2\rtimes \Zb\rightarrow G$ by
\[
(f,n)\mapsto \left( \prod_{i\in \Supp(f)}a^iba^{-i}\right)a^n
\]
where $\Supp(f):=\{i\in \Zb\mid f(i)\neq 0\}$.

Let us verify $\Phi$ is a homomorphism:
\[
\begin{array}{rcl}
\Phi((f,n)(g,m)) & = & \Phi((f+n.g,n+m))\\
					& = & \left( \prod_{i\in \Supp(f+n.g)}a^iba^{-i} \right)a^{n+m}\\
					& = & \left(\prod_{i\in \Supp(f)}a^iba^{-i}\prod_{i\in \Supp(n.g)}a^iba^{-i}\right) a^{n+m}\\
					& = & \left(\prod_{i\in \Supp(f)}a^{i}ba^{-i}\prod_{i\in \Supp(g)}a^{i+n}ba^{-i-n}\right)a^{n+m}\\
					& = & \left(\prod_{i\in \Supp(f)}a^{i}ba^{-i}\right) a^n \left(\prod_{i\in \Supp(g)}a^{i}ba^{-i}\right)a^{m}\\
					& = & \Phi((f,n))\Phi((g,m)).			
\end{array}
\]
The third equality uses that the conjugates of $b$ by powers of $a$ commute and that $b$ has order two.

It is clear that $\Phi$ is surjective. For injectivity, if $\left( \prod_{i\in \Supp(f)}a^iba^{-i}\right)a^n=1$, then $n=0$, and $\prod_{i\in \Supp(f)}a^iba^{-i}=1$. The second claim ensures that $ \prod_{i\in \Supp(f)}a^iba^{-i}=1$ exactly when the product is trivial. It follows that $\Phi$ is injective.
\end{proof}

\subsection{Weak self-replication}\label{ssec:weak_self-rep}

Let $X:=[2]$ and form $\Aut(\Xt)$. Let $\sigma$ be the cycle $(01)$ and define $a:=\sigma (\sigma,\sigma a)$.  The group $G:=\grp{a}$ is isomorphic to $\Zb$, so it cannot contain the section $\sigma$. On the other hand, one checks that $\sigma a \sigma =a^{-1}$, so $a^2=(a,a^{-1})$. The section homomorphism $\phi_x:G_{(x)}\rightarrow \Aut(\Xt)$ thus has image $G$. The group $G$ is weakly self-replicating but not self-similar, so it is not self-replicating.


\bibliographystyle{amsplain}
\bibliography{biblio}

\end{document}